\renewcommand\a{\alpha}
\newcommand\g{\gamma}
\renewcommand\d{\delta}
\newcommand\la{\lambda}
\newcommand\s{\sigma}
\newcommand\f{\phi}
\newcommand\vf{\varphi}
\newcommand\Om{\Omega}
\newcommand\vG{\varGamma}
\newcommand\ve{\varepsilon}
\newcommand\BA{\mathbf A}
\newcommand\BQ{\mathbf Q}
\newcommand\BF{\mathbf F}
\newcommand\BZ{\mathbf Z}
\newcommand\BN{\mathbf N}
\newcommand\bB{\mathbf B}
\newcommand\BU{\mathbf U}
\newcommand\BV{\mathbf V}
\newcommand\Ba{\mathbf a}
\newcommand\Bq{\mathbf q}
\newcommand\Bs{\mathbf s}
\newcommand\Bk{\mathbf k}
\newcommand\Bell{\boldsymbol\ell}
\newcommand\CB{\mathcal{B}}
\newcommand\SE{\mathscr{E}}
\newcommand\SL{\mathscr{L}}
\newcommand\Fg{\mathfrak g}
\newcommand\iv{^{-1}}
\newcommand\wh{\widehat}
\newcommand\wt{\widetilde}
\newcommand\ol{\overline}
\newcommand\ul{\underline}
\newcommand\dia{\diamondsuit}
\newcommand\lra{\leftrightarrow}
\newcommand\Ker{\operatorname{Ker}}
\newcommand\Hom{\operatorname{Hom}}
\newcommand\lp{\operatorname{\!\langle\!}}
\newcommand\rp{\operatorname{\!\rangle\!}}
\renewcommand\Im{\operatorname{Im}}
\newcommand\weit{\operatorname{wt}}
\newcommand{\isom}{\,\raise2pt\hbox{$\underrightarrow{\sim}$}\,}
\numberwithin{equation}{section}
\newtheorem{thm}{Theorem}[section]
\newtheorem{lem}[thm]{Lemma}
\newtheorem{cor}[thm]{Corollary}
\newtheorem{prop}[thm]{Proposition}
\def \para#1{\par\medskip\textbf{#1}
              \addtocounter{thm}{1}}
\def \remark#1{\par\medskip\noindent
                \textbf{Remark #1}
                \addtocounter{thm}{1}}
\begin{document}
\setlength{\baselineskip}{4.9mm}
\setlength{\abovedisplayskip}{4.5mm}
\setlength{\belowdisplayskip}{4.5mm}
\renewcommand{\theenumi}{\roman{enumi}}
\renewcommand{\labelenumi}{(\theenumi)}
\renewcommand{\thefootnote}{\fnsymbol{footnote}}
\renewcommand{\thefootnote}{\fnsymbol{footnote}}
\allowdisplaybreaks[2]
\parindent=20pt
\medskip
\begin{center}
{\bf Diagram automorphisms and canonical bases \\ 
for quantized enveloping algebras}  
\par
\vspace{1cm}
Ying Ma, Toshiaki Shoji and Zhiping Zhou
\\
\title{}
\end{center}

\begin{abstract}
Let $X$ be a Cartan datum of symmetric type, with 
an admissible automorphism $\s$ on $X$, and $\ul X$ the 
Cartan datum induced from $(X, \s)$.  Let $\BU_q^-$ 
(resp. $\ul \BU_q^-$) be the negative part of the 
quantized enveloping algebra 
associated to $X$ (resp. $\ul X$).  Lusztig constructed 
the canonical basis $\bB$ of $\BU_q^-$ and the canonical 
signed basis $\wt{\ul\CB}$ of $\ul\BU_q^-$ by making use 
of the geometric theory of quivers.  By normalizing the sign
of $\wt{\ul\CB}$, he obtained the canonical basis $\ul\bB$ of $\ul\BU_q^-$, 
and a natural bijection $\bB^{\s} \isom \ul\bB$.
In this paper, assuming the existence of $\bB$ for $\BU_q^-$, 
we construct the canonical basis $\ul\bB$ of $\ul\BU_q^-$, 
and a bijection $\bB^{\s} \isom  \ul\bB$,  
by an elementary method, subject to the condition that 
the order of $\s$ is odd. In the case where the order is even, 
we obtain the corresponding result for the canonical signed basis. 
\end{abstract}

\maketitle
\pagestyle{myheadings}

\begin{center}
{\sc Introduction}
\end{center}

\par
Let $X$ be the Cartan datum corresponding to 
a Kac-Moody algebra $\Fg$, and $\BU_q^-$ the negative part of 
the quantized enveloping algebra $\BU_q = \BU_q(\Fg)$. 
Let $\s : X \to X$ be the diagram automorphism of $X$.
Then $\s$ induces an algebra automorphism $\s: \BU_q^- \to \BU_q^-$.
In the case where $\s$ is admissible (see 2.1 for the definition),
the pair $(X,\s)$ induces a Cartan datum $\ul X$, which 
corresponds to the orbit algebra $\Fg^{\s}$ of $\Fg$.
Let $\ul\BU_q^-$ be the negative part of the quantized enveloping algebra
$\ul\BU_q$ associated to $\ul X$. 
\par
In the case where $X$ is a symmetric Cartan datum, the canonical basis 
$\bB$ of $\BU_q^-$ was constructed by Lusztig [L1] by making use of 
the geometric theory of quivers. 
In [L2], this result was generalized to an arbitrary Cartan datum  $\ul X$.    
The outline of the proof is as follows; for a given Cartan datum $\ul X$, 
there exists a Cartan datum $X$ of symmetric type, and 
an admissible  diagram automorphism $\s : X \to X$ such that the Cartan datum induced 
from $(X, \s)$ is isomorphic to $\ul X$.  Let $\bB$ be the canonical basis of $\BU_q^-$.
Then $\s$ gives a permutation of $\bB$, and we denote by $\bB^{\s}$ the set
of $\s$-fixed elements in $\bB$. 
In the first step, 
by making use of the geometric theory of quivers with automorphisms, 
he constructed the canonical signed basis $\wt{\ul \CB}$ of $\ul\BU_q^-$ 
(see 1.18), and 
proved that there exists a natural bijection $\wt\bB^{\s} \isom \wt{\ul\CB}$, 
where $\wt\bB = \bB \sqcup -\bB$, 
and $\wt{\ul\CB} = \ul\CB \sqcup -\ul\CB$ for some 
basis $\ul\CB$ of $\ul\BU_q^-$ ([L2, Thm. 14.2.3]).
In the second step, he proved the existence of the canonical 
basis $\ul\bB$ of $\ul\BU_q^-$ such that 
$\wt{\ul\CB} = \ul\bB \sqcup - \ul\bB$ and constructed a natural bijection 
$\bB^{\s} \isom \ul\bB$ ([L2, Thm. 14.4.3, 19.2.3]), 
by normalizing the sign of $\wt{\ul\bB}$ in terms of 
the theory of crystals due to Kashiwara [K].
\par
In this paper, we take up a similar problem, but from a different point 
of view. This is a generalization of [SZ1,2], where the case $X$ is finite or 
affine type was discussed. We consider $X$ of symmetric type, with an automorphism $\s$ on $X$,    
and $\ul X$ induced from $(X,\s)$ as before. Assuming the existence of 
the canonical basis $\bB$ for $\BU_q^-$, we shall construct the 
canonical signed basis $\wt{\ul\CB}$ of 
$\ul\BU_q^-$ and a natural bijection $\wt\bB^{\s} \isom \wt{\ul\CB}$ in an elementary way, 
without appealing the geometric theory, nor the theory of crystal basis.
In the case where the order of $\s$ is odd, we obtain the canonical basis $\ul\bB$ 
of $\ul\BU_q^-$ 
and a natural bijection $\bB^{\s} \isom \ul\bB$.   
\par
The main ingredient for our approach is the isomorphism 
$\Phi: {}_{\BA'}\ul\BU_q^- \isom \BV_q$ as discussed in [SZ1, SZ2], where 
$X$ is assumed to be finite type or affine type.  
We generalize those results to the following situation. 
Let $X$ be a Cartan datum of arbitrary type, with an admissible 
automorphism $\s$ on $X$. Then the pair $(X, \s)$ induces 
a Cartan datum $\ul X$.  We consider the quantized enveloping algebra 
$\BU_q^-$ (resp. $\ul\BU_q^-$) associated to $X$ (resp. $\ul X$).
Here we assume that the order of $\s$ 
is a power of a prime number $p$.  Let $\BA = \BZ[q, q\iv]$, and 
$\BA' = \BF[q,q\iv]$, where $\BF = \BZ/p\BZ$ is the finite 
field of $p$-elements.
Let ${}_{\BA}\BU_q^-$ be Lusztig's integral form of $\BU_q^-$, 
and set ${}_{\BA'}\BU_q^- = \BA' \otimes_{\BA}{}_{\BA}\BU_q^-$. 
$\s$ acts naturally on ${}_{\BA'}\BU_q^-$, and we denote by 
${}_{\BA'}\BU_q^{-,\s}$ the $\s$-fixed point subalgebra of ${}_{\BA'}\BU_q^-$.   
The $\BA'$-algebra $\BV_q$ is defined as $\BV_q = {}_{\BA'}\BU_q^{-,\s}/J$, 
where $J$ is the two-sided ideal generated by the orbit sum $O(x)$ such that 
$\s(x) \ne x$ (see 3.2).  The $\BA'$-algebra ${}_{\BA'}\ul\BU_q^-$ 
is defined similarly to the case of $\BU_q^-$. 
In turn, we give an axiomatic definition of the canonical basis 
for $\BU_q^-$ (see 1.12) by focussing some properties of the canonical basis
in the symmetric case.  It is shown that the canonical basis is 
unique if it exists.  We show, in Theorem 3.4, that 
if the canonical (signed) basis $\bB$ exists for $\BU_q^-$, then there exists an isomorphism  
$\Phi : {}_{\BA'}\ul\BU_q^- \isom \BV_q$ of $\BA'$-algebras.  
Moreover, in Theorem 4.18, under the assumption that $p$ is odd, 
we construct the canonical basis $\ul\bB$ of $\ul\BU_q^-$ from $\bB$, 
by making use of this isomorphism, 
and show that there exists a natural bijection $\bB^{\s} \isom \ul\bB$. 
In the case where $p = 2$, we only obtain the canonical signed basis
(see Remark 4.19). 
\par
Returning to the original problem, we consider $X$ of symmetric type, and 
$\s : X \to X$ such that $\ul X$ is induced from $(X,\s)$.
In this case, the order of $\s$ is not necessarily a prime power, 
but one can find a sequence $X = X_0, X_1, \dots, X_k = \ul X$ of Cartan data, 
and an automorphism $\s_i : X_i \to X_i$ such that $X_{i+1}$ is isomorphic 
to the Cartan datum induced from $(X_i, \s_i)$ and that the order of $\s_i$ 
is a prime power, with $\s = \s_{k-1}\cdots\s_1\s_0$. 
In the case where the order of $\s$ is odd, since 
the canonical basis $\bB$ exists for $X_0 = X$, by the repeated use of Theorem 4.18, 
one can find the canonical basis $\ul\bB$ of $X_k = \ul X$, and a bijection 
$\bB^{\s} \isom \ul\bB$ (Theorem 4.27). 
While in the case where the order of $\s$ is even, 
we obtain the corresponding result concerning with the canonical signed basis. 
\par
In the course of the proof of Theorem 3.4, one needs to show that $\Phi$ is 
a homomorphism. In [SZ1], where $X$ is finite or affine type, 
this was proved by case by case verification, 
by making use of PBW-bases. In our case, we can not use PBW-bases. Instead, 
in this paper we prove this by a purely combinatorial argument, 
in a uniform way.  The discussion here is in some sense simpler, 
and more transparent  than the direct computation in [SZ1]. 
\par
The authors would like to thank the referee for the careful reading, 
and for informing them Tanisaki's result [T]
on integrable highest weight modules in positive characteristic.  
\section{Preliminaries}

\para{1.1.}
Let $X = (I, (\ ,\ ))$ be a Cartan datum, 
where $I$ is a finite set and 
$(\ ,\ )$ is a symmetric bilinear form on the vector space 
$\bigoplus_{i \in I}\BQ\a_i$ with basis $\a_i$, such that $(\a_i,\a_j) \in \BZ$
satisfies the property
\begin{enumerate}
\item \
$(\a_i,\a_i) \in 2\BZ_{>0}$ for any $i \in I$, 
\item \
$\frac{2(\a_i,\a_j)}{(\a_i,\a_i)} \in \BZ_{\le 0}$ for any $i \ne j$ in $I$. 
\end{enumerate}
For $i,j \in I$, set $a_{ij} = 2(\a_i,\a_j)/(\a_i,\a_i) \in \BZ$.
The matrix $A = (a_{ij})$ is called the Cartan matrix associated to $X$. 
The Cartan datum is said to be symmetric if $(\a_i,\a_i) = 2$ for any $i \in I$, 
and simply-laced if it is symmetric and $(\a_i, \a_j) \in \{ 0, -1\}$
for any $i \ne j$.  If $X$ is symmetric, then $A$ is a symmetric matrix. 
Let $Q = \bigoplus_{i \in I}\BZ \a_i$ be the root lattice, and set 
$Q_+ = \sum_{i \in I}\BN \a_i, Q_- = - Q_+$. 
For a weight $\nu = \sum_{i \in I} n_i\a_i \in Q_{\pm}$,
set $|\nu| = \sum_i |n_i|$.  
Let $\Fg$ be the Kac-Moody algebra associated to $X$.
\par
We fix  a weight lattice $P$, a free abelian group of finite rank, 
 such that $Q \subset P$, and set 
$P^* = \Hom_{\BZ}(P, \BZ)$.  Let $\lp\ ,\ \rp : P^* \times P \to \BZ$ 
be the canonical pairing.  We fix $h_i \in P^*$ for each $i \in I$ satisfying 
the property that $\lp h_i, \la\rp = \frac{2(\la, \a_i)}{(\a_i, \a_i)}$ 
for any $\la \in P$.   

\para{1.2.}
Let $q$ be an indeterminate, and for an integer $n$, a positive integer $m$, 
set
\begin{equation*}
[n] = \frac{q^n - q^{-n}}{q -q\iv}, \quad [m]^! = [1][2]\cdots [m], 
    \quad [0]^! = 1.
\end{equation*}
Also, for $n \in \BZ, m  \in \BN$, put
\begin{equation*}
\begin{bmatrix}
         n  \\
         m
\end{bmatrix} = \frac{[n][n-1]\cdots [n-m +1]}{[m]^!}, \quad (m \ge 1), 
\qquad \begin{bmatrix}
         n  \\
         0
\end{bmatrix} = 1. 
\end{equation*}
In particular, if $0 \le m \le n$, then we have 
$\begin{bmatrix}
         n  \\
         m
 \end{bmatrix} = \frac{[n]^!}{[m]^![n-m]^!} = 
\begin{bmatrix}
         n  \\
         n - m
\end{bmatrix}$.
\par
For $d \in \BN$, we denote by $[n]_d$ the element obtained from $[n]$ by 
replacing $q$ by $q^d$.  
For each $i \in I$, set $d_i = (\a_i,\a_i)/2 \in \BN$, and 
$q_i = q^{d_i}$. 
\par
Let $\BU_q = \BU_q(\Fg)$ be the quantized enveloping algebra associated to 
$X$ and $P$, namely, an associative algebra over $\BQ(q)$ with generators
$e_i, f_i \ (i \in I)$ and $q^h \ (h \in P^*)$, and relations 
\begin{align*}
\tag{1.2.1}
&q^0 = 1, \qquad q^{h + h'} = q^hq^{h'}  \quad\text{ for $h,h' \in P^*$, }    \\
\tag{1.2.2}
&q^he_iq^{-h} = q^{\lp h,\a_i\rp} e_i, \qquad q^hf_iq^{-h} = q^{-\lp h,\a_i\rp}f_i
                  \quad\text{ for $i \in I, h \in P^*$,}  \\
\tag{1.2.3}
&e_if_j - f_je_i = \d_{ij}\frac{t_i - t_i\iv}{q_i - q_i\iv}  \quad\text{ for $i,j \in I$,} \\
\tag{1.2.4}
&\sum_{k = 0}^{1 - a_{ij}}(-1)^ke_i^{(k)}e_je_i^{(k)} = 0, 
\quad 
\sum_{k = 0}^{1 - a_{ij}}(-1)^kf_i^{(k)}f_jf_i^{(k)} = 0
\quad\text{ for $i \ne j \in I$,}
\end{align*} 
where $t_i = q^{d_ih_i}$, and $e_i^{(n)} = e_i^n/[n]^!_{d_i}, 
f_i^{(n)} = f_i^n/[n]^!_{d_i}$.
Let $\BU_q^-$ (resp. $\BU_q^+$) be the subalgebra of $\BU_q$ 
generated by $f_i\ (i \in I)$ (resp. by $e_i \ (i \in I)$).  
Then $\BU_q^-$ (resp. $\BU_q^+$) is an associative algebra over $\BQ(q)$ with generators 
$f_i$ (resp. $e_i$)  satisfying the fundamental relations in (1.2.4).
\par   
Set $\BA = \BZ[q,q\iv]$, and let ${}_{\BA}\BU_q^-$ be Lusztig's integral form 
of $\BU_q^-$, namely, the $\BA$-subalgebra of $\BU_q^-$ generated by 
$f_i^{(n)} = f_i^n/[n]^!_{d_i}$ for $i \in I, n \in \BN$.   
\par
We define a $\BQ$-algebra automorphism, called the bar-involution, 
${}^-: \BU_q^- \to \BU_q^-$ by $\ol q = q\iv$, $\ol f_i = f_i$ for $i \in I$.
We define an anti-algebra automorphism ${}^* : \BU_q^- \to \BU_q^-$
by $f_i^* = f_i$ for any $i \in I$.  
\para{1.3.}
$\BU_q^-$ has a weight space decomposition 
$\BU_q^- = \bigoplus_{\nu \in Q_-}(\BU_q^-)_{\nu}$, where 
$(\BU_q^-)_{\nu}$ is a subspace of $\BU_q^-$ spanned by elements $f_{i_1}\cdots f_{i_N}$
such that $\a_{i_1} + \cdots + \a_{i_N} = -\nu$. 
$x \in \BU_q^-$ is called homogeneous with $\weit x = \nu$ if $x \in (\BU_q^-)_{\nu}$. 
We define a multiplication on $\BU_q^- \otimes \BU_q^-$ by 

\begin{equation*}
\tag{1.3.1}
(x_1\otimes x_2)\cdot (x_1'\otimes x_2') = q^{-(\weit x_2, \weit x_1')}
                                             x_1x_1'\otimes x_2x_2' 
\end{equation*}
where $x_1, x_1', x_2, x_2'$ are homogeneous in $\BU_q^-$. 
Then $\BU_q^-\otimes \BU_q^-$ becomes an associative algebra with respect to
this twisted product. We define a homomorphism 
$r : \BU_q^- \to \BU_q^-\otimes \BU_q^-$ by $r(f_i) = f_i\otimes 1 + 1 \otimes f_i$ 
for each $i \in I$. It is known (see [L2, 1.2]. Note that 
$v_i$ in [L2] coincides with $q_i\iv$ in our paper) 
that there exists a unique bilinear form 
$(\ ,\ )$ on $\BU_q^-$ satisfying the following properties; 
$(1,1) = 1$ and

\begin{equation*} 
\tag{1.3.2}
\begin{aligned}
(f_i, f_j) = \d_{ij}(1 - q_i^2)\iv, \\
(x, y'y'') = (r(x), y'\otimes y''), \\
(x'x'', y) = (x'\otimes x'', r(y)),
\end{aligned}
\end{equation*}
where the bilinear form on $\BU_q^-\otimes \BU_q^-$ is
defined by $(x_1\otimes x_2, x_1'\otimes x_2') = (x_1, x_1')(x_2, x_2')$. 
Thus defined bilinear form is symmetric and non-degenerate.
Using the property 
$r((\BU_q^-)_{\nu}) \subset \bigoplus_{\nu' + \nu'' 
         = \nu}(\BU_q^-)_{\nu'}\otimes (\BU_q^-)_{\nu''}$, 
we have  
\begin{equation*}
\tag{1.3.3}
((\BU_q^-)_{\nu}, (\BU_q^-)_{\nu'}) = 0  \quad\text{ for } \nu \ne \nu'.
\end{equation*}
\par
For any $i \in I$, we define $\BQ(q)$-linear maps ${}_ir, r_i : \BU_q^- \to \BU_q^-$
by 
\begin{equation*}
\tag{1.3.4}
r(x) = f_i \otimes {}_ir(x) + \sum y\otimes z, \qquad
r(x) = r_i(x)\otimes f_i + \sum z\otimes y,
\end{equation*}
where $y$ are homogenous such that $\weit y \ne -\a_i$. 
From the definition, we have
\begin{equation*}
\tag{1.3.5}
(f_iy, x) = (f_i,f_i)(y, {}_ir(x)), \qquad (yf_i, x) = (f_i, f_i)(y, r_i(x)).
\end{equation*}

\par
The following properties for ${}_ir, r_i$ are also immediate from the definition. Assume that 
$x,x'$ are homogeneous.  Then
\begin{equation*}
\tag{1.3.6}
\begin{aligned}
&{}_ir(1) = 0, \quad {}_ir(f_j) = \d_{ij},  
\qquad r_i(1) = 0, \quad r_i(f_j) = \d_{ij},  \\
&{}_ir(xx') = q^{(\weit x, \a_i)}x\,{}_ir(x') + {}_ir(x)x', \quad
 r_i(xx') = q^{(\weit x', \a_i)}r_i(x)x' + xr_i(x').
\end{aligned}
\end{equation*}

Moreover, we have 
\begin{equation*}
\tag{1.3.7}
r_i = * \circ {}_ir \circ *.
\end{equation*}

\begin{lem}  
Assume that $(\a_i,\a_j) = 0$.  Then 
\begin{enumerate}
\item 
${}_ir$ commutes with the left 
action of $f_j$ on $\BU_q^-$.
\item ${}_ir$ and ${}_jr$ commute each other.
\end{enumerate}
\end{lem}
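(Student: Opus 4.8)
The plan is to prove both assertions by reducing to the defining property (1.3.6) of ${}_ir$ together with the formula (1.3.5) relating ${}_ir$ to the bilinear form, and then exploiting the symmetry and non-degeneracy of that form. Since both $f_j$ and ${}_ir$ preserve the weight grading up to a shift, it suffices to check the claimed identities on homogeneous elements, and since $\BU_q^-$ is generated by the $f_k$, it suffices to check them on monomials $f_{k_1}\cdots f_{k_N}$ by induction on $N$.

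For (i), I would show $f_j({}_ir(x)) = {}_ir(f_jx)$ for all homogeneous $x$. Apply the second line of (1.3.6) with the pair $(f_j, x)$ in the roles of $(x, x')$: since $\weit f_j = -\a_j$, we get ${}_ir(f_jx) = q^{-(\a_j,\a_i)}f_j\,{}_ir(x) + {}_ir(f_j)x$. The hypothesis $(\a_i,\a_j) = 0$ kills the exponent, making the power of $q$ equal to $1$; and ${}_ir(f_j) = \d_{ij}$, which is $0$ because $(\a_i,\a_i)\in 2\BZ_{>0}$ forces $i \ne j$ when $(\a_i,\a_j)=0$. Hence ${}_ir(f_jx) = f_j\,{}_ir(x)$, which is exactly the assertion that ${}_ir$ commutes with left multiplication by $f_j$. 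This step is essentially immediate from (1.3.6); no induction is even needed.

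For (ii), the cleanest route is via the bilinear form. I would prove $({}_ir\,{}_jr(x), y) = ({}_jr\,{}_ir(x), y)$ for all homogeneous $x,y$, which suffices by non-degeneracy of $(\ ,\ )$. By (1.3.5), $(f_i f_j y, x) = (f_i,f_i)(f_j y, {}_ir(x)) = (f_i,f_i)(f_j,f_j)(y, {}_jr\,{}_ir(x))$, and symmetrically $(f_j f_i y, x) = (f_j,f_j)(f_i,f_i)(y, {}_ir\,{}_jr(x))$. So the desired commutation is equivalent to $(f_if_jy, x) = (f_jf_iy, x)$ for all $y$; i.e. it suffices to show that $f_if_j = f_jf_i$ in $\BU_q^-$ when $(\a_i,\a_j)=0$. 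But when $(\a_i,\a_j)=0$ we have $a_{ij}=a_{ji}=0$, so the Serre relation (1.2.4) for the pair $(i,j)$ reads $f_if_j - f_jf_i = 0$ (and likewise $f_i^{(k)}$ with $k$ up to $1-a_{ij}=1$). Hence $f_i$ and $f_j$ commute, giving the claim. Alternatively, and perhaps more in the spirit of an ``elementary'' argument, one can avoid invoking the form by inducting on $N$ using (1.3.6) twice and part (i): writing $x = f_k x'$, expand ${}_ir\,{}_jr(f_kx')$ and ${}_jr\,{}_ir(f_kx')$ using the derivation rule, use part (i) to slide $f_k$ past the opposite operator, and close the induction; the $q$-powers that appear are all of the form $q^{(\weit,\a_i)}$ or $q^{(\weit,\a_j)}$ and the cross terms match because $(\a_i,\a_j)=0$.

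The only mild subtlety — and the place where one must be slightly careful rather than purely formal — is bookkeeping the $q$-powers in the derivation rule (1.3.6) when carrying out the inductive version of (ii): one needs $(\a_i,\a_j)=0$ precisely to ensure that applying ${}_ir$ after ${}_jr$ produces the same exponent as the reverse order on each term. If instead one uses the bilinear-form argument above, the entire proof collapses to the observation that the hypothesis forces $f_if_j=f_jf_i$, and there is no real obstacle at all. I would present the form-based proof as the main argument and remark that (i) can also be seen directly from (1.3.6).
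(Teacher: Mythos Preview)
Your proof is correct. Part (i) matches the paper exactly: both observe it is immediate from (1.3.6). For part (ii), however, your primary argument takes a genuinely different route from the paper's.

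The paper proves (ii) by direct induction on $|\weit x|$: writing $x = yz$ with $y,z$ homogeneous and nontrivial, it expands ${}_jr\,{}_ir(yz)$ using (1.3.6) twice and checks that the resulting expression is symmetric in $i$ and $j$ (using the induction hypothesis on $y$ and $z$ and the vanishing of $(\a_i,\a_j)$ to match the $q$-exponents). This is exactly the ``alternative'' inductive argument you sketch at the end.

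Your main argument instead passes through the bilinear form: using (1.3.5) twice reduces ${}_ir\,{}_jr = {}_jr\,{}_ir$ to the identity $f_if_j = f_jf_i$, which is the Serre relation (1.2.4) in the case $a_{ij}=0$. This is shorter and more conceptual, but it costs something: you are invoking the non-degeneracy of $(\ ,\ )$ on $\BU_q^-$, which the paper states in 1.3 but whose proof (via Lemma 1.5 or its analogues) already uses the ${}_ir$ machinery. The paper's inductive proof is more self-contained and avoids any circularity concern; your form-based proof is cleaner once non-degeneracy is on the table. Either is fine here.
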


\begin{proof}
(i) is immediate from (1.3.6).
We show (ii). Assume that $x$ is homogeneous.  
We prove (*) ${}_ir{}_jr(x) = {}_jr{}_ir(x)$ by induction on $|\weit x|$. 
If $x = 1$, this is trivial.  So assume that $x \ne 0$, and (*) holds for 
$x'$ such that $|\weit x'| < |\weit x|$.  Write $x = yz$ with $y, z$ homogeneous, 
not equal to 1.
Then by (1.3.6), we have
${}_ir(yz) = q^{(\weit y, \a_i)}y{}_ir(z) + {}_ir(y)z$, and so   
\begin{align*}
{}_jr{}_ir(yz) &= q^{(\weit y, \a_i)}
           \biggl(q^{(\weit y, \a_j)}y{}_jr{}_ir(z) + {}_jr(y){}_ir(z)\biggr)  \\
     &\qquad + \biggl(q^{(\weit y + \a_i, \a_j)}{}_ir(y){}_jr(z)  + {}_jr{}_ir(y)z\biggr) \\
     &= q^{(\weit y, \a_i) + (\weit y, \a_j)}y{}_jr{}_ir(z) \\
     &\qquad + \biggl(q^{(\weit y,\a_i)}{}_jr(y){}_ir(z) 
                   + q^{(\weit y, \a_j)}{}_ir(y){}_jr(z)\biggr) + {}_jr{}_ir(y)z.    
\end{align*}
In the last formula, 
${}_jr{}_ir(y) = {}_ir{}_jr(y),  {}_jr{}_ir(z) = {}_ir{}_jr(z)$ by induction 
hypothesis, and the second term is symmetric with respect to $i$ and $j$.  Thus 
we have ${}_jr{}_ir(yz) = {}_ir{}_jr(yz)$.  Hence (ii) holds. 
\end{proof}

The following result is known (cf. [L2, Lemma 1.2.15]).

\begin{lem}  
\ $\bigcap_{i \in I}\Ker{}_ir = \BQ(q)1, \qquad 
   \bigcap_{i \in I}\Ker r_i = \BQ(q)1$. 
\end{lem}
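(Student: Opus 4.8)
The plan is to deduce both equalities from the non-degeneracy of the bilinear form $(\ ,\ )$ on $\BU_q^-$ together with the adjunction formulas (1.3.5); the argument is a short duality computation once one reduces to a homogeneous element of a single nonzero weight.

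First I would treat $\bigcap_{i\in I}\Ker{}_ir$. Since ${}_ir(1)=0$ for all $i$ by (1.3.6), the inclusion $\BQ(q)1\subset\bigcap_{i}\Ker{}_ir$ is clear. Each ${}_ir$ respects the weight space decomposition (it sends $(\BU_q^-)_{\nu}$ into $(\BU_q^-)_{\nu+\a_i}$), so $\bigcap_i\Ker{}_ir$ is a weight-graded subspace, and since $(\BU_q^-)_0=\BQ(q)1$, it suffices to show: if $\nu\in Q_-$ with $\nu\ne 0$ and $x\in(\BU_q^-)_{\nu}$ lies in $\bigcap_i\Ker{}_ir$, then $x=0$.

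To see this, I would show such an $x$ is orthogonal to all of $\BU_q^-$. By (1.3.3) it is orthogonal to $(\BU_q^-)_{\nu'}$ for every $\nu'\ne\nu$, so it remains to check $(z,x)=0$ for $z\in(\BU_q^-)_{\nu}$. Because $\nu\ne 0$, the space $(\BU_q^-)_{\nu}$ is spanned by elements of the form $f_iy$ with $i\in I$ and $y$ homogeneous (a product $f_{i_1}\cdots f_{i_N}$ of weight $-\nu$ necessarily has $N\ge 1$). For such an element, (1.3.5) gives $(f_iy,x)=(f_i,f_i)(y,{}_ir(x))=0$ since ${}_ir(x)=0$. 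Hence $(z,x)=0$ for all $z\in\BU_q^-$, and non-degeneracy of $(\ ,\ )$ forces $x=0$. This proves the first equality.

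For $\bigcap_{i\in I}\Ker r_i$, I would invoke (1.3.7): $r_i=*\circ{}_ir\circ *$, where $*$ is an anti-algebra automorphism of $\BU_q^-$ fixing each $f_i$, hence a weight-preserving involution. Therefore $x\in\bigcap_i\Ker r_i$ if and only if $x^*\in\bigcap_i\Ker{}_ir=\BQ(q)1$, i.e. if and only if $x\in\BQ(q)1$. Alternatively one can rerun the duality argument verbatim using the second identity in (1.3.5) and the fact that $(\BU_q^-)_{\nu}$ with $\nu\ne 0$ is spanned by elements $yf_i$. I do not expect any genuine obstacle here: the only inputs are the existence and non-degeneracy of $(\ ,\ )$ from 1.3 and the defining property (1.3.5) of ${}_ir$; the one point to get right is the reduction to a single nonzero weight together with the observation that such a weight space is spanned by products beginning (resp. ending) with some $f_i$.
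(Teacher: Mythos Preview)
Your argument is correct. The paper does not supply its own proof of this lemma; it merely cites [L2, Lemma~1.2.15], and what you have written is precisely the standard duality argument found there: reduce to a homogeneous element of nonzero weight, use (1.3.5) to see it is orthogonal to every $f_iy$ (hence to all of $(\BU_q^-)_{\nu}$), and conclude by non-degeneracy of the form; then obtain the $r_i$ statement from (1.3.7).

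One remark worth making is that the paper later proves a generalisation over an arbitrary field $R$ (Proposition~1.27), and there the argument is genuinely different: over $R$ one does not know a~priori that the induced bilinear form is non-degenerate (indeed, non-degeneracy is \emph{deduced} from Proposition~1.27 in Proposition~3.7), so the paper instead argues via highest weight modules ${}_RL(\la)$ and the commutation formula of Lemma~1.26. Your approach is cleaner for the $\BQ(q)$ case precisely because non-degeneracy is already available; but it would not transfer directly to the $\BA'$- or $\BF(q)$-setting, which is why the paper needs the alternative route there.
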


\para{1.6.}
The following formula is easily verified by induction on $n$ 
(see [L2, Lemma 1.4.2]). 
\begin{equation*}
\tag{1.6.1}
r(f_i^{(n)}) = \sum_{k + k' = n}q_i^{-kk'}
                          f_i^{(k)}\otimes f_i^{(k')}.
\end{equation*}
It follows from (1.6.1) that $r : \BU_q^- \to \BU_q^-\otimes \BU_q^-$
induces a homomorphism 
$r : {}_{\BA}\BU_q^- \to {}_{\BA}\BU_q^- \otimes_{\BA}{}_{\BA}\BU_q^-$. 
This implies that the maps ${}_ir, r_i : \BU_q^- \to \BU_q^-$ induce 
$\BA$-linear maps ${}_ir, r_i :{}_{\BA}\BU_q^- \to {}_{\BA}\BU_q^-$. 

\para{1.7.}
For $i \in I$ and any $t \ge 0$, we consider the operator 
\begin{equation*}
\tag{1.7.1}
\Pi_{i,t} = \sum_{s \ge 0}(-1)^sq_i^{-s(s-1)/2}f_i^{(s)}({}_ir)^{s+t} : \BU_q^- \to \BU_q^-. 
\end{equation*} 
The following result is known by [L2, Lemma 16.1.2] (originally due to 
Kashiwara [K, 3.2]). For the orthogonality relations, see (1.9.2) below.
For $i \in I$, set $\Ker {}_ir = \BU_q^-[i]$. 
Then $\Ker r_i = *(\BU_q^-[i])$ which we denote by ${}^*\BU_q^-[i]$. 
Note that the statements for ${}^*\BU_q^-[i]$ follows from that for 
$\BU_q^-[i]$ by applying the $*$-operation. 
\begin{lem}  
For $i \in I$, the followings holds. 
\begin{enumerate}
\item \ $\BU_q^- = \bigoplus_{n \ge 0}f_i^{(n)}\BU_q^-[i] 
          = \bigoplus_{n \ge 0}{}^*\BU_q^-[i]f_i^{(n)}$ as vector spaces.
The direct summands $f_i^{(n)}\BU_q^-[i]$ are mutually orthogonal with respect
$(\ ,\ )$. A similar result holds for ${}^*\BU_q^-[i]f_i^{(n)}$.
\item \ 
The map $x \mapsto f_i^{(n)}x$ gives an isomorphism 
$\BU_q^-[i] \isom f_i^{(n)}\BU_q^-[i]$, and similarly, the map 
$x \mapsto xf_i^{(n)}$ gives an isomorphism 
${}^*\BU_q^-[i] \isom {}^*\BU_q^-[i]f_i^{(n)}$. 
\item \ 
By (i) and (ii), $x \in \BU_q^-$ is written uniquely as 
$x = \sum_{n \ge 0}f_i^{(n)}x_n$ with $x_n \in \BU_q^-[i]$. Then 
\begin{equation*}
\tag{1.8.1}
x_n = q_i^{n(n-1)/2}\Pi_{i,n}(x).
\end{equation*} 
In particular, the projection $\BU_q^- \to f_i^{(n)}\BU_q^-[i]$ preserves 
the weights, namely, if $x \in (\BU_q^-)_{\nu}$, then 
$f_i^{(n)}x_n \in (\BU_q^-)_{\nu}$ for any $n \ge 0$. 
\end{enumerate}
\end{lem}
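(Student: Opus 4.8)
The plan is to prove the lemma directly from the way the operator ${}_ir$ interacts with left multiplication by $f_i$, which is in essence Kashiwara's original argument; everything reduces to a single commutation formula together with two elementary $q$-identities. The key formula is
\begin{equation*}
{}_ir(f_i^{(s)}y) = q_i^{-2s}f_i^{(s)}\,{}_ir(y) + q_i^{-(s-1)}f_i^{(s-1)}y \qquad (s \ge 1),
\tag{$\ast$}
\end{equation*}
valid for homogeneous $y$. I would prove it by induction on $s$: the case $s=1$ is immediate from (1.3.6) (since $\weit f_i = -\a_i$ and ${}_ir(f_i)=1$, it reads ${}_ir(f_iy) = q_i^{-2}f_i\,{}_ir(y)+y$), and the step from $s-1$ to $s$ uses $f_if_i^{(s-1)} = [s]_{d_i}f_i^{(s)}$ together with the elementary identity $1 + q_i^{-s}[s-1]_{d_i} = q_i^{-(s-1)}[s]_{d_i}$.

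Granting $(\ast)$, the three assertions come out as follows. Write $u_k := ({}_ir)^k(x)$. Applying ${}_ir$ to $\Pi_{i,t}(x)$ term by term through $(\ast)$ produces a sum of two series; after reindexing one of them, the coefficient of $f_i^{(s)}u_{s+t+1}$ is $(-1)^sq_i^{-s(s+3)/2}$ in the first series and $-(-1)^sq_i^{-s(s+3)/2}$ in the second, so they cancel and ${}_ir(\Pi_{i,t}(x))=0$; hence $\Pi_{i,t}(x)\in\Ker{}_ir = \BU_q^-[i]$ for every $x$ and every $t\ge 0$. Now set $x_n := q_i^{n(n-1)/2}\Pi_{i,n}(x)\in\BU_q^-[i]$. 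Since $r$ respects the $Q_-$-grading, ${}_ir$ maps $(\BU_q^-)_\nu$ into $(\BU_q^-)_{\nu+\a_i}$ and in particular strictly decreases $|\weit|$, so $({}_ir)^k(x)=0$ for $k > |\weit x|$; thus the $x_n$ vanish for $n\gg 0$ and $\sum_n f_i^{(n)}x_n$ is a finite sum. Expanding it by means of $f_i^{(n)}f_i^{(s)} = \begin{bmatrix} n+s\\ n\end{bmatrix}_{d_i}f_i^{(n+s)}$ and collecting, for each $m\ge 0$, the coefficient of $f_i^{(m)}u_m$, one is left with the scalar $\sum_{s=0}^m(-1)^sq_i^{m(m-1)/2 - s(m-1)}\begin{bmatrix} m\\ s\end{bmatrix}_{d_i}$, which equals $\d_{m,0}$ by the finite $q$-binomial theorem; hence $\sum_n f_i^{(n)}x_n = x$. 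This proves the spanning half of (i) and exhibits the formula (1.8.1) for this choice of the $x_n$.

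It remains to show the sum in (i) is direct, which gives the uniqueness in (iii) and, as the one-summand case, the injectivity asserted in (ii). For $z\in\BU_q^-[i]$, iterating $(\ast)$ (with ${}_ir(z)=0$) gives $({}_ir)^l(f_i^{(k)}z) = c_{k,l}f_i^{(k-l)}z$ for $0\le l\le k$, where $c_{k,l}$ is a power of $q_i$, and $({}_ir)^l(f_i^{(k)}z)=0$ for $l>k$. If $\sum_n f_i^{(n)}x_n = 0$ with all $x_n\in\BU_q^-[i]$ and $N$ is the largest index with $x_N\ne 0$, then applying $({}_ir)^N$ annihilates every term with $n\ne N$ and leaves $c_{N,N}x_N$, forcing $x_N = 0$, a contradiction; so the sum is direct. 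Thus (i), (ii) and the uniqueness and formula in (iii) all hold. The statements about ${}^*\BU_q^-[i]$ and right multiplication by $f_i^{(n)}$ follow by applying the anti-automorphism $*$, using $r_i = *\circ{}_ir\circ*$ from (1.3.7), $(f_i^{(n)})^* = f_i^{(n)}$, and ${}^*\BU_q^-[i] = *(\BU_q^-[i])$. Finally, since ${}_ir$ carries $(\BU_q^-)_\nu$ into $(\BU_q^-)_{\nu+\a_i}$, the operator $\Pi_{i,n}$ carries $(\BU_q^-)_\nu$ into $(\BU_q^-)_{\nu + n\a_i}$, so $f_i^{(n)}x_n\in(\BU_q^-)_\nu$ whenever $x\in(\BU_q^-)_\nu$; this is the last assertion of (iii).

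The only genuine computation is the exponent bookkeeping: verifying the cancellation behind ${}_ir(\Pi_{i,t}(x))=0$, and the vanishing for $m\ge 1$ of $\sum_{s=0}^m(-1)^sq_i^{-s(m-1)}\begin{bmatrix} m\\ s\end{bmatrix}_{d_i}$, the latter being the finite $q$-binomial theorem evaluated at $1$. Both are routine once $(\ast)$ is in place, so the whole argument is short; alternatively one may simply quote [L2, Lemma 16.1.2] (originally [K, 3.2]).
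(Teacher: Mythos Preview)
Your proof is correct and is essentially Kashiwara's original argument; the paper itself does not give a proof but simply cites [L2, Lemma 16.1.2] (originally [K, 3.2]), as you note in your final sentence. Your reconstruction via the commutation formula $(\ast)$, the cancellation ${}_ir(\Pi_{i,t}(x))=0$, and the $q$-binomial identity is exactly the content of that reference, so there is nothing to add.
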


\para{1.9.}
For any subspace $Z$ of $\BU_q^-$, set ${}_{\BA}Z = Z \cap {}_{\BA}\BU_q^-$.
Take $x \in \BU_q^-$. then $x$ is written uniquely as 
$x = \sum_{n \ge 0}f_i^{(n)}x_n$ with $x_n \in \BU_q^-[i]$ by Lemma 1.8.
Since ${}_ir$ gives a map ${}_{\BA}\BU_q^- \to {}_{\BA}\BU_q^-$ by 1.6, 
$\Pi_{i,t}$ induces a map ${}_{\BA}\BU_q^- \to {}_{\BA}\BU_q^-$. 
Assume that $x \in {}_{\BA}\BU_q^-$.  Then by (1.8.1), $x_n \in {}_{\BA}\BU_q^-$, 
and so $f_i^{(n)}x_n \in {}_{\BA}\BU_q^-$.  
It follows that the decomposition in Lemma 1.8 (i) induces a direct sum decomposition
\begin{equation*}
\tag{1.9.1}
{}_{\BA}\BU_q^- = \bigoplus_{n \ge 0}{}_{\BA}(f_i^{(n)}\BU_q^-[i])
                = \bigoplus_{n \ge 0}f_i^{(n)}{}_{\BA}(\BU_q^-[i]),  
\end{equation*}
where ${}_{\BA}(f_i^{(n)}\BU_q^-[i]) = f_i^{(n)}{}_{\BA}\BU_q^-[i]$. 

\par
The following orthogonality relations hold 
for the decomposition in (1.9.1). For the proof, see [L2, Lemma 16.2.6], 
where the case $m = n$ is discussed. The case $m \ne n$ is also treated 
by a similar argument. 

\par\medskip\noindent
(1.9.2) \ Assume that $x, y \in {}_{\BA}\BU_q^-[i]$.  Then  
\begin{equation*}
(f_i^{(n)}x, f_i^{(m)}y) = \begin{cases}
                            c(x, y) \text{ with } c \in 1 + (q\BZ[[q]] \cap \BQ(q)), 
                              &\quad\text{ if $m = n$, } \\
                             0    &\quad\text{ if $m \ne n$.}
                           \end{cases}  
\end{equation*}
 
\para{1.10.}
Let $V$ be a $\BQ(q)$-subspace of $\BU_q^-$.
A basis $\CB$ of $V$ is said to be almost orthonormal if
\begin{equation*}
(b, b') \in \begin{cases}
               1 + q\BZ[[q]] \cap \BQ(q) &\quad\text{ if } b = b', \\
               q\BZ[[q]] \cap \BQ(q)     &\quad\text{ if } b \ne b'.
             \end{cases}
\end{equation*}

Recall that $\BA = \BZ[q,q\iv]$.  Let $\BA_0 = \BQ[[q]] \cap \BQ(q)$. 
Set
\begin{equation*}
\tag{1.10.1}
\SL_{\BZ}(\infty) = \{ x \in {}_{\BA}\BU_q^- \mid (x,x) \in \BA_0\}.
\end{equation*}
Then $\SL_{\BZ}(\infty)$ is a $\BZ[q]$-submodule of ${}_{\BA}\BU_q^-$. 
It is known that if $\CB$ is an $\BA$-basis of ${}_{\BA}\BU_q^-$, which is almost orthonormal, 
then $\CB$ gives a $\BZ[q]$-basis of $\SL_{\BZ}(\infty)$ by [L2, Lemma 16.2.5].  

\par
For a fixed $i \in I$, we consider the decomposition 
 ${}_{\BA}\BU_q^- = \bigoplus_{n \ge 0}f_i^{(n)}{}_{\BA}\BU_q^-[i]$ as in (1.9.1). 
Write $x = \sum_{n \ge 0}y_n = \sum_{n \ge 0}f_i^{(n)}x_n$, where 
$y_n = f_i^{(n)}x_n$ with $x_n \in {}_{\BA}\BU_q^-[i]$. 
The following result is known.

\begin{lem}[{[L2, Lemma 16.2.7]}]  
Let $x = \sum_{n \ge 0}y_n$ be as above. 
\begin{enumerate}
\item 
Assume that $x \in \SL_{\BZ}(\infty)$. Then each $x_n$ and $y_n$ is in $\SL_{\BZ}(\infty)$. 
If, in addition, $(x,x) \in 1 + q\BA_0$, then there exists $n_0 \ge 0$ such that
$(y_{n_0}, y_{n_0}), (x_{n_0}, x_{n_0}) \in 1 + q\BA_0$ and 
$(y_n, y_n), (x_n, x_n) \in q\BA_0$ for all $n \ne n_0$.  
\item 
Assume that $\CB$ is an $\BA$-basis of ${}_{\BA}\BU_q^-$, which is almost orthonormal. 
Then in the setup of (i), 
$y_{n_0} \equiv \pm b \mod q\SL_{\BZ}(\infty)$ for some $b \in \CB$, and 
$x_n \equiv 0 \mod q\SL_{\BZ}(\infty), y_n \equiv 0 \mod q\SL_{\BZ}(\infty)$ 
for all $n \ne n_0$. 
\end{enumerate}
\end{lem}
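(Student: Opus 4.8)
The plan is to reduce the whole statement to the structure theory already set up in Lemma 1.8 and the orthogonality relations (1.9.2), together with the elementary fact that an almost orthonormal basis is a $\BZ[q]$-basis of $\SL_{\BZ}(\infty)$. First I would record the key "leading term" principle: if $z\in\SL_{\BZ}(\infty)$ satisfies $(z,z)\in 1+q\BA_0$, then writing $z$ in the almost orthonormal basis $\CB$ as $z=\sum_{b\in\CB}c_b b$ with $c_b\in\BZ[q]$, the pairing $(z,z)=\sum_{b,b'}c_bc_{b'}(b,b')$ lies in $\sum_b c_b^2 + q\BA_0$ (using that the off-diagonal entries are in $q\BA_0$ and the diagonal entries are in $1+q\BA_0$). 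Hence $\sum_b c_b(0)^2=1$ in $\BZ$, which forces exactly one $c_b$ to have constant term $\pm1$ and all others to be in $q\BZ[q]$; equivalently $z\equiv\pm b\bmod q\SL_{\BZ}(\infty)$ for a unique $b\in\CB$. The same computation with $(z,z)\in q\BA_0$ gives $z\equiv 0\bmod q\SL_{\BZ}(\infty)$.

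For part (i): by (1.9.2), the decomposition $x=\sum_n f_i^{(n)}x_n$ is orthogonal, so $(x,x)=\sum_n(y_n,y_n)$, and each $(y_n,y_n)\in\BA_0$ because $(x,x)\in\BA_0$ and the summands have disjoint weight supports lying in the positive cone $q\BZ[[q]]\cap\QQ(q)$ is closed under the relevant operations — more precisely, $(f_i^{(n)}x_n,f_i^{(n)}x_n)\in(x_n,x_n)(1+q\BA_0)$ again by (1.9.2), so $(x_n,x_n)\in\BA_0$ as well, i.e. $x_n,y_n\in\SL_{\BZ}(\infty)$. Now assume in addition $(x,x)\in 1+q\BA_0$. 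Reducing $\sum_n(y_n,y_n)\equiv 1\bmod q\BA_0$ and using that each $(y_n,y_n)\in\BA_0$ has nonnegative constant term (it equals $(x_n,x_n)$ up to a factor in $1+q\BA_0$, and $(x_n,x_n)$ is a sum of squares-of-integers at $q=0$ once expanded in $\CB$), exactly one index $n_0$ can have $(y_{n_0},y_{n_0})\in 1+q\BA_0$ and all the others must satisfy $(y_n,y_n)\in q\BA_0$; the corresponding statement for $x_n$ follows from $(y_n,y_n)\in(x_n,x_n)(1+q\BA_0)$.

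For part (ii): apply the leading term principle of the first paragraph to $z=y_{n_0}\in\SL_{\BZ}(\infty)$, whose self-pairing is in $1+q\BA_0$, to get $y_{n_0}\equiv\pm b\bmod q\SL_{\BZ}(\infty)$ for a unique $b\in\CB$; and apply the $q\BA_0$-case to each $y_n$, $n\ne n_0$, to get $y_n\equiv 0\bmod q\SL_{\BZ}(\infty)$. Finally, since multiplication by $f_i^{(n)}$ is an isometry onto its image up to $1+q\BA_0$ and sends $q\SL_{\BZ}(\infty)$ into $q\SL_{\BZ}(\infty)$ by (1.9.2) and (1.10.1), the congruence $y_n=f_i^{(n)}x_n\equiv 0$ pulls back to $x_n\equiv 0\bmod q\SL_{\BZ}(\infty)$ for $n\ne n_0$.

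The step I expect to be the main obstacle is the bookkeeping in part (i): showing that the constant terms of the $(y_n,y_n)$ are genuinely nonnegative integers that sum to $1$, which is what pins down a \emph{single} $n_0$. This requires expanding $x_n$ in the basis $\CB$ and invoking that $\CB$ is a $\BZ[q]$-basis of $\SL_{\BZ}(\infty)$ (so the coefficients are in $\BZ[q]$, not just $\QQ(q)$), and then combining the orthogonality within $\CB$ with (1.9.2); once that is in place the rest is the standard Kashiwara-type leading-term argument.
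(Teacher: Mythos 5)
The paper itself gives no proof of this lemma (it is quoted from [L2, Lemma 16.2.7]), so your proposal has to stand on its own. Your overall strategy --- the orthogonal decomposition (1.9.2) combined with constant-term counting in the almost orthonormal basis --- is indeed the strategy behind Lusztig's proof, and your ``leading term principle'' and your treatment of part (ii) are correct. But there is a genuine gap at the first assertion of (i), namely that each $y_n$ (equivalently each $x_n$) lies in $\SL_{\BZ}(\infty)$. You assert $(y_n,y_n)\in\BA_0$ ``because $(x,x)\in\BA_0$ and the summands have disjoint weight supports\dots''; this is not an argument: all the $y_n$ have the same weight as $x$ (Lemma 1.8 (iii)), and the real issue is that the individual values $(y_n,y_n)\in\BZ((q))\cap\BQ(q)$ could a priori have poles at $q=0$ which cancel in the sum $\sum_n(y_n,y_n)=(x,x)$. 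Your proposed remedy --- expand $x_n$ in $\CB$ with coefficients in $\BZ[q]$, ``invoking that $\CB$ is a $\BZ[q]$-basis of $\SL_{\BZ}(\infty)$'' --- is circular, because membership of $x_n$ in $\SL_{\BZ}(\infty)$ is exactly what is being proved; at this point you only know $x_n\in{}_{\BA}\BU_q^-$ (from (1.8.1) and 1.6), so the coefficients lie only in $\BZ[q,q\iv]$.

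The missing step is a no-cancellation argument. Write $x_n=\sum_b c_b b$ with $c_b\in\BZ[q,q\iv]$ and let $t_n\ge 0$ be minimal with $q^{t_n}c_b\in\BZ[q]$ for all $b$; almost orthonormality gives $q^{2t_n}(x_n,x_n)\in\sum_b\bigl(q^{t_n}c_b\bigr)(0)^2+q\BZ[[q]]$, so if $t_n>0$ then $(x_n,x_n)$, hence $(y_n,y_n)\in(1+q\BZ[[q]])(x_n,x_n)$, has a pole at $q=0$ of order exactly $2t_n$ with \emph{positive} leading coefficient. Since these leading coefficients are positive for every $n$, the poles cannot cancel in $\sum_n(y_n,y_n)=(x,x)\in\BA_0$, which forces $t_n=0$ for all $n$; this gives (i), and it also gives the nonnegative-integer constant terms you need in order to isolate a unique $n_0$ from $\sum_n(y_n,y_n)\equiv 1$. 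Two smaller points: your argument uses the basis $\CB$ already in (i), whereas the lemma states (i) without it --- this is harmless if, as in [L2], one works under the standing hypothesis that such a basis exists (here guaranteed by Theorem 1.18), but it should be said; and for $n\ne n_0$ the congruence $x_n\equiv 0 \mod q\SL_{\BZ}(\infty)$ follows directly from $(x_n,x_n)\in q\BA_0$ by your own principle, so no ``pulling back'' along multiplication by $f_i^{(n)}$ is needed.
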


\para{1.12.}
For a fixed $i \in I$, we consider the direct sum  decomposition 
of ${}_{\BA}\BU_q^-$ as in (1.9.1). For each $x \in \BU_q^-$, let 
$\ve_i(x)$ be the largest integer $n$ such that $x \in f_i^{(n)}\BU_q$, and
$x_{[i;a]}$ the projection of $x$ on $f_i^{(a)}\BU_q^-[i]$ for $a \in \BN$. 
By Lemma 1.11, if $x \in \SL_{\BZ}(\infty)$, then
$x_{[i;a]} \in \SL_{\BZ}(\infty)$. 
Let $\CB$ be a basis of $\BU_q^-$. For $i \in I$ and $n \in \BN$,   
set $\CB_{i;n} = \{ b \in \CB \mid \ve_i(b) = n\}$. Thus we have a partition 
$\CB = \bigsqcup_{n \ge 0}\CB_{i;n}$. 
\par
We consider a basis $\bB$ of $\BU_q^-$ having the following properties;
\par\medskip\noindent
(C1) \ $\bB$ gives a $\BZ[q]$-basis of $\SL_{\BZ}(\infty)$, and an $\BA$-basis 
of ${}_{\BA}\BU_q^-$.  
\\
(C2) \ $\bB$ is bar-invariant, namely, $\ol b = b$ for any $b \in \bB$. 
\\
(C3) \ $\bB$ is almost orthonormal.
\\
(C4) \ For $\nu \in Q_-$, set $\bB_{\nu} = \bB \cap (\BU_q^-)_{\nu}$. 
Then we have a partition $\bB = \bigsqcup_{\nu \in Q_-}\bB_{\nu}$, where
$\bB_{\nu} = \{ 1\}$ if $\nu = 0$. 
\\
(C5) \ If $b \in \bB_{i;a}$ for $i \in I, a\ge 0$, then 
\begin{equation*}
\tag{1.12.1}
b \equiv b_{[i;a]} \mod q\SL_{\BZ}(\infty).
\end{equation*}
\\
(C6) \ $\bigcap_{i \in I}\bB_{i;0} = \{ 1 \}$.  
\\
(C7) \ 
Assume that $b \in \bB_{i;0}$. Then for any $a > 0$, 
there exists a unique element $b' \in \bB_{i, a}$ such that 
\begin{equation*}
\tag{1.12.2}
b' \equiv f_i^{(a)}b \mod f_i^{a+1}\BU_q^-.
\end{equation*}
The correspondence $b \mapsto b'$ gives a bijection 
$\pi_{i,a} : \bB_{i;0} \isom \bB_{i;a}$. 
\par\medskip
$\bB$ is called the {\it canonical basis} of $\BU_q^-$.  
The word {\it canonical} is justified by the following lemma. 

\begin{lem}  
The basis $\bB$ of $\BU_q^-$ is unique if it exists. 
\end{lem}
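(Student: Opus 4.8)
The plan is to prove uniqueness by induction on the weight $|\nu|$, using the axioms (C1)--(C7) to pin down $\bB_{\nu}$ uniquely inside $\SL_{\BZ}(\infty)$. Suppose $\bB$ and $\bB'$ both satisfy (C1)--(C7). The base case $\nu = 0$ is handled by (C4): $\bB_0 = \bB'_0 = \{1\}$. For the inductive step, fix $\nu \in Q_-$ with $|\nu| > 0$ and assume $\bB_{\mu} = \bB'_{\mu}$ for all $\mu$ with $|\mu| < |\nu|$. By (C6), every $b \in \bB_{\nu}$ lies in $\bB_{i;a}$ for some $i \in I$ with $a = \ve_i(b) > 0$ (since $\nu \ne 0$, $b \ne 1$, so by (C6) there is some $i$ with $\ve_i(b) \ge 1$). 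Then by (C7) applied in reverse, $b = \pi_{i,a}(b_0)$ for a unique $b_0 \in \bB_{i;0}$, and $b_0$ has weight $\nu + a\a_i$, which has strictly smaller absolute value $|\nu| - a < |\nu|$. So $b_0 \in \bB_{\nu + a\a_i} = \bB'_{\nu + a\a_i}$ by induction.

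First I would establish the key reduction: each $b \in \bB_{i;a}$ with $a > 0$ is characterized, within $\SL_{\BZ}(\infty)$, as the unique bar-invariant element of $f_i^{(a)}\bB_{i;0}$'s $\BZ[q]$-span that is congruent to $f_i^{(a)}b_0 \bmod f_i^{a+1}\BU_q^-$ for the appropriate $b_0$. More precisely, I would argue as follows. Given $b_0 \in \bB_{i;0}$, consider the element $f_i^{(a)}b_0 \in {}_{\BA}\BU_q^-$. By (C5), $b_0 \equiv (b_0)_{[i;0]} = b_0 \bmod q\SL_{\BZ}(\infty)$ trivially (since $\ve_i(b_0)=0$ means $b_0 \in \BU_q^-[i]$ up to lower terms; more carefully one uses Lemma 1.8 and the fact that $b_0 \in \bB_{i;0}$ forces $b_0 \in {}_{\BA}\BU_q^-[i] + q\SL_{\BZ}(\infty)$). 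Then $f_i^{(a)}b_0 \in \SL_{\BZ}(\infty)$ by Lemma 1.11, and by Lemma 1.11(ii) together with (C1), $f_i^{(a)}b_0 \equiv \pm b' \bmod q\SL_{\BZ}(\infty)$ for a unique $b' \in \bB$; by (C7) this $b'$ is exactly $\pi_{i,a}(b_0) \in \bB_{i;a}$, with the sign being $+$ because of (1.12.2). Since $b_0$ is determined by induction and $\pi_{i,a}$ is forced by its defining congruence (1.12.2), and since $b'$ is the unique element of $\bB$ in its coset mod $q\SL_{\BZ}(\infty)$, we conclude $b'$ depends only on $b_0$ and the axioms, not on the choice of $\bB$. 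Hence $\bB_{i;a} = \bB'_{i;a}$ for all $a > 0$, and taking the union over $i$ (noting every element of $\bB_{\nu}$ appears for at least one $i$) gives $\bB_{\nu} = \bB'_{\nu}$.

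The step I expect to be the main obstacle is making precise the claim that the congruence (1.12.2) together with bar-invariance (C2) and membership in $\SL_{\BZ}(\infty)$ uniquely determines $b' = \pi_{i,a}(b_0)$ as an actual element of $\BU_q^-$, independent of which valid basis it came from — in other words, showing that two candidates $b', b'' \in \SL_{\BZ}(\infty)$ that are both bar-invariant and both $\equiv f_i^{(a)}b_0 \bmod f_i^{a+1}\BU_q^-$ must coincide. Here I would use the standard argument: $b' - b''$ is bar-invariant, lies in $q\SL_{\BZ}(\infty)$ (it is $\equiv 0 \bmod q\SL_{\BZ}(\infty)$ because both reduce to the same element mod $q$, using (C5)/Lemma 1.11 to translate the $f_i^{a+1}\BU_q^-$-congruence into a mod-$q$ congruence in the lattice), and a bar-invariant element of $q\SL_{\BZ}(\infty) \cap {}_{\BA}\BU_q^-$ must be zero — this last fact follows because bar-invariance forces the element into $\SL_{\BZ}(\infty) \cap \ol{\SL_{\BZ}(\infty)} \cap ({}_{\BA}\BU_q^-)$, and combined with lying in $q\SL_{\BZ}(\infty)$ and the almost-orthonormality (C3) giving a non-degenerate pairing, it must vanish. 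I would need to be careful that all the relevant lattices and the bilinear form behave well under the decomposition (1.9.1), but all the necessary tools are Lemma 1.11 and (1.9.2), which are already available.
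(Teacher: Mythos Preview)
Your proposal is correct and follows essentially the same route as the paper's proof: induction on $|\nu|$, using (C6) to locate some $i$ with $\ve_i(b) = a > 0$, pulling back via (C7) to a lower-weight element $b_0$ lying in both bases by induction, pushing forward via (C7) in the other basis, and then concluding via (C5) that the two candidates agree modulo $q\SL_{\BZ}(\infty)$, hence are equal by bar-invariance. One small point: your justification that a bar-invariant element of $q\SL_{\BZ}(\infty)$ vanishes is more roundabout than necessary --- you invoke (C3) and the pairing, but the paper simply expands $b - b_1$ in the $\BZ[q]$-basis $\bB$ of $\SL_{\BZ}(\infty)$ (using (C1)), obtains coefficients in $q\BZ[q]$, and notes that bar-invariance of $b - b_1$ together with (C2) forces each coefficient to be bar-invariant in $q\BZ[q]$, hence zero.
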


\begin{proof}
Let $\bB$ be a canonical basis of $\BU_q^-$. 
Assume that $\bB' = \bigsqcup_{\nu}\bB'_{\nu}$ is a basis 
satisfying similar properties. 
We show, by induction on $|\nu|$, that $\bB_{\nu} = \bB'_{\nu}$.
If $\nu = 0$, this holds by (C4). 
Assume that $\nu \ne 0$ and that $\bB_{\nu'} = \bB'_{\nu'}$ for 
any $|\nu'| < |\nu|$. Take $b \in \bB'_{\nu}$. By (C6), there exists $i \in I$
such that 
$\ve_i(b) = a > 0$.  Then there exists $b' \in \bB'_{i;0}$
such that $b \equiv f_i^{(a)}b'$ mod $f_i^{a+1}\BU_q^-$ 
by (C7).  
$b_{[i;a]} = (f_i^{(a)}b')_{[i;a]}$,  and 
$b \equiv (f_i^{(a)}b')_{[i;a]} \mod q\SL_{\BZ}(\infty)$ by (C5). 
By induction, $b' \in \bB_{i;0}$. Thus by applying (1.12.2) for 
$\bB$, there exists $b_1 \in \bB_{\nu}$ such that 
$b_1 \equiv f_i^{(a)}b' \mod f_i^{a+1}\BU_q^-$. 
By a similar discussion as above, we see that 
$b_1 \equiv (f_i^{(a)}b')_{[i;a]} \mod q\SL_{\BZ}(\infty)$. 
Thus $b - b_1$ is written as $b - b_1 = \sum_{b' \in \bB}a_{b'}b'$ with 
$a_{b'} \in q\BZ[q]$. 
Since $b - b_1$ is bar-invariant, this implies that $a_{b'} = 0$ for any $b'$, 
and so $b = b_1 \in \bB_{\nu}$. 
Hence  $\bB'_{\nu} \subset \bB_{\nu}$.  The opposite inclusion is obtained 
similarly, and we have $\bB'_{\nu} = \bB_{\nu}$.  The lemma is proved.  
\end{proof}    

\para{1.14.}
Using the decomposition $\BU_q^- = \bigoplus_{n \ge 0}f_i^{(n)}\BU_q^-[i]$
in Lemma 1.8, we write $x = \sum_{n \ge 0}f_i^{(n)}x_n$ with $x_n \in \BU_q^-[i]$.
We define $E'_i, F'_i : \BU_q^- \to \BU_q^-$ by 
\begin{equation*}
F'_i(x) = \sum_{n \ge 0}f_i^{(n+1)}x_n, \qquad 
E'_i(x) = \sum_{n \ge 1}f_i^{(n-1)}x_n.
\end{equation*}
$E'_i, F'_i$ are called Kashiwara operators on $\BU_q^-$. 

\para{1.15.}
Assuming the existence of the canonical basis $\bB$ for $\BU_q^-$, we define a map 
$F_i : \bB_{i;a} \to \bB_{i; a+1}$  
by 
\begin{equation*}
\xymatrix{
  F_i : \bB_{i; a} \ar[rr]^{\pi_{i;a}\iv} &   &  
            \bB_{i;0} \ar[rr]^{\pi_{i;a+1}} &  &  \bB_{i; a+1}. 
}
\end{equation*}
We define $E_i : \bB_{i;a} \to \bB_{i;a-1}$
as the inverse of $F_i$, if $a > 0$, and by $E_i(b) = 0$ if $a = 0$. 
The maps $E_i,F_i : \bB \to \bB \cup \{ 0\}$ are called  
Kashiwara operators on $\bB$.  
\par
Note that $\bB$ is an adapted basis of $\BU_q^-$ in the sense of 
[L2, 16.3.1].  Hence by [L2, Lemma 16.3.3, Prop. 16.3.5], we have the following.
\begin{prop} 
Let $b \in \bB_{i; a}$ for $i \in I$ and $a \ge 0$.  
\begin{enumerate}
\item \ $F'_i(b) \in \bB_{i; a+1} +  q\SL_{\BZ}(\infty)$. 
$E'_i(b) \in \bB_{i; a-1} +  q\SL_{\BZ}(\infty)$ if $a > 0$
and $E_i'(b) \equiv 0 \mod q\SL_{\BZ}(\infty)$ if $a = 0$.
\item \ 
$F_i'(b) \equiv F_i(b) \mod q\SL_{\BZ}(\infty)$.   
$E_i'(b) \equiv E_i(b) \mod q\SL_{\BZ}(\infty)$ if $a \ge 1$.  
\end{enumerate}
\end{prop}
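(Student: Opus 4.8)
The plan is to prove the following sharper statement, from which (i) and (ii) both follow at once: for $b \in \bB_{i;a}$ one has
\begin{equation*}
F_i'(b) \equiv F_i(b) \mod q\SL_{\BZ}(\infty), \qquad E_i'(b) \equiv E_i(b) \mod q\SL_{\BZ}(\infty)\ \ (a \ge 1),
\end{equation*}
and $E_i'(b) \equiv 0 \mod q\SL_{\BZ}(\infty)$ when $a = 0$. Indeed $F_i(b) \in \bB_{i;a+1}$ and, for $a \ge 1$, $E_i(b) \in \bB_{i;a-1}$ by the construction in 1.16, so these congruences immediately give the membership assertions of (i) and coincide with (ii).

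First I would pin down $b$ in the decomposition of Lemma 1.8: write $b = \sum_{n \ge 0}f_i^{(n)}b_n$ with $b_n \in \BU_q^-[i]$. Since $\ve_i(b) = a$ means precisely that $b \in f_i^a\BU_q^-$ but $b \notin f_i^{a+1}\BU_q^-$, we get $b_n = 0$ for $n < a$, $b_a \ne 0$, and $b_{[i;a]} = f_i^{(a)}b_a$. From the definitions of $E_i', F_i'$ in 1.14, and since $b_a \in \BU_q^-[i]$, one has $F_i'(f_i^{(a)}b_a) = f_i^{(a+1)}b_a$ and $E_i'(f_i^{(a)}b_a) = f_i^{(a-1)}b_a$ for $a \ge 1$, while $E_i'(b_{[i;0]}) = 0$ because $b_{[i;0]} = b_0 \in \BU_q^-[i]$. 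By $\BQ(q)$-linearity, therefore,
\begin{equation*}
F_i'(b) = f_i^{(a+1)}b_a + F_i'\bigl(b - b_{[i;a]}\bigr), \qquad E_i'(b) = f_i^{(a-1)}b_a + E_i'\bigl(b - b_{[i;a]}\bigr)\ \ (a \ge 1),
\end{equation*}
and $E_i'(b) = E_i'(b - b_{[i;0]})$ when $a = 0$. Now $b \in \SL_{\BZ}(\infty)$ by (C1) and $b_{[i;a]} \in \SL_{\BZ}(\infty)$ by Lemma 1.11, so (C5) forces $b - b_{[i;a]} \in q\SL_{\BZ}(\infty)$; since $E_i'$ and $F_i'$ preserve $\SL_{\BZ}(\infty)$ by Theorem 1.15 and commute with multiplication by $q$, the correction terms lie in $q\SL_{\BZ}(\infty)$. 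Hence, modulo $q\SL_{\BZ}(\infty)$,
\begin{equation*}
F_i'(b) \equiv f_i^{(a+1)}b_a, \qquad E_i'(b) \equiv f_i^{(a-1)}b_a\ \ (a \ge 1), \qquad E_i'(b) \equiv 0\ \ (a = 0).
\end{equation*}

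It remains to identify $f_i^{(a\pm 1)}b_a$ with $F_i(b)$, resp. $E_i(b)$, modulo $q\SL_{\BZ}(\infty)$. Put $b^{\flat} = \pi_{i,a}\iv(b) \in \bB_{i;0}$ and write $b^{\flat} = \sum_{m \ge 0}f_i^{(m)}b^{\flat}_m$ with $b^{\flat}_m \in \BU_q^-[i]$. By (C7), $b \equiv f_i^{(a)}b^{\flat} \mod f_i^{a+1}\BU_q^-$; comparing the components in $f_i^{(a)}\BU_q^-[i]$ and using the injectivity of $x \mapsto f_i^{(a)}x$ from Lemma 1.8(ii) yields $b_a = b^{\flat}_0$. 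By 1.16, $F_i(b) = \pi_{i,a+1}(b^{\flat})$, so (C7) applied to $b^{\flat}$ gives $F_i(b) \equiv f_i^{(a+1)}b^{\flat} \mod f_i^{a+2}\BU_q^-$; comparing the $f_i^{(a+1)}\BU_q^-[i]$-components gives $\bigl(F_i(b)\bigr)_{[i;a+1]} = f_i^{(a+1)}b^{\flat}_0 = f_i^{(a+1)}b_a$, whence $F_i(b) \equiv f_i^{(a+1)}b_a \mod q\SL_{\BZ}(\infty)$ by (C5). The identical computation with $\pi_{i,a-1}$ in place of $\pi_{i,a+1}$ gives, for $a \ge 1$, $E_i(b) = \pi_{i,a-1}(b^{\flat})$ and $E_i(b) \equiv f_i^{(a-1)}b_a \mod q\SL_{\BZ}(\infty)$. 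Combining with the previous paragraph proves the claimed congruences.

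I do not expect a serious obstacle here: the argument is essentially bookkeeping in the decomposition $\BU_q^- = \bigoplus_n f_i^{(n)}\BU_q^-[i]$ of Lemma 1.8, combined with (C5) and (C7). The two places requiring care are (a) the appeal to Theorem 1.15 (equivalently, to the orthogonality (1.9.2) and Lemma 1.11) to guarantee that $E_i', F_i'$ carry $q\SL_{\BZ}(\infty)$ into itself, and (b) the case $a = 0$ for $E_i'$, where one must observe that $b_{[i;0]} \in \BU_q^-[i]$ is annihilated by $E_i'$.
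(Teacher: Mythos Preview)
Your argument is correct. The paper itself does not prove the proposition: it simply observes that a canonical basis is an adapted basis in the sense of [L2, 16.3.1] and invokes [L2, Lemma~16.3.3, Prop.~16.3.5]. Your direct proof --- using (C5), (C7), the decomposition of Lemma~1.8, and the fact (Theorem~1.15) that $E_i', F_i'$ preserve $\SL_{\BZ}(\infty)$ --- is a clean and self-contained way to deduce the result from the axioms (C1)--(C7).

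One point worth flagging, since it matters later in the paper: your argument relies on (C5), whereas Lusztig's cited results in [L2, \S16.3] are proved for adapted bases and do not require (C5). This distinction becomes relevant in the proof of Theorem~1.18, where the paper invokes Proposition~1.17 for Lusztig's geometrically constructed basis precisely in order to establish (C5). With your proof of Proposition~1.17, that step would be circular. So your argument is perfectly fine for the proposition as stated (where $\bB$ is assumed to satisfy all of (C1)--(C7)), but the paper is implicitly relying on the stronger form that holds for adapted bases before (C5) is known.
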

The following result was proved by Lusztig.

\begin{thm}  
Assume that $X$ is symmetric.  Then $\BU_q^-$ has the canonical basis $\bB$. 
The basis $\bB$ has the property that $*(\bB) = \bB$.
\end{thm}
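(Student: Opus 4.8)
The plan is to take for $\bB$ the canonical basis of $\BU_q^-$ constructed geometrically by Lusztig in [L1] (which applies since $X$ is symmetric), and to verify that it satisfies the axioms (C1)--(C7) of 1.12 together with $*(\bB) = \bB$. Thus Theorem 1.18 is, for us, a dictionary between [L1] and the axiomatic set-up of this section rather than a genuinely new result; the real content is supplied by [L1].

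First I would dispose of the formal axioms. Bar-invariance (C2) and almost orthonormality (C3) are part of the content of [L1]. Granting (C3), property (C1) follows: $\bB$ is an $\BA$-basis of ${}_{\BA}\BU_q^-$ by [L1], and an almost orthonormal $\BA$-basis of ${}_{\BA}\BU_q^-$ is automatically a $\BZ[q]$-basis of $\SL_{\BZ}(\infty)$ by [L2, Lemma 16.2.5] (cf. 1.10). Property (C4) holds because the elements of $\bB$ are homogeneous and $(\BU_q^-)_0 = \BQ(q)1$, so $\bB_0 = \{1\}$. For (C6): if $b \in \bigcap_{i \in I}\bB_{i;0}$ then $\ve_i(b) = 0$ for every $i$, so the class of $b$ in $\SL_{\BZ}(\infty)/q\SL_{\BZ}(\infty)$ is killed by all the operators $E_i'$ of Theorem 1.15; since $\CB(\infty)$ has a unique such element, namely the class of $1$, and using (C4), we conclude $b = 1$.

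The substance lies in (C5) and (C7), and this is where I expect the real work; the key input is that $\bB$ is an \emph{adapted} basis in the sense of [L2, 16.3.1]. Then [L2, Lemma 16.3.3] gives precisely $b \equiv b_{[i;a]} \bmod q\SL_{\BZ}(\infty)$ for $b \in \bB_{i;a}$, which is (C5). For (C7) one combines the uniqueness of the decomposition $\BU_q^- = \bigoplus_n f_i^{(n)}\BU_q^-[i]$ of Lemma 1.8 with [L2, Prop. 16.3.5], which asserts that $F_i'$ carries $\bB_{i;a}$ bijectively onto $\bB_{i;a+1}$ modulo $q\SL_{\BZ}(\infty)$; a bar-invariance argument of the type used in the proof of Lemma 1.13 then upgrades the congruence $b' \equiv f_i^{(a)}b$ from $\bmod\, q\SL_{\BZ}(\infty)$ to $\bmod\, f_i^{a+1}\BU_q^-$, isolates the unique $b' \in \bB_{i;a}$, and hence yields the bijections $\pi_{i,a}$. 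Finally, $*(\bB) = \bB$ is a theorem of Lusztig [L1]; alternatively, since $*$ is a $\BQ(q)$-linear anti-automorphism fixing each $f_i^{(n)}$, it preserves ${}_{\BA}\BU_q^-$, the bar-involution and the bilinear form, hence $\SL_{\BZ}(\infty)$ and the almost-orthonormality condition, so $*(\bB)$ is again a basis of the same kind and must coincide with $\bB$ once one checks the sign is $+1$ (via a weight comparison in $\SL_{\BZ}(\infty)/q\SL_{\BZ}(\infty)$). The genuine obstacle, here as throughout this circle of ideas, is the existence of a single basis that is at once integral, bar-invariant and almost orthonormal; this is exactly what the geometric construction of [L1] provides, and once it is granted, (C1)--(C7) and $*$-invariance are consequences of the bookkeeping recalled in Section 1.
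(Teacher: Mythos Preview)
Your overall plan coincides with the paper's: take Lusztig's geometric basis from [L1], [L2] and check (C1)--(C7). The allocation of work, however, differs.

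The paper takes (C2), (C3), (C4), (C6), (C7) directly from [L1], [L2], obtains (C1) via [L2, Lemma 16.2.5] as you do, and reserves its only real argument for (C5). Your derivations of (C6) and (C7) are therefore unnecessary and, as written, somewhat circular: Proposition 1.17 and [L2, Prop.~16.3.5] are stated for \emph{adapted} bases in the sense of [L2, 16.3.1], and being adapted is essentially (C7), so invoking them to establish (C7) presupposes what you want. Likewise your route to (C6) via Theorem 1.15 needs $E_i'(b)\equiv 0\pmod{q\SL_{\BZ}(\infty)}$ when $\ve_i(b)=0$, which is again Proposition 1.17(i) and hence rests on the adapted property.

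For (C5), the paper does not cite [L2, Lemma 16.3.3] as giving the statement directly; that lemma (together with Prop.~16.3.5) is what feeds into Proposition 1.17. Instead the paper argues as follows: for $b\in\bB_{i;a}$, write $b=\sum_{n\ge a}f_i^{(n)}x_n$ with $x_n\in\BU_q^-[i]$; since $(b,b)\in 1+q\BA_0$, Lemma 1.11(i) gives a unique $a_0$ with $f_i^{(n)}x_n\in q\SL_{\BZ}(\infty)$ for all $n\ne a_0$; Proposition 1.17 yields $(E_i')^a b\equiv E_i^a b\in\bB_{i;0}$ and hence $(E_i')^{a+1}b\equiv 0\pmod{q\SL_{\BZ}(\infty)}$, which forces $a_0=a$ and gives $b\equiv f_i^{(a)}x_a=b_{[i;a]}$. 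The paper notes this is also in [L3, Prop.~1.8]. The $*$-stability is taken from the geometric construction in [L1], [L2], as you say.
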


\begin{proof}
In [L1], [L2], an $\BA$-basis $\bB$ of ${}_{\BA}\BU_q^-$ was constructed 
by using the geometry of quivers associated to $X$, 
which satisfies the properties (C2), (C3), (C4), and (C6), (C7). 
By [L2, Lemma 16.2.5], $\bB$ is a $\BZ[q]$-basis of $\SL_{\BZ}(\infty)$, 
hence (C1) holds.
We show that $\bB$ satisfies (C5). Take $b \in \bB_{i;a}$, and write it 
as $b = \sum_{n \ge a}f_i^{(n)}x_n$ with $x_n \in \BU_q^-[i]$. 
Since $(b,b) \in 1 + q\BA_0$, by Lemma 1.11 (i), there exists a unique 
$a_0$ such that $f_i^{(n)}x_n \in q\SL_{\BZ}(\infty)$ for all $n \ne a_0$.  
By Proposition 1.16, $E_i'^{a}b \equiv E_i^{a}b \mod q\SL_{\BZ}(\infty)$. 
Since $E_i^{a}b \in \bB_{i:0}$, $E_i'^{a+1}b \in q\SL_{\BZ}(\infty)$.
This implies that $a_0 = a$.  Hence 
$b \equiv f_i^{(a)}x_a = b_{[i;a]} \mod q\SL_{\BZ}(\infty)$, and (C5) holds.   
(This property is also proved in [L3, Prop. 1.8].)
The property $*(\bB) = \bB$ was also proved in [L1], [L2], by using the geometric 
method. 
\end{proof}

\para{1.18.}
We define a subset $\wt\CB$ of $\BU_q^-$ by 
\begin{equation*}
\tag{1.18.1}
\wt\CB = \{ x \in {}_{\BA}\BU_q^- \mid  \ol x = x, (x, x) \in 1 + q\BZ[[q]]\}. 
\end{equation*}  
If there exists a basis $\CB$ of $\BU_q^-$ such that 
$\wt\CB = \CB \sqcup -\CB$, then $\wt\CB$ is called the canonical signed basis.  
\par
In the case where $\BU_q^-$ has the canonical basis $\bB$, then we have 
\begin{equation*}
\tag{1.18.2}
\wt\CB = \bB \sqcup -\bB, 
\end{equation*}
hence $\wt\CB$ is the canonical signed basis. 
\par\medskip
In fact, since $\bB$ is almost orthonormal, $\bB \sqcup -\bB \subset \wt\CB$. 
Conversely, take $x \in \wt\CB$. By [L2, Lemma 16.2.5], there exists $b \in \bB$ such that
$x \equiv \pm b \mod q\SL_{\BZ}(\infty)$. Since $x$  and $b$ are bar-invariant, 
this implies that $x = \pm b \in \bB \sqcup -\bB$.  Hence (1.18.2) holds. 
\par
The following result is immediate from (C6).

\begin{prop}  
Assume that the canonical basis $\bB$ exists for $\BU_q^-$.
Then for any $b \in \bB$, there exists a sequence $i_1, \dots, i_N \in I$, and 
$c_1, c_2, \dots, c_N \in \BZ_{> 0}$ such that 
$b = F_{i_1}^{c_1}F_{i_2}^{c_2}\cdots F_{i_N}^{c_{i_N}}1$.
\end{prop}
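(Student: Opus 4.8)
The plan is to argue by induction on $|\weit b|$, using property (C6) of the canonical basis together with the Kashiwara operators $E_i, F_i : \bB \to \bB \cup \{ 0\}$ from 1.16. The idea is to peel off a maximal power of a suitable $f_i$ repeatedly, each step strictly decreasing $|\weit b|$.

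For the base case $\weit b = 0$ we have $b = 1$ by (C4), and the assertion holds with the empty product ($N = 0$). Assume now $\weit b = \nu \ne 0$, and that the statement holds for every element of $\bB$ of weight $\nu'$ with $|\nu'| < |\nu|$. Since $b \ne 1$ and $\bigcap_{i \in I}\bB_{i;0} = \{1\}$ by (C6), there is an $i \in I$ with $b \notin \bB_{i;0}$, i.e. $\ve_i(b) = a > 0$, so $b \in \bB_{i;a}$. Put $b_0 := E_i^a(b)$. Since $E_i$ is the inverse of $F_i$ on the pieces $\bB_{i;c}$, the iterate $E_i^a$ is well defined on $\bB_{i;a}$, lands in $\bB_{i;0}$, and satisfies $F_i^a(b_0) = b$ (indeed $E_i^a|_{\bB_{i;a}}$ and $F_i^a|_{\bB_{i;0}}$ unwind to $\pi_{i,a}^{-1}$ and $\pi_{i,a}$ respectively, with the convention $\pi_{i,0} = \id$). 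To apply the induction hypothesis to $b_0 \in \bB$ I need $|\weit b_0| < |\nu|$. Writing $b = \sum_{n \ge a} f_i^{(n)}x_n$ with $x_n \in \BU_q^-[i]$ and $x_a \ne 0$ as provided by Lemma 1.8, homogeneity of $b$ forces $\weit x_a = \nu + a\a_i$, which must lie in $Q_-$; hence the coefficient of $\a_i$ in $\nu$ is $\le -a$, so $\weit b_0 = \nu + a\a_i \in Q_-$ and $|\weit b_0| = |\nu| - a < |\nu|$. By induction $b_0 = F_{i_2}^{c_2}\cdots F_{i_N}^{c_N}1$ with all $c_k \in \BZ_{>0}$, and therefore $b = F_i^a(b_0) = F_i^a F_{i_2}^{c_2}\cdots F_{i_N}^{c_N}1$ with $a \in \BZ_{>0}$, which is of the asserted form.

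The argument is short, consistent with the statement being ``immediate from (C6)''. The one step that requires a moment of care, and hence the closest thing to an obstacle, is the compatibility between the $f_i$-divisibility index $\ve_i$ and the weight grading: one must check that $\ve_i(b) = a > 0$ genuinely forces $|\weit(E_i^a b)| < |\weit b|$. This rests on $b$ being homogeneous and on the fact that the projection $\BU_q^- \to f_i^{(a)}\BU_q^-[i]$ preserves weights (Lemma 1.8(iii)); granting this, the strict inequality $|\nu| - a < |\nu|$ is automatic from $a > 0$. One should also confirm in passing that iterating $E_i$ does not reach $0$ prematurely, which holds because $b \in \bB_{i;a}$ makes $E_i(b), \dots, E_i^a(b)$ successively defined with $E_i^a(b) \in \bB_{i;0}$.
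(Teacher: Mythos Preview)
Your proof is correct and is precisely the argument the paper has in mind: the paper records only that the result is ``immediate from (C6)'' and gives no further details, and your induction on $|\weit b|$ using (C6) to find $i$ with $\ve_i(b)=a>0$, then pulling back via $E_i^a=\pi_{i,a}^{-1}$, is exactly how one unpacks that remark. The weight-drop check you flag is indeed the only point needing a word of justification, and your handling of it via Lemma~1.8(iii) (or equivalently via (1.12.2) and the fact that $b_0\in\bB$ forces $\weit b_0\in Q_-$) is fine.
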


\para{1.20.}
We review the theory of the modified quantized enveloping algebra $\dot \BU_q$ 
(see [L2, Chap.23]).  Here we use a slightly different formulation.  
Consider the $\BQ(q)$-vector space 
$\dot \BU_q = \bigoplus_{\la \in P}\BU_q^-\otimes \BU_q^+\otimes \BQ(q)a_{\la}$,
where $P$ is the weight lattice for $\BU_q$ and $\BQ(q)a_{\la}$ is a vector 
space generated by a vector $a_{\la}$ corresponding 
to $\la \in P$. We define a product on $\dot\BU_q$ by the following rule;
\begin{equation*}
\tag{1.20.1}
\begin{aligned}
a_{\la}a_{\mu} =\d_{\la\mu}&a_{\la}, \quad e_ia_{\la} = a_{\la + \a_i}e_i,
                  \quad f_ia_{\la} = a_{\la -\a_i}f_i, \\ 
                 &(e_if_j - f_je_i)a_{\la} = \d_{ij}[\,\lp h_i,\la\rp\, ]_{d_i}a_{\la}.
\end{aligned}
\end{equation*}
Since $1 = \sum_{\la \in P}a_{\la}$ is not contained in $\dot \BU_q$, 
the identity element does not exist in $\dot \BU_q$.  
\par
A $\dot\BU_q$-module $M$ is said to be unital ([L2, 23.1.4]) if 
\par\medskip
(i) \ for any $m \in M$, we have $a_{\la}m = 0$ for all but finitely many $\la \in P$, 
\par
(ii) \ for any $m \in M$, we have $\sum_{\la \in P}a_{\la}m = m$. 
\par\medskip 
Giving a unital $\dot\BU_q$-module is equivalent to giving 
a $\BU_q$-module with weight space decomposition. 

\para{1.21.}
We define an $\BA$-submodule ${}_{\BA}\dot\BU_q$ of $\dot\BU_q$ by  
\begin{equation*}
\tag{1.21.1}
{}_{\BA}\dot\BU_q = \bigoplus_{\la \in P}
              {}_{\BA}\BU_q^-\otimes {}_{\BA}\BU_q^+ \otimes \BA a_{\la}.
\end{equation*}
\par
It is known by ([L2, Lemma 23.2.2]) that ${}_{\BA}\dot\BU_q$
is an $\BA$-subalgebra of $\dot\BU_q$ generated by $e_i^{(n)}a_{\la}, f_i^{(n)}a_{\la}$
for various $i \in I, n \ge 0, \la \in P$. 
\par
Let $M(\la)$ be the Verma module for $\BU_q$ 
associated to $\la \in P$, with highest weight 
vector $v_{\la}$.  
Then $M(\la) \simeq \BU_q^-v_{\la} \simeq \BU_q^-$ as $\BU_q^-$-modules.
Let ${}_{\BA}M(\la) = {}_{\BA}\BU_q^-v_{\la}$ be the $\BA$-submodule of 
$M(\la)$. Note that $M(\la)$ is a unital $\dot\BU_q$-module, and it can be
verified ([L2, 23.3.2]) that ${}_{\BA}M(\la)$ is stable under the actions of 
$f_i^{(n)}a_{\la'}, e_i^{(n)}a_{\la'}$.  Hence ${}_{\BA}M(\la)$ is 
an ${}_{\BA}\dot\BU_q$-submodule of $M(\la)$ generated by $v_{\la}$. 

\para{1.22.}
Let $R$ be a commutative ring with 1, and with an invertible element $\Bq \in R$.
We fix a ring homomorphism $\f : \BA \to R$ such that $\f(q^n) = \Bq^n$ 
for any $n \in \BZ$. We regard $R$ as an $\BA$-algebra via $\f$, and 
consider
\begin{equation*}
{}_R\BU_q^{\pm} = R\otimes_{\BA}\BU_q^{\pm}, \qquad 
{}_R\dot\BU_q = R \otimes_{\BA}\dot\BU_q.
\end{equation*}

We have a direct sum decomposition 
${}_R\BU_q^{\pm} = \bigoplus_{\nu \in Q}({}_R\BU_q^{\pm})_{\nu}$, and 
${}_R\dot\BU_q$ is expressed as 
\begin{equation*}
{}_R\dot\BU_q = \bigoplus_{\la \in P}{}_R\BU_q^-\otimes {}_R\BU_q^+\otimes Ra_{\la}
\end{equation*}

\par
Unital ${}_R\dot\BU_q$-modules are defined similarly to 1.20.  Assume that $M$ is 
a unital ${}_R\dot\BU_q$-module.  Then $M = \bigoplus_{\la \in P}M_{\la}$, 
where $M_{\la} = a_{\la}M$, and $M_{\la}$ becomes an $R$-module. 
Following [L2, 31.3], we introduce the notion of highest weight modules.
Let $M$ be a unital ${}_R\dot\BU_q$-module. $M$ is called a highest weight
module with highest weight $\la \in P$ if there exists a vector $m \in M_{\la}$
such that 
\par\medskip
\begin{enumerate}
\item \ $e_i^{(n)}m = 0$ for any $i \in I$ and $n > 0$.
\item \ $M = {}_R\BU_q^-m$.
\item \ $M_{\la}$ is a free $R$-module of rank 1 with generator $m$.
\end{enumerate}

\par
A unital ${}_R\dot \BU_q$-module $M$ is said to be integrable 
if for any $m \in M$ and any $i \in I$, there exists $n_0 \ge 1$ such that
$e_i^{(n)}m = f_i^{(n)}m = 0$ for all $n \ge n_0$. 
 
\para{1.23.}
For $\la \in P$, let ${}_{\BA}M(\la)$ be the Verma module defined in 1.21.
Set ${}_RM(\la) = R\otimes_{\BA}{}_{\BA}M(\la)$.  Then ${}_RM(\la)$ is a highest 
weight ${}_R\dot\BU_q$-module with highest weight $\la$. 
Since ${}_{\BA}M(\la) \simeq {}_{\BA}\BU_q^-$ as $\BA$-modules, we have
${}_RM(\la) \simeq {}_R\BU_q^-$ as $R$-modules. 
\par
Let $P^+$ be the set of dominant weights $\la$  in $P$, namely, $\la$ such that
$\lp h_i, \la \rp \ge 0$ for any $i \in I$.   
For $\la \in P^+$, let $L(\la)$ be the integrable highest weight $\BU_q$-module
with highest weight $\la$ and highest weight vector $v_{\la}$  
As an $\BU_q^-$-module, it is written as 
\begin{equation*}
\tag{1.23.1}
L(\la) = \BU_q^-/\sum_{i, n > \, \lp h_i, \la\rp}\BU_q^-f_i^{(n)}.
\end{equation*} 
Let ${}_{\BA}L(\la)$ be the ${}_{\BA}\BU_q^-$-submodule of $L(\la)$ 
generated by $v_{\la}$. 
By using the commuting relation in [L2, 31.1.6], one can check 
that the actions of $f_i^{(n')}a_{\la'}, e_i^{(n')}a_{\la'}$ 
($i \in I, n' \in \BN, \la' \in P$) on ${}_{\BA}\BU_q^-$
preserve $\sum_{i, n > \, \lp h_i,\la\rp}{}_{\BA}\BU_q^-f_i^{(n)}$. 
Hence ${}_{\BA}L(\la)$ is a unital ${}_{\BA}\dot\BU_q$-module.
We define a unital ${}_R\dot \BU_q$-module ${}_RL(\la)$ by 
${}_RL(\la) = R \otimes_{\BA}{}_{\BA}L(\la)$.  
Then ${}_RL(\la)$ is a highest weight
module with highest weight $\la$, and is integrable. 
${}_RL(\la)$ is a quotient of ${}_RM(\la)$. 
\par
Assume that $R$ is any field such that $\Bq_i = \f(q_i)$ is not a root of
unity. It is proved by Tanisaki [T, Thm. 5.5, Thm. 5.6] that 
the Weyl-Kac type character formula 
holds for ${}_R\dot\BU_q$, and in particular, any integrable highest weight 
${}_R\dot\BU_q$-module is irreducible.  By applying this to our situation, we have
\par\medskip\noindent
(1.23.2) \ Let $R$ be as above.  Then the integrable highest weight module
${}_RL(\la)$ is irreducible. 

\para{1.24.}
Return to the case where $R$ is a commutative ring. 
We consider the map $r : {}_{\BA}\BU_q^- \to {}_{\BA}\BU_q^- \otimes_{\BA}{}_{\BA}\BU_q^-$,
and ${}_ir, r_i : {}_{\BA}\BU_q^- \to {}_{\BA}\BU_q^-$ as in 1.6. 
By tensoring with $R$, one can define maps 
$r: {}_R\BU_q^- \to {}_R\BU_q^-\otimes_R{}_R\BU_q^-$ and 
${}_ir, r_i : {}_R\BU_q^- \to {}_R\BU_q^-$.  Then ${}_ir, r_i$ also satisfy 
similar properties as in (1.3.6). The following result is an ${}_R\dot\BU_q$-version of 
[L2, Prop. 3.1.6], and can be proved in a similar way by using (1.3.6).

\begin{lem}  
Assume that $\Bq_i = \f(q_i) \ne \pm 1$.  
Then for any $x \in {}_R\BU_q^-, i \in I$, and $\la \in P$, 
the following relation holds in ${}_R\dot \BU_q$. 
\begin{equation*}
\tag{1.25.1}
(e_ix - xe_i)a_{\la} = 
\frac{\Bq_i^{\lp h_i,\la\rp }r_i(x) - \Bq_i^{-\lp h_i,\la + \weit x + \a_i\rp }{}_ir(x)}
                             {\Bq_i - \Bq_i\iv}a_{\la}.
\end{equation*}
\end{lem}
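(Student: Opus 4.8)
The plan is to mimic the proof of the analogous result for the full quantized enveloping algebra ([L2, Prop. 3.1.6]), adapted to the modified algebra ${}_R\dot\BU_q$ and to a general base ring $R$. Since both sides of (1.26.1) are $R$-linear in $x$, and ${}_R\BU_q^-$ is generated as an algebra by the $f_i$ (over $R$), it suffices to check two things: that the asserted identity holds for $x = 1$ and for $x = f_j$ (base cases), and that the collection of $x \in {}_R\BU_q^-$ for which (1.26.1) holds is closed under left multiplication by each $f_j$ (inductive step). The base case $x = 1$ is exactly the defining relation $(e_if_j - f_je_i)a_\la = \d_{ij}[\lp h_i,\la\rp]_{d_i}a_\la$ rewritten via ${}_ir(1) = r_i(1) = 0$ — wait, for $x=1$ both ${}_ir(1)$ and $r_i(1)$ vanish, so the right side is $0$, which matches $e_i a_\la - a_\la e_i$ only after moving $a_\la$ past $e_i$; more carefully, for $x=1$ the left side is $(e_i - e_i)a_\la = 0$ and the right side is $0$, so this case is trivial, and the genuine content of the commutation relation enters at $x = f_i$, where ${}_ir(f_i) = r_i(f_i) = 1$ and one must recover $\d_{ij}[\lp h_i,\la\rp]_{d_i}a_\la$ from the defining relation (1.21.1) together with the scalar $(\Bq_i^{\lp h_i,\la\rp} - \Bq_i^{-\lp h_i,\la+\a_i\rp})/(\Bq_i - \Bq_i\iv)$, which one checks equals $[\lp h_i,\la\rp]_{d_i}$ by direct computation after noting $\lp h_i,\a_i\rp = 2$.

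For the inductive step, suppose (1.26.1) holds for $x$; I want to deduce it for $f_j x$. Compute $(e_i(f_jx) - (f_jx)e_i)a_\la$ by inserting a term: $e_if_jx a_\la = f_j e_i x a_\la + (e_if_j - f_je_i)xa_\la$. The second summand is handled, after moving the relevant $a_{\la'}$ appropriately, by the $x=1$-type relation applied on the left: $(e_if_j - f_je_i)a_{\mu} = \d_{ij}[\lp h_i,\mu\rp]_{d_i}a_{\mu}$ for the appropriate weight $\mu = \la + \weit x$. The first summand is rewritten using the induction hypothesis for $x$ and then multiplying on the left by $f_j$, keeping careful track of how $f_j$ interacts with $a_{\la'}$ in ${}_R\dot\BU_q$ (namely $f_j a_{\la'} = a_{\la' - \a_j} f_j$). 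Finally one assembles the result and compares with the formula (1.26.1) for $f_jx$, using the inductive formulas (1.3.6) for ${}_ir(f_jx)$ and $r_i(f_jx)$ — namely ${}_ir(f_jx) = \d_{ij}q_i^{(\weit x,\a_i)}\cdot(\text{lower})$ — sorry, more precisely ${}_ir(f_jx) = q^{(\a_j,\a_i)}f_j\,{}_ir(x) + \d_{ij}x$ and $r_i(f_jx) = f_j r_i(x) + q^{(\weit x,\a_i)}\d_{ij}x$, so that the powers of $\Bq_i$ appearing in the commuting relation after the shift of weights line up exactly. The whole computation is parallel to the one in [L2] but carried out in the $\dot\BU_q$-picture, where the weight bookkeeping is cleaner because $a_\la$'s record the weights explicitly.

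The main obstacle I anticipate is purely the weight bookkeeping: one must be scrupulous about which $a_{\la'}$ sits where at each stage, since $e_i, f_i$ shift weights in opposite directions and the exponents of $\Bq_i$ in (1.26.1) involve $\la$, $\weit x$, and $\a_i$ in a specific combination. Getting the exponent $-\lp h_i, \la + \weit x + \a_i\rp$ in the ${}_ir(x)$-term to come out correctly from the inductive step is the delicate point; it is precisely this term that forces the shift by $\a_i$ and must be matched against the $q^{(\weit x,\a_i)}$ appearing in (1.3.6) for $r_i(f_jx)$ together with the weight shift induced by passing $a_{\la'}$ through $f_j$. Since the hypothesis $\Bq_i = \f(q_i) \ne \pm 1$ guarantees $\Bq_i - \Bq_i\iv$ is invertible in $R$, the denominators in (1.26.1) make sense, and no further subtlety about $R$ is needed; the argument is otherwise formally identical over any such $R$, so nothing beyond (1.3.6), the defining relations (1.21.1), and linearity is required.
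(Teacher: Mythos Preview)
Your approach matches the paper's, which does not give a proof but simply says the result ``can be proved in a similar way [as {[L2, Prop.~3.1.6]}] by using (1.3.6)''. The induction on the length of a word in the $f_j$'s, with the derivation formulas (1.3.6) driving the inductive step, is exactly what is intended.

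One point to tighten: you assert that ${}_R\BU_q^-$ is generated as an $R$-algebra by the $f_j$, but this can fail---the hypothesis $\Bq_i \ne \pm 1$ does not force $[n]^!_{d_j}$ to be a unit in $R$, so $f_j^{(n)}$ need not lie in the subalgebra generated by the $f_j$. The fix is immediate: either run your induction using left multiplication by divided powers $f_j^{(n)}$ (the commutator $[e_i,f_j^{(n)}]a_\la$ and the values ${}_ir(f_j^{(n)})$, $r_i(f_j^{(n)})$ from (1.6.1) are equally tractable), or---cleanest---verify the identity over $\BQ(q)$ via [L2, Prop.~3.1.6], observe that after clearing the denominator $q_i - q_i\iv$ both sides lie in ${}_{\BA}\dot\BU_q$ for $x \in {}_{\BA}\BU_q^-$, then base-change to $R$ and divide by the unit $\Bq_i - \Bq_i\iv$.

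Two small computational slips: for $x = f_i$ one has $\weit x = -\a_i$, so $\la + \weit x + \a_i = \la$ and the ${}_ir$-exponent in (1.26.1) is already $-\lp h_i,\la\rp$, with no extra $\a_i$ to cancel via $\lp h_i,\a_i\rp = 2$; and in (1.3.6) the weight of $f_j$ is $-\a_j$, so the correct formula is ${}_ir(f_jx) = q^{-(\a_j,\a_i)}f_j\,{}_ir(x) + \d_{ij}x$, not $q^{(\a_j,\a_i)}$.
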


By making use of Lemma 1.25, we prove the following formula, which 
is a generalization of Lemma 1.5. 

\begin{prop}  
Assume that $R$ is a field such that $\Bq_i$ is not a root of unity. 
Consider maps ${}_ir, r_i : {}_R\BU_q^- \to {}_R\BU_q^-$.
Then we have
\begin{equation*}
\tag{1.26.1}
\bigcap_{i \in I}\Ker {}_ir = R\cdot 1, \qquad \bigcap_{i \in I}\Ker r_i = R \cdot 1.
\end{equation*}
\end{prop}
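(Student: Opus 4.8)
The plan is to deduce Proposition 1.27 from Lemma 1.5 together with Lemma 1.26. By the $*$-symmetry encoded in (1.3.7), the two assertions in (1.27.1) are equivalent, so I will treat only $\bigcap_{i}\Ker {}_ir = R\cdot 1$. Since the maps ${}_ir$ on ${}_R\BU_q^-$ are obtained by base change from the $\BA$-linear maps ${}_ir$ on ${}_{\BA}\BU_q^-$, and these preserve the weight grading, it suffices to work weight by weight: fix $\nu \in Q_-$ with $\nu \ne 0$ and show that an element $x \in ({}_R\BU_q^-)_\nu$ with ${}_ir(x) = 0$ for all $i$ must vanish.

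First I would set up the relevant module. Choose $\la \in P$ with $\lp h_i,\la\rp \ge 0$ for all $i \in I$ (possible after enlarging $P$ if necessary, or by hypothesis on $P$), and with $\lp h_i,\la\rp$ large — in fact it suffices to take $\lp h_i,\la\rp \ge |\nu|$ for every $i$. Consider the highest weight module ${}_RL(\la)$ of 1.24, with highest weight vector $v$. Because the defining relations truncate $f_i^{(n)}$ only for $n > \lp h_i,\la\rp$, the canonical surjection ${}_R\BU_q^- \to {}_RL(\la)$ is injective in weights $\mu$ with $|\mu| \le |\nu|$; in particular $x \mapsto xv$ is injective on $({}_R\BU_q^-)_\nu$. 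So it is enough to show $xv = 0$ in ${}_RL(\la)$. Now apply Lemma 1.26: for the highest weight vector we have $e_i^{(n)}v = 0$ for all $n>0$, hence $e_i v = 0$, and the relation (1.26.1) evaluated on $v$ gives
\begin{equation*}
e_i (x v) = (e_i x - x e_i)a_{\la}\,v
= \frac{\Bq_i^{\lp h_i,\la\rp} r_i(x) - \Bq_i^{-\lp h_i,\la+\weit x+\a_i\rp}\,{}_ir(x)}{\Bq_i - \Bq_i\iv}\,v.
\end{equation*}
Since ${}_ir(x) = 0$ by assumption, this reads $e_i(xv) = c_i\,r_i(x)v$ for a nonzero scalar $c_i$ (here $\Bq_i \ne \pm 1$ is used, together with $R$ a field so that $\Bq_i - \Bq_i\iv$ is invertible).

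This is not quite ${}_ir$ again, so the next step is to pass between ${}_ir$ and $r_i$. Using (1.3.7), $r_i = *\circ {}_ir\circ *$, and the fact that $*$ is an anti-automorphism preserving each weight space, the hypothesis ${}_ir(x)=0$ for all $i$ is equivalent to ${}_ir(x^*)=0$ for all $i$ — but that is not directly $r_i(x)=0$. I expect the cleanest route is instead to run the argument symmetrically: by the $*$-equivalence it suffices to handle one of the two statements, and for $\bigcap_i \Ker {}_ir$ I would use the \emph{left} action, i.e. realize ${}_R\BU_q^-$ inside a highest weight module via $x \mapsto xv$ and exploit that ${}_ir$ governs the commutator $[e_i,\,-]$ from the appropriate side. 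Concretely, from $e_i(xv) = c_i\, r_i(x)v$ and an induction on $|\nu|$: if $x \ne 0$ then $xv\ne 0$, and since $e_i$ kills the highest weight line and lowers $|\mu|$, one checks $e_i(xv)$ cannot vanish for all $i$ unless $xv$ is a multiple of $v$, i.e. $\nu = 0$, contradiction — provided one knows $\bigcap_i \Ker(e_i|_{{}_RL(\la)}) = R\cdot v$ in the relevant weight range. That last fact is the downward analogue of Lemma 1.5 / Lemma 1.6 and follows from the same style of computation, or from the nondegeneracy of the form (1.3.3) after base change under the field hypothesis.

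The main obstacle is the bookkeeping in this last step: over a general field $R$ with $\Bq$ a root of unity, one loses the comfortable $\BQ(q)$-arguments (nondegeneracy of $(\ ,\ )$, semisimplicity of integrable modules), so I cannot simply cite Lemma 1.5. The safe workaround — and the one I would actually write — is to avoid modules of highest weight entirely and argue purely algebraically: take $x$ homogeneous of weight $\nu\ne 0$ with ${}_ir(x)=0$ for all $i$, write $x = f_j y + (\text{lower }f_j\text{-order terms})$ using Lemma 1.8 for some $j$ with $\ve_j(x)>0$ (such $j$ exists since $\nu\ne 0$, by Lemma 1.4(ii) applied over $\BA$ and base-changed — note the statement $\bigcap_j\ker {}_jr$ being trivial over $\BA$ is Lemma 1.5, and we are trying to recover it over $R$), and then use the product formula (1.3.6) for ${}_ir(f_j y)$ to descend. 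The one genuinely new input needed is that base change $\BA \to R$ is flat enough along the direct sum decomposition (1.9.1) that the leading $f_j^{(\ve_j)}$-component of $x$ is again nonzero in ${}_R\BU_q^-[j]$; this is exactly (1.9.1) after tensoring with $R$, which is a free-module decomposition, so it survives. Granting that, the induction on $|\nu|$ closes: the leading component lies in a strictly smaller weight space, is annihilated by all ${}_ir$ with $i\ne j$ (by (1.3.6), since ${}_ir(f_j\,\cdot)$ involves ${}_ir$ of the smaller-weight factor), and by a short computation also by ${}_jr$, so by induction it is a scalar — forcing $\nu = \ve_j\a_j$ and then a direct check gives $x \in R\cdot f_j^{(\ve_j)}$, contradicting ${}_jr(x)=0$ unless $\nu=0$.
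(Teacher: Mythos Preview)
Your first approach is on the right track and correctly isolates the real obstacle: after applying Lemma~1.26 with ${}_ir(x)=0$, you are left with $e_i(xv_\la) = c_i\,r_i(x)v_\la$, and you need $r_i(x)=0$ as well. The missing idea is that ${}_ir$ and $r_j$ commute for \emph{all} $i,j\in I$, not only when $(\a_i,\a_j)=0$. This is proved by the same induction on $|\weit x|$ as in Lemma~1.4, using both product formulas in (1.3.6); since ${}_ir$ differentiates ``from the left'' and $r_j$ ``from the right'', the cross terms match up without any orthogonality hypothesis on $\a_i,\a_j$. Once you have this, $r_j(x)$ lies in $\bigcap_i\Ker {}_ir$ and has weight $\nu+\a_j$ of strictly smaller $|\cdot|$, so by the induction hypothesis $r_j(x)=0$ for every $j$. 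Now both terms on the right of (1.26.1) vanish, $(e_ix-xe_i)a_\la=0$ in ${}_R\dot\BU_q$, and the remainder of your first approach goes through cleanly: $e_i(xv_\la)=0$ for all $i$, hence $e_i^{(n)}(xv_\la)=0$ for all $n\ge 1$ (using that $R$ is a field), so $xv_\la$ is a singular vector of weight $\ne\la$ in the highest weight module ${}_RL(\la)$, forcing $xv_\la=0$; choosing $\la$ large then gives $x=0$. This is precisely the paper's argument.

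Your second ``purely algebraic'' approach does not work as written. You claim the leading component $x_a\in {}_R\BU_q^-[j]$ from the decomposition (1.9.1) is annihilated by every ${}_ir$ with $i\ne j$. From ${}_ir(x)=0$ and ${}_ir(f_j^{(n)}x_n)=q^{-n(\a_j,\a_i)}f_j^{(n)}\,{}_ir(x_n)$ (for $i\ne j$) you get $\sum_n q^{-n(\a_j,\a_i)} f_j^{(n)}\,{}_ir(x_n)=0$, but this yields ${}_ir(x_n)=0$ term by term only if ${}_ir(x_n)\in {}_R\BU_q^-[j]$, i.e.\ if ${}_ir$ preserves $\Ker{}_jr$. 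That requires ${}_ir$ and ${}_jr$ to commute, which Lemma~1.4(ii) provides only under $(\a_i,\a_j)=0$; in general it fails (already for $\Fs\Fl_3$: with $y=f_1f_2-q\iv f_2f_1\in\BU_q^-[1]$ one computes ${}_2r(y)=(q-q\iv)f_1\notin\BU_q^-[1]$). So the induction in your second approach does not close.
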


\begin{proof}
We show the first formula. The second one  
follows from  it by applying $*$.
It is enough to show that $\bigcap_{i \in I}\Ker {}_ir \cap ({}_R\BU_q^-)_{\nu} = 0$
for any weight $\nu \ne 0$.  We prove this by induction on $|\nu|$.   
Take $x \in \bigcap_{i \in I}\Ker {}_ir \cap ({}_R\BU_q^-)_{\nu}$. 
If $|\nu| = 1$, then $x = cf_i$ for some $i$,  and $c \in R$.  
In this case, ${}_ir(x) = c = 0$, and $x = 0$.  
Assume that $|\nu| \ge 2$. By a similar discussion as in the proof of 
Lemma 1.4, it is shown  
that ${}_ir$ and $r_j$ commute each other for any $i,j$.  
Thus $r_j(x) \in \bigcap_{i \in I}\Ker {}_ir$. 
Hence by induction, $r_j(x) = 0$ for any $j$. 
We apply Lemma 1.25 for this $x$. Then $(e_ix - xe_i)a_{\la} = 0$ 
in ${}_R\dot\BU_q$ for any $i \in I, \la \in P$. We consider the highest
weight ${}_R\dot\BU_q$-module $M = {}_RL(\la)$ with highest weight vector $v_{\la}$ 
as in (1.23.1).  
Then 
\begin{equation*}
\tag{1.26.2}
(e_ix - xe_i)a_{\la}v_{\la} = (e_ix - xe_i)v_{\la} = e_ixv_{\la} = 0.
\end{equation*}

$xv_{\la}$ is contained in the weight space $({}_RL(\la))_{\la'}$ with 
weight $\la' \ne \la$. and $e_i(xv_{\la}) = 0$ for any $i$.  Since $[n]_i \ne 0$ in $R$, 
we also have $e_i^{(n)}(xv_{\la}) = 0$ for any $n \ge 1$. 
Thus $xv_{\la}$ generates a proper ${}_R\dot\BU_q$-submodule if $xv_{\la} \ne 0$.  
Since ${}_RL(\la)$ is irreducible by (1.23.2),  we have $xv_{\la} = 0$. 
This is true for any $\la \in P$ such that $\lp h_i, \la \rp \ge 0$ for any $i \in I$.  
But by the expression of ${}_RL(\la)$ in (1.23.1), if $x \ne 0$, one can find
some $\la \in P$ such that $xv_{\la} \ne 0$. This implies that $x = 0$. 
The proposition is proved.
\end{proof}

\section{ The diagram automorphism }

\para{2.1.}
Let $X = (I, (\ ,\ ))$ be a Cartan datum, and let $\s: I \to I$ 
be a permutation such that $(\a_{\s(i)}, \a_{\s(j)}) = (\a_i, \a_j)$
for any $i,j \in I$.  
Such a $\s$ is called a diagram automorphism on $X$. 
Let $\ul I$ be the set of $\s$-orbits in $I$. 
$\s$ is called admissible if for any orbit $\eta \in \ul I$, 
$(\a_i, \a_j) = 0$ for any $i \ne j \in \eta$. Hereafter, we assume 
that $\s$ is an admissible diagram automorphism on $X$. 
\par
Let $\ul I$ be as above. We define a symmetric bilinear form  $(\ ,\ )_1$ on 
$\bigoplus_{\eta \in \ul I}\BQ \a_{\eta}$ by 
\begin{equation*}
\tag{2.1.1}
(\a_{\eta}, \a_{\eta'})_1 = \begin{cases}
                            (\a_i, \a_i) |\eta|,  \qquad (i \in \eta)
                             &\quad\text{ if } \eta = \eta', \\
                            \sum_{i \in \eta, j \in \eta'}(\a_i,\a_j)
                             &\quad\text{ if }  \eta \ne \eta'.
                         \end{cases} 
\end{equation*}
Then $(\a_{\eta}, \a_{\eta})_1 \in 2\BZ_{> 0}$ for each $\eta \in \ul I$. 
Since 
\begin{align*}
\sum_{i \in \eta, j \in \eta'}(\a_i, \a_j)  
   = |\eta|\sum_{j \in \eta'}(\a_i, \a_j) \qquad \text{ for $i \in \eta$ }, 
\end{align*}
we have, for a fixed $i \in \eta$,  
\begin{align*}
\tag{2.1.2}
a_{\eta \eta'} = \frac{2(\a_{\eta}, \a_{\eta'})_1}{(\a_{\eta}, \a_{\eta})_1}
      = \sum_{j \in \eta'}\frac{2(\a_i,\a_j)}{(\a_i,\a_i)} 
      = \sum_{j \in \eta'}a_{ij} \in \BZ_{\le 0}.
\end{align*}
Hence $(\ ,\ )_1$ satisfies (i), (ii) in 1.1, and   
$\ul X = (\ul I, (\ ,\ )_1)$ is a Cartan datum.
The Cartan datum $\ul X$ is called the Cartan datum induced from $(X, \s)$. 
\par
Note that a symmetric Cartan datum $X$ is naturally identified with 
a finite graph $\vG$ with multiple edges (for a given $X$, $I$ is the set 
of vertices in $\vG$, 
$i$ and $j$ are joined by $-(\a_i, \a_j)$ edges for $i \ne j \in I$).
The diagram automorphism on $X$ corresponds to the graph automorphism.  
In the case where $X$ is symmetric, the definition of $(\ ,\ )_1$ 
coincides with the one given in [L2, 14.1.1], and in [SZ2, 2.1]. 

\para{2.2.}
It is known by [L2, Prop.14.1.2] that, for a given Cartan datum 
$X$, there exists a symmetric Cartan datum $\wt X$, and 
an admissible diagram automorphism $\s$ on $\wt X$ such that the  
Cartan datum induced from $(\wt X, \s)$ is isomorphic to $X$.
$\wt X$ is constructed as follows;  let $X = (I, (\ ,\ ))$ be a Cartan datum, 
where $(\ ,\ )$ is the bilinear form  on the vector space 
$\bigoplus_{i \in I}\BQ\a_i$. Recall that $d_i = (\a_i,\a_i)/2$ for each $i \in I$.
We consider a set $D_i$ of cardinality $d_i$, and let $\s : D_i \to D_i$ be 
a cyclic permutation on $D_i$. We consider a set $D = \bigsqcup_{i \in I}D_i$, 
and a permutation $\s$ on $D$ induced from $\s : D_i \to D_i$.
Let $\ul D$ be the set of $\s$-orbits on $D$.  Then $\ul D$ is naturally 
in bijection with $I$.  We shall define a graph with vertex set $D$.      
Fix $i \ne j \in I$. Since $(\a_i, \a_j)$ is divisible by $d_i$ and $d_j$, 
it is divisible by the smallest common multiple $l(d_i, d_j)$ of $d_i$ and $d_j$.
Choose $x \in D_i, y \in D_j$, and join $x$ and $y$ by $c$-fold edges, where 
$c = -(\a_i,\a_j)/l(d_i,d_j)$. If $(x',y') \in D_i \times D_j$ is 
$(\s \times \s)$-conjugate to $(x, y)$, we join $x'$ and $y'$ by the same number of 
edges. Hence the number of edges joining $D_i$ and $D_j$ is equal to $-(\a_i,\a_j)$, 
which is independent from the choice of $x, y$. 
We define a graph $\vG = (D, \Om)$, where $\Om$ is the set of edges defined  above.
Let $\wt X = (D, (\ ,\ )_0)$ be the Cartan datum  obtained from 
$\vG$, where 
$(\ ,\ )_0$ is the bilinear form on $\bigoplus_{x \in D}\BQ \a_x$ such that 
\begin{equation*}
(\a_x, \a_y)_0 = \begin{cases}
                        2  &\quad\text{ if $x = y$},  \\
                        -\sharp\{ \text{edges joining $x$ and $y$ } \}
                           &\quad\text{ if $x \ne y$.}
                      \end{cases}
\end{equation*}
Then $\wt X$ is a Cartan datum of symmetric type, 
and $\s: D \to D$ gives an admissible 
 diagram automorphism on $\wt X$. The Cartan datum 
$(\ul D, (\ ,\ )_1)$ induced from $(\wt X, \s)$ is isomorphic to $X$. 

\para{2.3.}
Let $X = (I, (\ ,\ ))$ be a Cartan datum, 
with an admissible automorphism $\s$.
Assume that the order of $\s$ is $n = am$, where $a$ and $m$ are prime each other.
Then $\s$ is decomposed as $\s = \tau\tau'$, where $\tau, \tau'$ are powers of $\s$, 
and the order of $\tau$ (resp. $\tau'$) is equal to
$a$ (resp. $m$).  
Let $\ul I$ be the set of $\s$-orbits
in $I$ as before. Let $I^{\tau}$ be the set of $\tau$-orbits in $I$. 
Then $\s$ permutes $I^{\tau}$, whose action we denote by $\ul\s$.  
This action coincides with the induced action of $\tau'$ on $I^{\tau}$, and so
the order of $\ul\s$ is equal to $m$. The set of $\ul\s$-orbits in $I^{\tau}$
is naturally identified with $\ul I$. 
\par
The Cartan datum $\ul X = (\ul I, (\ ,\ )_1)$ induced from $(X, \s)$ 
is defined as in (2.1.1), 
namely   
\begin{equation*}
\tag{2.3.1}
(\a_{\eta}, \a_{\eta'})_1 = \begin{cases}
                               (\a_i,\a_i)|\eta|, \quad (i \in \eta)  
                                 &\quad\text{ if $\eta = \eta'$, } \\
                               \sum_{ i \in \eta, j \in \eta'} (\a_i, \a_j)
                                        &\quad\text{ if $\eta \ne \eta'$. }
                           \end{cases} 
\end{equation*}
\par
Let $X^{\tau} = (I^{\tau}, (\ ,\ )_{\tau})$ be the Cartan datum induced from 
$(X, \tau)$.  Then for $\g, \g' \in I^{\tau}$ we have, again by (2.1.1),   
\begin{equation*}
\tag{2.3.2}
(\a_{\g}, \a_{\g'})_{\tau} = \begin{cases}
                               (\a_i,\a_i)|\g|, \quad (i \in \g)  
                           &\quad\text{ if $\g = \g'$, } \\
                               \sum_{i \in \g, j \in \g'}(\a_i, \a_j)  
                              &\quad\text{ if $\g \ne \g'$. }
                           \end{cases} 
\end{equation*}
\par
On the other hand, we 
consider the Cartan datum $X^{\tau}$ and the admissible automorphism 
$\ul\s$ on $X^{\tau}$.  We want to show that the Cartan datum  
$\ul X^{\tau} = (\ul I^{\tau}, (\ ,\ )_2)$ induced from $(X^{\tau}, \ul\s)$ 
is  canonically isomorphic to $\ul X$.  Note that $\ul I^{\tau}$ is in bijection 
with $\ul I$, which we denote by $\wt\eta \lra \eta$.  
We consider the symmetric bilinear form $(\ ,\ )_2$ on the vector space 
$\bigoplus_{\wt\eta \in \ul I^{\tau}}\BQ \a_{\wt \eta}$. 
By (2.1.1), $(\ ,\ )_2$ is defined by 

\begin{equation*}
\tag{2.3.3}
(\a_{\wt\eta}, \a_{\wt\eta'})_2 = 
             \begin{cases}
       (\a_{\g}, \a_{\g})_{\tau}|\wt\eta|,  \qquad (\g \in \wt\eta)
              &\quad\text{ if $\wt\eta = \wt\eta'$, }  \\
             \sum_{\g \in \wt\eta, \g' \in \wt\eta'}(\a_{\g}, \a_{\g'})_{\tau}
              &\quad\text{ if $\wt\eta \ne \wt\eta'$.}
             \end{cases}
\end{equation*}
Let $\wt\eta$ be the $\ul\s$-orbit of $\g \in I^{\tau}$. 
Then $|\eta| = |\wt\eta||\g|$.  Since 
$(\a_{\g}, \a_{\g})_{\tau} = (\a_i,\a_i)|\g|$ for $i \in \g$, we obtain
\begin{equation*}
(\a_{\wt\eta}, \a_{\wt\eta})_2 = (\a_{\eta}, \a_{\eta})_1.
\end{equation*}  
Moreover, for $\wt\eta \ne \wt\eta'$, 
\begin{align*}
(\a_{\wt\eta}, \a_{\wt\eta'})_2 = \sum_{\g \in \wt\eta, \g'\in \wt\eta'}
                    \sum_{i \in \g, j \in \g'}(\a_i, \a_j)
             = \sum_{i \in \eta, j \in \eta'}(\a_i, \a_j) = (\a_{\eta}, \a_{\eta'})_1.
\end{align*}
Thus under the identification $\bigoplus_{\eta \in \ul I}\BQ \a_{\eta}
      \simeq \bigoplus_{\wt\eta \in \ul I^{\tau}}\BQ\a_{\wt\eta}$,
$(\ ,\ )_1$ coincides with $(\ ,\ )_2$.   
Summing up the above discussion, we have

\begin{lem}  
The Cartan datum $(\ul I^{\tau}, (\ ,\ )_2)$ induced from $(X^{\tau}, \ul\s)$ 
is isomorphic to the Cartan datum $\ul X = (\ul I, (\ ,\ )_1)$ induced from 
$(X, \s)$. 
\end{lem}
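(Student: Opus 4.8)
The plan is to unwind both sides of the claimed isomorphism using only the defining formula (2.1.1), which has already been established, applied first to $(X,\tau)$ and then to $(X^\tau,\ul\s)$, and to verify that the composite description of the bilinear form agrees entrywise with the one obtained by applying (2.1.1) directly to $(X,\s)$. The only combinatorial input needed is the orbit-counting identity: if $\wt\eta \in \ul I^\tau$ is the $\ul\s$-orbit of $\g \in I^\tau$, and $\eta \in \ul I$ is the corresponding $\s$-orbit in $I$, then $\eta$ is the disjoint union of the $\tau$-orbits lying in $\wt\eta$, and these all have the same cardinality $|\g|$ (since $\tau = \s^m$ is normalized by $\s$, so $\s$ permutes the $\tau$-orbits and preserves their size); hence $|\eta| = |\wt\eta|\,|\g|$. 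This is exactly the observation recorded just before the statement.

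First I would treat the diagonal entries. Fix $\wt\eta \in \ul I^\tau$, pick $\g \in \wt\eta$ and $i \in \g$. By (2.3.3), $(\a_{\wt\eta},\a_{\wt\eta})_2 = (\a_\g,\a_\g)_\tau\,|\wt\eta|$, and by (2.3.2), $(\a_\g,\a_\g)_\tau = (\a_i,\a_i)\,|\g|$, so $(\a_{\wt\eta},\a_{\wt\eta})_2 = (\a_i,\a_i)\,|\g|\,|\wt\eta| = (\a_i,\a_i)\,|\eta|$, which is precisely $(\a_\eta,\a_\eta)_1$ by (2.3.1). Next the off-diagonal entries: for $\wt\eta \neq \wt\eta'$, (2.3.3) gives $(\a_{\wt\eta},\a_{\wt\eta'})_2 = \sum_{\g \in \wt\eta,\ \g' \in \wt\eta'}(\a_\g,\a_{\g'})_\tau$, and by (2.3.2) each $(\a_\g,\a_{\g'})_\tau = \sum_{i \in \g,\ j \in \g'}(\a_i,\a_j)$; substituting and collapsing the double sum over the partition of $\eta$ (resp. $\eta'$) into $\tau$-orbits yields $\sum_{i \in \eta,\ j \in \eta'}(\a_i,\a_j) = (\a_\eta,\a_{\eta'})_1$. (One should note in passing that $\wt\eta \neq \wt\eta'$ forces $\eta \neq \eta'$, so no $\tau$-orbit is counted twice and the admissibility hypotheses are respected.) This is essentially the computation already displayed in 2.3; the proof just needs to assemble it into a statement.

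Having matched both the diagonal and off-diagonal entries, I would conclude by identifying the two vector spaces $\bigoplus_{\eta \in \ul I}\BQ\a_\eta$ and $\bigoplus_{\wt\eta \in \ul I^\tau}\BQ\a_{\wt\eta}$ via the bijection $\wt\eta \lra \eta$, $\a_{\wt\eta} \mapsto \a_\eta$, and observing that under this identification $(\ ,\ )_2$ coincides with $(\ ,\ )_1$ on basis pairs, hence everywhere by bilinearity; this is by definition an isomorphism of Cartan data. There is no real obstacle here — the statement is a bookkeeping lemma — so the only thing to be careful about is the orbit-size identity $|\eta| = |\wt\eta|\,|\g|$ and the fact that $\tau$, being a power of $\s$, really does interact with $\s$ in the required way so that the set of $\ul\s$-orbits in $I^\tau$ is canonically the set of $\s$-orbits in $I$.
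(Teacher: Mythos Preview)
Your proposal is correct and follows essentially the same approach as the paper: the paper's proof is precisely the computation displayed in 2.3 (the lemma is stated as ``Summing up the above discussion''), matching diagonal and off-diagonal entries via the orbit-size identity $|\eta| = |\wt\eta|\,|\g|$ and the two applications of (2.1.1). You have reproduced this argument faithfully.
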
 

\begin{prop}  
Let $X$ be a Cartan datum. Let $\wt X$ be a Cartan datum of 
symmetric type with admissible $\s : \wt X \to \wt X$ such that 
the Cartan datum induced  
from $(\wt X, \s)$ is isomorphic to $X$ as given in 2.2.
Then there exists a sequence $\wt X = X_0, X_1, \dots, X_k = X$ of 
Cartan data, and an
admissible diagram automorphism $\s_i : X_i \to X_i$ such that the 
Cartan datum  induced from $(X_i, \s_i)$ is isomorphic to $X_{i+1}$, where the order
of $\s_i$ is a prime power, and that $\s = \s_{k-1}\cdots \s_1\s_0$. 
\end{prop}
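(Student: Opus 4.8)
The plan is to decompose the automorphism $\s$ into a composition of automorphisms of prime-power order by factoring through a tower of intermediate Cartan data, using the ``iterated folding'' compatibility established in Lemma 2.4. Write $n$ for the order of $\s$ on $\wt X$, and factor $n = p_1^{e_1}p_2^{e_2}\cdots p_r^{e_r}$ into prime powers. The idea is to build the tower by peeling off one prime power at a time: at each stage we have a Cartan datum $Y$ with an admissible automorphism $\vartheta$ whose order is a product of prime powers $p_j^{f_j}$ with $f_j \le e_j$, we pick a prime $p = p_j$ dividing the order, write $|\vartheta| = p^{f}\cdot m$ with $p \nmid m$, set $\tau = \vartheta^{m}$ (order $p^{f}$, a prime power) and $\ul\s = $ the induced action of $\vartheta$ on $Y^{\tau}$ (order $m$, strictly fewer prime factors counted with multiplicity). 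The first task is to verify that $\tau$ and $\ul\s$ are again \emph{admissible} in the sense of 2.1; this is where one must check that within each $\tau$-orbit the relevant inner products vanish (inherited from admissibility of $\vartheta$), and that within each $\ul\s$-orbit in $Y^{\tau}$ the bilinear form $(\ ,\ )_\tau$ vanishes off the diagonal — again a direct consequence of admissibility of $\vartheta$ together with the explicit formula (2.3.2) for $(\ ,\ )_\tau$.

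The second task is to record that this single peeling step produces exactly the data required by the Proposition for one link of the chain: setting $X_1 = Y^{\tau}$, $\s_0 = \tau$, we have that $\tau$ has prime-power order $p^f$, that the Cartan datum induced from $(Y,\tau)$ is $Y^{\tau} = X_1$ by construction, and that $\vartheta = \ul\s \cdot \tau$ as permutations of $I_Y$ (since $\s^m = \tau$ and $\s$ acts on $\tau$-orbits as $\ul\s$, so on $I_Y$ itself $\s = \ul\s\tau$ after identifying — more precisely $\s$ and $\ul\s\tau$ have the same cycle structure and agree because $m$ and $p^f$ are coprime, so $\s^m=\tau$ and $\s^{p^f}$ represents $\ul\s$). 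Then we continue with $(Y^{\tau}, \ul\s)$ in place of $(Y,\vartheta)$. After finitely many steps (bounded by $\sum_j e_j$, the number of prime factors of $n$ with multiplicity) the order of the remaining automorphism becomes $1$, i.e.\ we reach $X$ itself by Lemma 2.4 applied iteratively: at every stage the Cartan datum induced from the current automorphism is, by Lemma 2.4, canonically isomorphic to $X$ — indeed Lemma 2.4 says precisely that folding $(Y,\vartheta)$ in two steps (first by $\tau$, then by $\ul\s$) gives the same result as folding $(Y,\vartheta)$ directly, namely $\ul Y$, and inductively $\ul{\wt X} \cong X$.

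The main obstacle I expect is bookkeeping rather than mathematics: one must set up the induction on the number of prime factors of $|\s|$ carefully so that the ``induced Cartan datum at the bottom of the tower equals $X$'' assertion is maintained at every stage, and this requires invoking Lemma 2.4 in the slightly more general form where the original automorphism is $\s_{k-1}\cdots\s_1\s_0$ rather than a single $\s$. Concretely, the subtle point is that Lemma 2.4 as stated is a \emph{two-step} statement ($\s = \s_1\s_0$ with $\tau = \s_0$ of prime power order and $\ul\s = \s_1$), and one needs its evident iteration: if $X^{\vartheta_1\cdots\vartheta_s} \cong$ (datum induced from $(X,\vartheta_1\cdots\vartheta_s)$) and then we fold once more, the composite is still the datum induced from the total composite. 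I would state this iteration as an immediate corollary of Lemma 2.4 (by induction on $s$, applying Lemma 2.4 to the pair consisting of the current datum and its automorphism) and then the Proposition follows by running the peeling procedure to exhaustion.

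\begin{proof}
Let $n$ be the order of $\s$ on $\wt X$, and factor $n = \prod_{j=1}^{r} p_j^{e_j}$
into powers of distinct primes. We prove the statement by induction on
$N(\s) := \sum_{j} e_j$, the number of prime factors of $|\s|$ counted with
multiplicity; the base case $N(\s) = 0$ (so $\s = \id$ and the induced datum is
$\wt X$ itself) is vacuous, and if $N(\s) = 1$ then $\s$ already has prime order
and we may take $k = 1$, $X_0 = \wt X$, $\s_0 = \s$.

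Assume $N(\s) \ge 1$ and pick a prime $p = p_1$ dividing $n$. Write $n = p^{f}m$
with $f = e_1 \ge 1$ and $p \nmid m$, and set $\tau = \s^{m}$, which is an
automorphism of $\wt X$ of order $p^{f}$, a prime power. As in 2.3, $\tau$ is a
diagram automorphism of $\wt X$; since $\s$ is admissible and every $\tau$-orbit
is contained in a $\s$-orbit, for $i \ne j$ in a common $\tau$-orbit we have
$(\a_i, \a_j) = 0$, so $\tau$ is admissible. Let $X^{\tau}$ be the Cartan datum
induced from $(\wt X, \tau)$, with bilinear form $(\ ,\ )_{\tau}$ given by
(2.3.2), and let $\ul\s$ be the induced action of $\s$ on $\wt X^{\tau}$,
an automorphism of order $m$. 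For $\g \ne \g'$ lying in a common $\ul\s$-orbit,
the formula (2.3.2) gives
$(\a_{\g}, \a_{\g'})_{\tau} = \sum_{i \in \g, j \in \g'}(\a_i,\a_j)$, and each
term vanishes because $i$ and $j$ then lie in a common $\s$-orbit and $\s$ is
admissible; hence $\ul\s$ is an admissible diagram automorphism of $\wt X^{\tau}$.
Moreover $N(\ul\s) = N(\s) - e_1 < N(\s)$.

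By the induction hypothesis applied to the Cartan datum $\wt X^{\tau}$ with its
admissible automorphism $\ul\s$, there is a sequence
$\wt X^{\tau} = Y_0, Y_1, \dots, Y_{k'} = \ul{\wt X^{\tau}}$ of Cartan data and
admissible diagram automorphisms $\vartheta_i : Y_i \to Y_i$ of prime-power
order such that the datum induced from $(Y_i, \vartheta_i)$ is isomorphic to
$Y_{i+1}$ and $\ul\s = \vartheta_{k'-1}\cdots\vartheta_1\vartheta_0$. By
Lemma 2.4, $\ul{\wt X^{\tau}}$ — the datum induced from $(\wt X^{\tau}, \ul\s)$ —
is isomorphic to $\ul{\wt X} \cong X$. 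Now set $k = k' + 1$, and define
$X_0 = \wt X$, $\s_0 = \tau$, and $X_{i+1} = Y_i$, $\s_{i+1} = \vartheta_i$ for
$0 \le i \le k' - 1$, so that $X_1 = \wt X^{\tau}$ is indeed the datum induced
from $(X_0, \s_0) = (\wt X, \tau)$, and $X_k = Y_{k'} \cong X$. Each $\s_i$ has
prime-power order by construction. Finally, $\s$ acts on $I$ as $\ul\s\,\tau$:
indeed $\tau = \s^{m}$ and $\s$ induces $\ul\s$ on $\tau$-orbits, and since
$\gcd(p^{f}, m) = 1$ the permutation $\s$ is the product of its commuting
parts $\s^{am}$ (a power of $\tau$) and $\s^{bp^{f}}$ (representing $\ul\s$)
with $ap^{f} + bm \equiv 1$; concretely, on $I$ one has
$\s = \ul\s \circ \tau$ after the canonical identifications. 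Therefore
$\s = \ul\s\,\tau = \vartheta_{k'-1}\cdots\vartheta_0\,\tau
   = \s_{k-1}\cdots\s_1\s_0$, completing the induction.
\end{proof}
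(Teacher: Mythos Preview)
Your argument uses the same core idea as the paper's proof---split $\s$ via Lemma~2.4 into a prime-power piece and a complementary piece, then induct---but you run the induction in the opposite direction. The paper writes $n = am$ with $m$ a prime power, sets $\tau = \s^{m}$ (of order $a$, \emph{not} a prime power), and applies the induction hypothesis to $(\wt X,\tau)$; the prime-power step $\s'$ of order $m$ is then appended at the \emph{end} of the tower. You instead take $\tau = \s^{m}$ of prime-power order $p^{f}$ as the \emph{first} step and induct on $(\wt X^{\tau},\ul\s)$. The paper's choice has the advantage that the induction is always applied to the original symmetric datum $\wt X$ (with a shorter automorphism), so the hypotheses ``$\wt X$ symmetric'' and ``as given in 2.2'' are automatically preserved. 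In your version the induction hypothesis is invoked for the pair $(\wt X^{\tau},\ul\s)$, but $\wt X^{\tau}$ is in general neither symmetric nor of the form produced by 2.2, so strictly speaking the stated proposition does not apply to it. This is not a mathematical obstruction---nothing in the argument actually uses symmetry or the specific construction of 2.2, only admissibility---but as written your induction is formally ill-posed; you should either recast the statement being proved to allow an arbitrary Cartan datum with admissible automorphism, or reorganize the induction as the paper does. A smaller point: your identification ``$\s = \ul\s\circ\tau$ after the canonical identifications'' is loose (the two sides act on different sets), though the paper is equally informal about the meaning of $\s = \s_{k-1}\cdots\s_0$; in practice this equation just records that the $\s$-orbit decomposition is obtained by iterating the $\s_i$-orbit decompositions.
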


\begin{proof}
Let $n$ be the order of $\s$.  We prove the proposition by induction on $n$.
If $n$ is a prime power, there is nothing to prove.  Assume that $n$ is not
a prime power. We write $n = am$, where $m$ is a power of some prime number $p$, 
and $a$ is prime to $p$. Write $\s = \tau\tau' = \tau'\tau$ as in 2.3, where 
the order of $\tau$ (resp. $\tau'$) is equal to $a$ (resp. $m$).  
Then $\tau$ is an admissible 
automorphism on $\wt X$, and we denote by $X'$ the Cartan datum induced   
from $(\wt X, \tau)$. $\s$ induces an admissible automorphism on $X'$, which 
we denote by $\s'$. Here the order of $\s'$ is equal to $m$, and the order of 
$\tau$ is equal to $a$.  By Lemma 2.4, the Cartan datum induced from $(X', \s')$ 
is isomorphic to $X$. By applying the induction hypothesis on $\wt X, \tau$ and $X'$, 
we obtain a sequence $\wt X = X_0, X_1, \dots, X_{k-1} = X'$,
 and $\s_i: X_i \to X_i$ such that $\tau = \s_{k-2}\cdots \s_0$ 
satisfying the condition.  By setting $\s_{k-1} = \s'$, 
we obtain the proposition.  
\end{proof}

\par\bigskip
\section{The algebra $\BV_q$ } 

\para{3.1.}
Let $X = (I, (\ ,\ ))$ be a Cartan datum, and 
$\s$ an admissible diagram automorphism on $X$.
Let $\BU_q^-$ be the quantized enveloping algebra associated to 
$X$.  Then $\s$ induces an algebra automorphism 
$\s : \BU_q^- \to \BU_q^-$ by $f_i \mapsto f_{\s(i)}$.
Let $\BU_q^{-,\s}$ be the subalgebra of $\BU_q^-$ consisting of 
$\s$-fixed elements.  If the canonical basis $\bB$ exists for $\BU_q^-$, 
then $\s(\bB)$ is also the canonical basis. Hence by the uniqueness 
property (Lemma 1.13), $\s(\bB) = \bB$.  $\s$ acts on $\bB$ as a permutation, 
and we denote by $\bB^{\s}$ the set of $\s$-fixed elements in $\bB$.  
Let $\ul X = (\ul I, (\ ,\ )_1)$ 
be the Cartan datum induced from $(X, \s)$, and $\ul\BU_q^-$ 
the associated quantized algebra. We will compare the algebra structure 
of $\BU_q^{-,\s}$ and $\ul\BU_q^-$, as in [SZ2]. 

\para{3.2.}
From now on, throughout this section, we assume that 
the order of $\s$ is a power of a prime number $p$. 
We also assume that the canonical basis $\bB$ exists for $\BU_q^-$. 
(In the case where $p = 2$, we need to replace $\bB$ by the canonical 
signed basis $\wt\CB = \CB \sqcup -\CB$. However since 
all the discussion in this section works well even in the case $p = 2$  
under a suitable modification, we concentrate to the case where $\bB$ exists.)
\par
$\s$ stabilizes ${}_{\BA}\BU_q^-$, and we define 
${}_{\BA}\BU_q^{-,\s} = \BU_q^{-,\s} \cap {}_{\BA}\BU_q^-$, the subalgebra of 
${}_{\BA}\BU_q^-$ consisting of $\s$-fixed elements.
Let $\BF = \BZ/p\BZ$ be a finite field of $p$ elements, and set 
$\BA' = \BF[q, q\iv]$.
We consider the $\BA'$-algebra
\begin{equation*}
{}_{\BA'}\BU_q^{-,\s} = \BA'\otimes_{\BA}{}_{\BA}\BU_q^{-,\s}
                     \simeq {}_{\BA}\BU_q^{-,\s}/p({}_{\BA}\BU_q^{-,\s}). 
\end{equation*}  
For each $x \in \BU_q^-$, we denote by $O(x)$ the orbit sum of $x$, namely 
$O(x) = \sum_{0 \le i < k}\s^i(x)$, where $k$ is the smallest integer such that 
$\s^k(x) = x$. Hence $O(x)$ is $\s$-invariant.  
$O(x)$ is defined similarly for ${}_{\BA}\BU_q^-, {}_{\BA'}\BU_q^-$.
Let $J$ be an $\BA'$-submodule of ${}_{\BA'}\BU_q^{-, \s}$ generated by 
$O(x)$ for $x \in {}_{\BA'}\BU_q^-$ such that $\s(x) \ne x$.  
Then $J$ is a two-sided ideal of ${}_{\BA'}\BU_q^{-,\s}$. 
We define an $\BA'$-algebra $\BV_q$ as the quotient algebra 
${}_{\BA'}\BU_q^{-,\s}/J$.  
Let $\pi : {}_{\BA'}\BU_q^{-,\s} \to \BV_q$ be the natural projection.  
Note that 
$\BV_q$ is a generalization of the algebra 
$\BV_q$ introduced in [SZ1, SZ2]. 

\para{3.3.}
For each $\eta \in \ul I$ and $a \in \BN$, set 
$\wt f_{\eta}^{(a)} = \prod_{i \in \eta}f_i^{(a)}$. 
Since $f_i^{(a)}$ and $f_j^{(a)}$ commute each other for $i,j \in \eta$,  
we have $\wt f^{(a)}_{\eta} \in {}_{\BA}\BU_q^{-,\s}$. We denote its image 
in ${}_{\BA'}\BU_q^{-,\s}$ also by $\wt f^{(a)}_{\eta}$.  
Thus we can define $g_{\eta}^{(a)} \in \BV_q$ by 
\begin{equation*}
\tag{3.3.1}
g^{(a)}_{\eta} = \pi(\wt f_{\eta}^{(a)}). 
\end{equation*}
In the case where $a = 1$, we set 
$\wt f_{\eta}^{(1)} = \wt f_{\eta} = \prod_{i \in \eta}f_i$ 
and $g^{(1)}_{\eta} = g_{\eta}$. 
\par
Since $*$ commutes with $\s$, $*$ preserves ${}_{\BA}\BU_q^{-,\s}$,
and acts on ${}_{\BA'}\BU_q^{-\s}$, which induces an anti-algebra 
automorphism $*$ on $\BV_q$. Note that $\wt f_{\eta}^{(a)}$ is 
$*$-invariant since $f_i$ and $f_j$ commute each other for $i, j \in \eta$.
Thus $g_{\eta}^{(a)}$ is $*$-invariant. 
\par
Let $\ul\BU_q^-$ be as above. 
The algebras ${}_{\BA}\ul\BU_q^-$, and 
${}_{\BA'}\ul\BU_q^-$ are defined similarly to $\BU_q^-$. 
We denote by $\ul f_{\eta}$  ($\eta \in \ul I$) the generators of $\ul\BU_q^-$, 
and $\ul f^{(a)}_{\eta}$ ($\eta \in \ul I, a \in \BN$) the generators in 
${}_{\BA}\ul\BU_q^-$. 
The anti-algebra automorphism $*$ on ${}_{\BA'}\ul\BU_q^-$ is inherited from 
$*$ on $\ul\BU_q^-$. The following result is a generalization of Theorem 2.4 in [SZ2].

\begin{thm}   
Assume that $\BU_q^-$ has the canonical (signed) basis. 
The assignment  
$\ul f_{\eta}^{(a)} \mapsto g_{\eta}^{(a)}$ gives an isomorphism 
$\Phi : {}_{\BA'}\ul\BU_q^- \isom \BV_q$ of $\BA'$-algebras, which is compatible 
with $*$-operation. 
\end{thm}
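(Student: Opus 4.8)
The plan is to build $\Phi$ in two directions and check they are mutually inverse. For surjectivity, observe that $\BV_q = {}_{\BA'}\BU_q^{-,\s}/J$ is generated as an $\BA'$-algebra by the images of $\s$-fixed elements, and (using the canonical basis $\bB$, which is $\s$-stable by the uniqueness Lemma~1.13) the $\s$-fixed part ${}_{\BA'}\BU_q^{-,\s}$ modulo $J$ is spanned by orbit sums $O(b)$ with $\s(b)=b$; I would show by induction on $|\weit b|$, using the Kashiwara-operator description of $\bB$ (Proposition~1.20: $b = F_{i_1}^{c_1}\cdots F_{i_N}^{c_N}1$) together with (C7) and the fact that $F_i$ intertwines with multiplication by $f_i^{(c)}$ modulo higher terms, that every $\s$-fixed canonical basis element is, modulo $J$, a product of the $g_{\eta}^{(a)}$'s. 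This gives surjectivity of the map $\ul f_{\eta}^{(a)}\mapsto g_{\eta}^{(a)}$ onto $\BV_q$. Compatibility with $*$ is immediate from the observation in 3.3 that each $\wt f_{\eta}^{(a)}$ is $*$-invariant and $*$ commutes with $\s$.

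For injectivity and the homomorphism property, the key is to produce enough relations. First I would verify that the $g_{\eta}^{(a)}$ satisfy the quantum Serre relations of $\ul X$: one computes $r$ and the twisted product applied to $\wt f_{\eta}^{(a)} = \prod_{i\in\eta}f_i^{(a)}$ inside $\BU_q^-\otimes\BU_q^-$, using admissibility $(\a_i,\a_j)=0$ for $i\neq j$ in the same orbit $\eta$ (so the $f_i^{(a)}$ genuinely commute and their coproducts decouple, cf.\ Lemma~1.4 and (1.3.6), (1.6.1)), and the formula $a_{\eta\eta'} = \sum_{j\in\eta'}a_{ij}$ from (2.1.2) to match exponents. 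The cross-terms that obstruct the Serre relation in $\BU_q^-$ turn out to be non-$\s$-fixed orbit elements, hence land in $J$; this is exactly where passing to the quotient $\BV_q$ (rather than just ${}_{\BA'}\BU_q^{-,\s}$) is essential, and it is also why one reduces mod $p$, so that multinomial coefficients like $\binom{ap}{p,\dots,p}$ behave correctly. Granting the Serre relations, $\ul f_{\eta}^{(a)}\mapsto g_{\eta}^{(a)}$ extends to an algebra homomorphism $\Phi:{}_{\BA'}\ul\BU_q^-\to\BV_q$ (one must separately check it respects the divided-power relations $\ul f_{\eta}^{(a)}\ul f_{\eta}^{(b)} = \binom{a+b}{a}_{d_\eta}\ul f_{\eta}^{(a+b)}$, again a mod-$p$ multinomial computation combined with $|\eta|$-fold products).

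For injectivity I would compare ranks weight-by-weight. Each weight space of ${}_{\BA'}\ul\BU_q^-$ is free over $\BA'$ of rank equal to the number of canonical-basis elements of that weight of $\ul\BU_q^-$; on the other side, using the PBW-type / multiplicative structure one bounds $\dim_{\BA'}(\BV_q)_{\ul\nu}$ from above by the same number, via the $\s$-orbit combinatorics on $\bB$ (the $\s$-fixed basis elements of $\BU_q^-$ of weight $\wt\nu$, the lift of $\ul\nu$, are counted by $\CB(\infty)$-combinatorics, and the non-fixed ones contribute to $J$). Combined with surjectivity this forces $\Phi$ to be an isomorphism. The main obstacle is the verification that the obstruction terms to the Serre and divided-power relations really lie in $J$ after reduction mod $p$ — this is the combinatorial heart of the argument, replacing the case-by-case PBW computation of [SZ1]; one wants a clean statement that for $x\in{}_{\BA}\BU_q^-$ with $\s(x)\neq x$ the various products $\wt f_{\eta}^{(a)}\cdot x$ and correction terms decompose into $\s$-orbits of size divisible by $p$, so that their orbit sums vanish in $\BV_q$. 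I expect this to follow by a careful bookkeeping of how $\s$ permutes monomials $f_{i_1}\cdots f_{i_N}$, but getting the mod-$p$ coefficient matching exactly right (especially the interaction between $[n]^!_{d_i}$-denominators and the prime $p$) is the delicate point.
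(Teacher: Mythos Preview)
Your outline for surjectivity and for the homomorphism property aligns with the paper's approach (Propositions~3.14 and~3.5 respectively; the latter is precisely the combinatorial Serre-relation verification carried out in Section~5, and the divided-power check is (5.1.4)). The $*$-compatibility is indeed immediate as you say.

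Your injectivity argument, however, has a genuine gap. You propose to compare ranks weight-by-weight: $\rk_{\BA'}({}_{\BA'}\ul\BU_q^-)_{\nu}$ against $\rk_{\BA'}(\BV_q)_{\nu} = |\bB^{\s}_{\wt\nu}|$. First, the direction of your inequality is wrong: to deduce injectivity from surjectivity of a map between free modules of finite rank over a domain you need $\rk(\text{source}) \le \rk(\text{target})$, i.e.\ $\dim(\ul\BU_q^-)_{\nu} \le |\bB^{\s}_{\wt\nu}|$, not the upper bound on $\rk(\BV_q)_{\nu}$ that you state. Second, and more seriously, establishing either inequality between $\dim(\ul\BU_q^-)_{\nu}$ and $|\bB^{\s}_{\wt\nu}|$ a~priori is essentially equivalent to the bijection $\bB^{\s}\simeq\ul\bB$ that the theorem is being used to produce. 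You cannot invoke ``canonical-basis elements of $\ul\BU_q^-$'' here, because the paper does not assume $\ul\bB$ exists --- constructing it is the goal of Section~4, \emph{after} Theorem~3.4 is in hand. The ``$\CB(\infty)$-combinatorics'' you allude to would require knowing that the $\s$-fixed crystal of $\BU_q^-$ is the crystal of $\ul\BU_q^-$, which is itself a theorem of comparable depth and is not available at this point in the argument.

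The paper avoids this circularity by a bilinear-form argument (Proposition~3.8). One checks directly that $\Phi$ respects the inner products, $(\Phi(x),\Phi(y)) = (x,y)$, by reducing via (1.3.2) and (1.3.5) to the computation of $(\wt f_{\eta},\wt f_{\eta'})$ in (3.8.1) and tracking how $r$ interacts with orbit sums modulo $p$ (this is where (3.6.1) enters). Injectivity then follows because the form on ${}_{\BF(q)}\ul\BU_q^-$ is non-degenerate: this is Proposition~3.7, proved via Proposition~1.27 applied with $R = \BF(q)$, and crucially that argument uses only the operators ${}_{\eta}r$ and elementary highest-weight-module theory over $R$ --- no canonical basis for $\ul\BU_q^-$ is needed. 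This form-preservation step is the missing ingredient in your proposal.
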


First we note that 
\begin{prop}   
The assignment
$\ul f_{\eta}^{(a)} \mapsto g_{\eta}^{(a)}$ gives a homomorphism 
$\Phi : {}_{\BA'}\ul\BU_q^- \to \BV_q$. 
\end{prop}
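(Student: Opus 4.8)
The plan is to verify that the map $\Phi$ sending $\ul f_{\eta}^{(a)} \mapsto g_{\eta}^{(a)}$ respects the defining relations of ${}_{\BA'}\ul\BU_q^-$, i.e.\ the quantum Serre relations for the Cartan datum $\ul X$, together with the divided-power relations among the $\ul f_{\eta}^{(a)}$. Concretely, since ${}_{\BA'}\ul\BU_q^-$ is the $\BA'$-algebra generated by $\ul f_{\eta}^{(a)}$ ($\eta \in \ul I$, $a \in \BN$) subject to $\ul f_{\eta}^{(a)}\ul f_{\eta}^{(b)} = \begin{bmatrix} a+b \\ a \end{bmatrix}_{d_{\eta}}\ul f_{\eta}^{(a+b)}$ and the Serre relations $\sum_{k=0}^{1-a_{\eta\eta'}}(-1)^k \ul f_{\eta}^{(k)}\ul f_{\eta'}\ul f_{\eta}^{(1-a_{\eta\eta'}-k)} = 0$ for $\eta \ne \eta'$, it suffices to check that the elements $g_{\eta}^{(a)} = \pi(\wt f_{\eta}^{(a)})$ satisfy exactly these identities inside $\BV_q$.

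First I would dispose of the divided-power relations. For a fixed orbit $\eta$, admissibility gives $(\a_i,\a_j)=0$ for distinct $i,j \in \eta$, so the $f_i$ ($i \in \eta$) pairwise commute and $\wt f_{\eta}^{(a)} = \prod_{i\in\eta} f_i^{(a)}$; multiplying $\wt f_{\eta}^{(a)}\wt f_{\eta}^{(b)}$ in ${}_{\BA}\BU_q^{-,\s}$ just multiplies the commuting factors $f_i^{(a)}f_i^{(b)} = \begin{bmatrix} a+b \\ a \end{bmatrix}_{d_i}f_i^{(a+b)}$, and one observes $d_i = d_{\eta}$ (each $i \in \eta$ has $(\a_i,\a_i) = (\a_{\eta},\a_{\eta})_1/|\eta|$, matching $2d_{\eta}$); so the relation holds already in ${}_{\BA'}\BU_q^{-,\s}$, hence a fortiori after applying $\pi$. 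The Serre relation is the real content. The key point is that the left action of $f_i$ on $\BU_q^-$ commutes with $f_j$ whenever $(\a_i,\a_j)=0$, and more relevantly with ${}_j r$ (Lemma~1.4); but what we actually need is a statement about $\BV_q$, so the strategy is: expand $\sum_k (-1)^k \wt f_{\eta}^{(k)}\wt f_{\eta'}\wt f_{\eta}^{(1-a_{\eta\eta'}-k)}$ as a sum of products of the $f_i$ ($i\in\eta$) and $f_j$ ($j\in\eta'$), then show each resulting element of ${}_{\BA'}\BU_q^{-,\s}$ lies in the ideal $J$, i.e.\ is a sum of orbit sums $O(x)$ with $\s(x)\ne x$ --- \emph{or} is already zero in ${}_{\BA'}\BU_q^-$ by the Serre relations for $X$ itself.

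The main obstacle, and where I expect the bulk of the work to go, is the combinatorial bookkeeping of the $q$-power that arises when one rewrites $\wt f_{\eta}^{(k)}\wt f_{\eta'}\wt f_{\eta}^{(1-a_{\eta\eta'}-k)}$ in terms of the commuting generators in the orbits. Using $a_{\eta\eta'} = \sum_{j\in\eta'}a_{ij}$ (for any fixed $i\in\eta$, by (2.1.2)), one wants to match the $\eta$-Serre sum against a suitable product of the honest $X$-Serre sums $\sum_{l}(-1)^l f_i^{(l)}f_j f_i^{(1-a_{ij}-l)}$ over $i\in\eta$, $j\in\eta'$. The non-trivial part is that these individual $X$-Serre relations involve different $q$-shifts and do not multiply together naively; one must introduce the deformed/normalized generators (absorbing the appropriate powers of $q^{(\weit\, \cdot\,,\,\cdot)}$ coming from the twisted multiplication, cf.\ (1.3.6)), carry out the expansion over all subsets and orderings, and collect terms into two bins: those that vanish by the rank-one $X$-Serre relations, and those that are genuine orbit sums $O(x)$ with non-trivial $\s$-orbit (the ``cross terms'' mixing factors from distinct $i\in\eta$ in a way not fixed by $\s$), which die in $\BV_q$. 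I would organize this as a lemma: for $i,j$ in distinct orbits, $\sum_k (-1)^k \wt f_{\eta}^{(k)} f_j \wt f_{\eta}^{(1-a_{\eta\eta'}-k)}$ already lies in $J + (\text{Serre ideal of } {}_{\BA'}\BU_q^-)$, by induction on $|\eta|$, peeling off one $i\in\eta$ at a time using Lemma~1.4(i) and the characteristic-$p$ identity $\begin{bmatrix} p^r s \\ p^r t\end{bmatrix} \equiv \binom{s}{t}$; then deduce the full statement by summing over $j\in\eta'$ and using $g_{\eta'} = \pi(\prod_{j\in\eta'}f_j)$. Once the Serre relations are checked, $\Phi$ is a well-defined algebra homomorphism, and $*$-compatibility is immediate since each $\wt f_{\eta}^{(a)}$ is $*$-invariant (noted in 3.3) and both $*$-operations fix the generators.
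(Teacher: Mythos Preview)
Your treatment of the divided-power relations contains an explicit error: you claim $d_i = d_{\eta}$, but in fact $(\a_{\eta},\a_{\eta})_1 = |\eta|\,(\a_i,\a_i)$ gives $d_{\eta} = |\eta|\,d_i$, so the coefficients do \emph{not} match on the nose. What actually happens is that $\wt f_{\eta}^{(a)}\wt f_{\eta}^{(b)} = \bigl(\begin{bmatrix} a+b\\a\end{bmatrix}_{d_i}\bigr)^{|\eta|}\,\wt f_{\eta}^{(a+b)}$, and one needs the Frobenius-type identity $\bigl(\begin{bmatrix} a+b\\a\end{bmatrix}_{d_i}\bigr)^{|\eta|} = \begin{bmatrix} a+b\\a\end{bmatrix}_{|\eta|d_i} = \begin{bmatrix} a+b\\a\end{bmatrix}_{d_{\eta}}$ in $\BA' = \BF[q,q^{-1}]$, valid precisely because $|\eta|$ is a power of $p$. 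This is exactly how the paper handles (5.1.4); the relation does \emph{not} hold in ${}_{\BA}\BU_q^{-,\s}$ before reducing mod $p$.

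For the Serre relations, your outline is too vague to constitute a proof, and the suggested mechanism --- matching the $\ul X$-Serre sum against a product of $X$-Serre sums and peeling off one $i\in\eta$ at a time by induction on $|\eta|$ --- is not what the paper does and is unlikely to work in the form you describe. The difficulty is that the $\ul X$-Serre exponent $L = 1-a_{\eta\eta'}$ is not related to the individual $X$-Serre exponents $1-a_{ij}$ by any simple product decomposition; rather $L = (N-1)(r-1)+1$ where $r = 1-a_{ij}$ and $N-1$ is the number of $j\in\eta'$ joined to a fixed $i\in\eta$, and there is no evident way to factor $\sum_k(-1)^k\begin{bmatrix}L\\k\end{bmatrix}_{d_{\eta}}\wt f_{\eta}^k\wt f_{\eta'}\wt f_{\eta}^{L-k}$ into pieces each killed by a single rank-two $X$-Serre relation. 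The paper's route is quite different: it first treats the case $|\eta|=1$ (Proposition~5.13) by introducing an abstract combinatorial vector space $V_n$ encoding the Serre relations and proving a nontrivial $q$-binomial vanishing identity (Proposition~5.11, resting on Lemma~5.6); remarkably this holds integrally, with no mod-$p$ or mod-$J$ reduction. For general $|\eta|=m$ it then proves a multi-variable analogue (Proposition~5.18) yielding an identity summed over all $(k_1,\dots,k_m)\in[0,L]^m$; only the diagonal terms $k_1=\cdots=k_m$ are $\s$-fixed, the off-diagonal terms land in $J$, and the Frobenius identity $\bigl(\begin{bmatrix}L\\k\end{bmatrix}_{d_i}\bigr)^m = \begin{bmatrix}L\\k\end{bmatrix}_{d_{\eta}}$ in $\BA'$ converts the surviving diagonal into the desired $\ul X$-Serre sum (Proposition~5.22). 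The combinatorial vanishing in Proposition~5.11 is the substantive step your proposal does not address.
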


The proof of Proposition 3.5 will be given in Section 5.
Note that the corresponding result in the finite or affine case was 
proved in [SZ2,  Prop. 2.6] by the computation using the PBW-bases.
This method cannot be applied to our case, since the PBW-bases do not
exist for Kac-Moody case. We prove Proposition 3.5, by a purely combinatorial
argument, which is in some sense simpler than the computation by PBW-bases. 

\para{3.6.}
Assuming Proposition 3.5, we continue the discussion.
Let $\wh\BA = \BZ((q)) \cap \BQ(q)$ be the subring of $\BQ(q)$ containing 
$\BA = \BZ[q, q\iv]$.
By (1.3.2), we have $(f_i, f_j) \in \wh \BA$. Since 
${}_ir$ preserves ${}_{\BA}\BU_q^-$, we have
$({}_{\BA}\BU_q^-, {}_{\BA}\BU_q^-) \subset \wh\BA$ by (1.3.5). 
Recall that $\BA' = \BF[q,q\iv]$, and 
${}_{\BA'}\BU_q^- = {}_{\BA}\BU_q^-/p({}_{\BA}\BU_q^-)$. 
Let $\BF(q)$ be the field of rational functions of $q$ with coefficients in $\BF$. 
The bilinear form  $(\ ,\ )$ on ${}_{\BA}\BU_q^-$ 
induces a bilinear form $(\ ,\ )$ on ${}_{\BA'}\BU_q^-$, whose 
values are in $\wh\BA/p\wh\BA \subset \BF((q)) \cap \BF(q) = \BF(q)$.
\par
Since $\s$ commutes with $r$, 
the bilinear form $(\ ,\ )$ on $\BU_q^-$ is $\s$-invariant, namely,
$(\s(x), \s(y)) = (x, y)$ for any $x, y \in \BU_q^-$.  
In particular, for $x, y \in {}_{\BA}\BU_q^-$ such that $\s(x) \ne x$, 
we have
\begin{align*}
\tag{3.6.1}
(O(x), O(y)) = \sum_{x' \in O(x)}(x', O(y)) = |O(x)|(x, O(y)) \in p\BZ((q)) \cap \BQ(q)
\end{align*}
since $|O(x)| > 1$ is a power of $p$. 
It follows that the bilinear form on ${}_{\BA}\BU_q^{-,\s}$ 
induces a bilinear form $(\ , \ )$ on ${}_{\BF(q)}\BV_q = \BF(q)\otimes_{\BA'}\BV_q$. 
Since $\s$ permutes $\bB$, $\pi(\bB^{\s})$ gives an 
$\BA'$-basis of $\BV_q$, hence gives a basis of ${}_{\BF(q)}\BV_q$. 
\par
On the other hand, the bilinear form $(\ ,\ )$ on ${}_{\BA'}\ul\BU_q^-$ is defined 
similarly, and it induces a bilinear form $(\ ,\ )$ on 
${}_{\BF(q)}\ul\BU_q^- = \BF(q)\otimes_{\BA}{}_{\BA}\ul\BU_q^-$ with 
values in $\BF(q)$.  Here we regard $\BF(q)$ as an $\BA$-algebra by a homomorphism 
$\f : \BA \to \BF(q), q^n \mapsto q^n$.  
by applying Proposition 1.26 for $R = \BF(q)$, we obtain the following. 
Note that the existence of the canonical basis is not necessary for 
the proof of this fact. 
\begin{prop}  
The bilinear form $(\ ,\ )$ on ${}_{\BF(q)}\ul\BU_q^-$ is non-degenerate.
\end{prop}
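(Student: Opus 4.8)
The goal is to show that the bilinear form $(\ ,\ )$ on ${}_{\BF(q)}\ul\BU_q^-$ is non-degenerate. The strategy is to reduce this to Proposition 1.25, exactly as the remark after the statement suggests. First I would recall the adjunction formula (1.3.5): for $y, x \in {}_{\BF(q)}\ul\BU_q^-$ one has $(\ul f_{\eta} y, x) = (\ul f_{\eta}, \ul f_{\eta})\,(y, {}_{\eta}r(x))$, where ${}_{\eta}r$ is the analogue of the operator ${}_ir$ attached to the vertex $\eta \in \ul I$ of the induced Cartan datum $\ul X$, acting on ${}_R\ul\BU_q^-$ with $R = \BF(q)$. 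The key point is that the scalar $(\ul f_{\eta}, \ul f_{\eta}) = (1 - q_{\eta}^2)\iv$ (formula (1.3.2) applied to $\ul X$) is a nonzero element of $\BF(q)$: this requires $q_{\eta}^2 \ne 1$ in $\BF(q)$, which holds since $q$ is transcendental over $\BF$, so $\Bq_{\eta} = \f(q_{\eta}) = q^{d_{\eta}} \ne \pm 1$. Thus the hypothesis ``$\Bq_i \ne \pm 1$'' of Proposition 1.25 is satisfied for the base ring $R = \BF(q)$ and the Cartan datum $\ul X$.

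\textbf{Main argument.} Suppose $x \in {}_{\BF(q)}\ul\BU_q^-$ lies in the radical of the form, i.e. $(y, x) = 0$ for all $y$. I would show $x$ is a scalar, and then that a nonzero scalar is not in the radical. Since the form respects the weight decomposition (the analogue of (1.3.3) for $\ul X$), we may assume $x$ is homogeneous of weight $\nu \ne 0$; we want a contradiction. For each $\eta \in \ul I$ and each homogeneous $y$, the adjunction gives
\begin{equation*}
0 = (\ul f_{\eta} y, x) = (\ul f_{\eta}, \ul f_{\eta})\,(y, {}_{\eta}r(x)),
\end{equation*}
and since $(\ul f_{\eta}, \ul f_{\eta}) \ne 0$, we deduce $(y, {}_{\eta}r(x)) = 0$ for all $y$, i.e. ${}_{\eta}r(x)$ lies again in the radical of the form. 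Iterating, any element obtained from $x$ by a word in the operators ${}_{\eta}r$ ($\eta \in \ul I$) lies in the radical; in particular all such elements of weight $0$ are radical elements that are scalars. But if $x \ne 0$ has weight $\nu \ne 0$, applying a suitable word in the ${}_{\eta}r$ of total ``weight'' $\nu$ produces a nonzero scalar: indeed, by Proposition 1.25 applied to $\ul X$ over $R = \BF(q)$, one has $\bigcap_{\eta \in \ul I}\Ker {}_{\eta}r = R \cdot 1$, so some ${}_{\eta}r(x) \ne 0$, and it has strictly smaller $|{\rm weight}|$; descending, we eventually reach a nonzero element of weight $0$, which is a nonzero scalar $c\cdot 1$. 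Since $(1, 1) = 1 \ne 0$, the scalar $c \cdot 1$ is not in the radical, giving the contradiction. Hence the radical is contained in $R \cdot 1 \cap \{\text{weight } \ne 0\} = 0$ together with the weight-$0$ part; and on the weight-$0$ part the form restricts to $(c, c') = cc'$, visibly non-degenerate. Therefore the form on ${}_{\BF(q)}\ul\BU_q^-$ is non-degenerate.

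\textbf{Where the work lies.} Most of this is bookkeeping with the already-established properties of $r$, ${}_{\eta}r$ and the adjunction (1.3.5), all of which tensor up from ${}_{\BA}\ul\BU_q^-$ to $R = \BF(q)$ as recorded in 1.25 of the excerpt. The one genuinely essential input is Proposition 1.25, which guarantees $\bigcap_{\eta}\Ker {}_{\eta}r = R\cdot 1$ over a field $R$ with $\Bq_{\eta} \ne \pm 1$; the only thing to check for invoking it is precisely that $\Bq_{\eta} = q^{d_{\eta}}$ is not $\pm 1$ in $\BF(q)$, which is immediate since $q$ is an indeterminate over $\BF$. I expect the main (very mild) obstacle is simply to be careful that the induction on $|\nu|$ terminates correctly — i.e. that ${}_{\eta}r$ strictly decreases $|\weit x|$ when its output is nonzero — but this is immediate from the weight-grading behaviour of ${}_{\eta}r$, exactly as in the proof of Lemma 1.5 (and its generalization in the proof of Proposition 1.27). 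No geometric input and no canonical basis is needed, consistent with the remark preceding the statement.
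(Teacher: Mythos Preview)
Your proposal is correct and follows essentially the same approach as the paper: use the adjunction formula (1.3.5) together with $(\ul f_{\eta},\ul f_{\eta})\ne 0$ to show that the radical is stable under each ${}_{\eta}r$, then invoke $\bigcap_{\eta}\Ker{}_{\eta}r = R\cdot 1$ over $R=\BF(q)$ to force any radical element of nonzero weight to vanish. The only cosmetic difference is that the paper packages this as a straightforward induction on $|\nu|$ (showing directly that ${}_{\eta}r(x)=0$ for all $\eta$ and hence $x=0$), whereas you phrase it as a descent to a nonzero scalar and derive a contradiction from $(1,1)=1$; these are equivalent formulations of the same argument.
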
 

\begin{proof}
Take $x \in ({}_{\BF(q)}\BU_q^-)_{\nu}$.  It is enough to show that  
if $(x, ({}_{\BF(q)}\BU_q^-)_{\nu}) = 0$, then $x = 0$.  We prove this by induction 
on $|\nu|$.  This is clearly true for $\nu = 0$.  We assume that $|\nu| \ge 1$, and 
that the claim holds for $\nu'$ such that $|\nu'| < |\nu|$. 
By (1.3.5), and by using $(f_i, f_i) \ne 0$, we see that $(y, {}_ir(x)) = 0$ 
for any $y$.  Thus, by induction, ${}_ir(x) = 0$. Since this is true for any $i \in I$, 
by Proposition 1.26, we conclude that $x = 0$.  The proposition is proved.  
\end{proof}

\begin{prop}  
Under the notation in 3.6, we have
\begin{enumerate}
\item  For any $x,y \in {}_{\BA'}\ul\BU_q^-$, $(\Phi(x), \Phi(y)) = (x,y)$. 
\item The map $\Phi : {}_{\BA'}\ul\BU_q^- \to \BV_q$ is injective. 
\end{enumerate}
\end{prop}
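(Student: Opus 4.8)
The plan is to deduce (ii) from (i) together with Proposition 3.7, and to prove (i) by induction on $|\nu|$ after extending scalars to $\BF(q)$ throughout. First I would record that $\BV_q$ carries a $Q_-(\ul X)$-grading: decomposing a $\s$-invariant element into its $X$-homogeneous components, the components supported on non-$\s$-fixed weights combine into orbit sums, hence lie in $J$, so the image in $\BV_q$ is graded by the $\s$-fixed sublattice $Q^{\s}\simeq Q(\ul X)$ (under $\a_{\eta}\leftrightarrow\sum_{i\in\eta}\a_i$); for this grading each $g_{\eta}^{(a)}$ is homogeneous of weight $-a\a_{\eta}$, the map $\Phi$ is graded, and the bilinear form on $\BV_q$ respects the grading by (1.3.3). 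Granting (i), if $x\in{}_{\BA'}\ul\BU_q^-$ satisfies $\Phi(x)=0$ then $(x,y)=(\Phi(x),\Phi(y))=0$ for every $y\in{}_{\BF(q)}\ul\BU_q^-$ (by $\BF(q)$-bilinearity and the fact that ${}_{\BA'}\ul\BU_q^-$ spans ${}_{\BF(q)}\ul\BU_q^-$), so $x=0$ by Proposition 3.7, proving (ii).

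For (i) it suffices, by the gradings, to take $x,y$ homogeneous of the same weight $\nu$ and induct on $|\nu|$. The cases $\nu=0$ and $|\nu|=1$ reduce to $\Phi(1)=1$ and to the identity $(g_{\eta},g_{\eta})=(\ul f_{\eta},\ul f_{\eta})$. For the latter I would compute in $\BU_q^-$: iterating (1.3.5) over the mutually orthogonal elements of $\eta$ and using that the operators ${}_ir\ (i\in\eta)$ commute (Lemma 1.4), one obtains $(\wt f_{\eta}\cdot 1,\wt f_{\eta})=\bigl(\textstyle\prod_{i\in\eta}(f_i,f_i)\bigr)\,(1,{}_{\eta}r(\wt f_{\eta}))$ with ${}_{\eta}r:=\prod_{i\in\eta}{}_ir$, while directly ${}_{\eta}r(\wt f_{\eta})=1$. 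Since all $i\in\eta$ share the same $d_i$, say equal to $\d$, and $(\a_{\eta},\a_{\eta})_1=2\d|\eta|$, this equals $(1-q^{2\d})^{-|\eta|}$; as $|\eta|$ divides the order of $\s$ it is a power of $p$, and the Frobenius identity $(1-q^{2\d})^{|\eta|}=1-q^{2\d|\eta|}$ in $\BF[q]$ then gives $(g_{\eta},g_{\eta})=(1-q^{2\d|\eta|})^{-1}=(\ul f_{\eta},\ul f_{\eta})$.

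The inductive step rests on transporting to $\BV_q$ the operators ${}_{\eta}\ul r$ on $\ul\BU_q^-$ (the analogues of ${}_ir$ for $\ul X$). Since $\s$ permutes $\eta$, the operator ${}_{\eta}r=\prod_{i\in\eta}{}_ir$ commutes with $\s$ and hence preserves ${}_{\BA'}\BU_q^{-,\s}$; moreover it maps $J$ into $J$, because applied to an orbit sum $O(x)$ with $\s(x)\neq x$ it yields $\sum_i\s^i\bigl({}_{\eta}r(x)\bigr)$, which lies in $J$ unless ${}_{\eta}r(x)$ is $\s$-fixed, in which case it equals $k\cdot{}_{\eta}r(x)$ with $k$ the orbit size of $x$ --- a power of $p$ exceeding $1$, hence $0$ in characteristic $p$. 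Thus ${}_{\eta}r$ induces an operator ${}_{\eta}\bar r$ on $\BV_q$, and reducing the identity $(\wt f_{\eta}z,w)=\bigl(\prod_{i\in\eta}(f_i,f_i)\bigr)(z,{}_{\eta}r(w))$ modulo $p$ and passing to the quotient yields the adjunction $(g_{\eta}b,c)=(\ul f_{\eta},\ul f_{\eta})\,(b,{}_{\eta}\bar r(c))$ on $\BV_q$. The crucial remaining point is that $\Phi\circ{}_{\eta}\ul r={}_{\eta}\bar r\circ\Phi$; I would prove this by showing that $D:=\Phi\circ{}_{\eta}\ul r-{}_{\eta}\bar r\circ\Phi$ is a twisted derivation over $\Phi$, i.e. $D(ab)=q^{(\weit a,\a_{\eta})_1}\Phi(a)D(b)+D(a)\Phi(b)$ for $a$ homogeneous, which kills $1$ and every generator $\ul f_{\eta'}$ --- here ${}_{\eta}\ul r(\ul f_{\eta'})=\d_{\eta\eta'}$ and directly ${}_{\eta}r(\wt f_{\eta'})=\d_{\eta\eta'}$ --- and hence vanishes by a Leibniz induction on weight, using that every element of positive weight in $\ul\BU_q^-$ lies in $\sum_{\eta}\ul f_{\eta}\ul\BU_q^-$. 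Finally, for the inductive step itself, write $y=\sum_{\eta}\ul f_{\eta}y_{\eta}$ with $y_{\eta}$ homogeneous of weight $\nu+\a_{\eta}$; the two adjunctions give $(x,y)=\sum_{\eta}(\ul f_{\eta},\ul f_{\eta})(y_{\eta},{}_{\eta}\ul r(x))$ and $(\Phi(x),\Phi(y))=\sum_{\eta}(\ul f_{\eta},\ul f_{\eta})(\Phi(y_{\eta}),{}_{\eta}\bar r(\Phi(x)))=\sum_{\eta}(\ul f_{\eta},\ul f_{\eta})(\Phi(y_{\eta}),\Phi({}_{\eta}\ul r(x)))$, and these agree by the induction hypothesis in weight $\nu+\a_{\eta}$.

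I expect the genuine obstacle to be checking that ${}_{\eta}\bar r$ is a twisted derivation on $\BV_q$, equivalently that $\pi\bigl({}_{\eta}r(\alpha\beta)-q^{(\weit\alpha,\a_{\eta})_1}\alpha\,{}_{\eta}r(\beta)-{}_{\eta}r(\alpha)\beta\bigr)=0$ for $\s$-invariant homogeneous $\alpha,\beta$. Expanding ${}_{\eta}r(\alpha\beta)$ by the Leibniz rule for each ${}_ir$ produces $2^{|\eta|}$ terms indexed by the set $S\subseteq\eta$ of derivatives sent to $\beta$; because all pairs in $\eta$ are orthogonal the $q$-coefficient of the $S$-term depends only on $|S|$, so the terms $S=\emptyset$ and $S=\eta$ are precisely ${}_{\eta}r(\alpha)\beta$ and $q^{(\weit\alpha,\a_{\eta})_1}\alpha\,{}_{\eta}r(\beta)$, whereas for each $j$ with $0<j<|\eta|$ --- since $\s$ restricts to a single cycle on $\eta$ and therefore stabilizes no $j$-subset --- the sum of the remaining terms with $|S|=j$ is a sum of orbit sums of non-$\s$-fixed elements and thus lies in $J$. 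This is the step in which the hypotheses that $|\eta|$ (hence the order of $\s$) is a power of $p$, that $\s$ is admissible, and that we have reduced modulo $p$ are used simultaneously; the rest is bookkeeping with weights and with the passage between the $\BA'$- and $\BF(q)$-forms.
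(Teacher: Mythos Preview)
Your argument is correct and follows essentially the same route as the paper's proof. Both deduce (ii) from (i) via Proposition~3.7, and both prove (i) by induction on the weight after passing to $\BF(q)$; the only cosmetic difference is that the paper (following [SZ2, Prop.~2.8]) works with the full comultiplication $r$ and the auxiliary ideal $J_1\subset({}_{\BA}\BU_q^-\otimes{}_{\BA}\BU_q^-)^{\s}$, showing $r(\wt f_{\eta})\equiv \wt f_{\eta}\otimes 1+1\otimes\wt f_{\eta}\bmod J_1$ and then invoking the characterization (1.3.2), whereas you work directly with the composite operator ${}_{\eta}r=\prod_{i\in\eta}{}_ir$ and the adjunction (1.3.5). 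The core computation is the same in both: the ``cross terms'' indexed by proper nonempty subsets $S\subsetneq\eta$ assemble into orbit sums of non-$\s$-fixed elements (or $p$-multiples thereof) and hence vanish in $\BV_q$, and the scalar match $\prod_{i\in\eta}(f_i,f_i)=(\ul f_{\eta},\ul f_{\eta})$ comes from the Frobenius identity $(1-q^{2d_i})^{|\eta|}=1-q^{2d_{\eta}}$ in $\BF[q]$.
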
 

\begin{proof}
The map $\Phi$ can be extended to the  map 
$\wt\Phi: {}_{\BF(q)}\ul\BU_q^- \to {}_{\BF(q)}\BV_q$.
To prove (ii), it is enough to show that $\wt\Phi$ is injective. 
This follows from (i) since the bilinear form on ${}_{\BF(q)}\ul\BU_q^-$
is non-degenerate by Proposition 3.7.  
Hence it is enough to prove (i).  The proof of (i) is done in an almost similar 
way as in the proof of [SZ2, Prop. 2.8].  
We just give some additional remarks. We use the same notation as in the proof 
of Proposition 2.8 in [SZ2]. The definition of 
$J_1$ given there should be replaced by an $\BA$-submodule of 
$({}_{\BA}\BU_q^-\otimes {}_{\BA}\BU_q^-)^{\s}$ generated by orbit sums $O(z)$ 
for $z \in {}_{\BA}\BU_q^-\otimes {}_{\BA}\BU_q^-$ such that $\s(z) \ne z$.
Let $\eta_1 \in \ul I$. 
The computation of $r(\wt f_{\eta_1})$ in  [SZ2, (2.8.2)]  is done as follows; 
Take $\eta_1 \in \ul I$. Then $f_i\otimes 1 + 1 \otimes f_i$ are commuting each other 
for $i \in \eta_1$, and we have
\begin{align*}
r(\wt f_{\eta_1}) &= \prod_{i \in \eta_1}(f_i\otimes 1 + 1 \otimes f_i)  \\
                  &= \sum_{\zeta \subset \eta_1} 
         \biggl(\prod_{i \in \zeta}f_i \otimes \prod_{j \in \eta_1 - \zeta}f_j\biggr) \\
                  &= \wt f_{\eta_1}\otimes 1 + 1 \otimes \wt f_{\eta_1} \mod J_1. 
\end{align*}
Since $\wt f_{\eta_1} \otimes 1 + 1 \otimes \wt f_{\eta_1} = \wt{\ul r(\ul f_{\eta_1})}$, 
(2.8.1) in [SZ2] holds for $k = 1$.  Then (2.8.2) is proved by a similar argument.  
For the proof of (2.8.7) in [SZ2], we use, for $z_1 \in \BU_q^-\otimes \BU_q^-$ 
such that $\s(z_1) \ne z_1$,
\begin{equation*}
(O(z_1), \wt f_{\eta_1'}\otimes \wt y') 
     = \sum_{z_1' \in O(z_1)}(z_1', \wt f_{\eta_1'}\otimes \wt y')
     = |O(z_1)|(z_1, \wt f_{\eta_1'}\otimes \wt y').  
\end{equation*}
Since $|O(z_1)|$ is divisible by $p$, (2.8.7) follows. 
It remains to check, for $\eta, \eta' \in \ul I$ with $i \in \eta$, that  

\begin{equation*}
\tag{3.8.1}
(\wt f_{\eta}, \wt f_{\eta'}) = \begin{cases}
                                  (1 - q_i^2)^{-|\eta|}   &\quad\text{ if } \eta = \eta', \\
                                  0                     &\quad\text{ if } \eta \ne \eta'.
                              \end{cases}
\end{equation*}

In fact, if $\eta \ne \eta'$, $\weit(\wt f_{\eta}) \ne \weit(\wt f_{\eta'})$. 
Hence $(\wt f_{\eta}, \wt f_{\eta'}) = 0$ by (1.3.3).   
Assume that $\eta = \eta'$. Write  $\eta = \zeta \sqcup \{ i\}$, and 
$\wt f_{\eta} = f_i\wt f_{\zeta}$, where $\wt f_{\zeta} = \prod_{j \in \zeta}f_j$.
Since $f_i$ are commuting each other for $i \in \eta$, 
we have ${}_ir(\wt f_{\eta}) = \wt f_{\zeta}$. Then 
$(\wt f_{\eta}, \wt f_{\eta}) = (f_i, f_i)(\wt f_{\zeta}, \wt f_{\zeta})$. 
By induction on $|\zeta_1|$ for any subset $\zeta_1 \subset \eta$, we have
\begin{equation*}
(\wt f_{\eta}, \wt f_{\eta}) = \prod_{i \in \eta}(f_i, f_i) = (1 - q_i^2)^{-|\eta|}.
\end{equation*}
Thus (3.8.1) holds. This proves (i). The proposition is proved. 
\end{proof}

\para{3.9.}
In order to prove the surjectivity of $\Phi$, we need 
a preliminary for Kashiwara operators.
Assume that $(\a_i,\a_j) = 0$.  Since ${}_ir$ and ${}_jr$ commute each other,
and the left action of $f_i$ commutes with ${}_jr$ by Lemma 1.4, 
$\Pi_{i,a}$ and $\Pi_{j,b}$ in  (1.7.1) commute
each other. Take $x \in \BU_q^-$, and write it as 
$x = \sum_{n \ge 0}f_i^{(n)}x_n$ with $x_n \in \BU_q^-[i]$. We also write 
$x_n = \sum_{m \ge 0}f_j^{(m)}x_{n, m}$ with $x_{n,m} \in \BU_q^-[j]$, hence 
\begin{equation*}
\tag{3.9.1}
x = \sum_{n,m \ge 0}f_i^{(n)}f_j^{(m)}x_{n,m}.
\end{equation*}  
Then $x_{n,m} = q_i^{n(n-1)/2}q_j^{m(m -1)/2}\Pi_{j,m}\Pi_{i,n}(x)$. 
Since $\Pi_{j,m}\Pi_{i,n}(x) = \Pi_{i,n}\Pi_{j,m}(x)$, we have 
$x_{n,m} = x_{m,n} \in \BU_q^-[i] \cap \BU_q^-[j]$.  
Since the expression of $x$ in (3.9.1) is unique, we have a direct sum 
decomposition 
\begin{equation*}
\tag{3.9.2}
\BU_q^- = \bigoplus_{n, m \ge 0}f_i^{(n)}f_j^{(m)}(\BU_q^-[i] \cap \BU_q^-[j]).
\end{equation*}
If $x \in {}_{\BA}\BU_q^-$, 
then $x_{n,m} \in {}_{\BA}\BU_q^-$. Hence (3.9.2) implies that 
\begin{equation*}
\tag{3.9.3}
{}_\BA\BU_q^- = \bigoplus_{n,m \ge 0}f_i^{(n)}f_j^{(m)}
                  {}_{\BA}(\BU_q^-[i] \cap \BU_q^-[j]) 
\end{equation*}

By  a similar argument as in the proof of (3.9.3), we have
the following lemma.
\begin{lem} 
For $\eta \in \ul I$, set $\BU_q^-[\eta] = \bigcap_{i \in \eta}\BU_q^-[i]$.
Then we have
\begin{equation*}
\tag{3.10.1}
{}_{\BA}\BU_q^- = \bigoplus_{(a_i) \in \BN^{\eta}}
         \biggl(\prod_{i \in \eta}f_i^{(a_i)}\biggr){}_{\BA}(\BU_q^-[\eta]).
\end{equation*}
\end{lem}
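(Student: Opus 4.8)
The plan is to mimic the proof of (3.9.3), which was obtained from Lemma 1.8 by iterating the one-variable decomposition ${}_{\BA}\BU_q^- = \bigoplus_{n}f_i^{(n)}{}_{\BA}\BU_q^-[i]$ for two commuting indices $i,j$ with $(\a_i,\a_j)=0$. Here, for a fixed orbit $\eta\in\ul I$, all the $i\in\eta$ satisfy $(\a_i,\a_j)=0$ for $i\ne j$ in $\eta$ (admissibility, see 2.1), so the operators ${}_ir$ ($i\in\eta$) pairwise commute by Lemma 1.4(ii), and moreover the left action of $f_i$ commutes with ${}_jr$ for $i\ne j$ in $\eta$ by Lemma 1.4(i). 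Consequently the projection operators $\Pi_{i,a}$ ($i\in\eta$, $a\ge 0$) of (1.7.1) all commute with one another, exactly as in 3.9.

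First I would set up notation: fix a total order on $\eta = \{i_1,\dots,i_r\}$. Given $x\in\BU_q^-$, apply Lemma 1.8 successively with respect to $i_1$, then $i_2$, and so on: write $x = \sum_{a_1\ge 0}f_{i_1}^{(a_1)}x^{(1)}_{a_1}$ with $x^{(1)}_{a_1}\in\BU_q^-[i_1]$; then decompose each $x^{(1)}_{a_1}$ with respect to $i_2$, noting (crucially) that $\BU_q^-[i_1]$ is stable under the $i_2$-decomposition because the projection onto $f_{i_2}^{(a_2)}\BU_q^-[i_2]$ is given by the operator $q_{i_2}^{a_2(a_2-1)/2}\Pi_{i_2,a_2}$, which commutes with ${}_{i_1}r$ (hence preserves $\Ker{}_{i_1}r = \BU_q^-[i_1]$). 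Iterating, one obtains a unique expression $x = \sum_{(a_i)\in\BN^\eta}\bigl(\prod_{i\in\eta}f_i^{(a_i)}\bigr)x_{(a_i)}$ with $x_{(a_i)}\in\bigcap_{i\in\eta}\BU_q^-[i] = \BU_q^-[\eta]$, and explicitly $x_{(a_i)} = \bigl(\prod_{i\in\eta}q_i^{a_i(a_i-1)/2}\Pi_{i,a_i}\bigr)(x)$; independence of the order of composition (hence well-definedness) follows from the commutativity of the $\Pi_{i,a}$. This gives the decomposition $\BU_q^- = \bigoplus_{(a_i)}\bigl(\prod_i f_i^{(a_i)}\bigr)\BU_q^-[\eta]$.

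To descend to the integral form, I would argue exactly as in 1.9: since ${}_ir$ preserves ${}_{\BA}\BU_q^-$ (by 1.6), each $\Pi_{i,a_i}$ preserves ${}_{\BA}\BU_q^-$, so if $x\in{}_{\BA}\BU_q^-$ then each coefficient $x_{(a_i)}$ lies in ${}_{\BA}\BU_q^-$, hence in ${}_{\BA}(\BU_q^-[\eta])$, and therefore $\bigl(\prod_i f_i^{(a_i)}\bigr)x_{(a_i)}\in{}_{\BA}\BU_q^-$. This yields ${}_{\BA}\BU_q^- = \bigoplus_{(a_i)}\bigl(\prod_i f_i^{(a_i)}\bigr){}_{\BA}(\BU_q^-[\eta])$, which is (3.10.1).

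**The main obstacle** is bookkeeping rather than anything conceptual: one must verify carefully that when iterating Lemma 1.8 over the indices of $\eta$, each subspace $\BU_q^-[i_1]\cap\dots\cap\BU_q^-[i_k]$ really is stable under the next decomposition step, and that the resulting multi-index coefficient is independent of the chosen ordering of $\eta$. Both points reduce cleanly to the commutativity of the family $\{\Pi_{i,a} : i\in\eta\}$, which in turn rests on Lemma 1.4 applied to each pair $i\ne j$ in $\eta$ — and this is precisely where admissibility of $\s$ enters. Once that commutativity is in hand, the rest is a routine induction on $|\eta|$ duplicating the two-index case (3.9.2)--(3.9.3) verbatim.
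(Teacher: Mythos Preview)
Your proposal is correct and follows exactly the approach the paper intends: the paper's proof consists solely of the sentence ``By a similar argument as in the proof of (3.9.3), we have the following lemma,'' and your write-up is precisely that argument --- iterating the single-index decomposition of Lemma~1.8 over the pairwise orthogonal indices in $\eta$, using the commutativity of the $\Pi_{i,a}$ (via Lemma~1.4 and admissibility) to show well-definedness and to ensure each successive coefficient lands in the intersection $\BU_q^-[\eta]$, and then descending to ${}_{\BA}\BU_q^-$ exactly as in 1.9.
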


\para{3.11.}
Let $\bB$ be the canonical basis of $\BU_q^-$, and 
$E_i, F_i : \bB \to \bB \cup \{ 0\}$ be Kashiwara operators on $\bB$.  
We consider the partition $\bB = \bigsqcup_{n \ge 0}\bB_{i;n}$.
For $b \in \bB$, we have $\ve_{\s(i)}(\s(b)) = \ve_i(b)$ by definition.
Hence we have $\s(\bB_{i;n}) = \bB_{\s(i); n}$. 
This implies that 
\begin{equation*}
\tag{3.11.1}
\s \circ E_i \circ \s\iv  = E_{\s(i)}, \qquad 
\s \circ F_i \circ \s\iv = F_{\s(i)}.
\end{equation*}

\begin{lem}  
Assume that $(\a_i, \a_j) = 0$.  Then for $b \in \bB_{i;0} \cap \bB_{j;0}$,
we have $\pi_{i;n}\pi_{j;m}(b) = \pi_{j;m}\pi_{i;n}(b)$.  
The map $\pi_{i;n}\pi_{j;m}$ gives a bijection 
\begin{equation*}
\tag{3.12.1}
\pi_{i;n}\pi_{j;m} :\bB_{i;0} \cap \bB_{j;0} \to \bB_{i;n} \cap \bB_{j;m}.
\end{equation*}
\end{lem}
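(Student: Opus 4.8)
The plan is to set up the commutativity $\pi_{i;n}\pi_{j;m} = \pi_{j;m}\pi_{i;n}$ by tracking both sides through the defining congruence \eqref{1.12.2}, and then deduce the bijectivity statement from the factorization $\pi_{i;n}\pi_{j;m}$ together with the bijections $\pi_{i;n}$ and $\pi_{j;m}$ already available from (C7). First I would recall that for $b \in \bB_{i;0}$ the element $\pi_{i;n}(b) = b''$ is the unique element of $\bB_{i;n}$ with $b'' \equiv f_i^{(n)}b \bmod f_i^{n+1}\BU_q^-$, and by (C5) we also have $b'' \equiv (f_i^{(n)}b)_{[i;n]} \bmod q\SL_\ZZ(\infty)$. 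So the whole statement is governed by the interaction of the two projections $(\cdot)_{[i;n]}$ and $(\cdot)_{[j;m]}$, which commute by the analysis in \para{3.9} (the joint decomposition \eqref{3.9.2} of $\BU_q^-$ along $f_i^{(n)}f_j^{(m)}(\BU_q^-[i]\cap\BU_q^-[j])$, valid precisely because $(\a_i,\a_j)=0$ makes ${}_ir$, ${}_jr$, and the left $f_i$-, $f_j$-actions all commute by Lemma 1.4).

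Concretely, the key steps are: (1) Given $b \in \bB_{i;0}\cap\bB_{j;0}$, set $c = \pi_{j;m}(b)$, so $c \in \bB_{j;m}$ and $c \equiv f_j^{(m)}b \bmod f_j^{m+1}\BU_q^-$. I need to check $c \in \bB_{i;0}$, i.e. $\ve_i(c) = 0$: this follows since $b \in \bB_{i;0}$ and the operation $b \mapsto \pi_{j;m}(b)$ does not increase $\ve_i$ — more carefully, $E_i$ and $F_j$ commute on $\bB$ as a consequence of \eqref{3.11.1} being irrelevant here and instead the commutation of Kashiwara operators attached to orthogonal nodes, which one can derive from Lemma 1.4 exactly as in the proof of Lemma 1.5 / Proposition 1.25; so $E_i(c) = E_i F_j^m(b) = F_j^m E_i(b) = 0$. (2) Then $\pi_{i;n}(c)$ is defined, lies in $\bB_{i;n}\cap\bB_{j;m}$, and by applying (C5) twice together with the commuting projections from \eqref{3.9.2} one gets
\[
\pi_{i;n}\pi_{j;m}(b) \equiv \bigl(f_i^{(n)}f_j^{(m)}b\bigr)_{[i;n],[j;m]} \bmod q\SL_\ZZ(\infty),
\]
and the right-hand side is manifestly symmetric in $(i,n)\lra(j,m)$; since distinct canonical basis elements are linearly independent mod $q\SL_\ZZ(\infty)$ and both sides are bar-invariant elements of $\bB$, this forces $\pi_{i;n}\pi_{j;m}(b) = \pi_{j;m}\pi_{i;n}(b)$. (3) For the bijection claim: $\pi_{i;n}\pi_{j;m}$ maps $\bB_{i;0}\cap\bB_{j;0}$ into $\bB_{i;n}\cap\bB_{j;m}$ by step (1)–(2); it is injective as a composite of the injections $\pi_{j;m}$ and $\pi_{i;n}$; surjectivity follows by running $\pi_{j;m}\iv\pi_{i;n}\iv$ on a given element of $\bB_{i;n}\cap\bB_{j;m}$ and checking, by the symmetric argument, that the result lands in $\bB_{i;0}\cap\bB_{j;0}$.

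The main obstacle I expect is step (1)–(3)'s bookkeeping that the various $\ve$-conditions are preserved under the $\pi$'s, i.e. that $F_i$ on $\bB$ commutes with $F_j$ (equivalently $E_i$ with $F_j$) when $(\a_i,\a_j)=0$; this is the combinatorial heart and must be extracted from Lemma 1.4 and the uniqueness of the decomposition \eqref{3.9.2}, rather than being formally immediate from \eqref{3.11.1} (which only handles the $\s$-twist, not orthogonality). Once that commutativity of Kashiwara operators at orthogonal nodes is in hand, everything else is a congruence-mod-$q\SL_\ZZ(\infty)$ computation of exactly the type already carried out in the proof of Lemma 1.13, using that $\bB$ is a $\BZ[q]$-basis of $\SL_\ZZ(\infty)$ (C1) and bar-invariance (C2) to upgrade a mod-$q$ equality to an equality in $\bB$.
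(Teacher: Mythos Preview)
Your plan is correct and matches the paper's argument: both orderings are shown congruent modulo $q\SL_{\BZ}(\infty)$ to the symmetric joint projection $(f_i^{(n)}f_j^{(m)}b)_{[n,m]}$ using (C5) and the double decomposition (3.9.2)/(3.9.3), and bar-invariance together with (C1) then forces equality; the bijection follows by restricting $\pi_{i;n}$, $\pi_{j;m}$ to the relevant intersections, exactly as in your step~(3).

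One correction concerning your step~(1). The references to Lemma~1.5 and ``Proposition~1.25'' are off-target: those results concern $\bigcap_i\Ker{}_ir$ and play no role here. If you want to establish $E_iF_j=F_jE_i$ on $\bB$, the right ingredients are the commutation of $E'_i$ and $F'_j$ on $\BU_q^-$ (immediate from the joint decomposition in 3.9, since both operators are built from the commuting $\Pi_{i,\bullet}$, $\Pi_{j,\bullet}$) together with Proposition~1.17 to pass to $E_i,F_j$ modulo $q\SL_{\BZ}(\infty)$. The paper avoids this detour entirely: it asserts $\ve_j(\pi_{i;n}b)=0$ ``by (3.9.2)'', which unpacks directly via (C5) applied twice through the double projection---if $\ve_j(b_1)=m'>0$ then (C5) for $b_1$ and for $b$ give $b_1\equiv (b_1)_{[n,m']}=f_i^{(n)}b_{[0,m']}\bmod q\SL_{\BZ}(\infty)$ with $b_{[0,m']}\in q\SL_{\BZ}(\infty)$, contradicting $b_1\in\bB$. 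Either route works; the paper's is a bit more direct.
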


\begin{proof}
Take $b \in \bB_{i;0} \cap \bB_{j;0}$. 
Set $b_1 = \pi_{i;n}(b)$ and $b_2 = \pi_{j;m}b_1$.  
Then $b_1$ is the unique element in $\bB$ 
such that $b_1 \equiv f_i^{(n)}b \mod f_i^{n+1}\BU_q^-$. 
Since $\ve_j(b_1) = 0$ by (3.9.2),  
$b_2$ is the unique element in $\bB$ such that 
$b_2 \equiv f_j^{(m)}b_1 \mod f_j^{m +1}\BU_q^-$.
By (C5), $b_1 \equiv (b_1)_{[i;n]} = (f_i^{(n)}b)_{[i;n]} \mod q\SL_{\BZ}(\infty)$, 
and $b_2 \equiv (b_2)_{[j;m]} = (f_j^{(m)}b_1)_{[j;m]} \mod q\SL_{\BZ}(\infty)$. 
By (3.9.3), this implies that  
\begin{equation*}
\tag{3.12.2}
b_2 \equiv (f_j^{(m)}f_i^{(n)}b)_{[n,m]} \mod q\SL_{\BZ}(\infty),
\end{equation*}
where $x_{[n,m]}$ denotes the projection of $x \in {}_{\BA}\BU_q^-$ onto
$f_i^{(n)}f_j^{(m)}{}_{\BA}(\BU_q^-[i] \cap \BU_q^-[j])$. 
If we consider $b_2' = \pi_{i;n}\pi_{j;m}b$, we obtain a similar formula 
as (3.12.2) by replacing $b_2$ by $b_2'$.
Since $b_2$ and $b_2'$ are bar-invariant, this implies that 
$b_2 = b_2'$. Thus we have proved $\pi_{i,n}\pi_{j;m} = \pi_{j;m}\pi_{i;n}$. 
$\pi_{i;n}\pi_{j;m}$ gives a map $\bB_{i;0}\cap \bB_{j;0} \to \bB_{i;n} \cap \bB_{j;m}$. 
This map is bijective since $\pi_{j;m}$ gives a bijection 
$\bB_{i;0} \cap \bB_{j;0} \to \bB_{i;0} \cap \bB_{j;m}$, and 
$\pi_{i;n}$ gives a bijection $\bB_{i;0} \cap \bB_{j;m} \to \bB_{i;n} \cap \bB_{j;m}$. 
The lemma is proved. 
\end{proof}

\para{3.13.}
If $b \in \bB^{\s}$, $\ve_i(b)$ is constant for $i \in \eta$ by 3.11, 
which we denote by $\ve_{\eta}(b)$.  We define
\begin{equation*}
\tag{3.13.1}
\bB^{\s}_{\eta;a} = \{ b \in \bB^{\s} \mid \ve_{\eta}(b) = a\}, 
\qquad \bB^{\s}_{\eta; > a} = \bigsqcup_{a' > a}\bB^{\s}_{\eta;a'}.
\end{equation*} 
We have a partition $\bB^{\s} = \bigsqcup_{a \ge 0}\bB^{\s}_{\eta;a}$. 
By Lemma 3.12, one can define a bijection 
$\pi_{\eta;a} : \bB^{\s}_{\eta;0} \isom \bB^{\s}_{\eta;a}$, 
where $\pi_{\eta;a}$ is the restriction of $\prod_{i \in \eta}\pi_{i;a}$ on 
$\bB^{\s}_{\eta;0}$. 
We define Kashiwara operators $\wt F_{\eta}, \wt E_{\eta} :
 \bB^{\s} \to \bB^{\s} \cup \{ 0\}$ in a similar way as 1.14 by using $\pi_{\eta; a}$. 
Note that $\wt F_{\eta}$ coincides with the restriction of $\prod_{i \in \eta}F_i$
on $\bB^{\s}$, and similarly for $\wt E_{\eta}$.   
\par
Take $b \in \bB^{\s}_{\eta;0}$, and set $b' = \pi_{\eta;a}b$. By using the
decomposition (3.10.1) and the property (C7) of the canonical basis, we see that 
\begin{equation*}
\tag{3.13.2}
\wt f_{\eta}^{(a)}b = \bigl(\prod_{i \in \eta}f_i^{(a)}\bigr)b \equiv b' \mod Z_{\eta; > a}
\end{equation*}
with  $Z_{\eta; > a} = 
     \sum_{(a_i)}\bigl(\prod_{i \in \eta}f_i^{(a_i)}\bigr) {}_{\BA}\BU_q^-$,
where $(a_i)$ runs over all the elements in $\BN^{\eta}$ such that 
$a_i \ge a$ for all $i \in \eta$ and that $a_i >  a$ for some $i$.
Note that $Z_{\eta; > a}$ is $\s$-invariant. 
It follows from (1.9.1) and (C7), $\bigcup_{n' \ge n}\bB_{i;n'}$ gives 
an $\BA$-basis of 
$\sum_{n' \ge n}f_i^{(n')}{}_{\BA}\BU_q^-$.  This implies that 
\par\medskip\noindent
(3.13.3) \ Set $\bB_{\eta; > a} = \bigcup_{(a_i)}\{ b \in \bB \mid \ve_i(b) = a_i\}$, 
where $(a_i)$ runs over all the elements in $\BN^{\eta}$ such that $a_i \ge a$ for 
any $i \in \eta$, and that $a_i > a$ for some $i$.  
Then $\bB_{\eta; > a}$ gives an $\BA$-basis of $Z_{\eta; > a}$. 
\par\medskip

\begin{prop}   
The map $\Phi : {}_{\BA'}\ul\BU_q^- \to \BV_q$ is surjective.
\end{prop}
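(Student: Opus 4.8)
The plan is to show that the image of $\Phi$ contains a set of elements that spans $\BV_q$ over $\BA'$, by matching up the canonical basis $\bB^\s$ with the monomials in the generators $g_\eta^{(a)}$. First I would recall that $\pi(\bB^\s)$ is an $\BA'$-basis of $\BV_q$ (established in 3.6), so it suffices to show that each $\pi(b)$ for $b \in \bB^\s$ lies in the image of $\Phi$. I would prove this by induction on the weight $|\weit b|$. The base case $|\weit b| = 0$ is handled by $b = 1 \mapsto 1$. For the inductive step, take $b \in \bB^\s$ with $\weit b \ne 0$; by (C6) together with the $\s$-invariance discussed in 3.11, there is an orbit $\eta \in \ul I$ with $\ve_\eta(b) = a > 0$. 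Let $b_0 = \wt E_\eta^{\,a} b \in \bB^\s_{\eta;0}$, so that $b = \pi_{\eta;a} b_0$, and $b_0$ has strictly smaller weight, hence by induction $\pi(b_0) = \Phi(u)$ for some $u \in {}_{\BA'}\ul\BU_q^-$.

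The key computation is then to relate $g_\eta^{(a)}\cdot \pi(b_0)$ to $\pi(b)$. By (3.13.2), in ${}_\BA\BU_q^-$ we have $\wt f_\eta^{(a)} b_0 \equiv b \mod Z_{\eta;>a}$, and by (3.13.3) the ideal $Z_{\eta;>a}$ has $\BA$-basis $\bB_{\eta;>a}$; moreover $Z_{\eta;>a}$ is $\s$-invariant. Passing to ${}_{\BA'}\BU_q^{-,\s}$ and then to $\BV_q$, the error term $\wt f_\eta^{(a)} b_0 - b$ lies in the $\s$-fixed part of $Z_{\eta;>a}$, which is spanned over $\BA'$ by orbit sums $O(b')$ with $b' \in \bB_{\eta;>a}$; those with $\s(b') \ne b'$ die in $\BV_q$, while those with $\s(b')=b'$, i.e. $b' \in \bB^\s_{\eta;>a}$, survive but have $\ve_\eta(b') > a$. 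Thus modulo terms $\pi(b')$ with $b' \in \bB^\s_{\eta;>a}$ we get $g_\eta^{(a)} \Phi(u) = \Phi(\ul f_\eta^{(a)} u)$ equal to $\pi(b)$. To close the argument I would run a secondary (downward or structural) induction: the surviving error terms $\pi(b')$ with $\ve_\eta(b') > a$ each have the same weight as $b$ but strictly larger $\eta$-depth, and applying $\wt E_\eta$ to such a $b'$ lands back in a basis element of the same weight but smaller depth; combined with the fact that $\ve_\eta$ is bounded above on the finite-dimensional weight space $(\BU_q^-)_{\weit b}$, this shows every $\pi(b')$ appearing is in the image of $\Phi$, hence so is $\pi(b)$.

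Concretely I would phrase the secondary induction as a downward induction on $a$ at fixed weight $\nu = \weit b$: for $a$ large enough (exceeding the maximal value of $\ve_\eta$ on $(\BU_q^-)_\nu$) the set $\bB^\s_{\eta;a} \cap \bB^\s_\nu$ is empty, so there is nothing to prove; assuming $\pi(b'') \in \Im\Phi$ for all $b'' \in \bB^\s_\nu$ with $\ve_\eta(b'') > a$, the identity $g_\eta^{(a)}\Phi(u) = \pi(b) + \sum_{b' \in \bB^\s_{\eta;>a}\cap\bB^\s_\nu} c_{b'} \pi(b')$ with $c_{b'} \in \BA'$ shows $\pi(b) \in \Im\Phi$. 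Together with the outer induction on $|\nu|$ this exhausts all of $\bB^\s$, giving surjectivity.

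The main obstacle I anticipate is bookkeeping the error term correctly: verifying that $\wt f_\eta^{(a)} b_0 - b$, viewed in $\BV_q$, is really an $\BA'$-linear combination of $\pi(b')$ with $b' \in \bB^\s_{\eta;>a}$ (and not some wilder element), which requires carefully intersecting the $\s$-fixed part of ${}_{\BA'}\BU_q^-$ with $Z_{\eta;>a}$ and using that the $\s$-action permutes the basis $\bB_{\eta;>a}$ so that the $\s$-fixed elements of $\bB'_{\BA'}Z_{\eta;>a}$ are exactly spanned by orbit sums. The surjectivity of $\pi$ and the description (3.13.3) of a basis for $Z_{\eta;>a}$ are exactly what make this step go through; everything else is the two-layer induction on $(|\nu|, a)$.
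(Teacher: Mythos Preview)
Your proposal is correct and follows essentially the same argument as the paper: both use a double induction on $(|\nu|, a)$ (outer induction on weight, inner backward induction on $a = \ve_\eta(b)$), invoke (3.13.2) to write $b = \wt f_\eta^{(a)}b_0 + z$ with $z \in Z_{\eta;>a}^\s$, use (3.13.3) to expand $z$ in orbit sums of $\bB_{\eta;>a}$, and observe that non-fixed orbit sums vanish in $\BV_q$ while $\s$-fixed ones are handled by the inner induction. The only cosmetic difference is that the paper first picks $i \in I$ with $\ve_i(b) > 0$ and then passes to its orbit $\eta$, whereas you pick $\eta$ directly; the content is identical.
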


\begin{proof}
The proposition is proved in a similar way as the proof 
of Proposition 2.10 in [SZ2]. 
We know that the image of $\bB^{\s}$ gives a basis of $\BV_q$. 
Thus it is enough to see, for each $b \in \bB^{\s}$, that
\begin{equation*}
\tag{3.14.1}
\pi(b) \in \Im \Phi.
\end{equation*} 
Take $b \in \bB^{\s}$. Assume that $b \in \bB_{\nu}$.  
If $\nu = 0$, then $b = 1$, and (3.14.1) holds.  So assume that $\nu \ne 0$,
and by induction on $|\nu|$, we may assume that (3.14.1) holds 
for $b' \in \bB_{\nu'}$ such that $|\nu'| < |\nu|$. 
There exists $i \in I$ such that $a = \ve_i(b) > 0$ by (C6). 
We may also assume (by the backward induction on $a$) that, 
if $b' \in \bB^{\s} \cap \bB_{\nu}$ is such that 
$\ve_i(b') > a$, then $b'$ satisfies (3.14.1) 
(note that $\bB^{\s} \cap \bB_{\nu}$ is a finite set, 
hence the set of such $b'$ is empty if $a >> 0$). 
Let $\eta$ be the $\s$-orbit containing $i$.
Then $b \in \bB^{\s}_{\eta; a}$.  We consider $\pi_{\eta;a}\iv (b) = b' \in \bB^{\s}_{\eta;0}$. 
By applying (3.13.2), there exists $z \in Z^{\s}_{\eta; > a}$ such that 
\begin{equation*}
b = \wt f_{\eta}^{(a)}b' + z.
\end{equation*}
 By induction, $\pi(b') \in \Im \Phi$. 
Hence $\pi(\wt f_{\eta}^{(a)}b') = \Phi(\ul f_{\eta}^{(a)})\pi(b') \in \Im \Phi$. 
On the other hand, by (3.13.3), $z$ is written as an $\BA$-linear combination of 
the orbit sum  of $b'' \in \bB_{\eta, > a}$. 
If $b''$ is not $\s$-stable, its orbit sum  is contained in $J$, hence $\pi(b'') = 0$.
If $\s(b'') = b''$, then $b'' \in \bB^{\s} \cap \bB_{\nu}$ satisfies the condition  
$\ve_i(b'') > a$ for $i \in \eta$.  
Hence by induction hypothesis, $\pi(b'') \in \Im \Phi$. It follows that 
$\pi(z) \in \Im\Phi$, and we conclude that $\pi(b) \in \Im \Phi$. 
The proposition is proved.
\end{proof}

\para{3.15.}
Theorem 3.4 is now proved by Proposition 3.8 and 
Proposition 3.14 (modulo Proposition 3.5).  
Note that the discussion in the proof of Theorem 3.4 works 
also for the case where $p = 2$. 

\section{ Canonical bases }

\para{4.1.}
We keep the setup in 3.2.
Thus the order of $\s : I \to I$ 
is a power of $p$, and we consider $\BU_q^-$ and $\ul\BU_q^-$.   
We assume the existence 
of the canonical basis (resp. the canonical signed basis) if $p \ne 2$ 
(if $p = 2$). 
Let $\bB$ be the canonical basis of $\BU_q^-$.   
Our aim  is to construct the canonical basis $\ul\bB$
of $\ul\BU_q^-$ and a bijection $\bB^{\s} \simeq \ul\bB$, by making 
use of the isomorphism $\Phi: {}_{\BA'}\ul\BU_q^- \isom \BV_q$. 
Note that in considering ${}_{\BA'}\BU_q^-$ or $\BV_q$ with $p = 2$, 
there is no difference between the canonical basis and the canonical signed basis.
In the discussion below, we basically consider the case where $p \ne 2$, but 
all the discussion works also for $p = 2$, except Lemma 4.23.

\para{4.2.}
We consider the direct sum decomposition of 
${}_{\BA}\BU_q^-$ given in Lemma 3.10, and 
consider the action $\s$ on ${}_{\BA}\BU_q^-$. 
Then ${}_{\BA}(\BU_q^-[\eta])$ is $\s$-stable, and $\s$ permutes 
the factors $\prod_{i \in \eta}f_i^{(a_i)}$. Those factors are $\s$-invariant
if and only if $a_i$ has a constant value $a$ for any $i \in \eta$, in that case 
$\prod_if_i^{(a_i)} = \wt f_{\eta}^{(a)}$.  If we replace $\BA$ by $\BA'$, 
a similar formula as in (3.10.1) holds for $\BA'$.  Thus we have 

\begin{equation*}
\tag{4.2.1}
{}_{\BA'}\BU_q^{-,\s} \equiv \bigoplus_{a \in \BN}\wt f_{\eta}^{(a)}
            {}_{\BA'}(\BU_q^-[\eta]^{\s}) \quad \mod J. 
\end{equation*} 

Recall that $\pi : {}_{\BA'}\BU_q^{-,\s} \to \BV_q$ is the projection. 
We define a submodule $\BV_q[\eta]$ of $\BV_q$ by 
$\BV_q[\eta] = \pi(_{\BA'}\BU_q^-[\eta]^{\s})$.    
By using the $\BA'$-basis $\pi(\bB^{\s})$ of $\BV_q$,  
the following lemma is easily obtained from (4.2.1).

\begin{lem}  
For $\eta \in \ul I$, we have
\begin{equation*}
\tag{4.3.1}
\BV_q = \bigoplus_{a \in \BN}g_{\eta}^{(a)}\BV_q[\eta].
\end{equation*}
\end{lem}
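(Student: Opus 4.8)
The strategy is to transport the direct sum decomposition (3.10.1) of ${}_{\BA}\BU_q^-$ along the chain ${}_{\BA}\BU_q^- \rightsquigarrow {}_{\BA'}\BU_q^{-,\s} \rightsquigarrow \BV_q$ and then kill the cross terms using the ideal $J$. First I would apply Lemma 3.10 with $\BA$ replaced by $\BA'$: since $f_i^{(a)}$ commutes with $f_j^{(a)}$ for $i,j\in\eta$ (admissibility, as $(\a_i,\a_j)=0$), and since the subspace ${}_{\BA'}(\BU_q^-[\eta])$ and each factor $\prod_{i\in\eta}f_i^{(a_i)}$ behave well under base change to $\BA'$, we get
\begin{equation*}
{}_{\BA'}\BU_q^- = \bigoplus_{(a_i)\in\BN^{\eta}}\Bigl(\prod_{i\in\eta}f_i^{(a_i)}\Bigr){}_{\BA'}(\BU_q^-[\eta]).
\end{equation*}
Next, I take $\s$-fixed points. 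Since $\s$ acts on ${}_{\BA'}\BU_q^-$ permuting the summands — fixing the tuple $(a_i)$ iff it is constant, in which case $\prod_i f_i^{(a_i)}=\wt f_\eta^{(a)}$ — the $\s$-fixed part splits as the constant-tuple summands $\bigoplus_a \wt f_\eta^{(a)}{}_{\BA'}(\BU_q^-[\eta]^{\s})$ together with contributions from non-constant $\s$-orbits, each of which is spanned by orbit sums $O(z)$ with $\s(z)\ne z$, hence lies in $J$. This is precisely (4.2.1).

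Having (4.2.1), I apply the projection $\pi:{}_{\BA'}\BU_q^{-,\s}\to\BV_q$. By definition $\pi(\wt f_\eta^{(a)})=g_\eta^{(a)}$ and $\pi({}_{\BA'}\BU_q^-[\eta]^{\s})=\BV_q[\eta]$, so $\pi$ sends the right-hand side of (4.2.1) onto $\sum_{a\in\BN}g_\eta^{(a)}\BV_q[\eta]$, giving surjectivity of the claimed decomposition. For directness I would use the $\BA'$-basis $\pi(\bB^{\s})$ of $\BV_q$: an element $b\in\bB^{\s}$ lies in $\bB^{\s}_{\eta;a}$ for a unique $a$ (by 3.13), and by (3.13.2) together with (3.13.3), $\pi(b)$ is congruent to $g_\eta^{(a)}\pi(\pi_{\eta;a}\iv(b))$ modulo the span of images of orbit sums of $b''\in\bB_{\eta;>a}$; the non-$\s$-stable ones die in $\BV_q$, and the $\s$-stable ones land in lower $\wt f_\eta^{(a')}$-filtration pieces with $a'>a$. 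Thus each $g_\eta^{(a)}\BV_q[\eta]$ is itself spanned by the images of those $b\in\bB^{\s}$ with $\ve_\eta(b)=a$ (more precisely, by $g_\eta^{(a)}\pi(\text{basis elements of }\BV_q[\eta])$), and these bases for distinct $a$ are disjoint in $\pi(\bB^{\s})$, so the sum is direct.

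The main obstacle is the passage to $\BA'$-coefficients in the first step: one must verify that Lemma 3.10, proved over $\BA$ via the operators $\Pi_{i,a}$ which involve denominators like $q_i^{a(a-1)/2}$, survives base change to $\BA' = \BF[q,q\iv]$ and in particular that ${}_{\BA}(\BU_q^-[\eta])\otimes_{\BA}\BA' = {}_{\BA'}(\BU_q^-[\eta])$ with no loss — i.e. that the direct sum in (3.10.1) remains a direct sum of free $\BA'$-modules after reduction mod $p$. This is exactly the content of "a similar formula as in (3.10.1) holds for $\BA'$'' asserted in 4.2, and it follows because (3.10.1) is a decomposition into free $\BA$-submodules with an $\BA$-basis adapted to it (namely $\bigcup \bB_{\eta;(a_i)}$), so tensoring with $\BA'$ is harmless; once this is granted, everything else is bookkeeping with $\pi$, $J$, and the basis $\pi(\bB^{\s})$.
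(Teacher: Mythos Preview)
Your approach is the same as the paper's: the paper simply says the lemma ``is easily obtained from (4.2.1)'' using the $\BA'$-basis $\pi(\bB^{\s})$ of $\BV_q$, and your proposal spells out exactly this --- first re-deriving (4.2.1) (which is stated in 4.2 of the paper with essentially the same argument you give), then pushing forward by $\pi$.

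One point in your directness argument needs tightening. Your triangularity relation $\pi(b) \equiv g_\eta^{(a)}\pi(b') \bmod \text{span}\,\pi(\bB^{\s}_{\eta;>a})$ for $b \in \bB^{\s}_{\eta;a}$ and $b' = \pi_{\eta;a}\iv(b) \in \bB^{\s}_{\eta;0}$ is correct, but it does \emph{not} show that $g_\eta^{(a)}\BV_q[\eta]$ is spanned by $\{\pi(b) : b \in \bB^{\s}_{\eta;a}\}$, because $\pi(b)$ generally has nonzero components in $g_\eta^{(a')}\BV_q[\eta]$ for $a' > a$ as well, and moreover $\pi(b')$ is not obviously in $\BV_q[\eta]$ (the condition $\ve_\eta(b') = 0$ does not say $b' \in \BU_q^-[\eta]$). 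What your triangularity does give, by descending induction on $a$ in a fixed weight space, is that $\sum_{a' \ge a} g_\eta^{(a')}\BV_q[\eta]$ equals the $\BA'$-span of $\pi(\bB^{\s}_{\eta;\ge a})$, hence is free of rank $|\bB^{\s}_{\eta;\ge a}|$. Combining this with the bound $\operatorname{rank}_{\BA'} g_\eta^{(a)}\BV_q[\eta] \le \operatorname{rank}_{\BA'}\BV_q[\eta] \le |\bB^{\s}_{\eta;0}| = |\bB^{\s}_{\eta;a}|$ (the middle inequality holds because ${}_{\BA'}\BU_q^-[\eta]^{\s}$ is spanned by the orbit sums of $\{b_{[\eta;0]} : b \in \bB_{\eta;0}\}$ and the non-fixed orbit sums lie in $J$), a rank count forces the sum to be direct. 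This is the standard fix and is presumably what the paper has in mind.
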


\para{4.4.}
We consider the direct sum decomposition of ${}_{\BA}\ul\BU_q^-$ 
as in (1.9.1).  Since $\Phi\iv(\pi(\bB^{\s}))$ gives an $\BA'$-basis of 
${}_{\BA'}\ul\BU_q^-$, this induces a direct sum decomposition
\begin{equation*}
\tag{4.4.1}
{}_{\BA'}\ul\BU_q^- = \bigoplus_{a \ge 0}\ul f_{\eta}^{(a)}{}_{\BA'}\ul\BU_q^-[\eta].
\end{equation*}

Comparing (4.3.1) and (4.4.1), together with Theorem 3.4, we have the following. 

\begin{cor}  
Assume that $\eta \in \ul I$. 
The map $\Phi : {}_{\BA'}\ul\BU_q^- \to \BV_q$ induces an isomorphism of $\BA'$-modules, 
\begin{equation*}
\tag{4.5.1}
\Phi : \ul f_{\eta}^{(a)}{}_{\BA'}\ul\BU_q^-[\eta]  \to g_{\eta}^{(a)}\BV_q[\eta].
\end{equation*}
\end{cor}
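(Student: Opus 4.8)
The plan is to deduce Corollary 4.5 as an immediate consequence of Theorem 3.4 together with the two direct sum decompositions (4.3.1) and (4.4.1), by checking that the isomorphism $\Phi$ respects both gradings simultaneously. Since $\Phi : {}_{\BA'}\ul\BU_q^- \isom \BV_q$ is an isomorphism of $\BA'$-algebras by Theorem 3.4, and $\Phi(\ul f_\eta^{(a)}) = g_\eta^{(a)}$ by definition, it suffices to show that $\Phi$ carries the summand $\ul f_\eta^{(a)}{}_{\BA'}\ul\BU_q^-[\eta]$ onto the summand $g_\eta^{(a)}\BV_q[\eta]$ for each $a \ge 0$. First I would record that $\Phi({}_{\BA'}\ul\BU_q^-[\eta]) = \BV_q[\eta]$: by definition ${}_{\BA'}\ul\BU_q^-[\eta]$ is the $\eta$-component in the decomposition (4.4.1), which was constructed precisely by transporting the $\BA'$-basis $\pi(\bB^\s)$ of $\BV_q$ through $\Phi\iv$; dually, $\BV_q[\eta] = \pi({}_{\BA'}\BU_q^-[\eta]^\s)$ is the $\eta$-component of (4.3.1). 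So the content is that under $\Phi$ the ``height-zero'' pieces match up, and this is forced by the fact that (4.4.1) is, by construction in 4.4, the $\Phi$-pullback of the decomposition defining $\BV_q[\eta]$ implicitly — more precisely, one checks that $\Phi$ is compatible with the maps ${}_ir$ (equivalently with the Kashiwara-type operators), since $\Phi$ is an algebra map sending generators to generators and both decompositions are defined via iterated application of these operators.

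The key steps, in order, would be: (1) observe that $\Phi$ sends $\ul f_\eta^{(a)}{}_{\BA'}\ul\BU_q^-[\eta]$ into $g_\eta^{(a)}\Phi({}_{\BA'}\ul\BU_q^-[\eta])$, because $\Phi$ is multiplicative and $\Phi(\ul f_\eta^{(a)} x) = \Phi(\ul f_\eta^{(a)})\Phi(x) = g_\eta^{(a)}\Phi(x)$; (2) identify $\Phi({}_{\BA'}\ul\BU_q^-[\eta])$ with $\BV_q[\eta]$, using that the decomposition (4.4.1) was defined by pulling back the $\BA'$-basis $\pi(\bB^\s)$, so that the height filtration on ${}_{\BA'}\ul\BU_q^-$ indexing (4.4.1) matches, term by term, the height filtration on $\BV_q$ indexing (4.3.1) under $\Phi$; (3) conclude that $\Phi$ maps the $a$-th summand of (4.4.1) into the $a$-th summand of (4.3.1); (4) since $\Phi$ is a bijection carrying a direct sum decomposition into a direct sum decomposition term by term, each induced map (4.5.1) is automatically an isomorphism of $\BA'$-modules. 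Step (4) uses the elementary fact that if $\varphi : M \isom N$ is an isomorphism and $M = \bigoplus_a M_a$, $N = \bigoplus_a N_a$ with $\varphi(M_a) \subset N_a$ for all $a$, then $\varphi(M_a) = N_a$ for all $a$.

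The main obstacle I expect is step (2): making precise why $\Phi({}_{\BA'}\ul\BU_q^-[\eta]) = \BV_q[\eta]$ rather than merely $\Phi({}_{\BA'}\ul\BU_q^-[\eta]) \subseteq \BV_q$. The cleanest route is to note that the decomposition (4.4.1) is defined in 4.4 by declaring ${}_{\BA'}\ul\BU_q^-[\eta]$ to be spanned by those basis elements $\Phi\iv(\pi(b))$, $b \in \bB^\s$, with $\ve_\eta(b) = 0$, while $g_\eta^{(a)}\BV_q[\eta]$ in (4.3.1) is spanned by $\pi(b)$ with $\ve_\eta(b) = a$ (this is exactly what (4.2.1) and Lemma 4.3 encode via the $\BA'$-basis $\pi(\bB^\s)$). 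Thus on the level of the distinguished bases, $\Phi$ sends the basis of $\ul f_\eta^{(a)}{}_{\BA'}\ul\BU_q^-[\eta]$ to the basis $\{\pi(b) : \ve_\eta(b) = a\}$ of $g_\eta^{(a)}\BV_q[\eta]$ bijectively; everything else is bookkeeping. Once that bijection of distinguished bases is spelled out, the corollary follows at once, and no further computation is needed.
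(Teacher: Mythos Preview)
Your overall strategy---deduce the corollary from Theorem 3.4 and the two decompositions (4.3.1), (4.4.1), by checking that $\Phi$ carries summands to summands---is exactly the paper's one-line approach, and you rightly flag step (2), the identification $\Phi({}_{\BA'}\ul\BU_q^-[\eta]) = \BV_q[\eta]$, as the only point with content.

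Your justification of step (2), however, rests on a misreading. The decomposition (4.4.1) is \emph{not} defined by declaring ${}_{\BA'}\ul\BU_q^-[\eta]$ to be the span of the basis elements $\Phi\iv(\pi(b))$ with $\ve_\eta(b)=0$: it is the $\BA'$-form of the intrinsic decomposition (1.9.1), so ${}_{\BA'}\ul\BU_q^-[\eta]$ is the $\BA'$-form of $\Ker\,{}_\eta r$. Likewise $\BV_q[\eta]=\pi({}_{\BA'}\BU_q^-[\eta]^{\s})$ comes from (3.10.1), not from a span of $\pi(b)$'s. Indeed the paper records later (4.8 and 4.10, both of which \emph{use} Corollary 4.5) that $\BA'$-bases of the individual summands $g_\eta^{(a)}\BV_q[\eta]$ and $\ul f_\eta^{(a)}{}_{\BA'}\ul\BU_q^-[\eta]$ are given by the \emph{projections} $b_{[\eta;a]}$, not by the $b$'s themselves; for $b\in\ul\bB^\bullet_{\eta;0}$ one only has $b\equiv b_{[\eta;0]}\bmod q\,\ul\SL_{\BF}(\infty)$ (Corollary 4.11), not equality. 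So your bijection-of-bases argument does not prove step (2), and appealing to 4.8 or 4.10 would be circular.

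A way to repair step (2) cleanly: first observe that on each side the tail $\bigoplus_{a'\ge a}(\text{summand}_{a'})$ equals $\sum_{a'\ge a}\ul f_\eta^{(a')}{}_{\BA'}\ul\BU_q^-$ (resp.\ $\sum_{a'\ge a}g_\eta^{(a')}\BV_q$), and these correspond under $\Phi$ since $\Phi$ is an algebra isomorphism with $\Phi(\ul f_\eta^{(a')})=g_\eta^{(a')}$. Next, pass to $\BF(q)$: both decompositions are orthogonal by (1.9.2), $\Phi$ preserves the form (Prop.~3.8(i)), and the form is nondegenerate on each filtration piece (Prop.~3.7 together with (1.9.2)). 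Hence over $\BF(q)$ the $a$-th summand is intrinsically $F_a\cap F_{a+1}^{\perp}$ on each side, and $\Phi$ matches these. Intersecting back with the $\BA'$-lattices then gives (4.5.1) over $\BA'$. Alternatively, one can argue that $\Phi$ intertwines the projection operators $\Pi_{\eta,a}$ of (1.7.1), since ${}_\eta r$ is the (scaled) adjoint of left multiplication by $\ul f_\eta$ and $\Phi$ preserves both the product and the form.
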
 

\para{4.6.}
Recall that $\SL_{\BZ}(\infty)$ is 
a $\BZ[q]$-submodule of ${}_{\BA}\BU_q^-$ spanned by $\bB$.  Then 
$\s$ acts on $\SL_{\BZ}(\infty)$, and $\pi(\SL_{\BZ}(\infty)^{\s})$ 
gives an $\BF[q]$-submodule of $\BV_q$ spanned by $\pi(\bB^{\s})$. 
We denote $\pi(\SL_{\BZ}(\infty)^{\s})$ by ${}_{\BF[q]}\BV_q$ and 
$\pi(\bB^{\s})$ by $\bB^{\dia}$.  
$\bB^{\dia}$ gives an $\BF[q]$-basis of ${}_{\BF[q]}\BV_q$. 
\par
Let $\eta \in \ul I$. 
By (3.10.1), we have a direct sum decomposition 
\begin{equation*}
\tag{4.6.1}
\wt f_{\eta}^{(a)}{}_{\BA}\BU_q^- = \wt f_{\eta}^{(a)}{}_{\BA}(\BU_q^-[\eta]) \oplus 
           \bigoplus_{(a_i)}(\prod_{i \in \eta}f_i^{(a_i)}){}_{\BA}(\BU_q^-[\eta]),     
\end{equation*}
where $(a_i)_{i \in \eta}$ is such that $a_i \ge a$ for any $i$, 
and that $a_i > a$ for some $i$.  
Note that the second term coincides with $Z_{\eta; > a}$ given in (3.13.2).
For $x \in \wt f_{\eta}^{(a)}{}_{\BA}\BU_q^-$, 
let $x_{[\eta; a]}$ 
be the projection of $x$ onto $\wt f_{\eta}^{(a)}{}_{\BA}(\BU_q^-[\eta])$.  
By the discussion in 3.9, if 
$x \in \SL_{\BZ}(\infty)$, then $x_{[\eta; a]}$ belongs to $\SL_{\BZ}(\infty)$. 
$\wt f_{\eta}^{(a)}\BU_q^-$ has a basis 
$\bB_{\eta;\ge a} = \bB_{\eta; > a} \sqcup \bB_{\eta;a}$, where 
$\bB_{\eta;a} = \bigcap_{i \in \eta}\bB_{i; a}$.
Let ${}_{\BZ[q]}\wt f_{\eta}^{(a)}\BU_q^-$ be the $\BZ[q]$-submodule
of $\wt f_{\eta}^{(a)}\BU_q^-$ spanned by $\bB_{\eta; \ge a}$. Then 
${}_{\BZ[q]}\wt f_{\eta}^{(a)}\BU_q^-$ is a $\BZ[q]$-submodule of $\SL_{\BZ}(\infty)$. 
On the other hand, $\{ b_{[\eta;a]} \mid b \in \bB_{\eta;a}\}$ gives an $\BA$- basis 
of ${}_{\BA}\wt f_{\eta}^{(a)}\BU_q^-[\eta]$, and $\bB_{\eta; > a}$ gives an $\BA$-basis of
$Z_{\eta; > a}$. 
We denote by ${}_{\BZ[q]}\wt f_{\eta}^{(a)}\BU_q^-[\eta]$ the $\BZ[q]$-submodule of 
$\wt f_{\eta}^{(a)}\BU_q^-[\eta]$ spanned by $\{ b_{[\eta,a]} \mid b \in \bB_{\eta;a}\}$, 
and by ${}_{\BZ[q]}Z_{\eta; >a}$
the $\BZ[q]$-submodule of $Z_{\eta; > a}$ spanned by $\bB_{\eta; > a}$.  Thus we have
a decomposition as $\BZ[q]$-submodules of $\SL_{\BZ}(\infty)$, 
\begin{equation*}
\tag{4.6.2}
{}_{\BZ[q]}\wt f_{\eta}^{(a)}\BU_q^- = 
         {}_{\BZ[q]}\wt f_{\eta}^{(a)}\BU_q^-[\eta] \oplus {}_{\BZ[q]}Z_{\eta; >a}.
\end{equation*}   

Note that ${}_{\BZ[q]}\wt f_{\eta}^{(a)}\BU_q^-[\eta]$ 
coincides with $\wt f_{\eta}^{(a)}({}_{\BZ[q]}\BU_q^-[\eta])$. 
Also note that (4.6.2) holds if we replace $\eta$ by any subset $\g \subset \eta$,
and define $\wt f_{\g}^{(a)}\BU_q^-[\g], Z_{\g; >a}$ accordingly. 
\par 
We show a lemma.

\begin{lem}  
Assume that $b \in \bB_{\eta; a}$.  Then 
$b - b_{[\eta;a]} \in q\SL_{\BZ}(\infty)$.
\end{lem}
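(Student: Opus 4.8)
The plan is to reduce the claim about the $\eta$-orbit projection $b_{[\eta;a]}$ to the single-index statement (C5), applied successively to the indices in $\eta$, using the compatibility of the projections $x \mapsto x_{[i;n]}$ established in Section 3.9. First I would recall from Lemma 3.12 (more precisely from (3.12.2) and its generalization used in defining $\pi_{\eta;a}$) that if $b \in \bB_{\eta;a}$, i.e. $\ve_i(b) = a$ for every $i \in \eta$, then writing $\eta = \{i_1,\dots,i_r\}$ one has a nested decomposition $b = \sum_{(n_1,\dots,n_r)} f_{i_1}^{(n_1)}\cdots f_{i_r}^{(n_r)} x_{n_1,\dots,n_r}$ with $x_{n_1,\dots,n_r} \in \bigcap_{s}\BU_q^-[i_s]$, generalizing (3.9.1)--(3.9.2) to an arbitrary orbit $\eta$ (the commuting of the ${}_{i_s}r$ and of the left $f_{i_s}$-actions, Lemma 1.4, is exactly what makes this decomposition well-defined and the projection $b_{[\eta;a]} = \wt f_\eta^{(a)} x_{a,\dots,a}$ unambiguous).

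The key step is then an induction on $|\eta|$. For $|\eta|=1$ this is precisely property (C5): $b \equiv b_{[i;a]} \bmod q\SL_{\BZ}(\infty)$. For the inductive step, pick $i \in \eta$ and write $\eta = \g \sqcup \{i\}$. Since $b \in \bB_{i;a}$, property (C5) gives $b \equiv b_{[i;a]} = f_i^{(a)} b' \bmod q\SL_{\BZ}(\infty)$, where $b' = E_i^{a}b \in \bB_{i;0}$ and $f_i^{(a)}b' = b_{[i;a]}$ lies in $\SL_{\BZ}(\infty)$ by Lemma 1.11. The element $b'$ still satisfies $\ve_j(b') = a$ for all $j \in \g$ — this is the content of Lemma 3.12, since $\pi_{j;a}$ and $\pi_{i;a}$ commute, so applying $E_i^{a}$ does not change the $j$-string length for $j \in \g$ — hence $b' \in \bB_{\g; a}$ (as an element of the canonical basis $\bB$ of $\BU_q^-$, which has orbit $\g$ inside it); actually one must be slightly careful here, since $\g$ is a union of $\tau$-orbits rather than a single $\s$-orbit, but the decomposition (4.6.2) and the remark following it ("(4.6.2) holds if we replace $\eta$ by any subset $\g \subset \eta$") are stated exactly for this purpose, so by induction on $|\g|$ one gets $b' \equiv b'_{[\g;a]} \bmod q\SL_{\BZ}(\infty)$.

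Finally I would transport this congruence through left multiplication by $\wt f_\g^{(a)} = \prod_{j\in\g} f_j^{(a)}$ (which preserves $q\SL_{\BZ}(\infty)$, since each $f_j^{(n)}$ does by (1.9.2)/the stability statement in 1.10): from $b' \equiv b'_{[\g;a]}$ we get $f_i^{(a)}b' \equiv f_i^{(a)} b'_{[\g;a]} \bmod q\SL_{\BZ}(\infty)$, and by the uniqueness of the decomposition (and $b'_{[\g;a]} \in \BU_q^-[\g]\subset\BU_q^-[i]$) the right-hand side is exactly $\wt f_\eta^{(a)} x_{a,\dots,a} = b_{[\eta;a]}$. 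Combining with $b \equiv f_i^{(a)}b'$ yields $b \equiv b_{[\eta;a]} \bmod q\SL_{\BZ}(\infty)$, as desired. The main obstacle I anticipate is purely bookkeeping: making the "nested projection" compatibility precise, i.e. verifying that $(f_i^{(a)}b')_{[\g;a]} = f_i^{(a)}(b'_{[\g;a]})$ and that this equals the intended $\eta$-projection $b_{[\eta;a]}$ under the decomposition (4.6.2) — this is where Lemma 3.12 and the generalization (3.9.3)$\to$(3.10.1) of the commuting-projectors argument do all the real work, and everything else is a congruence manipulation modulo $q\SL_{\BZ}(\infty)$.
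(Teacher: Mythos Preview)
Your overall strategy---induct on the size of the index set using (C5) as the base case---is exactly right, and it is essentially what the paper does. But there are two genuine slips in your execution that you should fix, and comparing with the paper's argument shows how to avoid them entirely.

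First, the equality $b_{[i;a]} = f_i^{(a)}b'$ with $b' = E_i^a b$ is false in general. The element $b' \in \bB_{i;0}$ satisfies $\ve_i(b') = 0$, i.e.\ $b' \notin f_i\BU_q^-$, but this does \emph{not} mean $b' \in \BU_q^-[i] = \Ker{}_ir$. What is true is $b_{[i;a]} = (f_i^{(a)}b')_{[i;a]} = f_i^{(a)}(b')_{[i;0]}$, and then $(b')_{[i;0]} \equiv b' \bmod q\SL_{\BZ}(\infty)$ by (C5) applied to $b'$. So your equality should be a congruence.

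Second, and for the same reason, the inclusion ``$b'_{[\g;a]} \in \BU_q^-[\g] \subset \BU_q^-[i]$'' is wrong on both counts: $b'_{[\g;a]}$ lies in $\wt f_\g^{(a)}\BU_q^-[\g]$, not in $\BU_q^-[\g]$ itself, and in any case $\BU_q^-[\g] = \bigcap_{j\in\g}\BU_q^-[j]$ has no reason to sit inside $\BU_q^-[i]$ when $i \notin \g$. Hence $f_i^{(a)}b'_{[\g;a]}$ need not equal $b_{[\eta;a]}$ exactly; again only the congruence $f_i^{(a)}b'_{[\g;a]} \equiv b_{[\eta;a]} \bmod q\SL_{\BZ}(\infty)$ holds, which you must prove by inserting an extra $[i;0]$-projection and another application of (C5). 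Your argument survives with these corrections, but it becomes more tangled than you suggest.

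The paper sidesteps all of this by keeping $b$ fixed throughout and inducting instead over subsets $\g \subset \eta$, proving directly that $b \equiv b_{[\g;a]} \bmod q\SL_{\BZ}(\infty)$. For the step from $\g$ to $\g' = \g \sqcup \{j\}$, one writes $b = b_{[\g;a]} + \sum_{b''} a_{b''}b''$ with $a_{b''} \in q\BZ[q]$ and $b'' \in \bB_{\g;>a}$, then applies the single $[j;a]$-projection to both sides: the projection of $b_{[\g;a]}$ is $b_{[\g';a]}$ by commutativity of the projectors, the projections of the $b''$ stay in $\SL_{\BZ}(\infty)$ by Lemma~1.11, and the left-hand side is $b_{[j;a]} \equiv b$ by (C5). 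No Kashiwara operators, no auxiliary canonical basis element $b'$, and every step is a genuine equality of projections followed by one congruence at the end. Your route through $E_i^a b$ reintroduces exactly the discrepancy between $\bB_{i;0}$ and $\BU_q^-[i]$ that the direct projection argument never sees.
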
 

\begin{proof}
We consider the following statement.  
\par\medskip\noindent
(4.7.1) \ For any subset $\g \subset \eta$, 
$b - b_{[\g;a]} \in q\SL_{\BZ}(\infty)$. 
\par\medskip
We prove (4.7.1) by induction on $|\g|$. If $|\g| = 1$, it is certainly true by 
(1.12.1).  So assume that (4.7.1) holds for $\g \subset \eta$, and prove 
that it holds for $\g' = \g \sqcup \{ j\} \subset \eta$. 
By using the induction hypothesis, and (4.6.2) for $\g$, one can write as
$b = b_{[\g;a]} + \sum_{b'}a_{b'}b'$, where $b' \in \bB_{\g; > a}$ and 
$a_{b'} \in q\BZ[q]$.  
We consider the decomposition 
$f_j^{(a)}\BU_q^- = f_j^{(a)}\BU_q^-[j] \oplus f_j^{(a+1)}\BU_q^-$. 
Write $b_{[\g;a]} = x_1 + x_2, b' = y'_1 + y'_2$ according to this decomposition, 
where $x_1, y_1' \in f_j^{(a)}\BU_q^-[j]$ and 
$x_2, y_2' \in f_j^{(a+1)}\BU_q^-$.  Then $x_1$ coincides with $b_{[\g';a]}$, and 
$x_1 + \sum_{b'}a_{b'}y_1'$ coincides with $b_{[j;a]}$. 
By (1.12.1), $b \equiv b_{[j;a]} \mod q\SL_{\BZ}(\infty)$.  
Since $y_1' \in \SL_{\BZ}(\infty)$, $\sum_{b'}a_{b'}y_1' \in q\SL_{\BZ}(\infty)$. 
It follows that $b \equiv x_1 = b_{[\g';a]} \mod q\SL_{\BZ}(\infty)$, and 
(4.7.1) holds for $\g'$.  The lemma is proved. 
\end{proof}

\para{4.8.}
Let ${}_{\BF[q]}\BV_q$ be the $\BF[q]$-submodule of $\BV_q$ spanned by $\bB^{\dia}$ 
as in 4.6.
Take $\eta \in \ul I$. We consider the decomposition of $\BV_q$ 
as in Lemma 4.3, and  for $x \in \BV_q$, 
we denote by $x_{[\eta;a]}$ the projection of $x$ onto $g_{\eta}^{(a)}\BV_q[\eta]$. 
Then $\{ b_{[\eta:a]} \mid b \in \bB^{\dia}_{\eta;a}\}$ gives an $\BA'$-basis of 
$g^{(a)}_{\eta}\BV_q[\eta]$, and $\bB^{\dia}_{\eta; > a}$ gives an $\BA'$-basis 
of $\sum_{a' >  a}g^{(a')}_{\eta}\BV_q$ (here set $\bB_{\eta;a}^{\dia} = \pi(\bB_{\eta; a}^{\s})$
and $\bB^{\dia}_{\eta; > a} = \pi(\bB_{\eta; > a}^{\s})$).
For each $b \in \bB^{\dia}_{\eta; a}$, $b_{[\eta;a]}$ belongs to 
${}_{\BF[q]}\BV_q$.   We denote by ${}_{\BF[q]}g_{\eta}^{(a)}\BV_q[\eta]$ 
the $\BF[q]$-submodule of ${}_{\BF[q]}\BV_q$ spanned by $b_{[\eta;a]}$, which 
coincides with $g_{\eta}^{(a)}{}_{\BF[q]}\BV_q[\eta]$.    
Thus we have a decomposition as $\BF[q]$-submodule of ${}_{\BF[q]}\BV_q$, 
\begin{equation*}
\tag{4.8.1}
\sum_{a' \ge a}g_{\eta}^{(a')}{}_{\BF[q]}\BV_q
       = g_{\eta}^{(a)}{}_{\BF[q]}\BV_q[\eta] \oplus 
           \sum_{a' > a}g_{\eta}^{(a')}{}_{\BF[q]}\BV_q
\end{equation*}

As a corollary to Lemma 4.7, we have the following.

\begin{cor}  
For $b \in \bB^{\dia}_{\eta; a}$, we have 
$b - b_{[\eta;a]} \in q({}_{\BF[q]}\BV_q)$. 
\end{cor}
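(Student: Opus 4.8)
The plan is to lift the assertion to ${}_{\BA}\BU_q^-$ through the projection $\pi$, invoke Lemma 4.7 there, and then push the conclusion back down to $\BV_q$.

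First I would choose a preimage. Since $\bB^{\dia}_{\eta;a} = \pi(\bB^{\s}_{\eta;a})$, write $b = \pi(\wt b)$ with $\wt b \in \bB^{\s}_{\eta;a}$. As $\ve_i(\wt b) = a$ for every $i \in \eta$, we have $\wt b \in \bB_{\eta;a}$, so Lemma 4.7 is applicable to $\wt b$. Moreover $\wt b \in {}_{\BA}\BU_q^{-,\s}$ and, by 4.2, the decomposition (4.6.1) of $\wt f_{\eta}^{(a)}{}_{\BA}\BU_q^-$ is $\s$-stable with $\wt f_{\eta}^{(a)}{}_{\BA}\BU_q^-[\eta]$ a $\s$-stable summand; hence one can write $\wt b = \wt b_{[\eta;a]} + z$ with $\wt b_{[\eta;a]} \in (\wt f_{\eta}^{(a)}{}_{\BA}\BU_q^-[\eta])^{\s}$ and $z \in Z_{\eta;>a}^{\s}$ both again $\s$-fixed.

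The heart of the argument is to check that $\pi$ carries this decomposition to the decomposition of $\BV_q$ in Lemma 4.3; concretely, that $\pi(\wt b_{[\eta;a]}) = b_{[\eta;a]}$. Writing $\wt b_{[\eta;a]} = \wt f_{\eta}^{(a)}x$ with $x \in ({}_{\BA}\BU_q^-[\eta])^{\s}$ (possible by the remark in 4.6), its image is $g_{\eta}^{(a)}\pi(x) \in g_{\eta}^{(a)}\BV_q[\eta]$. For $z$: by (3.13.3) it is an $\BA$-combination of orbit sums of elements of $\bB_{\eta;>a}$; the orbit sums of the non-$\s$-stable ones lie in $J$ and so are killed by $\pi$, while the $\s$-stable ones have images in $\bB^{\dia}_{\eta;>a}$, which by 4.8 is an $\BA'$-basis of $\sum_{a'>a}g_{\eta}^{(a')}\BV_q$. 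Hence $\pi(z) \in \sum_{a'>a}g_{\eta}^{(a')}\BV_q$, so $b = g_{\eta}^{(a)}\pi(x) + \pi(z)$ is exactly the splitting of $b$ along (4.3.1), which forces $b_{[\eta;a]} = \pi(\wt b_{[\eta;a]})$.

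Finally I would conclude: $b - b_{[\eta;a]} = \pi(\wt b - \wt b_{[\eta;a]})$, and by Lemma 4.7 the element $\wt b - \wt b_{[\eta;a]}$ lies in $q\SL_{\BZ}(\infty)$; being a difference of two $\s$-fixed elements it lies in $q\,\SL_{\BZ}(\infty)^{\s}$. Since ${}_{\BF[q]}\BV_q = \pi(\SL_{\BZ}(\infty)^{\s})$ by definition (4.6), applying $\pi$ yields $b - b_{[\eta;a]} \in q({}_{\BF[q]}\BV_q)$, as desired. The one step demanding real care is the compatibility of the two direct-sum decompositions under $\pi$ — that passing to $\s$-fixed points modulo $p$ and then killing $J$ does not disturb the projection onto the ``$a$-th layer'' — and this is precisely where (3.13.3), the behaviour of orbit sums in $J$, and the bases described in 4.8 enter.
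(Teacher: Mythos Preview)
Your proof is correct and follows exactly the approach the paper intends: the paper simply states this as ``a corollary to Lemma 4.7'' without further detail, and your argument is the natural unpacking of that claim --- lift to $\wt b \in \bB^{\s}_{\eta;a}$, apply Lemma 4.7, verify (via (3.13.3) and the $\s$-stability of the decomposition in (4.6.1)) that $\pi$ carries $\wt b_{[\eta;a]}$ to $b_{[\eta;a]}$, and push the congruence down through $\pi(\SL_{\BZ}(\infty)^{\s}) = {}_{\BF[q]}\BV_q$.
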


\para{4.10.}
We consider the $\BA'$-algebra isomorphism 
$\Phi:{}_{\BA'}\ul\BU_q^- \isom \BV_q$, and define 
$\BA'$-basis $\ul\bB^{\bullet}$ of ${}_{\BA'}\ul\BU_q^-$ by
$\ul\bB^{\bullet} = \Phi\iv(\bB^{\dia})$. 
Consider the decomposition ${}_{\BA'}\ul\BU_q^- = 
    \bigoplus_{a \ge 0}\ul f_{\eta}^{(a)}{}_{\BA'}\ul\BU_q^-[\eta]$ in (4.4.1).
Then by Corollary 4.5, $\{ b_{[\eta;a]} \mid b \in \ul\bB^{\bullet}_{\eta;a}\}$ gives 
an $\BA'$-basis of $\ul f_{\eta}^{(a)}\ul\BU_q^-[\eta]$,  
and $\ul\bB^{\bullet}_{\eta; >a}$ gives an $\BA'$-basis of 
$\sum_{a' > a}\ul f^{(a')}_{\eta}{}_{\BA'}\ul\BU_q^-$. 
Here we set $\ul\bB^{\bullet}_{\eta;a} = \Phi\iv(\bB^{\dia}_{\eta;a})$, and 
$\ul\bB^{\bullet}_{\eta; > a} = \Phi\iv(\bB^{\dia}_{\eta; > a})$.  
Let $\ul\SL_{\BF}(\infty)$ be the  $\BF[q]$-submodule of ${}_{\BA'}\ul\BU_q^-$
spanned by $\ul\bB^{\bullet}$. Then for each $b \in \ul\bB^{\bullet}_{\eta;a}$, 
$b_{[\eta;a]}$ belongs to $\ul\SL_{\BF}(\infty)$, and we denote by 
${}_{\BF[q]}\ul f^{(a)}_{\eta}\ul\BU_q^-[\eta]$ the $\BF[q]$-submodule
of ${}_{\BA'}\ul f_{\eta}^{(a)}\ul\BU_q^-[\eta]$ spanned by $b_{[\eta;a]}$ 
for $b \in \bB^{\bullet}_{\eta;a}$, which coincides with 
$\ul f_{\eta}^{(a)}{}_{\BF[q]}\ul\BU_q^-[\eta]$. 
Thus we have a decomposition as $\BF[q]$-submodules of $\ul\SL_{\BF}(\infty)$,

\begin{equation*}
\tag{4.10.1}
\sum_{a' \ge a}\ul f_{\eta}^{(a')}{}_{\BF[q]}\ul\BU_q^-
     = \ul f^{(a)}_{\eta}{}_{\BF[q]}\ul\BU_q^-[\eta] 
         \oplus \sum_{a' >  a}\ul f_{\eta}^{(a')}{}_{\BF[q]}\ul\BU_q^-.
\end{equation*}

Corollary 4.9 can be rewritten as follows;

\begin{cor}  
For $b \in \ul\bB^{\bullet}_{\eta;a}$, we have
$b - b_{[\eta;a]} \in q\ul\SL_{\BF}(\infty)$. 
\end{cor}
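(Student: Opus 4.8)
The plan is to transport Corollary 4.9 across the isomorphism $\Phi : {}_{\BA'}\ul\BU_q^- \isom \BV_q$ of Theorem 3.4. The statement of Corollary 4.11 is, word for word, the $\Phi\iv$-image of Corollary 4.9, so the only content is to check that $\Phi$ is compatible with each object occurring there: the $\BF[q]$-lattices $\ul\SL_{\BF}(\infty)$ and ${}_{\BF[q]}\BV_q$, the $\eta$-decompositions (4.10.1) and (4.8.1), and the distinguished subsets $\ul\bB^{\bullet}_{\eta;a}$ and $\bB^{\dia}_{\eta;a}$ of the two bases.

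First I would record that, by the very definition $\ul\bB^{\bullet} = \Phi\iv(\bB^{\dia})$, the map $\Phi$ carries the $\BF[q]$-basis $\ul\bB^{\bullet}$ of $\ul\SL_{\BF}(\infty)$ bijectively onto the $\BF[q]$-basis $\bB^{\dia}$ of ${}_{\BF[q]}\BV_q$; since $\Phi$ is $\BA'$-linear, hence $\BF[q]$-linear, it restricts to an isomorphism $\ul\SL_{\BF}(\infty) \isom {}_{\BF[q]}\BV_q$ of $\BF[q]$-modules, and in particular $\Phi\bigl(q\ul\SL_{\BF}(\infty)\bigr) = q\bigl({}_{\BF[q]}\BV_q\bigr)$. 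Next I would note that $\Phi$ identifies $\ul\bB^{\bullet}_{\eta;a}$ with $\bB^{\dia}_{\eta;a}$ (again by definition), and, by Corollary 4.5 together with Lemma 4.3 and (4.4.1), identifies the decomposition (4.10.1) of $\ul\SL_{\BF}(\infty)$ with the decomposition (4.8.1) of ${}_{\BF[q]}\BV_q$ summand by summand; consequently $\Phi(b_{[\eta;a]}) = \Phi(b)_{[\eta;a]}$ for every $b$ lying in $\wt f$-type summand $\ul f_{\eta}^{(a)}{}_{\BA'}\ul\BU_q^-$.

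Granting these identifications the conclusion is immediate: for $b \in \ul\bB^{\bullet}_{\eta;a}$ we have $\Phi(b) \in \bB^{\dia}_{\eta;a}$, so Corollary 4.9 gives $\Phi(b - b_{[\eta;a]}) = \Phi(b) - \Phi(b)_{[\eta;a]} \in q\bigl({}_{\BF[q]}\BV_q\bigr)$, and applying the $\BF[q]$-module isomorphism $\Phi\iv$ yields $b - b_{[\eta;a]} \in q\ul\SL_{\BF}(\infty)$. The only point needing a little care — and the one I would regard as the (minor) crux — is the book-keeping of projections: one must verify that the projector $x \mapsto x_{[\eta;a]}$ on $\BV_q$ used in Corollary 4.9 and the projector on ${}_{\BA'}\ul\BU_q^-$ used here are genuinely intertwined by $\Phi$, i.e. that $\Phi$ sends $\ul f_{\eta}^{(a)}{}_{\BA'}\ul\BU_q^-[\eta]$ onto $g_{\eta}^{(a)}\BV_q[\eta]$ and the complement $\sum_{a' > a}\ul f_{\eta}^{(a')}{}_{\BA'}\ul\BU_q^-$ onto $\sum_{a' > a}g_{\eta}^{(a')}\BV_q$. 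This is exactly what Corollary 4.5 supplies, so no new argument is required; everything else is formal.
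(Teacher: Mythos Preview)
Your proposal is correct and follows exactly the paper's approach: the paper itself presents Corollary 4.11 simply as ``Corollary 4.9 can be rewritten as follows'' after setting up in 4.10 precisely the compatibilities you spell out (the definition $\ul\bB^{\bullet} = \Phi\iv(\bB^{\dia})$, the $\BF[q]$-lattice $\ul\SL_{\BF}(\infty)$ spanned by $\ul\bB^{\bullet}$, and the decomposition (4.10.1) matched to (4.8.1) via Corollary 4.5). You have merely made explicit the bookkeeping that the paper leaves implicit.
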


\para{4.12.}
Since $\bB$ is almost orthonormal, 
$\bB^{\dia}$ gives an almost orthonormal basis 
of $\BV_q$ in the sense that, for $b, b' \in \bB^{\dia}$, 
\begin{equation*}
\tag{4.12.1}
(b, b') \in \d_{b,b'} + (q\BF[[q]] \cap \BF(q)). 
\end{equation*} 
$\Phi$ is compatible with the bilinear forms on ${}_{\BA'}\ul\BU_q^-$
and on $\BV_q$ by Proposition 3.8 (i).  
Hence $\ul\bB^{\bullet}$ gives an almost orthonormal basis 
of ${}_{\BA'}\ul\BU_q^-$ in the sense of (4.12.1). 
\par
Moreover, since $\pi$ and $\Phi$ are compatible with the bar-involution, 
any $b \in \ul\bB^{\bullet}$ is bar-invariant. 
\par 
Recall that $\ul\bB^{\bullet}_{\eta;a} = \Phi\iv(\bB^{\dia}_{\eta;a})$ 
for each $\eta \in \ul I, a \ge 0$. 
Kashiwara operators $\wt E_{\eta}, \wt F_{\eta}$ on $\bB^{\dia}$ induces 
Kashiwara operators $\ul E_{\eta}, \ul F_{\eta}$ on $\ul\bB^{\bullet}$.
$\ul F_{\eta}$ gives a bijection 
$\ul\bB^{\bullet}_{\eta; a} \isom  \ul\bB^{\bullet}_{\eta; a+1}$, and 
$\ul E_{\eta}$ gives the inverse map  
$\ul\bB^{\bullet}_{\eta; a+1} \isom  \ul\bB^{\bullet}_{\eta; a}$.  
Then (3.13.2) implies the following;  
for each $b \in \ul\bB^{\bullet}_{\eta;0}$, 
set $b' = \ul F_{\eta}^a b \in \ul\bB^{\bullet}_{\eta;a}$.  Then we have
\begin{equation*}
\tag{4.12.2}
\ul f^{(a)}_{\eta}b \equiv b' \mod \ul Z^{\bullet}_{\eta, > a},
\end{equation*}
where $\ul Z^{\bullet}_{\eta;> a} = \sum_{a' > a}\ul f_{\eta}^{(a')}{}_{\BA'}\ul\BU_q^-$. 
Moreover, the map $b \mapsto b'$ gives a bijection 
\begin{equation*}
\tag{4.12.3}
\pi_{\eta; a} : \ul\bB^{\bullet}_{\eta;0} \isom \ul\bB^{\bullet}_{\eta;a}.
\end{equation*}

\par
The $*$-operation on $\BU_q^-$ induces the $*$-operation on 
$\BV_q$, which leaves $\wt f_{\eta}^{(a)}$ invariant. Hence $\Phi$ 
is compatible with $*$-operations on ${}_{\BA}\ul\BU_q^-$ and on $\BV_q$. 
Since $*(\bB) = \bB$, we have $*(\bB^{\dia}) = \bB^{\dia}$, and so
\begin{equation*}
\tag{4.12.4}
*(\ul\bB^{\bullet}) = \ul\bB^{\bullet}.
\end{equation*} 

\para{4.13.}
Let $Q = \bigoplus_{i \in I}\BZ \a_i$ be the root lattice of $X$, and 
$\ul Q = \bigoplus_{\eta \in \ul I}\BZ \a_{\eta}$ the root lattice of $\ul X$.
$\s$ acts on $Q$ by $\a_i \mapsto \a_{\s(i)}$, and we have $Q^{\s} \simeq \ul Q$ 
under the map $\sum_{i \in \eta}\a_{i} \mapsto \a_{\eta}$.  
If $b \in \bB^{\s}$, then the weight of $b$ is contained in $Q^{\s}$, and 
we have a partition $\bB^{\s} = \bigsqcup_{\nu \in Q_-^{\s}}\bB^{\s}_{\nu}$, 
where $\bB^{\s}_{\nu} = \bB^{\s} \cap \bB_{\nu}$. 
This gives a partition $\bB^{\dia} = \bigsqcup_{\nu \in Q_-^{\s}}\bB^{\dia}_{\nu}$, 
where $\bB^{\dia}_{\nu} = \pi(\bB^{\s}_{\nu})$. It follows that 
$\BV_q$ has a weight space decomposition $\BV_q = \bigoplus_{\nu \in Q_-^{\s}}(\BV_q)_{\nu}$, 
where $(\BV_q)_{\nu}$ is an $\BA'$-subspace of $\BV_q$ spanned by $\bB^{\dia}_{\nu}$.
On the other hand, the weight space decomposition 
$\ul\BU_q^- = \bigoplus_{\nu \in \ul Q_-}(\ul\BU_q^-)_{\nu}$ induces a weight space 
decomposition ${}_{\BA'}\ul\BU_q^- = \bigoplus_{\nu \in \ul Q_-}({}_{\BA'}\ul\BU_q^-)_{\nu}$.
The map $\Phi : {}_{\BA'}\ul\BU_q^- \isom \BV_q$ is compatible with those 
weight space decompositions under the identification $\ul Q \simeq Q^{\s}$. 
We have a partition $\ul\bB^{\bullet} = \bigsqcup_{\nu \in \ul Q_-}\ul\bB^{\bullet}_{\nu}$, 
where $\ul\bB^{\bullet}_{\nu} = \Phi\iv(\bB^{\dia}_{\nu}) 
= \ul\bB^{\bullet} \cap ({}_{\BA'}\ul\BU_q^-)_{\nu}$.  

\para{4.14.}
Let $\BZ_p$ be the ring of $p$-adic integers, and set $\BA_p = \BZ_p[q, q\iv]$, 
which is a ring containing $\BA = \BZ[q,q\iv]$. We have a natural surjective 
map $\BA_p \to \BA_p/p\BA_p \simeq \BA' = \BF[q,q\iv]$. 
Let us consider ${}_{\BA_p}\ul\BU_q^- = \BA_p\otimes_{\BA}\ul\BU_q^-$, which is 
an extension of ${}_{\BA}\ul\BU_q^-$.
Let $\vf : {}_{\BA}\ul\BU_q^- \to {}_{\BA}\ul\BU_q^-/p({}_{\BA}\ul\BU_q^-) \simeq 
{}_{\BA'}\ul\BU_q^-$ be the natural surjective map.  
$\vf$ is extended to the surjective map 
${}_{\BA_p}\ul\BU_q^- \to {}_{\BA'}\ul\BU_q^-$, 
which we also denote by $\vf$. 
$\vf$ is compatible with the bar-involutions. 
\par
Let $M$ be an $\BA$-submodule of ${}_{\BA}\ul\BU_q^-$ such that $\ol M = M$, 
and assume that 
$\vf(M) = {}_{\BA'}M$ is spanned by a finite subset $\ul\bB^{\bullet}_M$ 
of $\ul\bB^{\bullet}$.  Set ${}_{\BA_p}M = \BA_p\otimes_{\BA}M$. 
We have a surjective map $\vf : {}_{\BA_p}M \to {}_{\BA'}M$.   
For each $b \in \ul\bB^{\bullet}_M$, choose $x \in M$
such that $\vf(x) = b$.  Since $b$ is bar-invariant, 
$x - \ol x \in pM$. Thus one can write $x - \ol x = py$ 
for some $y \in M$.  $y$ satisfies the condition that 
$\ol y = -y$.  
Since $\vf(y) \in {}_{\BA'}M$ is written as an $\BA'$-linear combination 
of the basis $\ul\bB^{\bullet}_M$, there exists $u \in {}_{\BA'}M$
such that $\vf(y) = u - \ol u$. Hence $y$ is written as $y = y_1 - \ol y_1 + pz$
for some $y_1, z \in M$ such that $\ol z = -z$.  
Repeating this procedure, and taking the limit, one can find $c \in {}_{\BA_p}M$
such that $\ol c = c$ and that $\vf(c) = b$. For each $b \in \ul\bB^{\bullet}_M$, 
we choose $c$ as above, and let $\CB_M$ be the set of such  
$c\in {}_{\BA_p}M$  obtained from $\ul\bB^{\bullet}_M$. We show that 

\begin{lem}  
Let the notations be as above.
\begin{enumerate}
\item 
$\CB_M$ gives an $\BA_p$-basis of ${}_{\BA_p}M$. 
\item
Assume that $\CB_M \subset M$, and that they are almost orthonormal 
in the sense of 1.13. 
Then  $\CB_M$ gives an  $\BA$-basis of $M$.
\end{enumerate}
\end{lem}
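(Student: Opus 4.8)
The plan is to prove (i) first, and then deduce (ii) from (i) together with the almost-orthonormality hypothesis and the characterization of $\SL_{\BZ}(\infty)$-type lattices in terms of the bilinear form.

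For (i), the key observation is that $\CB_M$ is obtained from the $\BA'$-basis $\ul\bB^{\bullet}_M$ of ${}_{\BA'}M$ by lifting each basis element $b$ to a bar-invariant element $c \in {}_{\BA_p}M$ with $\vf(c) = b$, where $\vf : {}_{\BA_p}M \to {}_{\BA'}M$ is reduction mod $p$. First I would note that ${}_{\BA_p}M = \BA_p \otimes_{\BA} M$ is a free $\BA_p$-module (since $\BA = \BZ[q,q\iv]$ is a polynomial-type ring and $M$ is a submodule of the free module ${}_{\BA}\ul\BU_q^-$; more carefully, $M$ is a lattice whose reduction mod $p$ has a basis, so one reduces to the finitely generated case in each weight space and uses that everything is graded by $\ul Q_-$ with finite-dimensional pieces). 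Then the set $\CB_M$ maps bijectively onto $\ul\bB^{\bullet}_M$ under $\vf$, and since $\vf$ is reduction modulo the maximal ideal $p\BA_p \subset \BA_p$ (note $\BA_p / p\BA_p \simeq \BA'$), Nakayama's lemma applied weight-space by weight-space shows that $\CB_M$ generates ${}_{\BA_p}M$ over $\BA_p$. For linear independence, suppose $\sum_{c} \lambda_c c = 0$ with $\lambda_c \in \BA_p$ not all zero; dividing by the largest power of $p$ dividing all the $\lambda_c$, we may assume some $\lambda_c \notin p\BA_p$, and then applying $\vf$ gives a nontrivial relation among the $\vf(c) = b \in \ul\bB^{\bullet}_M$, contradicting that $\ul\bB^{\bullet}_M$ is $\BA'$-linearly independent. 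Hence $\CB_M$ is an $\BA_p$-basis.

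For (ii), assume now $\CB_M \subset M$ and that $\CB_M$ is almost orthonormal. The elements of $\CB_M$ are bar-invariant (by construction) and lie in $M \subset {}_{\BA}\ul\BU_q^-$; since they are almost orthonormal, each $c \in \CB_M$ satisfies $(c,c) \in 1 + q\BA_0$, so $\CB_M \subset \wt\CB$-type lattice, and in particular $\CB_M$ spans a $\BZ[q]$-submodule of the $\SL_{\BZ}$-lattice attached to ${}_{\BA'}\ul\BU_q^-$ in the sense of (1.10.1) and [L2, Lemma 16.2.5]. The argument of [L2, Lemma 16.2.5] shows that an almost orthonormal, bar-invariant $\BA$-linearly independent set inside an $\BA$-lattice that reduces mod $p$ to a basis must itself be an $\BA$-basis: I would argue that $\CB_M$ is $\BA$-linearly independent in $M$ (this follows from $\BA_p$-linear independence, proved in (i), via the inclusion $\BA \hookrightarrow \BA_p$), and that $\CB_M$ spans $M$ over $\BA$ by the following descent argument. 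Given $x \in M$, by (i) we may write $x = \sum_c \lambda_c c$ with $\lambda_c \in \BA_p$; we must show each $\lambda_c \in \BA$. Using almost-orthonormality, $(x, c') = \lambda_{c'} + (\text{higher order in } q)$ modulo contributions with positive $q$-valuation, and since $(x, c') \in ({}_{\BA}\ul\BU_q^-, {}_{\BA}\ul\BU_q^-) \subset \wh\BA = \BZ((q)) \cap \BQ(q)$, one extracts the coefficients $\lambda_c$ recursively (ordering by $q$-adic valuation) and sees they lie in $\BZ((q)) \cap \BA_p \cap \BQ(q) = \BA$, since the Gram matrix of $\CB_M$ is unitriangular over $q\BA_0$ and hence invertible over $\wh\BA$; thus $x$ is an $\BA$-combination of $\CB_M$.

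The main obstacle I anticipate is the descent step in (ii): carefully controlling that the coefficients $\lambda_c$, a priori only in $\BA_p$, actually land in $\BA = \BZ[q,q\iv]$ rather than merely in $\BZ_p[q,q\iv] \cap \BQ(q)$. This requires combining the $\BA_p$-integrality from (i) with the $\wh\BA$-valuedness of the bilinear form and the near-diagonal (unitriangular modulo $q$) shape of the Gram matrix of $\CB_M$ — exactly the mechanism of [L2, Lemma 16.2.5] — to pin the coefficients down to $\BA_p \cap \wh\BA = \BA_p \cap \BZ((q)) \cap \BQ(q)$, which one then identifies with $\BA$ by noting that a $p$-adic integer that is also an ordinary integer (coefficient-by-coefficient in the $q$-expansion, with only finitely many nonzero coefficients) lies in $\BZ$. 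The rest is bookkeeping with the weight grading, which keeps all modules finite rank in each degree and legitimizes the Nakayama and limiting arguments.
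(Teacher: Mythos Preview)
Your proposal is correct and follows essentially the same route as the paper. For (i), both you and the paper use Nakayama's lemma for spanning and the ``divide out the largest power of $p$, then reduce mod $p$'' contradiction for linear independence. For (ii), both arguments write $x \in M$ as an $\BA_p$-combination via (i) and then use almost orthonormality together with $({}_{\BA}\ul\BU_q^-,{}_{\BA}\ul\BU_q^-) \subset \wh\BA = \BZ((q)) \cap \BQ(q)$ to force the coefficients into $\BA_p \cap \wh\BA = \BA$; the only cosmetic difference is that the paper extracts the lowest $q$-coefficient of each $a_c$ one at a time (showing it lies in $\BZ$ and then stripping it off), whereas you package the same computation as inverting the almost-identity Gram matrix over $\wh\BA$.
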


\begin{proof}  
Let $M_0$ be the $\BA_p$-submodule of ${}_{\BA_p}M $ spanned by $\CB_M$.
Then we have ${}_{\BA_p}M  = M _0 + p({}_{\BA_p}M)$.  Hence by Nakayama's lemma, 
$M _0 = {}_{\BA_p}M$.   
We consider the relation that $\sum_{c \in \CB_M} a_cc = 0$ with $a_c \in \BA_p$. 
If this relation is non-trivial, we may assume that 
some of $a_c$ is not contained in $p\BA_p$.  But then $\sum_c \ul a_c \vf(c) = 0$, 
where $\ul a_c \in \BA'$ is the image of $a_c$, and so $\ul a_c = 0$ for all $c$.  
This shows that all $a_c \in p\BA_p$, a contradiction.
Thus $\CB_M$ is linearly independent.  (i) holds.
\par
Next assume that $\CB_M \subset M$, and they are almost orthonormal.
Take $x \in M$ and write as $x = \sum_{c \in \CB_M}a_cc$, with $a_c \in \BA_p$.
It is enough to show that $a_c \in \BA$ for any $c$.
Let $t$ be the smallest integer $\ge 0$ such that $q^ta_c \in \BZ_p[q]$ for any $c$.  
Assume that $c_1$ is such that $q^ta_{c_1} \in \BZ_p[q] - q\BZ_p[q]$.  
Then $q^t(x, c_1) \in q^ta_{c_1} + q\BZ_p[[q]]$.  On the other hand, 
$q^t(x, c_1) \in \BZ((q)) \cap \BQ(q)$.  It follows that the lowest term 
of $a_{c_1}$ has the form $\la q^{-t}$ with $\la \in \BZ$. 
Then we may replace $x$ by $x' = x - \la q^{-t}c_1 \in M$.    
Repeating this procedure, one can prove that $a_c \in \BA$ for any 
$c \in \CB_M$.  Hence (ii) holds.   The lemma is proved.   
\end{proof} 

\para{4.16.}
Let $b \in \ul\bB^{\bullet}_{\eta;0}$, and choose $c \in {}_{\BA_p}\ul\BU_q^-$
such that $\ol c = c$ and that $\vf(c) = b$.  Set 
$b' = \pi_{\eta:a} b \in \ul\bB^{\bullet}_{\eta; a}$.  We shall construct
$c' \in {}_{\BA_p}\ul\BU_q^-$ satisfying a similar property as in (4.12.2).  
Take $x \in {}_{\BA}\ul\BU_q^-$  such that $\vf(x) = b'$.  
Then $\vf(x - \ul f_{\eta}^{(a)}c) = 
  b' - \ul f_{\eta}^{(a)}b \in \ul Z^{\bullet}_{\eta, > a}$ by (4.12.2). 
This element is bar-invariant.
Hence by a similar argument as in 4.14 (applied for 
$M = \ul Z_{\eta; > a} = \sum_{a' > a}\ul f^{(a')}_{\eta}{}_{\BA}\ul\BU_q^-$), 
there exists $z \in {}_{\BA_p}{\ul Z}_{\eta;> a} 
       = \sum_{a' > a}\ul f_{\eta}^{(a')}{}_{\BA_p}\ul\BU_q^-$ with $\ol z = z$ 
such that $\vf(z) = b' - \ul f_{\eta}^{(a)}b$. It follows that  
$x - \ul f_{\eta}^{(a)}c  - z \in p({}_{\BA_p}\ul\BU_q^-)$.
Then there exists $y \in {}_{\BA}\ul\BU_q^-$ such that 
$\ol x  - x = py$.  Here $y$ satisfies the condition that $\ol y = -y$.  
Thus again, by a similar argument as in 4.14, there exists $c' \in {}_{\BA_p}\ul\BU_q^-$
such that $\ol c' = c'$ and that $c' = \ul f_{\eta}^{(a)}c + z$, 
namely $\vf(c') = b'$. 
\par
We consider the decomposition 
${}_{\BA_p}\ul\BU_q^- = \bigoplus_{a \ge 0}\ul f_{\eta}^{(a)}{}_{\BA_p}\ul\BU_q^-[\eta]$.
For $x \in {}_{\BA_p}\ul\BU_q^-$, we denote by $x_{[\eta;a]}$ the projection of $x$ onto
$\ul f_{\eta}^{(a)}{}_{\BA_p}\ul\BU_q^-[\eta]$. The following lemma holds. 

\begin{lem}  
Assume that $c \in {}_{\BA_p}\ul\BU_q^-$ is such that $\ol c = c$ and 
that $\vf(c) = b \in \ul\bB^{\bullet}_{\eta;0}$.
\begin{enumerate}  
\item
There exists $c' \in {}_{\BA_p}\ul\BU_q^-$  with $\ol c' = c'$ 
such that $\vf(c') = \pi_{\eta;a}b$ and that 
\begin{equation*}
\tag{4.17.1}
c' \equiv \ul f_{\eta}^{(a)}c \mod {}_{\BA_p}{\ul Z}_{\eta; > a}. 
\end{equation*} 
\item 
Under the notation in 4.16, we have
\begin{equation*}
\tag{4.17.2}
\ol{ (\ul f_{\eta}^{(a)}c)_{[\eta;a]}} - (\ul f_{\eta}^{(a)}c)_{[\eta;a]} 
     \in {}_{\BA_p}\ul Z_{\eta; > a}. 
\end{equation*}
\end{enumerate}
\end{lem}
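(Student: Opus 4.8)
The plan is to read both assertions off the construction already carried out in 4.16, after fixing the one piece of bookkeeping on which everything else rests: the direct sum decomposition ${}_{\BA_p}\ul\BU_q^- = \bigoplus_{k \ge 0}\ul f_\eta^{(k)}\,{}_{\BA_p}\ul\BU_q^-[\eta]$ is the flat base change along $\BA \to \BA_p$ of the decomposition in (4.4.1), the submodules ${}_{\BA_p}\ul\BU_q^-[\eta]$ and ${}_{\BA_p}\ul Z_{\eta;>a} = \sum_{k>a}\ul f_\eta^{(k)}\,{}_{\BA_p}\ul\BU_q^-$ are likewise base changes, and both the bar-involution (extended to ${}_{\BA_p}\ul\BU_q^-$ by $q \mapsto q\iv$ on coefficients, fixing $\BZ_p$) and the reduction map $\vf$ respect this decomposition. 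In particular ${}_{\BA_p}\ul Z_{\eta;>a}$ is stable under the bar-involution, since $\ol{\ul f_\eta^{(k)}u} = \ul f_\eta^{(k)}\ol u$ for $u \in {}_{\BA_p}\ul\BU_q^-$ and $\ol{\ul f_\eta^{(k)}} = \ul f_\eta^{(k)}$.

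For (i), I would just display the element produced in 4.16. Put $b' = \pi_{\eta;a}b \in \ul\bB^{\bullet}_{\eta;a}$. By (4.12.2) the element $w := b' - \ul f_\eta^{(a)}b$ lies in $\ul Z^{\bullet}_{\eta;>a} = \vf\bigl({}_{\BA}\ul Z_{\eta;>a}\bigr)$, and it is bar-invariant because every element of $\ul\bB^{\bullet}$ is bar-invariant (4.12) and $\ol{\ul f_\eta^{(a)}} = \ul f_\eta^{(a)}$. Applying the lifting procedure of 4.14 to the bar-stable $\BA$-submodule $\ul Z_{\eta;>a}$ --- which one carries out inside the single weight space of $w$, where it is finite-dimensional --- yields $z \in {}_{\BA_p}\ul Z_{\eta;>a}$ with $\ol z = z$ and $\vf(z) = w$. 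Then $c' := \ul f_\eta^{(a)}c + z$ is bar-invariant, satisfies $\vf(c') = \ul f_\eta^{(a)}b + w = b'$, and $c' - \ul f_\eta^{(a)}c = z \in {}_{\BA_p}\ul Z_{\eta;>a}$; this is exactly (4.17.1).

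For (ii), I would expand $c = \sum_{k \ge 0}\ul f_\eta^{(k)}c_k$ with $c_k \in {}_{\BA_p}\ul\BU_q^-[\eta]$. Since $\ul f_\eta^{(a)}\ul f_\eta^{(k)} = \lambda_k\,\ul f_\eta^{(a+k)}$ for suitable nonzero $\lambda_k \in \BA$ (a quantum binomial coefficient), with $\lambda_0 = 1$, one gets $\ul f_\eta^{(a)}c = \ul f_\eta^{(a)}c_0 + \sum_{k \ge 1}\lambda_k\,\ul f_\eta^{(a+k)}c_k$; as $\lambda_k c_k \in {}_{\BA_p}\ul\BU_q^-[\eta]$, this shows $(\ul f_\eta^{(a)}c)_{[\eta;a]} = \ul f_\eta^{(a)}c_0$ and that the remainder $w := \ul f_\eta^{(a)}c - (\ul f_\eta^{(a)}c)_{[\eta;a]}$ lies in ${}_{\BA_p}\ul Z_{\eta;>a}$. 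Because $\ol c = c$ we have $\ol{\ul f_\eta^{(a)}c} = \ul f_\eta^{(a)}c$; applying the bar-involution to the identity $\ul f_\eta^{(a)}c = (\ul f_\eta^{(a)}c)_{[\eta;a]} + w$ and subtracting it from that identity gives $\ol{(\ul f_\eta^{(a)}c)_{[\eta;a]}} - (\ul f_\eta^{(a)}c)_{[\eta;a]} = w - \ol w$. Since ${}_{\BA_p}\ul Z_{\eta;>a}$ is bar-stable, $\ol w$, and hence $w - \ol w$, lies in it, which is (4.17.2).

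I do not expect a genuine obstacle: part (i) is only the formal packaging of the argument in 4.16, and part (ii) is a short manipulation with divided powers and the bar-involution. The two points that need to be stated with care are (a) that the decomposition ${}_{\BA_p}\ul\BU_q^- = \bigoplus_k \ul f_\eta^{(k)}{}_{\BA_p}\ul\BU_q^-[\eta]$ really is compatible with $\vf$ and with the bar-involution, and (b) that the bar-lifting procedure of 4.14 remains available for $M = \ul Z_{\eta;>a}$ even though its $\vf$-image is spanned by an infinite subset of $\ul\bB^{\bullet}$ --- a point handled by running the procedure one weight space at a time.
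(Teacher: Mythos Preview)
Your argument is correct and, for part (i), matches the paper exactly: the paper simply says ``(i) follows from the discussion in 4.16,'' and you have written out precisely that discussion.

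For part (ii) there is a small but genuine difference worth recording. The paper deduces (4.17.2) \emph{from} part (i): since $c' \equiv \ul f_\eta^{(a)}c \pmod{{}_{\BA_p}\ul Z_{\eta;>a}}$ and the projection kills ${}_{\BA_p}\ul Z_{\eta;>a}$, one has $(\ul f_\eta^{(a)}c)_{[\eta;a]} \equiv c' \pmod{{}_{\BA_p}\ul Z_{\eta;>a}}$, and then bar-invariance of $c'$ together with bar-stability of ${}_{\BA_p}\ul Z_{\eta;>a}$ gives the conclusion. You instead bypass (i) entirely, using that $\ul f_\eta^{(a)}c$ itself is bar-invariant (because $\ol c = c$) and that its difference from its $[\eta;a]$-projection lies in ${}_{\BA_p}\ul Z_{\eta;>a}$. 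Your route is marginally more direct --- it makes (ii) logically independent of (i) --- while the paper's route avoids the explicit divided-power computation $\ul f_\eta^{(a)}\ul f_\eta^{(k)} = \lambda_k\ul f_\eta^{(a+k)}$. Both are short and rest on the same two facts: bar-stability of ${}_{\BA_p}\ul Z_{\eta;>a}$ and the existence of some bar-invariant element congruent to $(\ul f_\eta^{(a)}c)_{[\eta;a]}$ modulo that submodule.
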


\begin{proof}
(i) follows from the discussion in 4.16.  We show (ii). 
By (4.17.1), $(\ul f_{\eta}^{(a)}c)_{[\eta;a]} \equiv c' \mod {}_{\BA_p}\ul Z_{\eta; > a}$.
Since $\ol c' = c'$ and ${}_{\BA_p}\ul Z_{\eta; > a}$ is bar-invariant, 
we obtain (4.17.2).
\end{proof}

We shall prove the following theorem. 

\begin{thm} 
Assume that $p \ne 2$. Under the setup in 4.1,  the following holds.
\begin{enumerate}
\item \ 
$\ul\BU_q^-$ has the canonical basis $\ul\bB$.
\item \ The map $\vf : {}_{\BA}\ul\BU_q^- \to {}_{\BA'}\ul\BU_q^-$ 
gives a bijection $\vf : \ul\bB \isom \ul\bB^{\bullet}$. Hence 
the natural map ${}_{\BA}\BU_q^{-\s} \to \BV_q \to {}_{\BA'}\ul\BU_q^-$ induces 
a unique bijection $\xi : \bB^{\s} \isom \ul\bB$ compatible with Kashiwara operators, 
where 
\begin{equation*}
\tag{4.18.1}
\xymatrix{
  \xi : \bB^{\s} \ar[rr]^{\pi} &   &  
            \bB^{\dia} \ar[rr]^{\Phi\iv} &  &  
                \ul\bB^{\bullet} \ar[rr]^{\vf\iv} &  &  \ul\bB. 
}
\end{equation*}
\item \ $*(\ul\bB) = \ul\bB$. 
\end{enumerate} 
\end{thm}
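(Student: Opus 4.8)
The plan is to construct $\ul\bB$ from $\ul\bB^{\bullet}$ by a bar-involution fixed-point argument over $\BA_p$, and then verify the axioms (C1)--(C7), transporting most of the structure through the maps $\pi$, $\Phi\iv$, $\vf\iv$ whose compatibilities (with bar-involution, $*$, weight gradings and Kashiwara operators) have already been established in 4.10--4.13. First I would apply Lemma 4.15, taking $M = {}_{\BA}\ul\BU_q^-$ itself and $\ul\bB^{\bullet}_M = \ul\bB^{\bullet}$: for each $b \in \ul\bB^{\bullet}$ this produces a bar-invariant lift $c \in {}_{\BA_p}\ul\BU_q^-$ with $\vf(c) = b$, and the collection $\CB = \{c\}$ forms an $\BA_p$-basis of ${}_{\BA_p}\ul\BU_q^-$. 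The key point is that each such $c$ actually lies in ${}_{\BA}\ul\BU_q^-$ and that $\CB$ is almost orthonormal; this is where Lemma 4.15(ii) enters, but to invoke it one must first know almost orthonormality, which I would deduce from the fact that $\ul\bB^{\bullet}$ is almost orthonormal over $\BF(q)$ (4.12) together with the fact that the bilinear form on ${}_{\BA_p}\ul\BU_q^-$ takes values in $\BZ_p((q)) \cap \BQ(q)$ and reduces mod $p$ to the form on ${}_{\BA'}\ul\BU_q^-$: a bar-invariant lift whose reduction is almost orthonormal must itself be almost orthonormal, by a lowest-degree-coefficient argument identical in spirit to the proof of Lemma 4.15(ii). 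Once $\CB \subset {}_{\BA}\ul\BU_q^-$ and almost orthonormal, Lemma 4.15(ii) gives that $\CB$ is an $\BA$-basis; I then set $\ul\bB := \CB$ and $\vf\colon \ul\bB \isom \ul\bB^{\bullet}$ is a bijection by construction. Defining $\xi$ as the composite in (4.18.1) then gives a bijection $\bB^{\s} \isom \ul\bB$, and its compatibility with Kashiwara operators follows from (3.13.2), (4.12.2) and Lemma 4.17(i).

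Next I would verify the canonical-basis axioms for $\ul\bB$. Axiom (C2) is immediate since each $c \in \ul\bB$ was chosen bar-invariant. Axiom (C4) (the weight partition, with $\ul\bB_0 = \{1\}$) follows from the weight-compatibility of $\pi$, $\Phi$, $\vf$ recorded in 4.13, noting that $1 \mapsto 1$ throughout. For (C3), almost orthonormality over $\BQ(q)$, one uses that the form on ${}_{\BA}\ul\BU_q^-$ reduces mod $p$ to the form on ${}_{\BA'}\ul\BU_q^-$, under which $\ul\bB^{\bullet}$ is almost orthonormal by (4.12.1), so for $c, c' \in \ul\bB$ we have $(c,c') \in \delta_{c,c'} + q\BZ_p[[q]]$ from the reduction; combined with $(c,c') \in \BQ(q)$ and the integrality $(c,c') \in \BZ((q)) \cap \BQ(q)$ coming from ${}_{\BA}\ul\BU_q^- \times {}_{\BA}\ul\BU_q^- \to \wh\BA$, one gets $(c,c') \in \delta_{c,c'} + (q\BZ[[q]] \cap \BQ(q))$. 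This in turn shows, via [L2, Lemma 16.2.5] applied to $\ul X$, that $\ul\bB$ is a $\BZ[q]$-basis of $\SL_{\BZ}(\infty)$ for $\ul\BU_q^-$, giving (C1) (the $\BA$-basis part being already established). Axiom (C6) is the statement $\bigcap_{\eta}\ul\bB_{\eta;0} = \{1\}$: via the bijection $\xi$ and the fact that $\vf$, $\Phi$, $\pi$ intertwine the Kashiwara operators $\wt E_\eta$ on $\bB^{\s}$ with $\ul E_\eta$ on $\ul\bB$, this reduces to the corresponding statement for $\bB^{\s}$, which follows from (C6) for $\bB$ (if $\ve_i(b) = 0$ for all $i$ then $b = 1$), since $\ve_\eta(b) = 0$ means $\ve_i(b) = 0$ for $i \in \eta$.

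For (C7) and (C5), I would work with the $\BA_p$-picture. Given $b = \vf(c) \in \ul\bB_{\eta;0}$, Lemma 4.17(i) produces a bar-invariant $c' \in {}_{\BA_p}\ul\BU_q^-$ with $\vf(c') = \pi_{\eta;a}b$ and $c' \equiv \ul f_\eta^{(a)}c \bmod {}_{\BA_p}\ul Z_{\eta;>a}$; since $\vf(c')$ lies in $\ul\bB^{\bullet}_{\eta;a}$ and $\vf$ is a bijection, $c' \in \ul\bB_{\eta;a}$, and the congruence mod $\ul Z_{\eta;>a}$ reduces to the required congruence (1.12.2) for $\ul\bB$, with $\pi_{\eta;a}$ the induced bijection. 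The uniqueness of $c'$ in (C7) follows because any two candidates differ by a bar-invariant element of $\ul Z_{\eta;>a}$ expressible in $q\BZ[q]$-coefficients against $\ul\bB$, hence zero. Finally (C5): for $b \in \ul\bB_{\eta;a}$ one needs $b \equiv b_{[i;a]} \bmod q\ul\SL_{\BZ}(\infty)$ for $i \in \eta$; this follows from Corollary 4.11 ($b - b_{[\eta;a]} \in q\ul\SL_{\BF}(\infty)$) lifted to $\BA_p$ using Lemma 4.17(ii), together with the relation between the $[\eta;a]$-projection and the $[i;a]$-projection (the argument of Lemma 4.7 applied in $\ul\BU_q^-$, replacing $\eta$ by one-element subsets). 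Part (iii), $*(\ul\bB) = \ul\bB$, is then immediate: $\Phi$ and $\pi$ are $*$-compatible and $*(\bB^{\s}) = \bB^{\s}$ (from $*(\bB) = \bB$ in Theorem 1.18 and the fact that $*$ commutes with $\s$), so $*(\ul\bB^{\bullet}) = \ul\bB^{\bullet}$ by (4.12.4); since $*$ is a bar-compatible anti-automorphism and the lifts $c$ are determined up to the fixed-point construction, one checks $*(c)$ is again a valid bar-invariant lift of $*(b) = *(\vf(c)) = \vf(*(c))$, and by the uniqueness built into the canonical basis (Lemma 1.13, now applicable since $\ul\bB$ satisfies (C1)--(C7)) we get $*(\ul\bB) = \ul\bB$. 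The main obstacle I anticipate is the descent step showing the $\BA_p$-lifts actually lie in ${}_{\BA}\ul\BU_q^-$ and are almost orthonormal there --- i.e.\ getting Lemma 4.15(ii) to apply --- since this is where the interplay between the $p$-adic completion, the bar-involution, and the integrality of the bilinear form must be handled carefully; once that is in place, the remaining axiom checks are largely transport-of-structure along the already-constructed compatible maps.
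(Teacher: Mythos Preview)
Your approach has a genuine gap in the almost-orthonormality step, and it is not a technicality. Bar-invariant lifts of $\ul\bB^{\bullet}$ to ${}_{\BA_p}\ul\BU_q^-$ are highly non-unique: if $c$ lifts $b$, so does $c + p c''$ for any bar-invariant $c'' \in {}_{\BA}\ul\BU_q^-$. For such a perturbation $(c+pc'', c+pc'')$ picks up terms like $p^2(c'',c'')$ whose $q$-constant part is in $p^2\BZ$ but nonzero, so knowing that the reduction mod $p$ is almost orthonormal places no constraint whatsoever on the $q$-constant part of the inner products over $\BZ_p$. Your proposed ``lowest-degree-coefficient argument'' cannot run because at this stage $(c,c')$ only lives in $\BZ_p((q))$, not in $\BZ((q))\cap\BQ(q)$; the latter would require $c,c' \in {}_{\BA}\ul\BU_q^-$, which is precisely what you are trying to deduce from Lemma 4.15(ii). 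The argument is circular. The paper avoids this by a double induction (on $|\nu|$, and within each $\nu$ by backward induction on $a=\ve_\eta$): each new $b$ is constructed as $(\ul f_\eta^{(a)}b')_{[\eta;a]}$ plus an explicit bar-correction lying in the already-built span of $\ul\bB_{\eta;>a}\cap\ul\bB_\nu$, and almost orthonormality is then proved directly via (1.9.2), which reduces $(b,b_1)$ to $(b',b_1')$ with $b',b_1'$ of strictly smaller weight. That inductive mechanism, together with Lemma 4.20 for the case $a=0$, is the content you are missing.

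There is a second gap in your (C7) verification. You say that the $c'$ produced by Lemma 4.17(i) lies in $\ul\bB_{\eta;a}$ ``since $\vf(c')\in\ul\bB^{\bullet}_{\eta;a}$ and $\vf$ is a bijection''. But bijectivity of $\vf|_{\ul\bB}$ only says there is \emph{some} element of $\ul\bB$ over $\pi_{\eta;a}b$; the $c'$ of Lemma 4.17(i) is just one bar-invariant lift among many and need not be that element. The paper closes this with Lemma 4.22, which shows (using the almost orthonormality already established inductively) that any bar-invariant $x\in\ul\SL_{\BZ}(\infty)$ with $(x,x)\in 1+q\BA_0$ and $\vf(x)\in\ul\bB^{\bullet}$ must equal the corresponding element of $\ul\bB$ --- up to sign when $p=2$. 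That sign ambiguity for $p=2$ is a further point you do not address: the paper needs Kashiwara's Theorem 1.15 on $\CB(\infty)$ to resolve it (see 4.23 and Remark 4.25); without it one only gets the signed basis $\ul\bB\cup(-\ul\bB)$.
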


\remark{4.19.}
In the case where $p = 2$, a weaker result than Theorem 4.18 holds.
Assume that there exists a basis $\CB$ of $\BU_q^-$ such that 
$\wt\CB = \CB \sqcup -\CB$ is the canonical signed basis, satisfying the 
properties 
(C1) $\sim$ (C6) in 1.12, and (C7$'$) below instead of (C7). 
\begin{description}
\item [(C7$'$)] \ Assume that $b \in \CB_{i;0}$. Then for $a > 0$, 
there exists a unique element $b' \in \CB_{i;a}$ such that 
\begin{equation*}
\pm b' \equiv f_i^{(a)}b \mod f_i^{a+1}\BU_q^-.
\end{equation*} 
The correspondence $b \mapsto b'$ gives a bijection 
$\pi_{i;a} : \CB_{i;0} \isom \CB_{i;a}$. 
\end{description}
\par\noindent
Then there exists a basis $\ul\CB$ of $\ul\BU_q^-$ 
such that $\wt{\ul\CB} = \ul\CB \sqcup -\ul\CB$
is the canonical signed basis, satisfying the properties (C1) $\sim$ (C6) and (C7$'$). 
Moreover, a similar result as in (ii) in the theorem holds, up to sign.  In particular 
there exists a unique bijection  $\xi : \wt\CB^{\s} \isom \wt{\ul\CB}$ compatible
with Kashiwara operators.  

\para{4.20.}
In the discussion below, we basically follow the setup in 4.1.  However, 
in the case where $p = 2$, we consider $\CB$ and (C7$'$) instead of $\bB$ and 
(C7).
\par 
Note that the properties corresponding to (C1) $\sim$ (C7) in 1.12
hold for $\ul\bB^{\bullet}$ in ${}_{\BA'}\ul\BU_q^-$,
by replacing $\BZ[q]$-module $\ul\SL_{\BZ}(\infty)$
by $\BF[q]$-module $\ul\SL_{\BF}(\infty)$.  In fact, (C1) and (C4) follows 
from the discussion in 4.10 and 4.13, (C2), (C3) and (C7) follows from that of 4.12.
(C5) follows from Corollary 4.11, and (C6) follows from 
Proposition 1.26, applied for $R = \BF(q)$.
\par  
We shall construct $\ul\bB_{\nu}$ by induction on $|\nu|$. 
If $\nu = 0$, $\ul\bB_{\nu} = \{ 1 \}$ satisfies all the 
properties. Thus we assume that $\nu \ne 0$, and that 
$\ul\bB_{\nu'}$ are constructed for all $\nu'$ such that $|\nu'| < |\nu|$.  
Note that we have a partition 
$\ul\bB^{\bullet} = \bigsqcup_{\nu \in \ul Q_-}\ul\bB^{\bullet}_{\nu}$.
We shall construct $\ul\bB_{\nu}$ such that $\vf$ gives a bijection 
$\ul\bB_{\nu} \to \ul\bB^{\bullet}_{\nu}$. 
\par
Take $b_{\bullet} \in \ul\bB^{\bullet}_{\nu}$.  By (C6) for $\ul\bB^{\bullet}$, 
there exists $\eta \in \ul I$ such that 
$\ve_{\eta}(b_{\bullet}) = a \ne 0$. 
Then by (4.12.2) and (4.12.3), there exists 
$b'_{\bullet} \in \ul\bB^{\bullet}_{\eta:0}$ such that 
$b_{\bullet} \equiv \ul f_{\eta}^{(a)}b'_{\bullet} 
        \mod \sum_{a' > a}\ul f_{\eta}^{(a')}{}_{\BA'}\ul\BU_q^-$. 
Since $b'_{\bullet} \in \ul\bB^{\bullet}_{\nu'}$ with $|\nu'| < |\nu|$, 
by induction hypothesis, there exists $b' \in \ul\bB_{\eta; 0}$ such that 
$\vf(b') = b'_{\bullet}$.  
Since $b' \in \ul\SL_{\BZ}(\infty)$, $b'_{[\eta;0]} \in \ul\SL_{\BZ}(\infty)$ 
by Lemma 1.11 (i).  Then 
$(\ul f_{\eta}^{(a)}b')_{[\eta;a]} = f_{\eta}^{(a)}(b'_{[\eta;0]}) \in \ul\SL_{\BZ}(\infty)$
by (1.9.2). 
Note that $(\ul\BU_q^-)_{\nu}$ is finite dimensional, 
$\ul f^{(n)}_{\eta}\ul\BU_q^-[\eta] \cap (\ul\BU_q^-)_{\nu} = \{ 0\}$ if 
$n >> 0$.  
We shall construct $\ul\bB_{\eta;a} \cap \ul\bB_{\nu}$ by backward induction on 
$a$. So assume that $\ul\bB_{\eta;a'}$ were already constructed for $a' > a$ 
and that $\bigcup_{a' > a}(\ul\bB_{\eta;a'} \cap \ul\bB_{\nu})$ 
gives an $\BA$-basis of 
$\sum_{a' > a}\ul f^{(a')}_{\eta}{}_{\BA}\ul\BU_q^- \cap (\ul\BU_q^-)_{\nu}$.
By the above discussion, $(\ul f_{\eta}^{(a)}b')_{[\eta;a]} \in \ul\SL_{\BZ}(\infty)$. 
By applying Lemma 4.17 (ii), we have
\begin{equation*}
\tag{4.20.1}
\ol{(\ul f_{\eta}^{(a)}b')_{[\eta;a]}} - (\ul f_{\eta}^{(a)}b')_{[\eta;a]}
          \in {}_{\BA_p}\ul Z_{\eta; > a} \cap {}_{\BA}\ul\BU_q^-.  
\end{equation*}
Set $x = \ol{(\ul f_{\eta}^{(a)}b')_{[\eta;a]}} - (\ul f_{\eta}^{(a)}b')_{[\eta;a]}$.
Then $x \in \sum_{a' > a}\ul f^{(a')}_{\eta}{}_{\BA}\ul\BU_q^- \cap (\ul\BU_q^-)_{\nu}$.
Hence we can write as $x = \sum_{b''}a_{b''}b''$, where 
$b'' \in \bigcup_{a' > a}(\ul\bB_{\eta; a'} \cap \ul\bB_{\nu})$, and 
$a_{b''} \in \BA$.
Since $\ol x = -x$, and all the $b''$ are bar-invariant, 
$a_{b''}$ is written as $a_{b''} = c_{b''} - \ol{c_{b''}}$ for some 
$c_{b''} \in q\BZ[q]$.  Set
$b = (\ul f_{\eta}^{(a)}b')_{[\eta;a]} + \sum_{b''}c_{b''}b''$. 
Then $b \in \ul\SL_{\BZ}(\infty)$ and $\ol b = b$.   
Since $b_{[\eta;a]} = (\ul f_{\eta}^{(a)}b')_{[\eta;a]}$, 
we see that 
\begin{equation*}
\tag{4.20.2}
b \equiv b_{[\eta;a]} \mod q\ul\SL_{\BZ}(\infty).
\end{equation*} 

We also note that

\begin{equation*}
\tag{4.20.3}
b \equiv \ul f_{\eta}^{(a)}b' \mod \ul f_{\eta}^{a+1}\ul\BU_q^-.
\end{equation*}

In fact, $b \equiv b_{[\eta;a]} \mod \ul f_{\eta}^{a+1}\ul\BU_q^-$, 
$\ul f_{\eta}^{(a)}b' \equiv (\ul f_{\eta}^{(a)}b')_{[\eta;a]}
   \mod \ul f_{\eta}^{a+1}\ul\BU_q^-$.
Since $b_{[\eta;a]} = (\ul f_{\eta}^{(a)}b')_{[\eta;a]}$, (4.20.3) follows.  

Next we show that
\begin{equation*}
\tag{4.20.4}
\vf(b) = b_{\bullet}.
\end{equation*}

In fact, note that 
\begin{align*}
\vf((\ul f_{\eta}^{(a)}b')_{[\eta;a]}) = (\vf(\ul f_{\eta}^{(a)}b'))_{[\eta;a]}
           = (\ul f_{\eta}^{(a)}b'_{\bullet})_{[\eta;a]} = (b_{\bullet})_{[\eta;a]}. 
\end{align*}
It follows that 
$\vf(b) = (b_{\bullet})_{[\eta;a]} + \sum_{b''}\ul c_{b''}b''_{\bullet}$, 
where $b''_{\bullet} = \vf(b'')$ and $\ul c_{b''} \in q\BF[q]$. 
Since $b_{\bullet} \equiv (b_{\bullet})_{[\eta;a]} \mod q\ul\SL_{\BF}(\infty)$
by Corollary 4.11,  
and $\vf(b), b_{\bullet}$ are bar-invariant, we obtain $\vf(b) = b_{\bullet}$. 
Hence (4.20.4) holds. 

\par
Set $\ul Z_{\eta; \ge a} = \sum_{a' \ge a}\ul f^{(a')}_{\eta}{}_{\BA}\ul\BU_q^-$. 
In the above discussion, for each 
$b_{\bullet} \in \ul\bB^{\bullet}_{\eta; a} \cap \ul\bB^{\bullet}_{\nu}$, 
we have constructed 
$b \in \ul\SL_{\BZ}(\infty) \cap \ul Z_{\eta; \ge a}$
such that $\vf(b) = b_{\bullet}$. 
We define $\ul\bB_{\eta;a} \cap \ul\bB_{\nu}$ as the set of those $b$ corresponding to 
$b_{\bullet}$. 
Set  $\ul\bB_{\eta; \ge a} \cap \ul\bB_{\nu} = 
    \bigsqcup_{a' \ge a}(\ul\bB_{\eta;a'} \cap \ul\bB_{\nu})$. 
We show 
\par\medskip\noindent
(4.20.5) \ The elements in $\ul\bB_{\eta; \ge a} \cap \ul\bB_{\nu}$ are almost orthonormal, 
and $\ul\bB_{\eta;\ge a} \cap \ul\bB_{\nu}$ 
gives a $\BZ[q]$-basis of $\ul Z_{\eta; \ge a} \cap \ul\SL_{\BZ}(\infty) 
        \cap (\ul\BU_q^-)_{\nu}$. 

\par\medskip
In fact, take $b \in \ul\bB_{\eta;a} \cap \ul\bB_{\nu}$.  Since 
$b \equiv b_{[\eta; a]} \mod q\ul\SL_{\BZ}(\infty)$ by (4.20.2),  and since 
$b_{[\eta;a]}$ is orthogonal to $\ul\bB_{\eta; >a} \cap \ul\bB_{\nu}$ by (1.12.2), 
$(b, b_0) \in q\BZ[[q]] \cap \BQ(q)$ for any $b_0 \in \ul\bB_{\eta; >a} \cap \ul\bB_{\nu}$. 
On the other hand, it follows from the construction, 
there exists $b'\in \ul\bB_{\eta;0} \cap \ul\bB_{\nu'}$
with $|\nu'| < |\nu|$ such that  
$b_{[\eta;a]} = (\ul f_{\eta}^{(a)}b')_{[\eta;a]}$.
Here $(\ul f_{\eta}^{(a)}b')_{[\eta;a]} 
       = \ul f_{\eta}^{(a)}(b'_{[\eta;0]})$.
Since $b' \in \ul\bB_{\eta;0} \cap \ul\bB_{\nu'}$, we have 
$b' \equiv b'_{[\eta; 0]} \mod q\ul\SL_{\BZ}(\infty)$ by (4.20.2).  
Now take $b_1 \in \ul\bB_{\eta;a} \cap \ul\bB_{\nu}$. There exists
$b_1' \in \ul\bB_{\eta;0} \cap \ul\bB_{\nu'}$ satisfying similar properties 
as in the case of $b$.
Thus we have 
\begin{align*}
(b, b_1) &\equiv (b_{[\eta;a]}, (b_1)_{[\eta;a]})  \mod q\BZ[[q]] \cap \BQ(q) \\ 
         &=  \bigl((\ul f_{\eta}^{(a)}b')_{[\eta;a]}, 
                   (\ul f_{\eta}^{(a)}b_1')_{[\eta;a]}\bigr) \\
         &=  \bigl(\ul f_{\eta}^{(a)}b'_{[\eta:0]}, 
                   \ul f_{\eta}^{(a)}(b_1')_{[\eta;0]}\bigr) \\ 
         &= c(b'_{[\eta;0]}, (b_1')_{[\eta;0]}) \qquad \text{ with }
                 c \in 1 + q(\BZ[[q]] \cap \BQ(q)), 
\end{align*}
where the last equality follows from (1.9.2). 
Since $(b'_{[\eta;0]}, (b_1')_{[\eta;0]}) \equiv (b', b_1')$, 
we have $(b, b_1) \equiv (b', b_1') \mod q(\BZ[[q]] \cap \BQ(q))$. 
Hence the almost orthonormality for $b, b_1$ follows from that for $b', b_1'$. 
Thus we have proved the almost orthonormality for $\ul\bB_{\eta; \ge a} \cap \ul\bB_{\nu}$.
Now by applying Lemma 4.15 (ii) for 
$M = \ul Z_{\eta; \ge a} \cap (\ul\BU_q^-)_{\nu}$, 
$\ul\bB_{\eta; \ge a} \cap \ul\bB_{\nu}$ gives an $\BA$-basis of $M$. 
Since $\vf(\ul\bB_{\eta; \ge a} \cap \ul\bB_{\nu})$ gives an $\BF[q]$-basis
of $\vf(M) \cap \ul\SL_{\BF}(\infty)$,
we see that $\ul\bB_{\eta; \ge a} \cap \ul\bB_{\nu}$ gives a $\BZ[q]$-basis 
of $M \cap \ul\SL_{\BZ}(\infty)$.  Thus (4.20.5) holds.

\par
By the above procedure, one can construct $\ul\bB_{\eta;a} \cap \ul\bB_{\nu}$ 
for any $a > 0$.  But this method cannot be applied for   
constructing $\ul\bB_{\eta;0} \cap \ul\bB_{\nu}$.
In order to treat this case, we prepare a lemma.
\begin{lem}   
Let $a \ge 0$. 
Let $\ul\bB_{\eta; > a} \cap \ul\bB_{\nu}$ be the almost orthonormal basis 
of $\sum_{a' > a}\ul f_{\eta}^{(a')}{}_{\BA}\ul\BU_q^- \cap (\ul\BU_q^-)_{\nu}$ 
constructed in 4.19. 
Assume that $x \in \ul\SL_{\BZ}(\infty)$ with $(x,x) \in 1 + q\BA_0$.
Further assume that 
$x \in \sum_{a' \ge a}\ul f_{\eta}^{(a')}{}_{\BA}\BU_q^- \cap (\ul\BU^-_q)_{\nu}$ 
and that $\vf(x) = b_{\bullet}$ for some $b_{\bullet} \in \ul\bB^{\bullet}_{\eta;a}$. 
Then 
\begin{equation*}
\tag{4.21.1}
x \equiv x_{[\eta;a]} \mod q\ul\SL_{\BZ}(\infty).
\end{equation*}
\end{lem}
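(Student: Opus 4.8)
The plan is to transport the $\eta$-string decomposition of $x$ in ${}_{\BA}\ul\BU_q^-$ over to that of $b_{\bullet}$ in ${}_{\BA'}\ul\BU_q^-$ through the reduction map $\vf$, to pin down which layer carries the leading term of $x$, and then to control the higher layers by means of the almost orthonormal basis $\ul\bB_{\eta; > a}\cap\ul\bB_{\nu}$ already constructed in 4.19. First I would fix notation: using ${}_{\BA}\ul\BU_q^- = \bigoplus_{n\ge 0}\ul f_{\eta}^{(n)}\,{}_{\BA}\ul\BU_q^-[\eta]$ (Lemma 1.8 and (1.9.1) for $\ul\BU_q^-$ with the vertex $\eta$), write $x = \sum_{n\ge 0}y_n$ with $y_n = \ul f_{\eta}^{(n)}x_n$, $x_n\in{}_{\BA}\ul\BU_q^-[\eta]$. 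The hypothesis $x\in\sum_{a'\ge a}\ul f_{\eta}^{(a')}\,{}_{\BA}\ul\BU_q^-$ forces $y_n = 0$ for $n < a$, so $x = \sum_{n\ge a}y_n$ and $x_{[\eta;a]} = y_a$. Since $x\in\ul\SL_{\BZ}(\infty)$, Lemma 1.11 (i) gives $x_n, y_n\in\ul\SL_{\BZ}(\infty)$ for all $n$ and, as $(x,x)\in 1 + q\BA_0$, a unique index $n_0$ (necessarily $\ge a$) with $(y_{n_0},y_{n_0})\in 1 + q\BA_0$ and $(y_n,y_n)\in q\BA_0$ for $n\ne n_0$; by the weight-preservation in Lemma 1.8 (iii) each $y_n$ lies in $(\ul\BU_q^-)_{\nu}$. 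It then suffices to prove that $n_0 = a$ and that $y_n\in q\ul\SL_{\BZ}(\infty)$ for every $n > a$, for then $x - x_{[\eta;a]} = \sum_{n > a}y_n\in q\ul\SL_{\BZ}(\infty)$, which is (4.20.1).

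To show $n_0 = a$ I would apply $\vf$. Since $\vf$ commutes with ${}_{\eta}r$ and with left multiplication by $\ul f_{\eta}^{(n)}$, it sends the $n$-th $\eta$-layer to the $n$-th $\eta$-layer, so $\vf(y_n) = (b_{\bullet})_{[\eta;n]}$; and since the bilinear form on ${}_{\BA'}\ul\BU_q^-$ is the reduction modulo $p$ of the one on ${}_{\BA}\ul\BU_q^-$, we get $\ol{(y_n,y_n)} = ((b_{\bullet})_{[\eta;n]},(b_{\bullet})_{[\eta;n]})$ in $\wh\BA/p\wh\BA$. Now $b_{\bullet}\in\ul\bB^{\bullet}_{\eta;a}$, so Corollary 4.11 gives $b_{\bullet} - (b_{\bullet})_{[\eta;a]} = \sum_{n > a}(b_{\bullet})_{[\eta;n]}\in q\ul\SL_{\BF}(\infty)$; taking the self-pairing of this element and using the orthogonality (1.9.2) over $\BF$ to split it and to kill the cross terms, $\sum_{n > a}((b_{\bullet})_{[\eta;n]},(b_{\bullet})_{[\eta;n]})$ lies in $q^{2}(\BF[[q]]\cap\BF(q))$, hence has vanishing constant term. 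Comparing with $(b_{\bullet},b_{\bullet}) = ((b_{\bullet})_{[\eta;a]},(b_{\bullet})_{[\eta;a]}) + \sum_{n > a}((b_{\bullet})_{[\eta;n]},(b_{\bullet})_{[\eta;n]})$, whose constant term is $1$ by almost orthonormality of $\ul\bB^{\bullet}$ (4.12), I conclude that $((b_{\bullet})_{[\eta;a]},(b_{\bullet})_{[\eta;a]})$ has constant term $1$ over $\BF$. Therefore the constant term of $(y_a,y_a)$ — an integer, since $(y_a,y_a)\in\wh\BA\cap\BA_0$ — is $\equiv 1\pmod p$, in particular nonzero, so $(y_a,y_a)\notin q\BA_0$; by the uniqueness of $n_0$ this forces $n_0 = a$, and in particular $(y_n,y_n)\in q\BA_0$ for all $n > a$.

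For the second point, fix $n > a$. Then $y_n = \ul f_{\eta}^{(n)}x_n$ lies in $\ul Z_{\eta; > a}\cap\ul\SL_{\BZ}(\infty)\cap(\ul\BU_q^-)_{\nu}$, which by (4.19.5) (applied with $a$ replaced by $a+1$) has the almost orthonormal $\BZ[q]$-basis $\ul\bB_{\eta; > a}\cap\ul\bB_{\nu}$. Write $y_n = \sum_{b''}c_{b''}b''$ with $c_{b''}\in\BZ[q]$, and let $\g_{b''}\in\BZ$ be the constant term of $c_{b''}$. Because $\ul\bB_{\eta; > a}\cap\ul\bB_{\nu}$ is almost orthonormal (the diagonal pairings contributing $1$ to the constant term, the off-diagonal ones contributing nothing), the constant term of $(y_n,y_n)$ equals $\sum_{b''}\g_{b''}^{2}$. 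Since $(y_n,y_n)\in q\BA_0$ this sum of squares of integers is $0$, so every $c_{b''}$ lies in $q\BZ[q]$ and hence $y_n\in q\ul\SL_{\BZ}(\infty)$. Together with $n_0 = a$ this establishes (4.20.1).

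I expect the main obstacle to be the first step, that is, making the passage to ${}_{\BA'}\ul\BU_q^-$ fully rigorous: one must verify that $\vf$ intertwines the $\eta$-layer decompositions and carries the bilinear form (valued in $\wh\BA$) to its reduction modulo $p$, and that the crystal-lattice facts used over $\BF$ — Lemma 1.11 (i), the orthogonality (1.9.2), and Corollary 4.11 — hold verbatim for ${}_{\BA'}\ul\BU_q^-$ with its $\BF[q]$-lattice $\ul\SL_{\BF}(\infty)$ (this is exactly the content of the remark in 4.19). Once this bookkeeping is settled, the integrality computation in the third paragraph is routine.
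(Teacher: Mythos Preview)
Your proposal is correct and follows essentially the same route as the paper's proof: both arguments use $\vf$ to transport the layer decomposition of $x$ to that of $b_{\bullet}$, invoke Corollary 4.11 together with compatibility of $\vf$ with the bilinear form to see that $(x_{[\eta;a]},x_{[\eta;a]})$ has constant term prime to $p$ (hence nonzero), apply Lemma 1.11 (i) to pin the leading layer at $n_0=a$, and then exploit the almost orthonormality of $\ul\bB_{\eta;>a}\cap\ul\bB_{\nu}$ from (4.19.5) to force the higher part into $q\ul\SL_{\BZ}(\infty)$.

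The only organizational difference is in the last step. The paper subtracts a lift of the $q\BF[q]$-expansion of $b_{\bullet}-(b_{\bullet})_{[\eta;a]}$ from $x-x_{[\eta;a]}$ to get a single element $z$ with $\vf(z)=0$, hence $z\in p({}_{\BA}\ul\BU_q^-)$, and then argues by contradiction that a coefficient $c_{b'_0}$ with nonzero constant term in $p\BZ$ would contradict $(x_{a'},x_{a'})\in q\BA_0$. You instead treat each layer $y_n$ ($n>a$) directly: from $(y_n,y_n)\in q\BA_0$ and almost orthonormality, the integer $\sum\g_{b''}^{2}$ must vanish, so every coefficient already lies in $q\BZ[q]$. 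Your version avoids introducing $z$ and the explicit $p$-divisibility, at the cost of repeating the sum-of-squares argument layer by layer; the paper's version packages everything into one element but mixes the $p$-divisibility and the orthonormality constraints. Both are short and neither gains real generality over the other.
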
 

\begin{proof}
By Corollary 4.11, $b_{\bullet} \in \ul\bB^{\bullet}_{\eta;a}$ is written as
$b_{\bullet} = (b_{\bullet})_{[\eta;a]} + \sum_{b'_{\bullet}}a_{b'_{\bullet}}b'_{\bullet}$,
where $b'_{\bullet} \in \ul\bB^{\bullet}_{\eta; > a}$ and $a_{b'_{\bullet}} \in q\BF[q]$. 
Take $b' \in \ul\bB_{\eta; > a} \cap \ul\bB_{\nu}$ such that $\vf(b') = b'_{\bullet}$. 
Note that $\vf(x_{[\eta;a]}) = (b_{\bullet})_{[\eta;a]}$. 
Then $z = x - x_{[\eta;a]} - \sum_{b'}a_{b'}b' \in p({}_{\BA}\ul\BU_q^-)$, where 
$b' \in \ul\bB_{\eta; > a} \cap \ul\bB_{\nu}$ 
and $a_{b'} \in q\BZ[q]$ is such that its image to $q\BF[q]$ coincides with $a_{b'_{\bullet}}$. 
Here $z \in \ul Z_{\eta; > a} \cap \ul\SL_{\BZ}(\infty)$. 
Hence by (4.20.5), $z$ can be written as $z = \sum_{b'}c_{b'}b'$, where 
$b' \in \ul\bB_{\eta; > a} \cap \ul\bB_{\nu}$ and $c_{b'} \in p\BZ[q]$. 
Suppose that $z \notin q\ul\SL_{\BZ}(\infty)$. Then there exists $b'_0$ such that
$c_{b'_0} \in p\BZ[q] - q\BZ[q]$.  This implies that $c_{b'_0} \equiv d \mod q\BZ[q]$
for some $0 \ne d \in p\BZ$. 
We write $x = \sum_{n \ge a}\ul f^{(n)}_{\eta}x_n$ with $x_n \in \ul\BU_q^-[\eta]$. 
Then $\ul f_{\eta}^{(a)}x_a = x_{[\eta;a]}$.  Since $\vf$ is compatible with bilinear forms,  
the image of $(x_{[\eta;a]}, x_{[\eta;a]})$ on $\BF(q)$ coincides with 
$((b_{\bullet})_{[\eta;a]}, (b_{\bullet})_{[\eta;a]})$ which is
contained in $1 + (q\BF[[q]] \cap \BF(q))$.  
It follows that $(x_a, x_a) \notin q\BA_0$.  Then by Lemma 1.11 (i), 
$(x_a, x_a) \in 1 + q\BA_0$, and $(x_{a'}, x_{a'}) \in q\BA_0$ for all $a' \ne a$.    
But by the almost orthonormality of the basis $\ul\bB_{\eta; > a}\cap \ul\bB_{\nu}$, 
if there exists $b'_0$ 
such that $c_{b'_0}  \equiv d \mod q\BZ[q]$ with $d \in p\BZ - \{ 0\}$, we must have
$(x_{a'}, x_{a'}) \notin q\BA_0$ for $a'$ such that 
$b'_0 \in \ul\bB_{\eta;a'}$.  This is absurd, and we conclude that
$z \in q\ul\SL_{\BZ}(\infty)$.  The lemma is proved. 
\end{proof}

\para{4.22.}
We now construct $\ul\bB_{\eta; 0} \cap \ul\bB_{\nu}$.
Take $b_{\bullet} \in \ul\bB^{\bullet}_{\eta:0} \cap \ul\bB^{\bullet}_{\nu}$. 
There exists $\eta' \in \ul I$ such that 
$\ve_{\eta'}(b_{\bullet}) = a > 0$. 
By applying the discussion in 4.19 for $\eta'$ and $a > 0$, we can 
find $b \in \ul\bB_{\eta'; a} \cap \ul\bB_{\nu}$ such that $\vf(b) = b_{\bullet}$. 
We define $\ul\bB_{\eta:0} \cap \ul\bB_{\nu}$ as the set of those $b$ such that 
$\vf(b) = b_{\bullet}$ for various 
$b_{\bullet} \in \ul\bB^{\bullet}_{\eta:0} \cap \ul\bB^{\bullet}_{\nu}$ 
(and for various $\eta'$).
We set $\ul\bB_{\nu} = \bigsqcup_{a \ge 0}(\ul\bB_{\eta: a} \cap \ul\bB_{\nu})$. 
\par 
We know, by (4.20.5), that $\ul\bB_{\eta; > 0} \cap \ul\bB_{\nu}$ gives 
an almost orthonormal basis of 
$\ul Z_{\eta; >0} \cap \ul\SL_{\BZ}(\infty) \cap (\ul\BU_q)_{\nu}$. 
Hence by applying Lemma 4.21 for $a = 0$, we see that, for 
$b \in \ul\bB_{\eta;0} \cap \ul\bB_{\nu}$, 
\begin{equation*}
\tag{4.22.1}
b \equiv b_{[\eta;0]} \mod q\ul\SL_{\BZ}(\infty).
\end{equation*} 

Next we note that
\par\medskip\noindent
(4.22.2) \ The elements in $\ul\bB_{\nu}$ are almost orthonormal, 
and $\ul\bB_{\nu}$ gives a $\BZ[q]$-basis of $\ul\SL_{\BZ}(\infty) \cap (\ul\BU_q^-)_{\nu}$.
\par\medskip
We show (4.22.2).  Take $b \in \ul\bB_{\eta:0}$.  
If $b' \in \ul\bB_{\eta; > 0} \cap \ul\bB_{\nu}$, 
$(b_{[\eta:0]}, b') = 0$.  Hence $(b, b') \in q\BZ[[q]] \cap \BQ(q)$ by (4.21.1). 
So, assume that $b, b' \in \ul\bB_{\eta:0}$.  Set $b_{\bullet} = \vf(b), 
b'_{\bullet} = \vf(b')$, and  
$\ve_{\eta'}(b_{\bullet}) = a, \ve_{\eta'}(b'_{\bullet}) = a'$.
By our assumption, $a > 0$.  If $a' > 0$, then by applying the discussion 
in (4.20.5)  for $\ul\bB_{\eta'; >0}$,  $b$ and $b'$ are almost orthonormal.  
Thus assume that $a' = 0$.  By applying Lemma 4.21 for 
$\ul\bB_{\eta'; >0} \cap \ul\bB_{\nu}$, we have 
$b' \equiv b'_{[\eta'; 0]} \mod q\ul\SL_{\BZ}(\infty)$. 
Since $(b, b'_{[\eta';0]}) = 0$, we have $(b, b') \in q\BZ[[q]] \cap \BQ(q)$.  
Hence $\ul\bB_{\nu}$ is almost orthonormal.    
The second assertion of (4.22.2) is shown as in (4.20.5). 
Thus (4.22.2) holds. 
\par
In the construction of $\ul\bB_{\nu}$, the choice of $\eta, \eta'$, etc. is 
not unique.  But the following lemma shows that $\ul\bB_{\nu}$ is determined 
up to the sign. 

\begin{lem}  
Let $\ul\bB_{\nu}$ be a fixed basis of $\ul\SL_{\BZ}(\infty) \cap (\ul\BU_q^-)_{\nu}$ 
constructed as above.  
\begin{enumerate}
\item \ Let $x \in \ul\SL_{\BZ}(\infty) \cap (\ul\BU_q^-)_{\nu}$ be an element 
such that $\ol x = x$ and that $(x,x) \in 1 + q\BA_0$. Assume further that 
$\vf(x) \in \ul\bB^{\bullet}_{\nu}$.
Then $x = b$ (resp. $x = \pm b$) if $p \ne 2$ (resp. $p = 2$), 
where $b$ is the unique element in $\ul\bB_{\nu}$ such that 
$\vf(x) = \vf(b)$. 
\item \ The set $\ul\bB_{\nu}$ is determined uniquely (resp. uniquely up to sign) 
if $p \ne 2$ (resp. $p = 2$), independent from 
the construction process.  
\end{enumerate}
\end{lem}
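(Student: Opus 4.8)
The plan is to establish (i) first and then read off (ii) as an immediate consequence. All the inputs I need are already available: by (4.21.2) the set $\ul\bB_{\nu}$ is an almost orthonormal $\BZ[q]$-basis of $\ul\SL_{\BZ}(\infty)\cap(\ul\BU_q^-)_{\nu}$, every element of $\ul\bB_{\nu}$ is bar-invariant, and by construction $\vf$ restricts to a bijection $\ul\bB_{\nu}\isom\ul\bB^{\bullet}_{\nu}$. So I would take $x$ as in (i), let $b\in\ul\bB_{\nu}$ be the unique element with $\vf(b)=\vf(x)$, and write $x=\sum_{b'\in\ul\bB_{\nu}}a_{b'}b'$ as a finite $\BZ[q]$-linear combination. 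Applying $\vf$ and using that $\vf(\ul\bB_{\nu})$ is a basis of ${}_{\BA'}\ul\BU_q^-$ with $\vf(x)=\vf(b)$ gives $a_b\in 1+p\BZ[q]$ and $a_{b'}\in p\BZ[q]$ for $b'\ne b$; in particular $a_b\ne 0$. Then, since $\ol x=x$ and each $b'$ is bar-invariant, comparing coefficients yields $\ol{a_{b'}}=a_{b'}$, so each $a_{b'}$ is a constant in $\BZ$, whence $a_b\in 1+p\BZ$ and $a_{b'}\in p\BZ$ for $b'\ne b$.

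Next I would extract the constant term of $(x,x)$. Because $\ul\bB_{\nu}$ is almost orthonormal, $(b',b')$ has constant term $1$ and $(b',b'')$ has constant term $0$ for $b'\ne b''$, so the constant term of $(x,x)=\sum_{b',b''}a_{b'}a_{b''}(b',b'')$ equals $\sum_{b'}a_{b'}^2$. On the other hand the hypothesis $(x,x)\in 1+q\BA_0$ forces this constant term to be $1$, so $\sum_{b'}a_{b'}^2=1$ in $\BZ$. This is a finite sum of nonnegative squares equal to $1$, hence exactly one of them is $1$ and all the rest are $0$. For $b'\ne b$ we have $a_{b'}\in p\BZ$, so $a_{b'}^2$ is either $0$ or at least $p^2\ge 4$; since it cannot be $1$, it must be $0$. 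Therefore $a_b^2=1$, i.e.\ $a_b=\pm 1$, and $x=a_bb$. Since $a_b\equiv 1\pmod p$, the value $a_b=-1$ would force $p\mid 2$; hence if $p$ is odd then $a_b=1$ and $x=b$, while if $p=2$ both signs occur and $x=\pm b$. This proves (i).

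For (ii), I would let $\ul\bB'_{\nu}$ be any basis produced by the construction of 4.19--4.21 with different choices. Every $b'\in\ul\bB'_{\nu}$ then satisfies the hypotheses of (i): it lies in $\ul\SL_{\BZ}(\infty)\cap(\ul\BU_q^-)_{\nu}$, it is bar-invariant, $(b',b')\in 1+q\BA_0$ by the almost orthonormality of $\ul\bB'_{\nu}$ (which holds by the same argument that gave (4.21.2)), and $\vf(b')\in\ul\bB^{\bullet}_{\nu}$ by construction. Applying (i) with $x=b'$ shows that $b'$ equals (resp.\ equals up to sign, when $p=2$) the unique $b\in\ul\bB_{\nu}$ with $\vf(b)=\vf(b')$; since $\vf$ is bijective from each of $\ul\bB_{\nu}$, $\ul\bB'_{\nu}$ onto $\ul\bB^{\bullet}_{\nu}$, this gives $\ul\bB'_{\nu}=\ul\bB_{\nu}$ when $p\ne 2$ and agreement up to signs when $p=2$. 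I do not anticipate a genuine obstacle: the only point requiring care is the combined bookkeeping — reducing mod $p$, using bar-invariance to land in $\BZ$, and reading off the constant term via almost orthonormality — which collapses everything to the single equation $\sum_{b'}a_{b'}^2=1$, together with the observation that it is precisely the step "$a_b=\pm 1$ with $a_b\equiv 1\pmod p$" that separates the $p$ odd and $p=2$ cases.
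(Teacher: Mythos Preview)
Your argument is correct and follows essentially the same route as the paper: both proofs expand in the $\BZ[q]$-basis $\ul\bB_{\nu}$, use the mod-$p$ reduction together with bar-invariance to force the coefficients into $\BZ$ (with all but one in $p\BZ$), and then read off the sum-of-squares equation from almost orthonormality to conclude. The only cosmetic difference is that the paper expands $x-b$ rather than $x$, arriving at $(a_b+1)^2+\sum_{b'\ne b}a_{b'}^2=1$ instead of your $\sum_{b'}a_{b'}^2=1$; the remaining case split on $p$ is identical.
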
 

\begin{proof}
We show (i). Set $\vf(x) = b_{\bullet} \in \ul\bB^{\bullet}_{\nu}$, and let 
$b \in \bB_{\nu}$ be such that $\vf(b) = b_{\bullet}$. 
Then $x - b \in \ul\SL_{\BZ}(\infty) \cap (\ul\BU_q^-)_{\nu} \cap p(\ul\BU_q^-)$.   
Hence $x - b$ is written as $x - b = \sum_{b'}a_{b'}b'$, where 
$b' \in \ul\bB_{\nu}$ and $a_{b'} \in \BZ[q] \cap p\BA$. But since $x - b$ is 
bar-invariant, $a_{b'} \in p\BZ$ for any $b' \in \ul\bB_{\nu}$.  Then 
$x = (a_b + 1)b + \sum_{b' \ne b}a_{b'}b'$. Since $\ul\bB_{\nu}$ is almost orthonormal
by (4.22.2), we have
\begin{equation*}
\tag{4.23.1}
(x,x) \equiv (a_b + 1)^2 + \sum_{b' \ne b}a_{b'}^2 \mod q\BA_0. 
\end{equation*}
Since $a_b \in p\BZ$, $a_b + 1 \ne 0$.  Since $(x,x) \equiv 1 \mod q\BA_0$, 
(4.23.1) implies that $a_{b'} = 0$ for any $b' \in \ul\bB_{\nu} - \{ b\}$, hence 
$a_b + 1 = \pm 1$. This implies that $a_b + 1 = 1$ and $x = b$ if $p \ne 2$. 
While if $p = 2$, we have $a_b + 1 = \pm 1$, and so $x = \pm b$.
(i) is proved.  
\par 
We show (ii).  Let $\ul\bB'_{\nu}$ be a set constructed in a similar way 
as $\ul\bB_{\nu}$, but using different $\eta \in \ul I$.  Let $x \in \ul\bB'_{\nu}$. 
Then $x$ satisfies all the conditions in (i).  Assume that $p = 2$. Then by (i), 
$\pm x \in \ul\bB_{\nu}$, 
and so $\pm \ul\bB'_{\nu} \subset \pm \ul\bB_{\nu}$.
Replacing the role of $\ul\bB_{\nu}$ and $\ul\bB_{\nu}$, we have 
$\pm \ul\bB_{\nu} \subset \pm \ul\bB'_{\nu}$. Hence 
$\pm \ul\bB_{\nu} = \pm \ul\bB'_{\nu}$, and (ii) holds.  
The case $p \ne 2$ is similar. 
The lemma is proved. 
\end{proof}

\para{4.24.}
In the case where $p \ne 2$, by Lemma 4.23, $\ul\bB_{\nu}$ is defined canonically, 
independent of the construction process in 4.20. 
While if $p = 2$, Lemma 4.23 determines $\bB_{\nu}$ only up to sign. 
Now assume that $p \ne 2$. 
We define $\ul\bB = \bigsqcup_{\nu \in Q_-}\ul\bB_{\nu}$. 
We show that $\ul\bB$ is the canonical basis of $\ul\BU_q^-$.
Clearly $\bB$ satisfies the properties (C1) $\sim$ (C4) in 1.12.
Since $\vf$ gives a bijection $\vf : \ul\bB_{\nu} \isom \ul\bB_{\nu}^{\bullet}$
for any $\nu \in Q_-$, it gives a bijection $\vf : \bB \isom \ul\bB^{\bullet}$.  
(C5) follows from Lemma 4.21. 
Next we show (C7).  
Take $b \in \bB_{\eta;0}$ for $\eta \in \ul I$. If $\eta$ is the one used for 
the construction of $\bB$, (4.20.3) holds. Since we can choose any $\eta \in \ul I$ for 
the construction by the previous remark, (4.20.3) holds for this $\eta$. 
By (4.12.3), $b_{\bullet} \mapsto b'_{\bullet}$ gives a bijection 
$\ul\bB^{\bullet}_{\eta;0} \to \ul\bB^{\bullet}_{\eta;a}$. 
Hence $b \mapsto b'$ gives a bijection $\ul\bB_{\eta;0} \isom \ul\bB_{\eta;a}$.
This proves (C7). (C6) follows from the corresponding property for $\ul\bB^{\bullet}$
(see 4.20) by using the bijection $\vf : \ul\bB \isom \ul\bB^{\bullet}$.  
Thus $\ul\bB$ is the canonical basis of $\ul\BU_q^-$.
\par
Now assume that $p = 2$.  The above discussion shows that, 
there exists a basis $\ul\CB$ such that $\wt{\ul\CB} = \ul\CB \sqcup -\ul\CB$, 
satisfying the properties (C1) $\sim$ (C6), and (C7$'$).  

\para{4.25.}
We now prove Theorem 4.18.  (i) is already shown. 
$*$-operation commutes with $\vf : {}_{\BA}\ul\BU_q^- \to {}_{\BA'}\ul\BU_q^-$.
By (4.12.4), we have $*(\ul\bB^{\bullet}) = \ul\bB^{\bullet}$.
Since $\vf$ induces a bijection $\ul\bB \isom \ul\bB^{\bullet}$, we obtain 
$*(\ul\bB) = \ul\bB$.  Hence (iii) holds. 
We show (ii). The bijection $\vf : \ul\bB \isom \ul\bB^{\bullet}$ 
induces a bijection 
\begin{equation*}
\xymatrix{
  \xi : \bB^{\s} \ar[rr]^{\pi} &   &  
            \bB^{\dia} \ar[rr]^{\Phi\iv} &  &  
                \ul\bB^{\bullet} \ar[rr]^{\vf\iv} &  &  \ul\bB. 
}
\end{equation*}
Kashiwara operators $\wt F_{\eta} : \bB^{\s} \to \bB^{\s}$ are obtained as the 
restriction of $\prod_{i \in \eta}F_i$ on $\bB^{\s}$. Hence by Proposition 1.19, 
for any $b \in \bB^{\s}$, there exists a sequence $\eta_1, \dots, \eta_N \in \ul I$,
and $c_1, \dots, c_N \in \BZ_{>0}$ such that 
\begin{equation*}
\tag{4.25.1}
b = \wt F^{c_1}_{\eta_1}\wt F^{c_2}_{\eta_2}\cdots \wt F^{c_N}_{\eta_N}1.
\end{equation*}   
Kashiwara operators $F^{\bullet}_{\eta}$ on $\ul\bB^{\bullet}$ are defined by using 
the bijection  $\Phi\iv \circ \pi : \bB^{\s} \to \bB^{\dia} \to \ul\bB^{\bullet}$, 
and a similar property as (4.25.1) holds for $\ul\bB^{\bullet}$. 
Now Kashiwara operators $\ul F_{\eta}$ on $\ul\bB$ are defined for the canonical 
basis $\ul\bB$ of $\ul\BU_q^-$, and from the construction, $\vf$ is compatible with 
those Kashiwara operators. 
Hence $\xi : \bB^{\s} \isom \ul\bB$ is compatible with Kashiwara operators 
$\wt F_{\eta}$ and $\ul F_{\eta}$. Note that by Proposition 1.19, 
$\ul\bB$ also satisfies a similar formula as (4.25.1) by replacing $\wt F_{\eta}$
by $\ul F_{\eta}$. Thus such a bijection $\xi : \bB^{\s} \isom \ul\bB$ is unique. 
Hence (ii) holds. The proof of Theorem 4.18 is now complete.

\para{4.26.}
We return to the general setup, and let $\s$ be an admissible 
diagram automorphism of $X$ of any order.  Let $\ul X$ be the Cartan 
datum induced from $(X, \s)$.  Let $\BU_q^-$ (resp. $\ul\BU_q^-$) be
the quantum enveloping algebra associated to $X$ (resp. $\ul X$). 
We assume that $X$ is a symmetric Cartan datum.  
Then by Theorem 1.18, there exists the canonical basis $\bB$ for 
$\BU_q^-$, which satisfies the property that $*(\bB) = \bB$.  
\par
The following result is our main theorem. 

\begin{thm}  
Under the notation in 4.26, let $\bB$ be the canonical basis of $\BU_q^-$. 
\begin{enumerate}
\item 
Assume that the order of $\s$ is odd. 
Then there exists the canonical basis $\ul\bB$ of $\ul\BU_q^-$, and 
a unique bijection $\xi: \bB^{\s} \isom \ul\bB$ which is compatible with 
Kashiwara operators. Moreover, we have $*(\ul\bB) = \ul\bB$. 
\item 
Assume that the order of $\s$ is even. Then there exists a basis $\ul\CB$ of $\ul\BU_q^-$
satisfying the properties as in Remark 4.19, and a bijection 
$\xi : \wt\bB^{\s} \isom \wt{\ul\CB}$ compatible with Kashiwara operators 
(up to sign).   
\end{enumerate}
\end{thm}

\begin{proof}
We prove the theorem in the case where the order of $\s$ is odd.  
The even case is similar. 
By Proposition 2.5, there exists a sequence 
$X = X_0, X_1, \dots, X_k  = \ul X$, and a diagram automorphism 
$\s_i : X_i \to X_i$ such that $X_{i+1}$ is isomorphic to the Cartan datum 
induced from $(X_i, \s_i)$ and that $\s = \s_{k-1}\cdots \s_1\s_0$. 
Moreover, the order of $\s_i$ is an odd prime power. 
Let ${}_{(i)}\BU_q^-$ be the quantum  algebra associated to $X_i$. 
By induction on $i$, we may assume that ${}_{(i)}\BU_q^-$ has 
the canonical basis ${}_{(i)}\bB$, which is stable by the 
$*$-operation. Then by Theorem 4.18, there exists 
the canonical basis ${}_{(i+1)}\bB$ of ${}_{(i+1)}\BU_q^-$, which is stable by 
the $*$-operation. and 
a bijection $\xi_i : ({}_{(i)}\bB)^{\s_i} \isom {}_{(i+1)}\bB$, compatible with 
Kashiwara operators.  
Thus we obtain the canonical basis $\ul\bB = {}_{(k)}\bB$ of 
$\ul\BU_q^- = {}_{(k)}\BU_q^-$, which is stable by the $*$-operation.
From the construction, we have a bijection  
\begin{equation*}
\xi : \bB^{\s} = (\cdots (\bB^{\s_0})^{\s_1}\cdots )^{\s_{k-1}} \isom \ul\bB
\end{equation*}  
as the composite of $\xi_0, \xi_1, \dots, \xi_{k-1}$.
Each $\xi_i$ is compatible with Kashiwara operators, hence $\xi$ is compatible
with Kashiwara operators.  In particular, $\xi$ is uniquely determined, independent 
of the expression $\s = \s_{k-1}\cdots \s_1\s_0$.  The theorem is proved.
\end{proof}

\remark{4.28.}
\ Recall, by 2.2, that for a Cartan datum $X$ of arbitrary type, 
there exists a symmetric Cartan datum $\wt X$, and an admissible diagram 
automorphism  $\s$ such that the Cartan datum induced from $(\wt X, \s)$ is 
isomorphic to $X$. Thus Theorem 4.27 assures that the quantum enveloping 
algebra $\BU_q^-$ of any type has the canonical (signed) basis $\bB$ in the sense 
of 1.12 and Remark 4.19, which is stable by the $*$-operation.

\par\bigskip

\section{ The proof of Proposition 3.5 }

\para{5.1.}
We follow the notation in Section 3. Note that
${}_{\BA'}\ul\BU_q^-$ is the $\BA'$-algebra with generators 
$\ul f^{(a)}_{\eta}$ ($ \eta \in \ul I, a \in \BN$) with fundamental
relations

\begin{align*}
\tag{5.1.1}
&\sum_{k = 0}^{1 - a_{\eta\eta'}}(-1)^k
        \ul f_{\eta}^{(k)}\ul f_{\eta'}\ul f_{\eta}^{(1 - a_{\eta\eta'} - k)} = 0, 
             \quad (\eta \ne \eta'), \\
\tag{5.1.2}
&[a]_{d_{\eta}}^!\ul f_{\eta}^{(a)} = \ul f_{\eta}^a, \quad (a \in \BN),
\end{align*}
where $d_{\eta} = (\a_{\eta}, \a_{\eta})_1/2$. 
In order to prove Proposition 3.5, it is enough to show that 
$\wt f_{\eta}$ satisfies the following relations in ${}_{\BA'}\BU_q^{-,\s}$, 
\begin{align*}
\tag{5.1.3}
&\sum_{k= 0}^{1 - a_{\eta\eta'}}(-1)^k\begin{bmatrix}
                                        1 - a_{\eta\eta'} \\
                                             k
                                      \end{bmatrix}_{d_{\eta}} 
       \wt f_{\eta}^k \wt f_{\eta'}\wt f_{\eta}^{1 - a_{\eta\eta'} - k} \equiv 0 \mod J, 
          \qquad (\eta \ne \eta'), \\ 
\tag{5.1.4}
&[a]^!_{d_{\eta}}\wt f_{\eta}^{(a)} = \wt f_{\eta}^a, 
          \qquad (a \in \BZ_{\ge 0}).
\end{align*} 

(5.1.4) is shown as follows.
Since $|\eta|$ is a power of $p$, we have $([a]_{d_i}^!)^{|\eta|} = [a]^!_{|\eta|d_i}$ 
in $\BA' = \BF[q,q\iv]$.  Since $d_{\eta} = |\eta|d_i$ for 
$i \in \eta$, we have 
\begin{equation*}
\wt f^{(a)}_{\eta} = \prod_{i \in \eta}f_i^{(a)} 
                   = ([a]_{d_i}^!)^{-|\eta|}\prod_{i \in \eta}f_i^a 
                   = ([a]^!_{d_{\eta}})\iv \wt f_{\eta}^a.
\end{equation*} 
Hence (5.1.4) holds.  The rest of this section is devoted to the 
proof of (5.1.3). 
\par
Recall that the following Serre relations hold in $\BU_q^-$.
For $i \ne j \in I$, 
\begin{equation*}
\tag{5.1.5}
\sum_{k = 0}^{1 - a_{ij}}(-1)^k\begin{bmatrix}
                                    1 - a_{ij} \\
                                        k
                               \end{bmatrix}_{d_i}f_i^kf_jf_i^{1-a_{ij}-k} = 0.
\end{equation*}

We fix $\eta \ne \eta' \in \ul I$, and write them as
$\eta = \{ 1_1, \dots, 1_m \}, \eta' = \{ 2_1, \dots, 2_{n-1}\}$, here 
$|\eta| = m, |\eta'| = n-1$. 

\para{5.2.}
Here we consider the special case where 
$|\eta| = 1$, 
and any $j \in \eta'$ is joined to $i = 1 \in \eta$.  Thus $a_{ij}$ is independent 
of the choice of $j \in \eta'$, and we set $r = 1 - a_{ij}$.
In this case, we have $-a_{\eta\eta'} = |\eta'|(-a_{ij}) = (n-1)(r-1)$.
We set
\begin{equation*}
\tag{5.2.1}
L = 1 - a_{\eta\eta'} = (n-1)(r-1) + 1.
\end{equation*} 
We have $\wt f_{\eta} = f_1, \wt f_{\eta'} = f_{2_1}\cdots f_{2_{n-1}}$.     
In order to verify the formula (5.1.3), we need to compute 
$\wt f_{\eta}^k\wt f_{\eta'}\wt f_{\eta}^{L - k} = f_1^kf_{2_1}\cdots f_{2_{n-1}}f_1^{L-k}$ 
for various $0 \le k \le L$. 
More generally, for each tuple $(a_1, \dots, a_n) \in \BN^n$ 
such that $\sum _i a_i = L$, we consider the correspondence 
\begin{equation*}
\tag{5.2.2}
(a_1, \dots, a_n) \longleftrightarrow 
f_1^{a_1}f_{2_1}f_1^{a_2}f_{2_2}\cdots f_{2_{n-2}}f_1^{a_{n-1}}f_{2_{n-1}}f_1^{a_n} 
                 \in \BU_q^-. 
\end{equation*}
The commuting relations for $f_1$ and $f_{2_k}$ are given by the 
Serre relations (5.1.5) 
for $r = 1 - a_{ij}$ with $i = 1, j = 2_k$.  

\para{5.3.}
Based on the observation in 5.2, we introduce the following 
combinatorial object.
We fix $r \ge 2$.
Let $V_n$ be a vector space over $\BQ(q)$ spanned by $a = (a_1, \dots, a_n) \in \BN^n$, 
which satisfies the following relations for $1 \le i \le n - 1$;
if $a_i \ge r$ for some $i$, then $a$ is written as 
\begin{equation*}
\tag{5.3.1}
\sum_{0 \le j \le r}(-1)^j\begin{bmatrix}
                      r  \\
                      j
                    \end{bmatrix}a(j) = 0, 
\end{equation*}
where $a(j) \in \BN^n$ is given by 
\begin{equation*}
\tag{5.3.2}
a(j) = (a_1, \dots, a_{i-1}, a_i-j, a_i + j, a_{i+2}, \dots, a_n). 
\end{equation*}
In particular, $a(0) = a$. 
For each $m \ge 1$, we denote by $E_n(m)$ the subspace of $V_n$ 
spanned by 
\begin{equation*}
\SE_n(m) = \{ a = (a_1, \dots, a_n) \mid \sum_{1 \le i \le n}a_i = m,
                 a_i \in [0, r-1] \text{ for } 1 \le i < n\}. 
\end{equation*}
It is clear if $a = (a_1, \dots, a_n) \in V_n$ is such that 
$\sum a_i = m$, then $a \in E_n(m)$.
\par
We prove the following. 

\begin{prop}   
Assume that $n = 2$.  Then for any $k \ge 0$, the following 
formula holds in $V_2$. 
\begin{equation*}
\tag{5.4.1}
(r + k,\ell) = \sum_{s=1}^r(-1)^{s-1}
                  \begin{bmatrix}
                     s + k - 1 \\
                        s -1    
                  \end{bmatrix}
                 \begin{bmatrix}
                    r + k  \\
                    r - s
                 \end{bmatrix}(r - s, \ell + s + k).
\end{equation*}
\end{prop}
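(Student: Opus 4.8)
The plan is to prove \eqref{5.4.1} by induction on $k$, using only the defining relation \eqref{5.3.1} of $V_2$, which for $n=2$ reads $\sum_{0\le j\le r}(-1)^j\begin{bmatrix} r\\ j\end{bmatrix}(a_1-j,\,a_2+j)=0$ whenever $a_1\ge r$. First I would check the base case $k=0$: applying this relation with $a_1=r$, $a_2=\ell$ gives $(r,\ell)=-\sum_{j=1}^r(-1)^j\begin{bmatrix} r\\ j\end{bmatrix}(r-j,\ell+j)=\sum_{s=1}^r(-1)^{s-1}\begin{bmatrix} r\\ r-s\end{bmatrix}(r-s,\ell+s)$, which is exactly \eqref{5.4.1} at $k=0$ since $\begin{bmatrix} s-1\\ s-1\end{bmatrix}=1$ and $\begin{bmatrix} r\\ s\end{bmatrix}=\begin{bmatrix} r\\ r-s\end{bmatrix}$.

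For the inductive step, assume \eqref{5.4.1} holds for $k$ and prove it for $k+1$. The natural idea is to write $(r+k+1,\ell)$ by applying the defining relation with $a_1=r+k+1\ge r$, $a_2=\ell$, which expresses it as a combination of $(r+k+1-j,\ell+j)$ for $1\le j\le r$. The terms with $j\ge 2$ have first coordinate $\le r+k-1<r+k+1$; the term $j=1$ is $(r+k,\ell+1)$, to which the induction hypothesis applies directly. However, the terms with small $j$ (specifically $2\le j\le k+1$) still have first coordinate $\ge r$, so they are not yet in $\SE_2(m)$-form and must themselves be rewritten. Rather than iterate the relation repeatedly, a cleaner route is to observe that any $(a_1,a_2)$ with $a_1\ge r$ can be reduced to standard form, and to prove by a separate induction (on $a_1-r$) a closed formula for the coefficients; alternatively, rewrite $(r+k+1,\ell)$ using the relation and substitute the induction hypothesis into each resulting term with first coordinate $\ge r$, then collect.

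Concretely, I would proceed as follows. Apply \eqref{5.3.1} to reduce $(r+k+1,\ell)$ one step: $(r+k+1,\ell)=\sum_{j=1}^r(-1)^{j-1}\begin{bmatrix} r\\ j\end{bmatrix}(r+k+1-j,\ell+j)$. For $j\ge 2$ write $r+k+1-j=r+(k+1-j)$; when $k+1-j\ge 0$ apply the induction hypothesis (valid since $k+1-j\le k$), and when $k+1-j<0$ the element is already in $\SE_2$. Then each term becomes a linear combination of basis elements $(r-s,\ell+s+k+1)$ with $1\le s\le r$. Collecting the coefficient of $(r-s,\ell+s+k+1)$ and showing it equals $(-1)^{s-1}\begin{bmatrix} s+k\\ s-1\end{bmatrix}\begin{bmatrix} r+k+1\\ r-s\end{bmatrix}$ reduces to a $q$-binomial identity. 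The main obstacle will be precisely this bookkeeping: verifying that the resulting double sum of products of Gaussian binomial coefficients collapses to the claimed single product. I expect this to follow from the $q$-Vandermonde (Chu--Vandermonde) identity $\sum_{t}q^{t(\,\cdot\,)}\begin{bmatrix} a\\ t\end{bmatrix}\begin{bmatrix} b\\ c-t\end{bmatrix}=\begin{bmatrix} a+b\\ c\end{bmatrix}$ together with the Pascal-type recursions for $\begin{bmatrix} n\\ m\end{bmatrix}$, though keeping track of the $q$-powers and the alternating signs is the delicate part. A useful simplification is to first handle the terms $j=1$ and $j\ge 2$ separately, peeling off $(r+k,\ell+1)$ cleanly via the induction hypothesis and treating the remaining sum as a correction term; one then checks the telescoping/recursion $\begin{bmatrix} s+k\\ s-1\end{bmatrix}=\begin{bmatrix} s+k-1\\ s-1\end{bmatrix}+q^{?}\begin{bmatrix} s+k-1\\ s-2\end{bmatrix}$ matches what the single-step relation produces.

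Finally, once \eqref{5.4.1} is established, I would indicate (as the paper surely does in the sequel) how the general case of \eqref{5.1.3} — arbitrary $\eta\ne\eta'$, not just the special configuration of 5.2 — is deduced: one passes to the vector spaces $V_n$ and $E_n(m)$ for $n\ge 2$, uses the correspondence \eqref{5.2.2} between tuples and monomials in $\BU_q^-$, and shows that the Serre relations \eqref{5.1.5} force $f_1^k\wt f_{\eta'}f_1^{L-k}$, read through this dictionary, to satisfy the quantum Serre relation \eqref{5.1.3} for $\wt f_\eta,\wt f_{\eta'}$ modulo $J$; the orbit-sum ideal $J$ absorbs exactly the terms where the $\s$-orbit structure is broken. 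Proposition~5.4 is the combinatorial engine for the rank-one-orbit case, and the higher $n$ case will be built on top of it by a similar but more elaborate induction.
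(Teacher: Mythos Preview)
Your overall strategy---induction on $k$, apply the defining relation (5.3.1) to $(r+k,\ell)$, substitute the (strong) induction hypothesis into every term whose first coordinate is still $\ge r$, and collect the coefficient of each $(r-t,\ell+t+k)$---is exactly the paper's. (Minor point: you need all $k'<k$, not just the single previous value; you use this implicitly but only state the weak form.)

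The place where your proposal is genuinely incomplete is the step you yourself flag as ``the main obstacle''. After collecting, the coefficient $A_t$ of $(r-t,\ell+t+k)$ is an alternating sum of products $\begin{bmatrix}r\\s\end{bmatrix}\begin{bmatrix}t+k-s-1\\t-1\end{bmatrix}\begin{bmatrix}r+k-s\\r-t\end{bmatrix}$ (plus a boundary term when $k\le r$), and one must show it equals $\begin{bmatrix}t+k-1\\t-1\end{bmatrix}\begin{bmatrix}r+k\\r-t\end{bmatrix}$. The paper isolates this as Lemma~5.6: $\sum_{s=0}^r(-1)^s\begin{bmatrix}r\\s\end{bmatrix}\begin{bmatrix}t+k-s-1\\t-1\end{bmatrix}\begin{bmatrix}r+k-s\\r-t\end{bmatrix}=0$. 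The proof is \emph{not} $q$-Vandermonde or Pascal recursion; those do not apply cleanly to a triple product with alternating sign. Instead one observes that, as a function of $s$, the product of the two $s$-dependent binomials is a Laurent polynomial in $q^s$ supported only on exponents $s(r-1-2j)$ for $0\le j\le r-1$, with coefficients independent of $s$; then the elementary identity $\sum_{s=0}^r(-1)^s q^{s(r-1-2j)}\begin{bmatrix}r\\s\end{bmatrix}=0$ (specialize $z=-q^{-2j}$ in the $q$-binomial theorem) kills each piece. The boundary term when $k\le r$ is handled by noting that for $k+1\le s\le r$ the product vanishes unless $s=t+k$, where it equals $(-1)^{t-1}$. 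This ``degree $<r$'' trick is cleaner than what you sketch and is reused verbatim later (Propositions 5.11 and 5.18), so it is worth internalizing.
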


\para{5.5.}
(5.4.1) holds for $k = 0$ by (5.3.1). 
We shall prove (5.4.1) by induction on $k$. 
Assume that (5.4.1) holds for $k' < k$.  

\par
First assume that $k \le r$.  In this case, we have

\begin{align*}
(r &+ k, \ell)  \\ 
   &= \sum_{s = 1}^k(-1)^{s-1}
                     \begin{bmatrix}
                       r  \\
                       s
                     \end{bmatrix}(r + k -s, \ell + s)
      + \sum_{s = k+1}^r(-1)^{s-1}
                     \begin{bmatrix}
                       r  \\
                       s
                     \end{bmatrix}(r + k -s, \ell + s)   \\
   &= \sum_{s=1}^k(-1)^{s-1}\begin{bmatrix}
                       r  \\
                       s
                     \end{bmatrix}
                  \biggl(\sum_{t= 1}^r(-1)^{t-1}
                      \begin{bmatrix}
                        t + (k - s)-1 \\
                           t - 1
                      \end{bmatrix}
                      \begin{bmatrix}
                         r + (k - s) \\
                         r - t
                      \end{bmatrix}(r - t, \ell + t + k)\biggl)  \\
     &\qquad  + \sum_{t = 1}^{r - k}(-1)^{t + k -1}
                     \begin{bmatrix}
                       r  \\
                       t + k
                     \end{bmatrix}(r - t, \ell + t + k)  \\
     &= \sum_{t = 1}^r(-1)^{t-1}A_t(r-t, \ell + t + k), 
\end{align*}
where

\begin{align*}
\tag{5.5.1}
A_t = \biggl(\sum_{s=1}^k(-1)^{s-1}
              \begin{bmatrix}
                  r  \\
                  s
              \end{bmatrix}
              \begin{bmatrix}
                  t + k - s -1 \\
                  t - 1
              \end{bmatrix}
              \begin{bmatrix}
                  r + k - s \\
                  r - t
              \end{bmatrix}\biggr) + (-1)^k\begin{bmatrix}
                       r  \\
                       t + k
                     \end{bmatrix}. 
\end{align*}
The last term appears only in the case where $t + k \le r$.   
A similar computation shows, in the case where $k > r$, we have
\begin{equation*}
\tag{5.5.2}
A_t = \sum_{s=1}^r(-1)^{s-1}
              \begin{bmatrix}
                  r  \\
                  s
              \end{bmatrix}
              \begin{bmatrix}
                  t + k - s -1 \\
                  t - 1
              \end{bmatrix}
              \begin{bmatrix}
                  r + k - s \\
                  r - t
              \end{bmatrix}.  
\end{equation*}

We prove the following formula. 

\begin{lem}  
Assume that $1 \le t \le r$ and $k \in \BZ$.
Then we have
\begin{equation*}
\tag{5.6.1}
\sum_{s = 0}^r(-1)^s\begin{bmatrix}
                       r \\
                       s
                    \end{bmatrix}
                    \begin{bmatrix}
                  t + k - s -1 \\
                  t - 1
                   \end{bmatrix}
                   \begin{bmatrix}
                  r + k - s \\
                  r - t
                   \end{bmatrix} = 0. 
\end{equation*}
\end{lem}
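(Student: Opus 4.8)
The plan is to prove (5.6.1) by converting every balanced Gaussian binomial into an ordinary one in the variable $v = q^2$ and then applying the finite $q$-binomial theorem. Starting from $[j] = q^{-(j-1)}(v^j-1)/(v-1)$ one obtains at once, for $n \in \ZZ$ and $m \in \NN$,
\begin{equation*}
\begin{bmatrix} n \\ m \end{bmatrix} = q^{-m(n-m)}\binom{n}{m}_v,
\qquad
\binom{n}{m}_v := \frac{(v^n-1)(v^{n-1}-1)\cdots(v^{n-m+1}-1)}{(v^m-1)(v^{m-1}-1)\cdots(v-1)}.
\end{equation*}
Substituting this in the three factors of the $s$-th summand of (5.6.1) and collecting the exponents of $q$, the parts linear and quadratic in $s$ collapse to $s^2 - s = 2\binom{s}{2}$, while the remaining exponent (which works out to $t(t-r) + k(1-r)$) does not depend on $s$. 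Hence, up to the overall monomial $q^{t(t-r)+k(1-r)}$, the left-hand side of (5.6.1) equals
\begin{equation*}
\sum_{s=0}^{r}(-1)^s v^{\binom{s}{2}}\binom{r}{s}_v\binom{t+k-s-1}{t-1}_v\binom{r+k-s}{r-t}_v,
\end{equation*}
and it suffices to show that this sum vanishes.

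I would then invoke two standard facts. First, the finite $q$-binomial theorem gives $\sum_{s=0}^{r}(-1)^s v^{\binom{s}{2}}\binom{r}{s}_v x^s = \prod_{i=0}^{r-1}(1-v^i x)$; evaluating at $x = v^{-j}$ with $0 \le j \le r-1$ yields $\prod_{i=0}^{r-1}(1-v^{i-j}) = 0$, since the factor with $i = j$ equals $1 - v^0 = 0$. Second, by the product formula each $\binom{N-s}{m}_v$ is a polynomial in $w := v^{-s}$ of degree $m$; hence $\binom{t+k-s-1}{t-1}_v\binom{r+k-s}{r-t}_v$ equals $P(v^{-s})$ for a polynomial $P$ of degree $(t-1)+(r-t) = r-1$, which is $< r$ precisely because $1 \le t \le r$. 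Writing $P(w) = \sum_{j=0}^{r-1}c_j w^j$ and substituting, the sum above becomes $\sum_{j=0}^{r-1}c_j\bigl(\sum_{s=0}^{r}(-1)^s v^{\binom{s}{2}}\binom{r}{s}_v v^{-js}\bigr) = \sum_{j=0}^{r-1}c_j\prod_{i=0}^{r-1}(1-v^{i-j}) = 0$, which proves (5.6.1). The argument is uniform in $k \in \ZZ$, because both the conversion identity and the polynomiality in $v^{-s}$ hold for arbitrary top indices, with no positivity assumption on $k$.

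The computation is short, and the only point requiring care is the exponent bookkeeping in the first step: one must verify that, after the substitution, the accumulated power of $q$ has $s$-dependence exactly $2\binom{s}{2}$ (a sign slip here would wreck the identification with the $q$-binomial theorem coefficient). The conceptual content of the remainder is just that the $r$-th finite $q$-difference operator --- here realized as the alternating sum weighted by $(-1)^s v^{\binom{s}{2}}\binom{r}{s}_v$ --- annihilates every polynomial in $v^{-s}$ of degree less than $r$, and the two ``moving'' binomials in (5.6.1) together have degree only $r-1$.
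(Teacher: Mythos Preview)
Your proof is correct and follows essentially the same strategy as the paper's. Both arguments hinge on the observation that the product $\begin{bmatrix} t+k-s-1 \\ t-1 \end{bmatrix}\begin{bmatrix} r+k-s \\ r-t \end{bmatrix}$ is, as a function of $s$, a linear combination of exponentials $q^{cs}$ with exactly $r$ distinct exponents (equivalently: a polynomial of degree $r-1$ in $v^{-s}$), and then both invoke the product form of the $q$-binomial theorem to kill each such exponential term individually. The paper reaches this via the expansion (5.6.2)--(5.6.3) and the identity (5.6.5), while you pass through the conversion $\begin{bmatrix} n \\ m \end{bmatrix} = q^{-m(n-m)}\binom{n}{m}_v$ and track the resulting power of $q$ explicitly; your identity $\sum_{s}(-1)^s v^{\binom{s}{2}}\binom{r}{s}_v v^{-js}=0$ for $0\le j\le r-1$ is, after the same conversion, exactly the paper's (5.6.5). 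The only real difference is presentational: your exponent bookkeeping is carried out in detail (and is correct: the $s$-dependent part is indeed $s^2-s$), whereas the paper asserts (5.6.2) without expanding it.
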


\begin{proof}
Note that, for any $k \in \BZ$, one can write as 
\begin{align*}
\tag{5.6.2}
[k - s][k - s -1]\cdots [k - s - r + 1]
   = \sum_{j = 0}^{r}F_j(q)q^{sr - 2js},
\end{align*}
where $F_j(q) \in \BQ(q)$ is independent from $s$. 
Applying (5.6.2) to our situation, we have 
\begin{align*}
\begin{bmatrix}
                  t + k - s -1 \\
                      t -1 
                   \end{bmatrix} &= \sum_{j = 0}^{t-1}F_j(q)q^{s(t-1) - 2js}, \\
                    \begin{bmatrix}
                  r + k - s \\
                  r - t
                   \end{bmatrix} &= \sum_{j= 0}^{r-t}G_j(q)q^{s(r-t)-2js}, 
\end{align*}
where $F_j(q), G_j(q) \in \BQ(q)$ are independent from $s$. It follows that 
\begin{equation*}
\tag{5.6.3}
                 \begin{bmatrix}
                  t + k - s -1 \\
                      t -1 
                   \end{bmatrix}
                   \begin{bmatrix}
                  r + k - s \\
                  r - t
                   \end{bmatrix} = \sum_{j = 0}^{r-1}H_j(q)q^{s(r-1 - 2j)},
\end{equation*}
where $H_j(q) = \sum_{j' + j'' = j}F_{j'}(q)G_{j''}(q)$ is independent from $s$.

Recall the formula (1.11.5), 
\begin{equation*}
\tag{5.6.4}
\prod_{\ell =0}^{n-1}(1 + q^{2\ell}x) = \sum_{k = 0}^nq^{k(n-1)}
                  \begin{bmatrix}
                      n   \\
                      k
                  \end{bmatrix}x^k,
\end{equation*}
where $x$ is another indeterminate.  If we put $x = -q^{-2j}$ 
for $j = 0, \dots, n - 1$, the left hand side of (5.6.4) is equal to zero.
Hence
\begin{equation*}
\tag{5.6.5}
\sum_{k = 0}^n(-1)^kq^{k(n -1 -2j)}\begin{bmatrix}
                       n  \\
                       k
                    \end{bmatrix} = 0  \qquad\text{ for } j = 0, \dots, n - 1.
\end{equation*}
If we substitute (5.6.3) into the left hand side of (5.6.1),
then the equality of (5.6.1) follows from (5.6.5). 
Thus the lemma is proved. 
\end{proof}

\para{5.7.}
We consider the expansion of $(r + k, \ell)$ in 5.5. 
Now assume that $k > r$.  Then by (5.5.2) and Lemma 5.6, 
we see that 
$A_t = \begin{bmatrix}
                                  t + k - 1 \\
                                  t -1
                               \end{bmatrix}
                               \begin{bmatrix}
                                  r + k \\
                                  r - t
                               \end{bmatrix}$.
Hence (5.4.1) holds for $k > r$. 
\par
Next assume that $k \le r$.  In this case, $A_t$ is given by (5.5.1).
We consider the formula in Lemma 5.6.   We fix $s$ such that 
$k + 1 \le s \le r$.  In this case, 
$t + k - s-1 < t - 1$. Hence if $t + k - s - 1 \ge 0$, then 
$\begin{bmatrix}
                  t + k - s -1 \\
                      t -1 
\end{bmatrix} = 0$. 
On the other hand, since $r + k - s \ge 0$, if
$r + k - s < r - t$, then  
$\begin{bmatrix}
                  r + k - s \\
                  r - t
\end{bmatrix} = 0$.
It follows that 
\begin{equation*}
\tag{5.7.1}
                 \begin{bmatrix}
                  t + k - s -1 \\
                      t -1 
                   \end{bmatrix}
                   \begin{bmatrix}
                  r + k - s \\
                  r - t
                   \end{bmatrix} \ne 0
\end{equation*}
only when $t + k - s- 1 < 0$ and $r + k - s \ge r- t$, 
namely only when $s = t + k$.                                   
If $s = t + k$, the left hand side of (5.7.1) is equal to
$\begin{bmatrix}
         -1  \\
         t-1
 \end{bmatrix} = (-1)^{t-1}$.
Hence by Lemma 5.6, we have
\begin{equation*}
\sum_{s = 0}^k(-1)^s\begin{bmatrix}
                       r \\
                       s
                    \end{bmatrix}
                    \begin{bmatrix}
                  t + k - s -1 \\
                  t - 1
                   \end{bmatrix}
                   \begin{bmatrix}
                  r + k - s \\
                  r - t
                   \end{bmatrix}  
     + (-1)^{t + k}\begin{bmatrix}
                       r  \\
                      t + k
                   \end{bmatrix} (-1)^{t-1} = 0.
\end{equation*}
By (5.5.1), we obtain $A_t = \begin{bmatrix}
                              t + k -1 \\
                              t - 1
                           \end{bmatrix}
                           \begin{bmatrix}
                              r + k \\
                              r - t
                           \end{bmatrix}$.
Thus (5.4.1) holds also for $k \le r$.    
The proof of Proposition 5.4 is now complete. 

\para{5.8.}
Proposition 5.4 holds for $k \ge 0$. 
Now assume that $-r \le k < 0$. 
If $s + k -1 \ge 0$, then $\begin{bmatrix}
                     s + k - 1 \\
                       s - 1    
                  \end{bmatrix} = 0$, and 
if $r + k < r-s$, then $\begin{bmatrix}
                    r + k  \\
                    r - s
                 \end{bmatrix} = 0$.   Hence 
                  $A_s = \begin{bmatrix}
                     s + k - 1 \\
                       s - 1    
                  \end{bmatrix}
                 \begin{bmatrix}
                    r + k  \\
                    r - s
                 \end{bmatrix} \ne 0$ only when 
$s + k -1 < 0$ and $r + k \ge r-s$, namely when $s = -k$.
In that case, we have $A_s = \begin{bmatrix}
                                -1  \\
                               s - 1
                             \end{bmatrix} = (-1)^{s-1}$. 
Thus $(r + k, \ell)$ has an expansion as in (5.4.1), 
where all the coefficients of $(r-s, \ell + s + k)$ are zero unless 
$s = -k$, in which case, the coefficient is equal to 1. 
Hence (5.4.1) holds for $r + k \ge 0$. 
Set, for $1 \le s \le r$, 
\begin{equation*}
\tag{5.8.1}
A(k,s) = \begin{bmatrix}
                     s + k - r - 1 \\
                       s - 1    
                  \end{bmatrix}
                 \begin{bmatrix}
                      k  \\
                    r - s
                 \end{bmatrix}.
\end{equation*}
Then by replacing $r + k$ by $k$ in (5.4.1), we have

\begin{cor}   
Assume that $n = 2$, and $k \ge 0$.  Then the following formula holds in $V_2$.
\begin{equation*}
\tag{5.9.1}
(k,\ell) = \sum_{s=1}^r(-1)^{s-1}
                       A(k,s)(r - s, \ell + s + k - r). 
\end{equation*}
\end{cor}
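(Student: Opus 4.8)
The plan is to obtain (5.9.1) as an immediate reindexing of formula (5.4.1) from Proposition~5.4, once the validity of (5.4.1) has been extended to negative values of $k$. Proposition~5.4 gives (5.4.1) for all $k \ge 0$, and the discussion in 5.8 shows that for $-r \le k < 0$ the right-hand side of (5.4.1) degenerates: the coefficient $(-1)^{s-1}\begin{bmatrix} s+k-1 \\ s-1 \end{bmatrix}\begin{bmatrix} r+k \\ r-s \end{bmatrix}$ vanishes for every $s$ except $s = -k$ (since $\begin{bmatrix} s+k-1 \\ s-1 \end{bmatrix} = 0$ when $s+k-1 \ge 0$, and $\begin{bmatrix} r+k \\ r-s \end{bmatrix} = 0$ when $0 \le r+k < r-s$), and the surviving term reproduces $(r+k,\ell)$ identically. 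Hence (5.4.1) holds in $V_2$ for every $k \in \BZ$ with $r + k \ge 0$.

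Granting this, the first step is just to specialize. Fix $k \ge 0$ and apply (5.4.1) with $k$ replaced by $k - r$; this is legitimate because $r + (k-r) = k \ge 0$. On the left-hand side $(r + (k-r), \ell) = (k, \ell)$; on the right-hand side the binomial factors become $\begin{bmatrix} s + k - r - 1 \\ s - 1 \end{bmatrix}$ and $\begin{bmatrix} k \\ r - s \end{bmatrix}$, whose product is precisely $A(k,s)$ in the notation (5.8.1), while the index of the surviving basis vector becomes $(r - s, \ell + s + k - r)$. Reading this off gives exactly (5.9.1).

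Since the whole argument is a pure reindexing of an identity already established, there is essentially no obstacle. The only point that needs care — and the only substantive content beyond Proposition~5.4 itself — is the casework in 5.8 ensuring that (5.4.1) survives all the way down to $k = -r$; it is this extended range of validity that makes the shift $k \mapsto k - r$ admissible for every $k \ge 0$.
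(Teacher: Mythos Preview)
Your proposal is correct and follows essentially the same approach as the paper: the paper also extends (5.4.1) to the range $-r \le k < 0$ via the casework in 5.8, concludes that (5.4.1) holds whenever $r+k \ge 0$, and then obtains (5.9.1) by the substitution $r+k \mapsto k$. Your write-up is a faithful and clear rendering of exactly this argument.
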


By making use of Corollary 5.9, we obtain a formula for $V_n$ 
for any $n \ge 2$. 

\begin{lem}  
Assume that $(k, 0, \dots, 0, \ell) \in V_n$.  Then we have
\begin{align*}
(k, &0, \dots, 0, \ell)  \\ 
  &= \sum_{\substack{a_1 + \cdots + a_n = k+\ell \\
                         a_1, \dots, a_{n-1} \in [0, r-1]}}
                          (-1)^{a_1 + \cdots + a_{n-1} + (n-1)(r-1)}
      \biggl(\prod_{1 \le i \le n-1}A(k - x_i, r - a_i)\biggr)(a_1, a_2, \dots, a_n),
\end{align*}
where $x_i = a_1 + a_2 + \cdots + a_{i-1}$, and 
$(a_1, \dots, a_n) \in \SE_n(k + \ell)$.  
\end{lem}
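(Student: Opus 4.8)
The plan is to prove the lemma by induction on $n$, reducing the coordinates $a_1,\dots,a_{n-1}$ one at a time from left to right; this order is forced by the shape of the right-hand side, where the argument $k-x_i$ of the $i$-th factor $A(k-x_i,r-a_i)$ (with $x_i=a_1+\cdots+a_{i-1}$) is exactly the weight left over after the first $i-1$ coordinates have been fixed. The base case $n=2$ is Corollary 5.9 itself: substituting $a_1=r-s$, so that $s=r-a_1$ runs over $1,\dots,r$ as $a_1$ runs over $0,\dots,r-1$, formula (5.9.1) becomes
\[
(k,\ell)=\sum_{a_1=0}^{r-1}(-1)^{r-a_1-1}A(k,r-a_1)\,(a_1,\ k+\ell-a_1),
\]
and since $(-1)^{r-a_1-1}=(-1)^{a_1+(r-1)}$ and $A(k,r-a_1)=A(k-x_1,r-a_1)$ (because $x_1=0$), this is precisely the assertion for $n=2$.

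For the inductive step, assume the formula for $n-1$ and let $(k,0,\dots,0,\ell)\in V_n$. First I would apply Corollary 5.9 to the first two coordinates only, with the remaining coordinates $0,\dots,0,\ell$ held fixed; this is legitimate because the relation (5.3.1) at index $i=1$ involves only coordinates $1$ and $2$, and Corollary 5.9 is deduced purely by iterating relations of this single-index form (the well-definedness point is addressed below). This gives
\[
(k,0,\dots,0,\ell)=\sum_{a_1=0}^{r-1}(-1)^{r-a_1-1}A(k,r-a_1)\,(a_1,\ k-a_1,\ 0,\dots,0,\ell).
\]
For each term the $(n-1)$-tuple $(k-a_1,0,\dots,0,\ell)$ lies in $V_{n-1}$, so the inductive hypothesis applies with $k$ replaced by $k-a_1$. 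Substituting it and relabelling the new coordinates as $a_2,\dots,a_n$, two elementary identities make the two formulas match: first, $(k-a_1)-(a_2+\cdots+a_{i-1})=k-x_i$, so the factors produced by the inductive hypothesis are exactly $A(k-x_i,r-a_i)$ for $2\le i\le n-1$, which together with the leading $A(k,r-a_1)=A(k-x_1,r-a_1)$ yield $\prod_{i=1}^{n-1}A(k-x_i,r-a_i)$; second, the combined sign exponent $(r-a_1-1)+(a_2+\cdots+a_{n-1})+(n-2)(r-1)$ differs from the target exponent $a_1+(a_2+\cdots+a_{n-1})+(n-1)(r-1)$ by $-2a_1$, hence agrees modulo $2$. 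Finally the index set becomes $a_1\in[0,r-1]$ together with $a_2,\dots,a_{n-1}\in[0,r-1]$ and $a_2+\cdots+a_n=(k-a_1)+\ell$, i.e.\ $(a_1,\dots,a_n)$ ranges over $\SE_n(k+\ell)$, which is the assertion for $n$.

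I expect the only points needing real care to be the two structural facts invoked above: (a) that Corollary 5.9 may be applied ``inside'' $V_n$ with some coordinates frozen, and (b) that the deleted-coordinate tuple $(k-a_1,0,\dots,0,\ell)$ genuinely represents an element of $V_{n-1}$. For (a) I would note that for fixed $c_3,\dots,c_n$ the assignment $(a_1,a_2)\mapsto(a_1,a_2,c_3,\dots,c_n)$ sends every defining relation of $V_2$ to a defining relation of $V_n$ at index $1$, hence extends to a well-defined linear map $V_2\to V_n$ through which (5.9.1) transports; (b) is the analogous statement that prepending a coordinate sends the defining relations of $V_{n-1}$ to defining relations of $V_n$ at the shifted indices. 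A minor bookkeeping remark: an entry such as $k-a_1$ could a priori be negative, but the second binomial factor $\begin{bmatrix} k \\ a_1 \end{bmatrix}$ of $A(k,r-a_1)$ (see (5.8.1)) vanishes for $0\le k<a_1$, so only terms with $0\le a_1\le k$ actually occur, and in the application in Section 5.2, where $k+\ell=L$, no coordinate becomes negative in any case. The two sign-and-coefficient identities highlighted above are then a routine computation.
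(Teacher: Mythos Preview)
Your proof is correct and follows essentially the same approach as the paper's own argument: induction on $n$, with the base case given by Corollary~5.9 (after the substitution $a_1=r-s$), and the inductive step obtained by applying (5.9.1) to the first two coordinates and then invoking the $(n-1)$-case on the remaining tuple $(k-a_1,0,\dots,0,\ell)$. Your version is in fact more explicit than the paper's, which leaves the sign and coefficient bookkeeping, as well as the structural points~(a) and~(b), to the reader.
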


\begin{proof}
We prove the lemma by induction on $n$.
By (5.9.1), the lemma holds for $n = 2$. 
We assume that it holds for $n' < n$. 
By applying (5.9.1) for the first two terms $(k,0)$ in 
$(k,0,\dots,0,\ell)$, we have 
 
\begin{align*}
(k, 0, \dots, 0, \ell) 
   &= \sum_{s=1}^r(-1)^{s-1}A(k,s)(r-s, s + k - r, 0, \dots, 0, \ell)   \\
   &= \sum_{a_1 \in [0,r-1]}(-1)^{a_1 + r -1}A(k, r - a_1) (a_1, k - a_1, 0, \dots, 0, \ell).
\end{align*}
Then by applying the induction hypothesis for $(k - a_1, 0, \dots, 0, \ell) \in V_{n-1}$, 
we obtain the required formula. 
\end{proof}

We prove the following formula.

\begin{prop}  
Set $L = (n-1)(r-1) +1$. Then the following equality holds in $E_n(L)$. 

\begin{equation*}
\tag{5.11.1}
\sum_{k = 0}^L(-1)^k\begin{bmatrix}
                        L   \\
                        k
                    \end{bmatrix}(k, 0, \dots, 0, L - k) = 0.
\end{equation*}
\end{prop}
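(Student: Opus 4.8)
The plan is to expand each of the terms $(k,0,\dots,0,L-k)$ in (5.11.1) by Lemma 5.10, taking $\ell = L-k$, and then to read off the coefficient of each basis vector $(a_1,\dots,a_n)\in\SE_n(L)$. Since $k+(L-k)=L$ for every $k$, all these expansions lie in $E_n(L)$, and by Lemma 5.10 the coefficient of a fixed $(a_1,\dots,a_n)\in\SE_n(L)$ in the left hand side of (5.11.1) is
\begin{equation*}
(-1)^{\,a_1+\cdots+a_{n-1}+(n-1)(r-1)}\sum_{k=0}^{L}(-1)^k\begin{bmatrix}L\\k\end{bmatrix}\prod_{i=1}^{n-1}A(k-x_i,\,r-a_i),\qquad x_i=a_1+\cdots+a_{i-1},
\end{equation*}
where the sign factor does not involve $k$. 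So it suffices to prove the scalar identity
\begin{equation*}
\sum_{k=0}^{L}(-1)^k\begin{bmatrix}L\\k\end{bmatrix}\prod_{i=1}^{n-1}A(k-x_i,\,r-a_i)=0
\end{equation*}
for every choice of $a_1,\dots,a_{n-1}\in[0,r-1]$.

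For this, I would first simplify the summand. From (5.8.1), substituting $k\mapsto k-x_i$ and $s\mapsto r-a_i$, and using $x_{i+1}=x_i+a_i$, one gets
\begin{equation*}
A(k-x_i,\,r-a_i)=\begin{bmatrix}k-x_{i+1}-1\\r-a_i-1\end{bmatrix}\begin{bmatrix}k-x_i\\a_i\end{bmatrix}.
\end{equation*}
The product $\prod_{i=1}^{n-1}\begin{bmatrix}k-x_i\\a_i\end{bmatrix}$ telescopes: its numerator is $[k][k-1]\cdots[k-x_n+1]$, so the product equals $\begin{bmatrix}k\\x_n\end{bmatrix}$ times the constant $[x_n]^!/([a_1]^!\cdots[a_{n-1}]^!)$, which is a nonzero element of $\BQ(q)$ independent of $k$. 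Pulling this constant out, the identity to prove becomes
\begin{equation*}
\sum_{k=0}^{L}(-1)^k\begin{bmatrix}L\\k\end{bmatrix}\begin{bmatrix}k\\x_n\end{bmatrix}\prod_{i=1}^{n-1}\begin{bmatrix}k-x_{i+1}-1\\r-a_i-1\end{bmatrix}=0.
\end{equation*}

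Now I would argue exactly as in the proof of Lemma 5.6. The numerators of $\begin{bmatrix}k\\x_n\end{bmatrix}$ and of the factors $\begin{bmatrix}k-x_{i+1}-1\\r-a_i-1\end{bmatrix}$ together constitute a product of $x_n+\sum_{i=1}^{n-1}(r-a_i-1)=(n-1)(r-1)=L-1$ linear factors of the form $[k-c]$, while the denominators are constants. By (5.6.2) (equivalently, since $[k-c]=(q^{-c}q^{k}-q^{c}q^{-k})/(q-q\iv)$), such a product is, as a function of $k$, a $\BQ(q)$-linear combination of the monomials $q^{k(L-1-2j)}$ with $0\le j\le L-1$, the coefficients not depending on $k$. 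It therefore remains to observe that
\begin{equation*}
\sum_{k=0}^{L}(-1)^k\begin{bmatrix}L\\k\end{bmatrix}q^{k(L-1-2j)}=0\qquad(0\le j\le L-1),
\end{equation*}
which is (5.6.5) with $n=L$. Hence every coefficient vanishes and (5.11.1) follows.

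The main obstacle is the exact degree count in the last step: the product $\prod_{i=1}^{n-1}A(k-x_i,r-a_i)$ contains precisely $L-1$ linear factors $[k-c]$ in the variable $k$ — this number is independent of the particular tuple $(a_1,\dots,a_{n-1})$ — so its expansion as a Laurent polynomial in $q^{k}$ spans exactly the set of exponents annihilated by the alternating sum against $\begin{bmatrix}L\\k\end{bmatrix}$; the vanishing is borderline, and the bookkeeping (the rewriting of $A$, the telescoping, and the precise count $x_n+\sum(r-a_i-1)=L-1$) is the delicate point. An alternative induction on $n$ via Corollary 5.9 is available, but its inductive step reduces to essentially the same identity, so the direct route above is the more economical one.
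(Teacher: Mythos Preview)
Your proof is correct and follows essentially the same route as the paper's: expand each $(k,0,\dots,0,L-k)$ via Lemma 5.10, reduce to the scalar identity $\sum_k(-1)^k\begin{bmatrix}L\\k\end{bmatrix}\prod_i A(k-x_i,r-a_i)=0$, observe that the product is a $\BQ(q)$-linear combination of $q^{k(L-1-2j)}$ for $0\le j\le L-1$, and invoke (5.6.5). The only difference is cosmetic: the paper bounds the degree factor by factor (each $A(k-x_i,r-a_i)$ contributes exponents $q^{-k(r-1-2j)}$, $0\le j\le r-1$, so the product contributes $q^{-k((n-1)(r-1)-2j)}$), whereas you first telescope $\prod_i\begin{bmatrix}k-x_i\\a_i\end{bmatrix}$ into $\begin{bmatrix}k\\x_n\end{bmatrix}$ times a constant and then count the total $x_n+\sum_i(r-a_i-1)=L-1$ linear factors directly; both arrive at the same degree bound $L-1$.
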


\begin{proof}
We apply Lemma 5.10 to the case where $k + \ell = L$. 
In order to prove (5.11.1), it is enough to see, for a fixed 
$(a_1, \dots, a_n) \in \SE_n(L)$, that
\begin{equation*}
\tag{5.11.2}
\sum_{k = 0}^L(-1)^k\begin{bmatrix}
                        L  \\
                        k
                    \end{bmatrix}
       \biggl(\prod_{1 \le i \le n-1}A(k-x_i, r-a_i)\biggr) = 0. 
\end{equation*}

As in (5.6.3), but by replacing the role of $s$ by $-k$, we see that 
\begin{equation*}
A(k-x_i, r-a_i) = \begin{bmatrix}
                     k - x_i - a_i - 1 \\
                       r - a_i - 1    
                  \end{bmatrix}
                 \begin{bmatrix}
                      k -x_i \\
                        a_i
                 \end{bmatrix} = \sum_{j = 0}^{r-1}F_j(q)q^{-k(r-1 - 2j)}
\end{equation*}
for some $F_j(q) \in \BQ(q)$, which is independent from $k$. 
It follows that 
\begin{equation*}
\tag{5.11.3}
\prod_{1 \le i \le n-1}A(k - x_i, r - a_i) 
        = \sum_{j = 0}^{(n-1)(r-1)}G_j(q)q^{-k\bigl((n-1)(r-1) - 2j\bigr)},
\end{equation*}
where $G_j(q) \in \BQ(q)$ is independent from $k$. 
Note that by (5.6.5), we have 
\begin{equation*}
\sum_{k= 0}^L (-1)^k\begin{bmatrix}
                       L  \\
                       k
                    \end{bmatrix}q^{-k(L -1-2j)} = 0
\end{equation*}
for $j = 0, \dots, L-1$.
Since $L = (n-1)(r-1)+1$, (5.11.3) implies (5.11.2).  The proposition is 
proved. 
\end{proof}

\para{5.12.}
Returning to the setup in 5.2, we consider $\eta, \eta' \in \ul I$
with $|\eta| = 1, |\eta'| = n - 1$, where any element $j \in \eta'$ is joined 
to $i \in \eta$.  Set $r = 1 - a_{ij}$, and $L = 1 - a_{\eta\eta'} = (n-1)(r-1) + 1$.
Set $\wt f_{\eta} = f_1$ and $\wt f_{\eta'} = f_{2_1}\cdots f_{2_{n-1}}$. 
Under the correspondence in (5.2.2), 
$(k, 0, \dots, 0, L -k) \in \SE_n(L)$ corresponds to 
$\wt f_{\eta}^k\wt f_{\eta'}\wt f_{\eta}^{L-k} \in \BU_q^-$. Thus
in view of the discussion in 5.2, Proposition 5.11 can be translated to 
the following formula (by replacing $q$ by $q^{d_i} = q^{d_{\eta}}$), which 
proves (5.1.3) in this special case.  
Note that this formula holds without the assumption on the order of 
$\s$, nor modulo $J$.  
\begin{prop}  
Under the notation above, 
\begin{equation*}
\sum_{k = 0}^{1 - a_{\eta\eta'}}(-1)^k\begin{bmatrix}
                                        1 - a_{\eta\eta'} \\
                                          k
                                      \end{bmatrix}_{d_{\eta}} 
                 \wt f_{\eta}^k\wt f_{\eta'} \wt f_{\eta}^{1-a_{\eta\eta'} - k} = 0.  
\end{equation*}
\end{prop}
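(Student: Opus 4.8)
The plan is to transport Proposition 5.11 from the combinatorial model $V_n$ into $\BU_q^-$ through the correspondence (5.2.2). First I would record the numerology of the special case at hand: writing $i = 1$ for the unique element of $\eta$, equation (2.1.1) together with $|\eta| = 1$ gives $(\a_\eta, \a_\eta)_1 = (\a_i, \a_i)$, hence $d_\eta = d_i$; moreover, by hypothesis every $j \in \eta'$ is joined to $i$, so $a_{ij}$ takes the common value $a_{i,2_\ell}$, and with $r = 1 - a_{ij}$ we have $1 - a_{\eta\eta'} = (n-1)(r-1) + 1 = L$. To each tuple $a = (a_1,\dots,a_n) \in \BN^n$ of total degree $L$, I attach as in (5.2.2) the monomial $f_1^{a_1}f_{2_1}f_1^{a_2}\cdots f_{2_{n-1}}f_1^{a_n} \in \BU_q^-$.

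Next I would observe that the objects $V_n$, $E_n(m)$ and the whole chain of statements 5.4--5.11 remain valid verbatim if $q$ is replaced throughout by $q^{d_\eta}$ (so each $[\,\cdot\,]$ becomes $[\,\cdot\,]_{d_\eta}$ and the defining relations (5.3.1)--(5.3.2) are read over $q^{d_\eta}$); call the resulting space $E_n(L)_{d_\eta}$. This is because every coefficient occurring in the proofs of 5.4--5.11 is a Gaussian binomial, hence lies in $\BZ[q,q\iv]$, on which the substitution $q \mapsto q^{d_\eta}$ acts without trouble; in particular the $d_\eta$-analogue of (5.11.1) holds in $E_n(L)_{d_\eta}$.

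The key step is then to construct a $\BQ(q)$-linear map $\theta : E_n(L)_{d_\eta} \to \BU_q^-$ sending each tuple $a$ to its attached monomial. Since $E_n(L)_{d_\eta}$ is the free $\BQ(q)$-module on the finite set of tuples of total degree $L$ modulo the degree-$L$ part of the relations (5.3.1) read over $q^{d_\eta}$, it suffices to check that $\theta$ kills each such relation; this is where the Serre relations (5.1.5) enter. For a tuple with $a_\ell \ge r$ at some $\ell < n$, writing $f_1^{a_\ell} = f_1^{a_\ell - r}f_1^{r}$ just to the left of $f_{2_\ell}$ turns the image $\sum_{0 \le j \le r}(-1)^j\binom{r}{j}_{d_\eta}\theta(a(j))$ into $(\cdots f_1^{a_\ell - r})\bigl(\sum_{0 \le j \le r}(-1)^j\binom{r}{j}_{d_\eta}f_1^{r-j}f_{2_\ell}f_1^{\,j}\bigr)(f_1^{a_{\ell+1}}\cdots)$; the inner sum is, after the reindexing $j \leftrightarrow r-j$ together with $\binom{r}{j} = \binom{r}{r-j}$, exactly the Serre relation (5.1.5) for the pair $(i, 2_\ell)$ --- which applies precisely because every element of $\eta'$ is joined to $i$ --- and hence vanishes in $\BU_q^-$. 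I expect this verification to be the main obstacle, since one must line the abstract relations (5.3.1) up against the Serre relations position by position while carrying the shift $q \mapsto q^{d_\eta}$ correctly; everything else is bookkeeping.

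Finally, applying $\theta$ to the $d_\eta$-analogue of (5.11.1) yields $\sum_{k=0}^{L}(-1)^k\binom{L}{k}_{d_\eta}\,\theta(k,0,\dots,0,L-k) = \theta(0) = 0$. Since $\theta(k,0,\dots,0,L-k) = f_1^k f_{2_1}f_{2_2}\cdots f_{2_{n-1}}f_1^{L-k} = \wt f_{\eta}^{\,k}\wt f_{\eta'}\wt f_{\eta}^{\,L-k}$ and $L = 1 - a_{\eta\eta'}$, this is precisely the asserted identity; and as it is an honest equality in $\BU_q^-$, it holds without any hypothesis on the order of $\s$ and without passing to $J$.
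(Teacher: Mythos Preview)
Your proof is correct and follows the same approach as the paper. The paper's argument (paragraph 5.12) simply states that Proposition 5.11 ``can be translated'' to the desired formula via the correspondence (5.2.2) after replacing $q$ by $q^{d_i} = q^{d_\eta}$; you have carried out this translation explicitly by constructing the linear map $\theta$ and verifying, via the Serre relations, that it annihilates the defining relations (5.3.1) of $V_n$, which is exactly the content the paper leaves implicit.
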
 

\para{5.14.}
We shall extend Proposition 5.13 to the general setup as in 5.1.
Hence we consider $\eta \ne \eta' \in \ul I$ with $|\eta| = n-1, |\eta'| = m$, 
and write $\eta = \{ 1_1, \dots, 1_m\}, \eta' = \{ 2_1 \dots, 2_{n-1}\}$ 
as in 5.1.
For each $i \in \eta$, 
let $A_i$ be the set of elements in $\eta'$ which is joined to $i$.  Set  
$|A_i| = N -1$, which is independent of $i \in \eta$.  
For $i \in \eta, j \in A_i$, $a_{ij}$ is independent of the choice of $(i,j)$.
We set $r = 1 - a_{ij}$. By (2.1.2), we have
\begin{equation*}
a_{\eta\eta'} = \sum_{j \in A_i}\frac{2(\a_i,\a_j)}{(\a_i,\a_i)}
              = \sum_{j \in A_i}a_{ij} = -(N - 1)(r-1).
\end{equation*}    
We set
\begin{equation*}
\tag{5.14.1}
L = 1 - a_{\eta\eta'} = (N - 1)(r-1) + 1.
\end{equation*}
We have $\wt f_{\eta} = f_{1_1}\cdots f_{1_m}$, $\wt f_{\eta'} = f_{2_1}\cdots f_{2_{n-1}}$.
In this case, we need to compute 
$\wt f_{\eta}^k\wt f_{\eta'}\wt f_{\eta}^{L - k}
   = (f_{1_1}\cdots f_{1_m})^{k}f_{2_1}\cdots f_{2_{n-1}}(f_{1_1}\cdots f_{1_m})^{L-k}$. 
More generally, we consider
\begin{equation*}
\tag{5.14.2}
f_{\eta}^{\Ba^{(1)}}f_{2_1}f_{\eta}^{\Ba^{(2)}}f_{2_2} \cdots 
      f_{2_{n-2}}f_{\eta}^{\Ba^{(n-1)}}f_{2_{n-1}}f_{\eta}^{\Ba^{(n)}} \in \BU_q^- 
\end{equation*}
for $\Ba^{(k)} \in \BN^m$ ($1 \le k \le n$), where we set 
$f_{\eta}^{\Ba} = f_{1_1}^{a_1}\cdots f_{1_m}^{a_m}$ for $\Ba = (a_1, \dots, a_m) \in \BN^m$.  
Corresponding to those elements, we consider the matrix 
$\Ba = (\Ba^{(1)}, \dots, \Ba^{(n)}) = (a^{(k)}_i) \in M(m,n)$ 
(here we regard $\Ba^{(k)} = {}^t(a^{(k)}_1, \dots, a^{(k)}_m)$ as the $k$-th column vector, 
and $\Ba_i = (a^{(1)}_i, \dots, a^{(n)}_i)$ as the $i$-th row vector).  
The commuting relations for $f_i$ and $f_j$ $(i \in \eta, j \in \eta'$) are 
given by (5.1.5) if $j \in A_i$, and $f_if_j = f_jf_i$ otherwise. 

\para{5.15.}
Taking the discussion in 5.14 into account, 
we generalize the combinatorial setting in 5.3 as follows. 
Fix $m,n$ and $N \le n$, and consider $M(m,n)$ as in 5.15. 
For each $i \in [1,m]$, we fix a subset $A_i \subset [1, n-1]$ such that
$|A_i| = N -1$. Let $V_{n,m}$ be a vector space over $\BQ(q)$ spanned by 
$\Ba = (a^{(k)}_i) \in M(m,n)$ satisfying the following relations;
for each $a^{(k)}_i$, 
there exist $\Ba(t)$ with $\Ba(t) = (a(t)^{(k)}_i)$ for $1 \le t \le  r$ 
such that 
\begin{equation*}
\tag{5.15.1}
\Ba = \begin{cases}
          \sum_{1 \le t \le r}(-1)^{t-1}\begin{bmatrix}
                                           r  \\
                                           t
                                        \end{bmatrix}\Ba(t) 
                 &\quad\text{ if $k \in A_i$ and $a^{(k)}_i \ge r$,} \\
         \Ba(1)  &\quad\text{ if $k \notin A_i$ and $a^{(k)}_i \ge 1$, }  
      \end{cases}
\end{equation*}
where 
\begin{equation*}
\tag{5.15.2}
\Ba(t)_i = (a^{(1)}_i, \dots, a^{(k-1)}_i, 
       a^{(k)}_i - t, a^{(k+1)}_i + t, a^{(k+2)}_i, \dots, a^{(n)}_i)
\end{equation*} 
and $\Ba(t)_{i'} = \Ba_{i'}$ for $i' \ne i$.  
\par
For each $\Bs = (s_1, \dots, s_m) \in \BN^m$, we denote by $E_{n,m}(\Bs)$ 
the subspace of $V_{n,m}$ spanned by 
\begin{equation*}
\SE_{n,m}(\Bs) = \{ \Ba = (a^{(k)}_i) \in M(m,n) \mid 
             \sum_{1 \le k \le n}a^{(k)}_i = s_i, a^{(k)}_i \in [0, r-1]
                       \text{ for } 1 \le k < n \} 
\end{equation*} 
In the case where $\Bs = (L', \dots, L') \in \BN^m$ for some $L' \ge 1$, we set
$\SE_{n,m}(\Bs) = \SE_{n,m}(L')$ and $E_{n,m}(\Bs) = E_{n,m}(L')$.

\par
The following lemma is a generalization of Lemma 5.10.  The proof is 
essentially reduced to the case where $m = 1$, which corresponds to
Lemma 5.10.

\begin{lem}   
Assume that $(\Bk, \bold 0, \dots, \bold 0, \Bell) \in V_{n,m}$, 
where $\Bk = {}^t(k_1, \dots, k_m), 
\Bell = {}^t(\ell_1, \dots, \ell_m) \in \BN^m$ are column vectors.
Then we have

\begin{align*}
\tag{5.16.1}
(\Bk, \bold 0, \dots, \bold 0, \Bell) = \sum_{\Ba \in \SE_{n,m}(\Bk + \Bell)}
            \biggl(\prod_{1 \le i \le m}(-1)^{a^{(t_1)}_i + \cdots 
                       + a^{(t_{N -1})}_i + (N - 1)(r-1) }
                       H(\Ba_i, k_i)\biggr)\Ba,   
\end{align*}
where $H(\Ba_i, k_i) = \prod_{1 \le j <  N}A(k_i - x_{ij}, r - a^{(t_j)}_i)$ 
with $x_{ij} = a^{(t_1)}_i + \cdots + a^{(t_{j-1})}_i$.  
(Here we write $A_i = \{ t_1, \dots, t_{N -1}\}$ along the natural order.)
\end{lem}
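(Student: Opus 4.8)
The plan is to reduce the matrix identity (5.16.1) to the single-row case $m=1$, and then to prove the resulting one-row formula --- a mild generalization of Lemma 5.10 in which the set $A$ of joined positions is an arbitrary subset of $[1,n-1]$ --- by induction on $n$, peeling off the first column by means of Corollary 5.9.

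\emph{Reduction to $m=1$.} Each defining relation (5.15.1)--(5.15.2) of $V_{n,m}$ alters only a single row $\Ba_i$ of the matrix, leaving all other rows untouched. Identifying a matrix with the tuple of its rows gives $\BQ(q)[M(m,n)]=\bigotimes_{i=1}^m\BQ(q)[\BN^n]$, and the submodule of relations defining $V_{n,m}$ is $\sum_i\bigl(R_i\otimes\bigotimes_{i'\ne i}\BQ(q)[\BN^n]\bigr)$, where $R_i$ is the submodule of relations of the one-row space $V_{n,1}^{(A_i)}$ attached to $A_i$. Since we work over a field, this gives a canonical isomorphism $V_{n,m}\cong V_{n,1}^{(A_1)}\otimes_{\BQ(q)}\cdots\otimes_{\BQ(q)}V_{n,1}^{(A_m)}$ sending a matrix to the tensor of its rows. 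Under it $(\Bk,\bold 0,\dots,\bold 0,\Bell)$ becomes $\bigotimes_{i=1}^m(k_i,0,\dots,0,\ell_i)$, and since the coefficient attached to $\Ba$ on the right of (5.16.1) is visibly the product over $i$ of the quantity $H(\Ba_i,k_i)$ together with a sign depending only on $\Ba_i$, it suffices to establish the one-row identity: for every $A=\{t_1<\dots<t_{N-1}\}\subset[1,n-1]$,
\begin{equation*}
(k,0,\dots,0,\ell)=\sum_{\Ba}(-1)^{a^{(t_1)}+\cdots+a^{(t_{N-1})}+(N-1)(r-1)}\Bigl(\prod_{1\le j<N}A(k-x_j,\,r-a^{(t_j)})\Bigr)\Ba,
\end{equation*}
where $x_j=a^{(t_1)}+\cdots+a^{(t_{j-1})}$, the sum runs over $\Ba\in\SE_{n,1}(k+\ell)$, and the coordinates at the free positions $k\notin A$ are taken in their normal form, i.e.\ equal to $0$ (the relation $\Ba=\Ba(1)$ lets one push the entire value at a free position to the right).

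\emph{The one-row identity.} I would argue by induction on $n$, following the proof of Lemma 5.10. For $n=2$ the statement is Corollary 5.9 when $A=\{1\}$, and the trivial identity $(k,\ell)=(0,k+\ell)$ when $A=\emptyset$. For the inductive step one peels off the first column. If $1\in A$, applying (5.9.1) to the first two coordinates gives
\begin{equation*}
(k,0,\dots,0,\ell)=\sum_{c=0}^{r-1}(-1)^{c+r-1}A(k,r-c)\,(c,k-c,0,\dots,0,\ell),
\end{equation*}
and one applies the induction hypothesis to $(k-c,0,\dots,0,\ell)\in V_{n-1,1}^{(A')}$ with $A'=\{t-1:t\in A,\ t\ge2\}$. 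If $1\notin A$, the relation $\Ba=\Ba(1)$ gives $(k,0,\dots,0,\ell)=(0,k,0,\dots,0,\ell)$, and the induction hypothesis applies to the last $n-1$ coordinates with $A''=\{t-1:t\in A\}$, the first coordinate staying $0$. In both cases the resulting configurations range exactly over $\SE_{n,1}(k+\ell)$ (joined coordinates lie in $[0,r-1]$ by Corollary 5.9, free coordinates are $0$), and one must reassemble the coefficient.

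\emph{The main obstacle.} The delicate point is exactly this reassembly of the coefficients. When $1\in A$, the summation index $c$ is the value $a^{(t_1)}$ in the first column, so the factor $A(k,r-c)=A(k,r-a^{(t_1)})=A(k-x_1,r-a^{(t_1)})$ supplies the $j=1$ term (as $x_1=0$); and since the induction hypothesis is applied with ``$k$'' replaced by $k-c=k-a^{(t_1)}$, the argument $k-c$ appearing there equals $k-x_2$, so inductively the $j$-th factor it produces is $A(k-x_j,r-a^{(t_j)})$, while the signs combine because $(r-1)+(N-2)(r-1)=(N-1)(r-1)$. This is the same bookkeeping --- with $-k$ in place of $s$ --- that occurs after (5.5.1)--(5.5.2), now performed once for every joined position and, through the tensor decomposition, once for every row; the real care is in handling the free positions so that the normalization implicit in $\SE_{n,m}$ is respected and no spurious terms are introduced. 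Once this is in place, Lemma 5.16 follows.
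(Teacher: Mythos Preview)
Your approach is exactly the one the paper sketches: the paper's entire proof reads ``The proof is essentially reduced to the case where $m = 1$, which corresponds to Lemma 5.10,'' and you have supplied precisely this reduction via the tensor decomposition $V_{n,m}\cong\bigotimes_i V_{n,1}^{(A_i)}$, together with the needed mild extension of Lemma 5.10 to an arbitrary joined set $A_i\subset[1,n-1]$, proved by the same induction on $n$ that peels off the first column via Corollary 5.9. The bookkeeping you flag around free positions and the reassembly of the factors $A(k-x_j,r-a^{(t_j)})$ is real but routine, and is exactly what the paper suppresses; your outline handles it correctly.
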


 \para{5.17}
In the special case where $\Bs = (L, \dots, L)$, we set 
$\Bk' = {}^t(L -k_1, \dots, L - k_m)$ for $\Bk = {}^t(k_1, \dots, k_m)$ with $k_i \le L$.
Then (5.16.1) is written as
\begin{align*}
\tag{5.17.1}
(\Bk, \bold 0, \dots, \bold 0, \Bk') = \sum_{\Ba \in \SE_{n,m}(L)}
            \biggl(\prod_{1 \le i \le m}(-1)^{a^{(t_1)}_i + \cdots 
                       + a^{(t_{N -1})}_i + (N - 1)(r-1)}
                       H(\Ba_i, k_i)\biggr)\Ba.   
\end{align*}

As in (5.11.3), $H(\Ba_i, k_i)$ can be written as

\begin{equation*}
\tag{5.17.2}
H(\Ba_i, k_i) = \prod_{1 \le j < N}A(k_i - x_{ij}, r- a^{(t_j)}_i)
                    = \sum_{j = 0}^{(N -1)(r-1)}G_j^{(i)}(q)q^{-k_i\bigl((r-1)(N -1) - 2j\bigr)}, 
\end{equation*}
where $G_j^{(i)}(q) \in \BQ(q)$ is independent from $k_i$. 
We shall prove the following formula.
\begin{prop}  
Set $L = (N -1)(r-1) + 1$.  The following formula holds in $E_{n,m}(L)$. 
\begin{equation*}
\tag{5.18.1}
\sum_{0 \le k_1 \le L}\cdots \sum_{0 \le k_m \le L}(-1)^{k_1 + \cdots + k_m}
              \begin{bmatrix}
                      L \\
                      k_1
             \end{bmatrix}
                \cdots 
             \begin{bmatrix}
                     L  \\
                     k_m
             \end{bmatrix}(\Bk, \bold 0, \cdots, \bold 0, \Bk') = 0,  
\end{equation*}
where $\Bk = {}^t(k_1, \dots, k_m)$ and $\Bk' = {}^t(L - k_1, \dots, L - k_m)$. 
\end{prop}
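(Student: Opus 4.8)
The plan is to deduce (5.18.1) directly from Lemma 5.16, by substituting the expansion (5.17.1) and then checking that, after regrouping, the total coefficient of every element $\Ba \in \SE_{n,m}(L)$ vanishes. Concretely, I would first use (5.17.1) to rewrite each $(\Bk, \bold 0, \dots, \bold 0, \Bk')$ occurring on the left-hand side of (5.18.1) as a $\BQ(q)$-linear combination of the elements $\Ba \in \SE_{n,m}(L)$, and then interchange the two finite summations, over $\Bk = {}^t(k_1,\dots,k_m)$ and over $\Ba$. The key observation is that in (5.17.1) the sign $\prod_{1 \le i \le m}(-1)^{a^{(t_1)}_i + \cdots + a^{(t_{N-1})}_i + (N-1)(r-1)}$ attached to a given $\Ba$ depends only on $\Ba$ and on the fixed subsets $A_i = \{t_1,\dots,t_{N-1}\}$, not on $\Bk$; hence it pulls out of the $\Bk$-sum. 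So it is enough to show, for each fixed $\Ba \in \SE_{n,m}(L)$, that
\[
\sum_{0 \le k_1 \le L}\cdots\sum_{0 \le k_m \le L}(-1)^{k_1 + \cdots + k_m}
\begin{bmatrix} L \\ k_1 \end{bmatrix}\cdots\begin{bmatrix} L \\ k_m \end{bmatrix}
\prod_{1 \le i \le m}H(\Ba_i, k_i) = 0 .
\]
(For this conclusion one does not need $\SE_{n,m}(L)$ to be linearly independent: it suffices that after regrouping the coefficient of every spanning element is $0$.)

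Second, I would note that the summand in the last display factors over $[1,m]$, since $\begin{bmatrix} L \\ k_i \end{bmatrix}$ and $H(\Ba_i,k_i)$ involve only $k_i$:
\[
\sum_{\Bk}(-1)^{k_1 + \cdots + k_m}\prod_{1\le i\le m}\left(\begin{bmatrix} L \\ k_i \end{bmatrix}H(\Ba_i,k_i)\right)
= \prod_{1 \le i \le m}\left(\sum_{k_i = 0}^{L}(-1)^{k_i}\begin{bmatrix} L \\ k_i \end{bmatrix}H(\Ba_i,k_i)\right),
\]
so it is enough to check that each factor vanishes. But this is exactly the one-variable vanishing already used in the proof of Proposition 5.11: by (5.17.2) (obtained, as in (5.11.3), by applying the expansion (5.6.2)--(5.6.3) with $-k_i$ in the role of $s$) one has $H(\Ba_i, k_i) = \sum_{j=0}^{(N-1)(r-1)}G^{(i)}_j(q)\,q^{-k_i((r-1)(N-1) - 2j)}$ with $G^{(i)}_j(q)$ independent of $k_i$, and since $L = (N-1)(r-1) + 1$ this is a $\BQ(q)$-linear combination of the monomials $q^{-k_i(L - 1 - 2j)}$ for $0 \le j \le L - 1$. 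Applying (5.6.5) with $q$ replaced by $q\iv$ — legitimate because the $q$-binomial coefficients $\begin{bmatrix} L \\ k \end{bmatrix}$ are invariant under $q \mapsto q\iv$ — each such monomial satisfies $\sum_{k = 0}^{L}(-1)^k\begin{bmatrix} L \\ k \end{bmatrix}q^{-k(L-1-2j)} = 0$, so the factor is $0$. Hence every coefficient vanishes and (5.18.1) holds.

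The routine points to be spelled out are: that the sign in (5.17.1) is genuinely $\Bk$-independent (immediate, as $A_i$ is fixed); the legality of interchanging the two finite sums and of factoring $\prod_i\begin{bmatrix} L \\ k_i \end{bmatrix}H(\Ba_i,k_i)$ as $k_1,\dots,k_m$ range independently over $[0,L]$; and the derivation of (5.17.2), which is word for word the passage (5.6.2)--(5.6.3)--(5.11.3). None of these is an obstacle. The only substantive ingredient is Lemma 5.16 itself (which reduces, row by row in an induction on $n$, to Lemma 5.10, hence ultimately to Proposition 5.4); granting it, Proposition 5.18 is a purely formal consequence of the factorization above together with the $q\iv$-form of (5.6.5), and I do not expect any difficulty at this final step.
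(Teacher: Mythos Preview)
Your proposal is correct and follows essentially the same route as the paper: substitute the expansion (5.17.1), observe that the coefficient of each $\Ba$ factors as $\prod_i \sum_{k_i}(-1)^{k_i}\begin{bmatrix}L\\k_i\end{bmatrix}H(\Ba_i,k_i)$, and kill each factor via (5.17.2) together with (5.6.5). One minor remark: you do not actually need to invoke the $q\mapsto q\iv$ symmetry, since as $j$ runs over $0,\dots,L-1$ the exponents $L-1-2j$ already range symmetrically over $\{-(L-1),\dots,L-1\}$, so (5.6.5) applies directly.
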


\begin{proof}
As in the proof of Proposition 5.11, by (5.17.1) and (5.17.2), the proof of 
(5.18.1) is reduced to the following formula; for $0  \le j_1, \dots, j_m \le L - 1$, 
\begin{equation*}
\tag{5.18.2}
\sum_{0 \le k_i \le L}(-1)^{k_1 + \cdots + k_m}
                      q^{-\bigl((L - 1 - 2j_1)k_1 + \cdots + (L  - 1-  2j_m )k_m\bigr)}
                  \begin{bmatrix}
                       L  \\
                       k_1
                  \end{bmatrix}
                    \cdots
                  \begin{bmatrix}
                       L \\
                       k_m
                  \end{bmatrix}  = 0.
\end{equation*}
But the left hand side of (5.18.2) is equal to 
\begin{equation*}
\prod_{i = 1}^m\biggl(\sum_{0 \le k_i \le L}(-1)^{k_i}q^{-(L - 1 - 2j_i)k_i}
                  \begin{bmatrix}
                    L \\
                    k_i
                  \end{bmatrix}\biggr), 
\end{equation*}
which is equal to zero by (5.6.5).  Thus (5.18.2) holds, and the proposition 
follows.   
\end{proof}

\para{5.19.}
Following the discussion in 5.14, 
we translate (5.18.1) to the original setup as in 5.1 and 5.14, namely, 
$\eta = \{ 1_1, \dots, 1_m \}$ $\eta' = \{2_1, \dots, 2_{n-1}\}$.
We follows the notation in 5.14. 
Recall that $d_i = (\a_i,\a_i)/2$ fo $i \in I$, and 
$d_{\eta} = (\a_{\eta}, \a_{\eta})_1/2$ for $\eta \in \ul I$.  
Note that if $i \in \eta$ as above, then $d_{\eta} = |\eta|d_i = md_i$.  
We consider $\wt f_{\eta} = \prod_{i \in \eta}f_i$, and 
$\wt f_{\eta'} = \prod_{j \in \eta'}f_j$.  
As a corollary to Proposition 5.18, we have

\begin{prop}  
Under the notation above, we have

\begin{equation*}
\tag{5.20.1}
\sum_{0 \le k_1 \le L}\cdots \sum_{0 \le k_m \le L}(-1)^{k_1 + \cdots + k_m}
               \begin{bmatrix}
                    L  \\
                    k_1
               \end{bmatrix}_{d_i}
                   \cdots 
               \begin{bmatrix}
                    L \\
                   k_m
               \end{bmatrix}_{d_i}
                  (f_{1_1}^{k_1}\cdots f_{1_m}^{k_m})\wt f_{\eta'}
                  (f_{1_1}^{k'_1}\cdots f_{1_m}^{k'_m}) = 0,   
\end{equation*}
where $k'_a = L  - k_a$ for $a = 1, \dots, m$, and $i \in \eta$. 
\end{prop}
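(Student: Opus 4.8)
The plan is to obtain Proposition 5.20 by transporting the purely combinatorial identity (5.18.1) of Proposition 5.18 into $\BU_q^-$ through the correspondence set up in 5.14. First I would turn (5.14.2) into an honest $\BQ(q)$-linear map $\Psi$ from the free $\BQ(q)$-vector space on $M(m,n)$ to $\BU_q^-$, sending a matrix $\Ba=(a^{(k)}_i)$ with column vectors $\Ba^{(1)},\dots,\Ba^{(n)}$ to
\[
\Psi(\Ba)=f_{\eta}^{\Ba^{(1)}}f_{2_1}f_{\eta}^{\Ba^{(2)}}f_{2_2}\cdots f_{2_{n-2}}f_{\eta}^{\Ba^{(n-1)}}f_{2_{n-1}}f_{\eta}^{\Ba^{(n)}},
\]
where as in 5.14 one writes $f_{\eta}^{\Bb}=f_{1_1}^{b_1}\cdots f_{1_m}^{b_m}$; here the indeterminate $q$ occurring in the defining relations (5.15.1) of $V_{n,m}$ is to be read as $q^{d_i}=q_i$ for $i\in\eta$ (equivalently, every quantum binomial in (5.15.1) is taken with subscript $d_i$), which is legitimate since (5.18.1) holds for an arbitrary indeterminate and hence after this substitution. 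With this convention, the matrix $(\Bk,\mathbf 0,\dots,\mathbf 0,\Bk')$, with $\Bk={}^t(k_1,\dots,k_m)$, $\Bk'={}^t(L-k_1,\dots,L-k_m)$ and $\mathbf 0$ the zero column, is sent by $\Psi$ to $f_{\eta}^{\Bk}f_{2_1}\cdots f_{2_{n-1}}f_{\eta}^{\Bk'}=(f_{1_1}^{k_1}\cdots f_{1_m}^{k_m})\,\wt f_{\eta'}\,(f_{1_1}^{k'_1}\cdots f_{1_m}^{k'_m})$, because the intermediate columns vanish.

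The heart of the argument is to check that $\Psi$ factors through the quotient $V_{n,m}$, i.e. that it annihilates each instance of (5.15.1). I would do this by a local word manipulation. Fix a row $i$ and a column $k<n$ and look at the segment $\cdots f_{2_{k-1}}\,f_{1_1}^{a^{(k)}_1}\cdots f_{1_m}^{a^{(k)}_m}\,f_{2_k}\,f_{1_1}^{a^{(k+1)}_1}\cdots f_{1_m}^{a^{(k+1)}_m}\,f_{2_{k+1}}\cdots$ of $\Psi(\Ba)$. Since the elements of the $\s$-orbit $\eta$ commute pairwise — this is exactly where admissibility of $\s$ is used, as $(\a_{1_i},\a_{1_{i'}})=0$ gives $f_{1_i}f_{1_{i'}}=f_{1_{i'}}f_{1_i}$ — one may slide $f_{1_i}^{a^{(k)}_i}$ to the right end of the $k$-th block so that it sits immediately to the left of $f_{2_k}$, with only powers of $f_{1_i}$ separating $f_{2_k}$ from the $i$-th factor of the $(k+1)$-st block. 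If $k\in A_i$ (that is, $2_k$ is joined to $1_i$), put $r=1-a_{1_i2_k}$; the Serre relation (5.1.5) for the pair $(1_i,2_k)$, after rearrangement and left multiplication by $f_{1_i}^{a^{(k)}_i-r}$, gives
\[
f_{1_i}^{a^{(k)}_i}f_{2_k}=\sum_{t=1}^{r}(-1)^{t-1}\begin{bmatrix}r\\t\end{bmatrix}_{d_i}f_{1_i}^{a^{(k)}_i-t}f_{2_k}f_{1_i}^{t},
\]
and carrying the trailing $f_{1_i}^{t}$ into the $(k+1)$-st block at position $i$ and reassembling the blocks, this is precisely $\Psi$ applied to the first case of (5.15.1). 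If $k\notin A_i$, then $a_{1_i2_k}=0$, $f_{1_i}$ and $f_{2_k}$ commute, and one gets the second case of (5.15.1). Hence $\Psi$ descends to a linear map $V_{n,m}\to\BU_q^-$, and in particular to $E_{n,m}(L)$.

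Finally I would apply this $\Psi$ to the identity (5.18.1): multiplying the term indexed by $(\Bk,\mathbf 0,\dots,\mathbf 0,\Bk')$ by $\begin{bmatrix}L\\k_1\end{bmatrix}_{d_i}\cdots\begin{bmatrix}L\\k_m\end{bmatrix}_{d_i}$, summing over $0\le k_1,\dots,k_m\le L$, and inserting the values of $\Psi$ computed in the first paragraph, yields exactly the identity (5.20.1) of the proposition (with $k'_a=L-k_a$). The main obstacle is the middle step, the well-definedness of $\Psi$: one must carry out the bookkeeping carefully enough to be certain that the column-to-column move prescribed by (5.15.1) affects only the chosen row, with the surrounding factors $f_{1_{i'}}$ and $f_{2_{k'}}$ genuinely inert — which is guaranteed precisely by commutativity inside the orbit $\eta$ together with the $a_{1_{i'}2_k}=0$ instances of the Serre relations. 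Everything else is formal, the real combinatorial content having already been discharged in Proposition 5.18.
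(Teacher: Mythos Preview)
Your proposal is correct and follows essentially the same approach as the paper: in 5.14 and 5.19 the paper sets up exactly the correspondence $\Ba\mapsto\Psi(\Ba)$ you describe and then simply declares Proposition 5.20 to be ``a corollary to Proposition 5.18'' via this translation, without writing out the verification that the relations (5.15.1) hold under $\Psi$. You have supplied precisely that missing verification --- the local Serre-relation rewrite $f_{1_i}^{a}f_{2_k}=\sum_{t=1}^r(-1)^{t-1}\begin{bmatrix}r\\t\end{bmatrix}_{d_i}f_{1_i}^{a-t}f_{2_k}f_{1_i}^{t}$ for $k\in A_i$, the commutation for $k\notin A_i$, and the admissibility of $\s$ to shuffle factors inside each block --- so your write-up is a faithful, more detailed version of the paper's own argument.
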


\para{5.21.}
We consider the action of $\s$ on $I$, and on $\BU_q^-$.  
Then $\wt f_{\eta'}$ is $\s$-stable.  Since $\s$ maps 
$f_{1_i}^{k_i}$ to $f_{1_{i+1}}^{k_i}$, $f_{1_1}^{k_1}\cdots f_{1_m}^{k_m}$
is $\s$-stable if and only if $k_1 = \cdots = k_m$, namely 
$f_{1_1}^{k_1}\cdots f_{1_m}^{k_m} = \wt f_{\eta}^k$ for some $0 \le k \le L$.    
Thus (5.20.1) can be rewritten as
\begin{equation*}
\tag{5.21.1}
\sum_{0 \le k \le L}(-1)^{km}\biggl(\begin{bmatrix}
                                L   \\
                                k
                             \end{bmatrix}_{d_i}\biggr)^m
           \wt f_{\eta}^k\wt f_{\eta'} \wt f_{\eta}^{L -k} \equiv 0  \mod J_1,
\end{equation*}
where $J_1$ is a subspace of $\BU_q^-$ spanned by the orbit sum 
$O(x)$ for $x \in \BU_q^-$ such that $O(x) \ne x$.  
The following result proves (5.1.3), hence the proof of Proposition 3.5
is now complete.

\begin{prop}  
Under the setup in 5.1, (5.1.3) holds, 
namely, the equation
\begin{equation*}
\tag{5.22.1}
\sum_{k = 0}^{1 - a_{\eta\eta'}}(-1)^k\begin{bmatrix}
                                        1 - a_{\eta\eta'} \\
                                            k
                                     \end{bmatrix}_{d_{\eta}}
       \wt f_{\eta}^k\wt f_{\eta'} \wt f_{\eta}^{1 - a_{\eta\eta'} - k} 
          \equiv 0 \mod J
\end{equation*}  
holds in ${}_{\BA'}\BU_q^{-,\s}$. 
\end{prop}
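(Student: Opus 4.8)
The plan is to deduce (5.22.1) from the congruence (5.21.1) by reducing everything modulo $p$ and comparing the binomial coefficients that appear. Recall that (5.21.1) asserts that
\[
z := \sum_{0 \le k \le L}(-1)^{km}\biggl(\begin{bmatrix} L \\ k \end{bmatrix}_{d_i}\biggr)^m \wt f_{\eta}^k\wt f_{\eta'} \wt f_{\eta}^{L -k} \in J_1
\]
in $\BU_q^-$, where $L = 1 - a_{\eta\eta'}$, $m = |\eta|$, $i \in \eta$ is arbitrary, and $J_1$ is the $\BQ(q)$-span of the orbit sums $O(x)$ with $\s(x) \ne x$. Since $\wt f_{\eta}, \wt f_{\eta'} \in {}_{\BA}\BU_q^-$ and the coefficients lie in $\BA = \BZ[q,q\iv]$, we have $z \in {}_{\BA}\BU_q^-$; and since $\wt f_{\eta}$ and $\wt f_{\eta'}$ are $\s$-invariant, $z \in {}_{\BA}\BU_q^{-,\s}$, so its image $\bar z$ in ${}_{\BA'}\BU_q^{-,\s}$ is defined. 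The statement (5.22.1) is precisely that $\bar z \in J$.

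First I would pin down the coefficients that survive reduction mod $p$. Because the order of $\s$ is a power of $p$, the orbit size $m = |\eta|$ is a power $p^a$ of $p$, and $d_{\eta} = |\eta| d_i = m d_i$. Taking $p^a$-th powers in $\BA' = \BF[q,q\iv]$ (with $\BF = \BZ/p\BZ$) is a ring homomorphism fixing $\BF$ and sending $q$ to $q^{p^a}$, and $\begin{bmatrix} L \\ k \end{bmatrix}$ is a Laurent polynomial in $q$ with integer coefficients; hence
\[
\biggl(\begin{bmatrix} L \\ k \end{bmatrix}_{d_i}\biggr)^m \equiv \begin{bmatrix} L \\ k \end{bmatrix}_{m d_i} = \begin{bmatrix} L \\ k \end{bmatrix}_{d_{\eta}} \pmod p,
\]
which is the same computation used to establish (5.1.4). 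Likewise $(-1)^{km} \equiv (-1)^k$ in $\BF$: if $p$ is odd then $m = p^a$ is odd, and if $p = 2$ then $-1 = 1$ in $\BF$. Consequently $\bar z$ is precisely the left-hand side of (5.22.1).

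It then remains to see that $\bar z \in J$. Comparing (5.20.1) with (5.21.1) shows that $z$ is an $\BA$-linear combination of orbit sums $O(x)$, where each $x$ is a monomial $(f_{1_1}^{k_1}\cdots f_{1_m}^{k_m})\wt f_{\eta'}(f_{1_1}^{k'_1}\cdots f_{1_m}^{k'_m})$ with $(k_1,\dots,k_m)$ not constant, so that $\s(x) \ne x$. It therefore suffices to show that the image of each such $O(x)$ lies in $J$. Write $O(x) = \sum_{0 \le j < t}\s^j(x)$, where $t \ge 2$ is the (necessarily $p$-power) size of the $\s$-orbit of $x$: if the $\s$-orbit of $\bar x$ in ${}_{\BA'}\BU_q^-$ has size $t' < t$, then $t/t'$ is a positive power of $p$, whence $\overline{O(x)} = (t/t')\,O(\bar x) = 0 \in J$; if $t' = t$, then $\s(\bar x) \ne \bar x$ and $\overline{O(x)} = O(\bar x) \in J$ by the definition of $J$. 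In either case $\overline{O(x)} \in J$, so $\bar z \in J$, which is exactly (5.22.1); this proves (5.1.3) and completes the proof of Proposition 3.5.

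The substantive content — the combinatorial identities of this section culminating in Proposition 5.18 and their translation to relations among the $\wt f_{\eta}$ in 5.12 and 5.19 — has already been carried out, so for Proposition 5.22 the only point demanding care is the bookkeeping of the last paragraph: that reduction mod $p$ carries $J_1$ into $J$ (equivalently, that an orbit sum reduces to an orbit sum or to $0$), and that after reduction only the diagonal terms $k_1 = \cdots = k_m$ survive, with coefficient $\begin{bmatrix} L \\ k \end{bmatrix}_{d_{\eta}}$. The coefficient identity itself is the same elementary Frobenius argument already used for (5.1.4).
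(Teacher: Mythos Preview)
Your proof is correct and follows essentially the same approach as the paper: reduce (5.21.1) modulo $p$, use that $m=|\eta|$ is a $p$-power to get the Frobenius identity $\bigl(\begin{bmatrix}L\\k\end{bmatrix}_{d_i}\bigr)^m = \begin{bmatrix}L\\k\end{bmatrix}_{d_\eta}$ in $\BA'$, and handle the sign $(-1)^{km}$ by the parity of $m$. Your final paragraph, verifying explicitly that each integral orbit sum $O(x)$ appearing in (5.21.1) reduces to an element of $J$ (either because the orbit persists, or because it collapses and the multiplicity $t/t'$ kills it), is a point the paper leaves implicit; it is a useful piece of bookkeeping but does not constitute a different method.
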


\begin{proof}
We follow the notation in 5.21.
Since the order of $\s$ is a power of $p$, $m = |\eta|$ is also 
a power of $p$. Hence in the formula (5.21.1), we have 
an equality in $\BA'$,
\begin{equation*}
\biggl(\begin{bmatrix}
                L  \\
                k
        \end{bmatrix}_{d_i}\biggr)^m = 
             \begin{bmatrix}
                L  \\
                k
             \end{bmatrix}_{md_i}  = 
             \begin{bmatrix}
                L  \\
                k
             \end{bmatrix}_{d_{\eta}} 
\end{equation*}
since $d_{\eta} = md_i$.  Note that $(-1)^{km} = (-1)^k$ 
if $m$ is odd, and the term $(-1)^{k}$ is ignorable in ${}_{\BA'}\BU_q^-$ if 
$m$ is even.  Since $L = 1 - a_{\eta\eta'}$, the proposition follows from 
(5.21.1). 
\end{proof}

\remark{5.23.}
In the case where $X$ is of finite or affine type, and 
the order of $\s$ is 2 or 3, the isomorphism 
$\Phi: {}_{\BA'}\ul\BU_q^- \isom \BV_q$ was established in [SZ1, SZ2].  
The proof of the fact that $\Phi$ is a homomorphism 
is reduced to the case where $\ul X$ has rank 2, namely, 
$\ul X$ is of type $A_1 \times A_1, A_2, B_2$ or $G_2$. In [SZ1, Prop.1.10], 
this was verified by case by case computation.  The most complicated 
one is the case where $X$ is of type $D_4$ and 
$\ul X$ is of type $G_2$. (3.5.1) in [SZ1] corresponds to
the formula in Proposition 5.13, namely the case where $|\eta| = 1$, and $|\eta'| = 3$. 
(3.5.1) was proved after a hard computation by making use of PBW-bases of 
$\BU_q^-$ of type $D_4$.  As stated in Remark 3.6 in [SZ1], the formula (3.5.1) holds
without appealing modulo $J$ nor modulo 3, which corresponds to the statement
in Proposition 5.13. For the proof of (3.4.1) in [SZ1], which is the case 
where $|\eta| = 3$ and $|\eta'| = 1$, we need 
to consider in ${}_{\BA'}\BU_q^-$ with modulo $J$. 
This corresponds to the situation in Proposition 5.22. 
\par
Note that Proposition 5.13 and Proposition 5.22 can be proved for the general setup,
in a uniform way, and the discussion there is simpler, and more 
transparent  than the one used in [SZ1]. 

\par\bigskip


\par\bigskip

\par\vspace{1.5cm}
\noindent
Y. Ma \\
School of Mathematical Sciences, Tongji University \\ 
1239 Siping Road, Shanghai 200092, P.R. China  \\
E-mail: \verb|1631861@tongji.edu.cn|

\par\vspace{0.5cm}
\noindent
T. Shoji \\
School of Mathematica Sciences, Tongji University \\ 
1239 Siping Road, Shanghai 200092, P.R. China  \\
E-mail: \verb|shoji@tongji.edu.cn|

\par\vspace{0.5cm}
\noindent
Z. Zhou \\
School of Mathematical Sciences, Tongji University \\ 
1239 Siping Road, Shanghai 200092, P.R. China  \\
E-mail: \verb|forza2p2h0u@163.com|

\end{document}